\documentclass[a4paper,12pt,twoside]{article}

\usepackage{amsmath, amssymb, amsfonts}
\usepackage[amsmath, thmmarks]{ntheorem}
\usepackage{graphicx}
\usepackage{paper}
\usepackage{mathrsfs}
\usepackage{tikz}
\usepackage{tikz-cd}
\usepackage{float}
\usepackage{shuffle}
\mathtoolsset{showonlyrefs=true}
\newcommand{\MT}{\mathcal{MT}}
\newcommand{\B}{\mathrm{B}}
\newcommand{\dR}{\mathrm{dR}}
\newcommand{\Lm}{\mathbb{L}^{\mathfrak{m}}}
\newcommand{\pia}{\pi_\mathfrak{a}}
\newcommand{\kk}{\mathbf{k}}
\newcommand{\dch}{\mathrm{dch}}
\renewcommand{\Im}{I^{\mathfrak{m}}}
\newcommand{\Il}{I^{\mathfrak{l}}}

\newcommand{\Ia}{I^{\mathfrak{a}}}
\newcommand{\zm}{\zeta^{\mathfrak{m}}}
\newcommand{\za}{\zeta^{\mathfrak{a}}}

\newcommand{\zl}{\zeta^{\mathfrak{l}}}

\newcommand{\sm}{s^{\mathfrak{m}}}
\renewcommand{\sl}{s^{\mathfrak{l}}}
\newcommand{\sa}{s^{\mathfrak{a}}}
\newcommand{\bsm}{\bar{s}^{\mathfrak{m}}}
\newcommand{\bsl}{\bar{s}^{\mathfrak{l}}}
\newcommand{\bsa}{\bar{s}^{\mathfrak{a}}}
\newcommand{\per}{\mathrm{per}}
\renewcommand{\H}{\mathcal{H}}
\renewcommand{\L}{\mathcal{L}}
\newcommand{\A}{\mathcal{A}}
\renewcommand{\P}{\mathcal{P}}
\newcommand{\tikznode}[2]{%
  \tikz[remember picture,baseline=(#1.base)] \node (#1) [inner sep=0pt] {$#2$};%
}

\title{New Hoffman-type generators for alternating multiple zeta values}
\author{Wataru Machida}

\begin{document}
\maketitle

\begin{firstfootnote}
    \ Graduate School of Mathematical Sciences, The University of Tokyo, 8-1 Komaba 3-chome, Meguro-ku, Tokyo, 153-8914, Japan

    \ E-mail address: \texttt{wmachida@ms.u-tokyo.ac.jp}

    \ 2020 \textit{Mathematics Subject Classification}.
    Primary: 11M32;
    Secondary: 16T05.
\end{firstfootnote}

\begin{abstract}
    We introduce multiple $s$ values and prove that every alternating 
    multiple zeta value is a $\Q$-linear combination of multiple $s$ values
    $s(k_1, \dots, k_r)$ with $k_i \in \{1, 2\}$.
    We also study when certain multiple $s$ values can be expressed
    as $\Q$-linear combinations of multiple zeta values.
\end{abstract}

\section{Introduction}
The multiple zeta value (MZV) is defined by the series
\[
\zeta(\kk) = \sum_{0 < n_1 < \dots < n_r} \frac{1}{n_1^{k_1}\cdots n_r^{k_r}},
\]
where $\kk = (k_1, \dots, k_r) \in \Z_{>0}^r$ is an index with $k_r \ge 2$.
For an index $\kk$, we denote its weight $k_1 + \cdots + k_r$ and depth $r$ 
by $\wt\kk$ and $\dep\kk$, respectively.
Let $\mathcal{Z}^1_N$ denote the $\Q$-vector space spanned by MZVs of weight $N$.
Zagier (\cite{MR1341859}) conjectured that
\[
\dim_\Q \mathcal{Z}^1_N = d_N,
\]
where $d_N$ is defined recursively by
\[
d_N = d_{N-2} + d_{N-3}, \qquad d_0 = 1,\ d_1 = 0,\ d_2 = 1.
\]
Terasoma (\cite{MR1918675}) and Deligne--Goncharov (\cite{MR2136480}) independently proved the upper bound
\[
\dim_\Q \mathcal{Z}^1_N \le d_N.
\]
On the other hand, Hoffman (\cite{hoffmanconj}) conjectured that every multiple zeta value is a $\Q$-linear combination of
\[
\{\zeta(k_1, \dots, k_r) \mid k_i \in \{2, 3\}\},
\]
which is known as the Hoffman basis.
The number of such indices of weight $N$ coincides with $d_N$.
This conjecture was proved by Brown (\cite{MR2993755}) using the theory of motivic multiple zeta values.

Alternating multiple zeta values (AMZVs), also known as Euler sums, 
are a natural generalization of MZVs.
For an index $\kk = (k_1, \dots, k_r) \in \Z_{>0}^r$ and signs $\epsilon_1, \dots, \epsilon_r \in \{\pm1\}$ 
satisfying $(k_r, \epsilon_r) \ne (1,1)$, the alternating multiple zeta value is defined by
\[
\zeta\binom{k_1, \dots, k_r}{\epsilon_1, \dots, \epsilon_r}
= \sum_{0<n_1<\cdots<n_r} \frac{\epsilon_1^{n_1}\cdots\epsilon_r^{n_r}}{n_1^{k_1}\cdots n_r^{k_r}}.
\]
When $\epsilon_1 = \cdots = \epsilon_r = 1$, this reduces to a multiple zeta value.
Let $\mathcal{Z}^2_N$ denote the $\Q$-vector space spanned by AMZVs of weight $N$.
Broadhurst (\cite{broadhurst1996enumerationirreduciblekfoldeuler}) conjectured that
\[
\dim_\Q \mathcal{Z}^2_N = F_{N+1},
\]
where $F_N$ is the Fibonacci sequence defined by
\[
F_N = F_{N-1}+F_{N-2}, \qquad F_1 = F_2 = 1.
\]
Deligne (\cite{MR2737978}) proved the upper bound
\[
\dim_\Q \mathcal{Z}^2_N \le F_{N+1}.
\]
Charlton established an analogue of Brown's theorem for AMZVs in \cite{MR4906315}.
More precisely, every alternating multiple zeta value is a $\Q$-linear combination of
\[
\{t^{\shuffle, 0}(k_1, \dots, k_r) \mid k_i \in \{1, 2\}\},
\]
where $t^{\shuffle, 0}$ denotes the shuffle regularization of multiple $t$ values
\footnote{For simplicity, we only state the case of shuffle regularization. 
    Charlton also proved analogous results for certain stuffle regularizations.}.
Multiple $t$ values (M$t$Vs) were introduced by Hoffman (\cite{MR4013729}), and are defined,
for an index $\kk = (k_1, \dots, k_r) \in \Z_{>0}^r$ with $k_r \ge 2$, by
\[
t(\kk)
= \sum_{\substack{0 < n_1 < \cdots < n_r \\ n_i: \text{odd}}} \frac1{n_1^{k_1}\cdots n_r^{k_r}}.
\]
A simple computation shows that
\[
t(\kk) = 2^{-\dep\kk} \sum_{\epsilon_i\in\{\pm1\}} \epsilon_1\cdots\epsilon_r\,
\zeta\binom{k_1, \dots, k_r}{\epsilon_1, \dots, \epsilon_r}.
\]
Hence, every multiple $t$ value is a $\Q$-linear combination of AMZVs.
Such a regularization is necessary, since M$t$Vs diverge whenever the last index is 1.
It is therefore natural to ask whether there exist Hoffman-type generators for AMZVs 
that are defined by convergent series.

To answer this question, we introduce a new family of AMZVs, called multiple $s$ values (M$s$Vs).
For an index $\kk = (k_1, \dots, k_r) \in \Z_{>0}^r$, the multiple $s$ value is defined by
\[
s(\kk) = \sum_{\substack{\epsilon_i \in \{\pm1\}\ (k_i>1)\\ \epsilon_i = -1\ (k_i=1)}}
\zeta\binom{k_1, \dots, k_r}{\epsilon_1, \dots, \epsilon_r}.
\]
Unlike M$t$Vs, the signs corresponding to the indices $k_i = 1$ are fixed to be $-1$.
Consequently, M$s$Vs converge even when the last index is 1, and hence require no regularization.
Our main result is the following analogue of Charlton's theorem for M$s$Vs:
\begin{thm} \label{thm:sbasis}
Every alternating multiple zeta value is a $\Q$-linear combination of
\[
\{s(k_1, \dots, k_r) \mid k_i \in \{1,2\}\}.
\]
\end{thm}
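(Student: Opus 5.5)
We follow Brown's and Charlton's strategy: lift to motivic alternating MZVs and exploit Deligne's result that the motivic AMZVs of weight $N$ span a space of dimension $F_{N+1}$. The number of indices with entries in $\{1,2\}$ and weight $N$ is exactly $F_{N+1}$. It therefore suffices to show that the motivic versions $\sm(k_1,\dots,k_r)$, $k_i\in\{1,2\}$, are linearly independent; spanning then follows by dimension count and the period map. The motivic AMZVs form a comodule over the motivic Hopf algebra $\A$ of the category $\MT(\Z[1/2])$, whose Lie coalgebra of indecomposables is cofree on generators $f_1,f_3,f_5,\dots$ (odd degrees together with degree $1$, since $\log 2$ is present). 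Following Brown, we would prove independence by induction on the number $m$ of $1$'s (the analogue of Brown's count of $3$'s), using the infinitesimal coactions $D_{2n+1}$, including $D_1$. We first write each $\sm(\kk)$ as a motivic iterated integral on $\{0,\pm1\}$; by the definition of M$s$V this is $2^{\#\{k_i>1\}}$ times an iterated integral with the words $\omega_0\omega_{1}$-type blocks averaged over $\pm1$. Equivalently, it is an integral over letters $0,\pm1$ where each $k=2$ block gives an even combination, and each $k=1$ gives a fixed $\omega_{-1}$.

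Next we compute $D_{2n+1}\sm(\kk)$ via Goncharov--Brown's formula, cutting subsequences of the iterated integral. The aim is a formula of the shape
\[
D_{2n+1}\sm(\kk)=\sum c\cdot \zeta^{\mathfrak{l}}_{2n+1}\otimes \sm(\kk'),
\]
where $\kk'$ is obtained from $\kk$ by replacing a consecutive block containing $1$'s by a shorter block, and the $\kk'$ have one fewer entry equal to $1$ (modulo terms with fewer $1$'s). The left factors should reduce to explicit multiples of $\zeta^{\mathfrak{l}}(\overline{2n+1})$, or of $\log 2$ for $n=0$. Concretely, one needs the "Lie" values $\sl(2^a,1,2^b)$ and similar quantities. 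These should be computable, likely via a generating function in the spirit of Zagier's evaluation of $\zeta(2,\dots,2,3,2,\dots,2)$, using that $s(2,\dots,2)$ and $s(2^a,1,2^b)$ have closed forms in $\zeta(\bar{\,\cdot\,})$ and $\log2$. This gives a lower-triangular structure in which the leading coefficients are nonzero rationals, possibly $2$-adically controlled as in Brown's argument.

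Finally, we order the indices with $m$ ones by their $2$-adic or lexicographic data and show that the matrix of leading coefficients is invertible, for instance because it is triangular modulo $2$ with unit diagonal. Combined with the induction hypothesis on the images $D_{2n+1}$ and the fact that $\ker\bigoplus_n D_{2n+1}$ in weight $N$ is spanned by $\zm(2)^{N/2}$ (which equals a multiple of $\sm(2,\dots,2)$), this yields independence.

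The main obstacle is the explicit evaluation of the motivic Lie values $\sl(2^a,1,2^b)$ and the verification that the resulting coefficient matrix is nonsingular. We would first try to express these values as polynomials in $\zeta(\overline{odd})$ and $\log 2$ modulo products, and then check invertibility by a $2$-adic valuation argument.
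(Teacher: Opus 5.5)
Your overall plan matches the paper's: lift to the motivic setting, induct on the number of $1$'s through the $D_{2r+1}$, and show a coefficient matrix is triangular modulo $2$. However, every step that carries content is stated only as an aim, and one of them is misdescribed, so there is a genuine gap.

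\textbf{The coaction formula.} You assert that the right-hand factors of $D_{2r+1}\sm(\kk)$ are again $\sm$-values with fewer $1$'s. Equivalently, $\sm(\{2\}^{l_0},1,\dots,1,\{2\}^{l_d})\in C_d\H$, together with an explicit graded formula. This does not come for free from the Goncharov--Brown formula:
\begin{itemize}
\item The sums over signs factor as (left factor)$\otimes\sm(\kk')$ only after each left factor is normalized by homothety.
\item Cuts between two nonzero letters of the same block of $2$'s vanish only after summing over the intermediate signs (Murakami's lemma).
\item Cuts between nonzero letters of different blocks vanish trivially when the endpoints agree, and by a path-composition/homothety symmetry when they are opposite.
\end{itemize}
Your description of the integrand is also wrong. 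The letters are $\eta_i=\epsilon_i\cdots\epsilon_r$, so an entry $k_i=1$ does not contribute a fixed letter $-1$. It contributes $-\eta_{i+1}$, immediately followed by $\eta_{i+1}$. Cuts starting at the first letter of such a pair, or ending at the second, produce a second family of left factors, $\bsl(\{2\}^a,1,\{2\}^b)$, built from the sign-weighted sums $\bar s$. These enter the matrix and must be evaluated too; ``similar quantities'' leaves this open.

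\textbf{The coefficients.} You propose to obtain $\sl(\{2\}^a,1,\{2\}^b)$ ``using that $s(\{2\}^a,1,\{2\}^b)$ has a closed form''. But that closed form, and the one for $\bar s(\{2\}^a,1,\{2\}^b)$, is exactly the analytic theorem to be proved. The paper needs a two-variable generating function, growth estimates and a Carlson-type uniqueness theorem for it.

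Even granted the closed form, a numerical identity does not determine $\sl$, since the numbers $\zeta(\overline{2r+1})\,\pi^{2(a+b-r)}$ are not known to be $\Q$-independent. You must lift it by induction on $a+b$:
\begin{itemize}
\item compute $D_{2r+1}$ for $r<a+b$ from the coaction formula and the lower cases;
\item then use Glanois's theorem $\ker D_{<N}\cap\H^2_N=\Q\,\zm(\overline N)$ and the period map to fix the last constant.
\end{itemize}
The kernel fact you quote (for all $D_{2n+1}$ together) is not the one that does this.

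\textbf{Invertibility.} Only once the coefficients are known can you check triangularity. Write $\sl(\{2\}^a,1,\{2\}^b)=(-2)^{-(a+b)}\alpha^{a,b}\,\zl(\overline{2a+2b+1})$, and similarly $\beta^{a,b}$ for $\bsl$, and rescale $\zl(\overline{2r+1})\mapsto(-2)^r$. The argument then runs as follows:
\begin{itemize}
\item Order words lexicographically and match $u\mapsto u12^r$, from words with $d-1$ ones and weight $<N$ to words with $d$ ones and weight $N$.
\item Show that modulo $2\Z_{(2)}$ only the deconcatenation terms $\alpha^{a,b}\sm(u)$, with $w=u\,2^a12^b$, survive. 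This needs $\alpha^{a,b}\equiv\alpha^{b,a}\equiv\beta^{a,b}\equiv\beta^{b,a}\pmod{2\Z_{(2)}}$.
\item Check that the diagonal entries $\alpha^{0,r}=1-\frac{2r\cdot 2^{2r}}{2^{2r}-1}$ lie in $1+2\Z_{(2)}$.
\end{itemize}
Without these explicit values, ``triangular mod $2$ with unit diagonal'' is a hope, not an argument.
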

We prove this theorem following the approaches of
\cite[Theorem 1.1]{MR2993755}, \cite[Theorem 2]{MR4377309}, and \cite[Theorem 1.3]{MR4906315}.
First, we define the motivic version of M$s$V, which we denote by $\sm(\kk)$.
It is an element of the graded $\Q$-vector space $\H = \oplus_{N \ge 0} \H_N$ of motivic AMZVs.
Since AMZVs are periods of mixed Tate motives over $\Z[1/2]$,
$\H$ is equipped with the coaction of the motivic Galois group
and the coradical filtration $C_d\H$.
We prove in Proposition~\ref{prop:deriv} that for $k_i \in \{1, 2\}$ 
with $\#\{i \mid k_i = 1\} = d$, we have
\[
\sm(k_1, \dots, k_r) \in C_d\H.
\]
By the theory of mixed Tate motives, we have an isomorphism 
\[
\partial_{N, d} \colon \gr_d\H_N \stackrel{\sim}{\longrightarrow} \bigoplus_{1 \le 2r+1 \le N} \gr_{d-1}\H_{N-(2r+1)},
\]
where $\gr_d\H = C_d\H/C_{d-1}\H$.
Using this isomorphism, we prove by induction on $d$ that 
\[
\{\sm(k_1, \dots, k_r) \bmod C_{d-1}\H_N \mid k_i \in \{1, 2\},\; k_1+\dots+k_r = N,\; \#\{i \mid k_i = 1\} = d\}
\]
is a $\Q$-basis of $\gr_d\H_N$ (Theorem~\ref{thm:grbasis}).
In order to compute the image of $\sm(k_1, \dots, k_r)$ for $k_i \in \{1, 2\}$ 
under the isomorphism $\partial_{N, d}$, we need certain relations for 
motivic M$s$Vs.
We prove these relations first for (non-motivic) M$s$Vs and then lift them to the motivic setting.
Applying the period map $\H \to \C$ to the above result yields Theorem~\ref{thm:sbasis}.

The paper is organized as follows.
In Section 2, we introduce M$s$Vs and their analogues, called multiple $\bar{s}$ values (M$\bar{s}$Vs).
In Section 3, we prove several relations for M$s$Vs and M$\bar{s}$Vs.
In Section 4, we introduce the motivic versions of M$s$Vs and M$\bar{s}$Vs,
and compute the coaction induced on $\gr_d\H$ for $\sm(k_1, \dots, k_r)$ with $k_i \in \{1, 2\}$.
In Section 5, we lift the relations from Section 3 to the motivic setting
and prove Theorem~\ref{thm:sbasis}.
In Section 6, we study unramified M$s$Vs using Glanois's descent criterion.

\medbreak
\noindent{\bfseries Acknowledgment}\quad
The author would like to express his sincere gratitude to Professor Yoichi Mieda
for his invaluable guidance and insightful suggestions.
The author would also like to thank Professor Nobuo Sato for many helpful discussions
and valuable insights.
This work was supported by the WINGS-FMSP Program at the Graduate School of Mathematical Sciences, 
the University of Tokyo.

\section{Multiple $s$ values and multiple $\bar{s}$ values}
In this section, we introduce a new family of alternating multiple zeta values, 
called multiple $s$ values and multiple $\bar{s}$ values.
Recall that, for an index $\kk = (k_1, \dots, k_r) \in \Z_{>0}^r$ and signs $\epsilon_1, \dots, \epsilon_r \in \{\pm1\}$ 
with $(k_r, \epsilon_r) \ne (1,1)$, the alternating multiple zeta value is defined by
\[
\zeta\binom{k_1, \dots, k_r}{\epsilon_1, \dots, \epsilon_r}
= \sum_{0<n_1<\cdots<n_r} \frac{\epsilon_1^{n_1}\cdots\epsilon_r^{n_r}}{n_1^{k_1}\cdots n_r^{k_r}}.
\]
To simplify notation, we often write $k_i$ for $\binom{k_i}{1}$ and
$\overline{k}_i$ for $\binom{k_i}{-1}$.
For example, 
\[
\zeta(2, \overline{1}) = \zeta\binom{2, 1}{1, -1}.
\]
We also write $\{k\}^n$ for the index $\underbrace{k,\dots,k}_{n}$.
\begin{dfn}
    For an index $\kk = (k_1, \dots, k_r) \in \Z_{>0}^r$, the multiple $s$ value $s(\kk)$ is defined by
    \[
    s(\kk) = \sum_{\substack{\epsilon_i \in \{\pm 1\} \ (k_i > 1) \\ \epsilon_i = -1 \ (k_i = 1)}} 
    \zeta\binom{k_1, \dots, k_r}{\epsilon_1, \dots, \epsilon_r}.
    \]
    The multiple $\bar{s}$ value $\bar{s}(\kk)$ is defined by
    \[
    \bar{s}(\kk) = \sum_{\substack{\epsilon_i \in \{\pm 1\} \ (k_i > 1) \\ \epsilon_i = -1 \ (k_i = 1)}} 
    \epsilon_1\cdots\epsilon_r\, \zeta\binom{k_1, \dots, k_r}{\epsilon_1, \dots, \epsilon_r}.
    \]
\end{dfn}
If every $k_i$ is greater than 1, then we have
\begin{align}
    s(\kk) 
    & = \sum_{\epsilon_i \in \{\pm 1\}} 
        \zeta\binom{k_1, \dots, k_r}{\epsilon_1, \dots, \epsilon_r} \\
    & = \sum_{\epsilon_i \in \{\pm 1\}} 
        \sum_{0 < n_1 < \dots < n_r} \frac{\epsilon_1^{n_1}\cdots\epsilon_r^{n_r}}{n_1^{k_1}\cdots n_r^{k_r}} \\
    & = \sum_{0 < n_1 < \dots < n_r} \frac{(1+(-1)^{n_1})\cdots(1+(-1)^{n_r})}{n_1^{k_1}\cdots n_r^{k_r}} \\
    & = 2^{\dep\kk} \sum_{\substack{0 < n_1 < \dots < n_r \\ n_i: \text{even}}} \frac{1}{n_1^{k_1}\cdots n_r^{k_r}} \\
    & = 2^{\dep\kk - \wt\kk}\, \zeta(\kk).
\end{align}
In particular, 
\begin{equation}
    s(\{2\}^r) = 2^{-r}\, \zeta(\{2\}^r) = 2^{-r} \frac{\pi^{2r}}{(2r+1)!}. \label{eq:sz}
\end{equation}
Similarly, under the same assumption, we have
\begin{align}
    \bar{s}(\kk)
    & = \sum_{\epsilon_i \in \{\pm 1\}} \epsilon_1 \cdots \epsilon_r\,
        \zeta\binom{k_1, \dots, k_r}{\epsilon_1, \dots, \epsilon_r} \\
    & = \sum_{\epsilon_i \in \{\pm 1\}} 
        \sum_{0 < n_1 < \dots < n_r} \frac{\epsilon_1^{n_1-1}\cdots\epsilon_r^{n_r-1}}{n_1^{k_1}\cdots n_r^{k_r}} \\
    & = \sum_{0 < n_1 < \dots < n_r} \frac{(1+(-1)^{n_1-1})\cdots(1+(-1)^{n_r-1})}{n_1^{k_1}\cdots n_r^{k_r}} \\
    & = 2^{\dep\kk} \sum_{\substack{0 < n_1 < \dots < n_r \\ n_i: \text{odd}}} \frac{1}{n_1^{k_1}\cdots n_r^{k_r}} \\
    & = 2^{\dep\kk}\, t(\kk).
\end{align}
In particular, 
\begin{equation}
    \bar{s}(\{2\}^r) = 2^r\, t(\{2\}^r) = 2^{-r} \frac{\pi^{2r}}{(2r)!}, \label{eq:bsz}
\end{equation}
where the second equality follows from the identity
\[
t(\{2\}^r) = \frac{\pi^{2r}}{2^{2r}(2r)!}
\]
due to Hoffman (\cite{MR4013729}).

\section{Evaluation of $s(\{2\}^a, 1, \{2\}^b)$ and $\bar{s}(\{2\}^a, 1, \{2\}^b)$}
In this section, we evaluate $s(\{2\}^a, 1, \{2\}^b)$ and $\bar{s}(\{2\}^a, 1, \{2\}^b)$ for $a, b \ge 0$.
These identities play an essential role in the proof of Theorem~\ref{thm:sbasis}.
\begin{thm} \label{thm:sidentity}
    Let $a$ and $b$ be non-negative integers.
    Then we have
    \[
    s(\{2\}^a, 1, \{2\}^b)
        = \sum_{r=0}^{a+b} \frac{(-1)^r}{2^r} \alpha_r^{a, b}\, \zeta(\overline{2r+1})\, s(\{2\}^{a+b-r}),
    \]
    where $\alpha_r^{a, b}$ is the rational number defined by
    \[
    \alpha_r^{a, b} = - \frac{2^{2r}}{2^{2r}-1} \binom{2r}{2a+1} + \binom{2r}{2b}.
    \]
    For $r=0$, the term involving $\frac{2^{2r}}{2^{2r}-1}$ is understood to be zero.
\end{thm}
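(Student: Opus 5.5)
We prove the identity by generating functions, in the spirit of Zagier's evaluation of $\zeta(\{2\}^a,3,\{2\}^b)$ and Murakami's evaluation of $t(\{2\}^a,1,\{2\}^b)$. The first step is to write $s(\{2\}^a,1,\{2\}^b)$ as a single series. The argument $1$ carries sign $-1$ and each $2$ carries the factor $1+(-1)^{n}$, so
\[
s(\{2\}^a,1,\{2\}^b)=\sum_{0<m_1<\dots<m_a<n<l_1<\dots<l_b}\frac{2^{a+b}(-1)^n}{(2m_1)^2\cdots(2m_a)^2\, n\,(2l_1)^2\cdots(2l_b)^2},
\]
where the $m_i,l_j$ are now arbitrary positive integers constrained by $2m_a<n<2l_1$. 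Next we form the two-variable generating function
\[
F(x,y)=\sum_{a,b\ge0}(-1)^{a+b}\,2^{a+b}\,s(\{2\}^a,1,\{2\}^b)\,x^{2a+1}y^{2b+1},
\]
with the normalisation chosen so that the right-hand side becomes tractable. Summing over the $m_i$ and $l_j$ turns the inner sums into products
\[
\prod_{2m<n}\Bigl(1-\frac{x^2}{4m^2}\Bigr)\prod_{2l>n}\Bigl(1-\frac{y^2}{4l^2}\Bigr).
\]
Writing the second product as $\frac{\sin(\pi y/2)}{\pi y/2}$ divided by a finite product expresses $F$ as a single sum over $n$ of ratios of Gamma functions, namely of the shape $\Gamma(\cdot\pm x/2)/\Gamma(\cdot\pm y/2)$ with arguments around $n/2$. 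We then split this sum according to the parity of $n$.

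The main step is to evaluate the resulting sum in closed form. It is a very-well-poised-type ${}_3F_2$ or ${}_4F_3$ at argument $-1$ (or $1$ after the parity split), and we would first try Dougall's or Kummer's summation, as Zagier did, to obtain something like
\[
\frac{\sin(\pi y/2)}{\pi y/2}\cdot\bigl(\text{combination of }\psi\text{-differences}\bigr).
\]
Expanding the digamma differences via $\psi(1+z)-\psi(1-z)=-2\sum_{k}\zeta(2k+1)z^{2k+1}$ and their alternating half-shifted analogues produces the values $\zeta(\overline{2r+1})=-(1-2^{-2r})\zeta(2r+1)$. This conversion is the source of the factor $\frac{2^{2r}}{2^{2r}-1}$ multiplying $\binom{2r}{2a+1}$. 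The factor $\frac{\sin}{\cdot}$ gives the $s(\{2\}^{m})=2^{-m}\pi^{2m}/(2m+1)!$ by \eqref{eq:sz}. The binomials $\binom{2r}{2a+1}$ and $\binom{2r}{2b}$ then come from expanding $(x\pm y)^{2r+1}$ or $(x\pm y)^{2r}$ and extracting the coefficient of $x^{2a+1}y^{2b+1}$. Here the $\binom{2r}{2b}$ term arises from a boundary (log-type, $\zeta(\overline1)=-\log2$) contribution, which also explains the $r=0$ convention.

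This summation step is the one I expect to be the main obstacle, because the alternating sign and the parity split make a single classical summation formula unlikely to suffice. If direct hypergeometric summation fails, the fallback is a Zagier-style uniqueness argument. We show that both sides, viewed as functions of $x$, are meromorphic with the same poles and residues, have the same growth at infinity, and agree at $x=0$ (where $a=0$, reducing to $s(1,\{2\}^b)$, which can be computed separately by a one-variable generating function). The last step is to compare coefficients to read off $\alpha_r^{a,b}$.
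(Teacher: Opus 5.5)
Your set-up matches the paper's first step: you rewrite $s(\{2\}^a,1,\{2\}^b)$ over even arguments, split the parity of the middle variable, form a two-variable generating function and extract a sine factor. There is one slip: with your normalisation $(-1)^{a+b}2^{a+b}$ the products are $\prod_{2m<n}(1-x^2/m^2)$ and $\prod_{2l>n}(1-y^2/l^2)$, so the prefactor is $\frac{\sin\pi y}{\pi y}$, not $\frac{\sin(\pi y/2)}{\pi y/2}$. You also guess the right shape of the answer. With $F(x,y)=\sum_{a,b}(-2)^{a+b}s(\{2\}^a,1,\{2\}^b)x^{2a+1}y^{2b}$ (yours is $y$ times this), the right-hand side sums to
\[
\widetilde F(x,y)=\frac{\sin\pi y}{2\pi}\bigl(A(x+y)-A(x-y)\bigr)-\frac{\sin\pi x}{2\pi}\bigl(B(x+y)+B(x-y)+2\log 2\bigr),
\]
where $A(z)=\sum_{r\ge1}\zeta(2r+1)z^{2r}$ and $B(z)=-\sum_{r\ge1}\zeta(\overline{2r+1})z^{2r}$. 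However, the step that carries the whole proof, $F=\widetilde F$, is missing. Your primary route is only a hope. The paper never sums $\sum_k\frac{(1-x)_{k-1}(1+x)_{k-1}}{(1-y)_k(1+y)_k}\bigl(\frac k2-\frac{k^2-x^2}{2k+1}\bigr)$ in closed form, and you do not name a summation formula that actually applies to it.

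Your fallback, as stated, does not determine the function. For fixed $y$, both $F(\cdot,y)$ and $\widetilde F(\cdot,y)$ are entire in $x$, because the poles of the $A$- and $B$-terms cancel in pairs. So ``same poles and residues'' is vacuous. Both functions are only $O(e^{\pi|x|})$, because of the factor $\prod_{n<k}(1-x^2/n^2)$. Growth plus agreement at one point cannot pin down such a function: adding $c(y)\bigl(\sin\pi x-2\sin(\pi x/2)\bigr)$ preserves oddness in $x$, the growth bound, and even the coefficient of $x^1$. Note also that $F(0,y)=0$ trivially, since $F$ is odd in $x$.

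The missing idea is interpolation at the integers. At $x=n\in\mathbb{Z}$ the series terminates, since $(1-n)_{k-1}=0$ for $k>n$. After partial fractions of $1/((1-y)_k(1+y)_k)$ in $y$, the equality $F(n,y)=\widetilde F(n,y)$ reduces to the finite identity
\[
\sum_{k=i}^{n}(-1)^{k-i}\binom{2k}{k-i}\Bigl(\binom{n+k}{2k+1}+\frac14\binom{n+k-1}{2k-1}\Bigr)=\begin{cases}\frac12, & 1\le i<n,\\ \frac14, & i=n,\end{cases}
\]
which the paper proves with a generating function in $n$. Only then does a Carlson/Valiron-type theorem give $F-\widetilde F=c(y)\sin\pi x$, and one extra condition kills $c(y)$. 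The paper uses the diagonal $x=y$. There the two products collapse to $\frac{\sin\pi x}{\pi x}(1-x^2/k^2)^{-1}$, and both sides equal $\frac{\sin\pi x}{2\pi}(A(x)-2\log 2)$.

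If by ``poles and residues'' you meant those of $F/\sin\pi x$, then matching them is exactly this integer-point computation, which you would still have to carry out. Your normalisation via the $x^1$-coefficient (the case $a=0$) would also kill $c(y)$. But it needs an independent evaluation of $s(1,\{2\}^b)$, itself a nontrivial case of the theorem, whereas the diagonal check is elementary.
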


\begin{thm} \label{thm:bsidentity}
    Let $a$ and $b$ be non-negative integers.
    Then we have
    \[
    \bar{s}(\{2\}^a, 1, \{2\}^b)
        = \sum_{r=0}^{a+b} \frac{(-1)^r}{2^r} \beta_r^{a, b}\, \zeta(\overline{2r+1})\, \bar{s}(\{2\}^{a+b-r}),
    \]
    where $\beta_r^{a, b}$ is the rational number defined by
    \[
    \beta_r^{a, b} =
        \begin{dcases}
            \frac{2^{2r}}{2^{2r}-1} \binom{2r}{2a} + \binom{2r}{2b} & \text{if} \ a \ge 1, \\
            -2^{2r+1} + \delta_{r, b} & \text{if} \ a = 0.\\
        \end{dcases}
    \]
    For $r=0$, the term involving $\frac{2^{2r}}{2^{2r}-1}$ is understood to be zero.
\end{thm}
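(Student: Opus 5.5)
We need to prove Theorem~\ref{thm:bsidentity}, the evaluation of $\bar{s}(\{2\}^a,1,\{2\}^b)$. The plan is a generating-function argument parallel to the one for Theorem~\ref{thm:sidentity}.

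First we rewrite $\bar{s}$ as a single nested sum. For the entries equal to $2$, summing over both signs with the factor $\epsilon_i$ keeps only odd $n_i$, with weight $2$. For the entry equal to $1$, the sign is fixed to $-1$, so that term contributes $\epsilon=-1$ times $(-1)^{n}$, i.e.\ $(-1)^{n-1}$. Hence
\[
\bar{s}(\{2\}^a,1,\{2\}^b)=2^{a+b}\sum_{\substack{0<n_1<\dots<n_a<m<n'_1<\dots<n'_b\\ n_i,n'_j\ \text{odd}}}\frac{(-1)^{m-1}}{n_1^2\cdots n_a^2\, m\, n_1'^2\cdots n_b'^2}.
\]
We then form the two-variable generating function
\[
G(x,y)=\sum_{a,b\ge0}(-1)^{a+b}\,\bar{s}(\{2\}^a,1,\{2\}^b)\,x^{2a}y^{2b}.
\]
Summing over the odd variables below and above $m$ gives products of the form
\[
\prod_{\substack{n<m\\ n\ \text{odd}}}\Bigl(1-\frac{2x^2}{n^2}\Bigr)
\quad\text{and}\quad
\prod_{\substack{n>m\\ n\ \text{odd}}}\Bigl(1-\frac{2y^2}{n^2}\Bigr).
\]
With $u=\sqrt2\,x$ and $v=\sqrt2\,y$, these are truncations of $\cos(\pi u/2)=\prod_{n\ \text{odd}}(1-u^2/n^2)$. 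So
\[
G=\cos(\pi v/2)\sum_{m\ge1}\frac{(-1)^{m-1}}{m}\,\frac{P_m(u)}{P_m(v)},
\]
where $P_m(u)=\prod_{n<m,\ n\ \text{odd}}(1-u^2/n^2)$ is a ratio of Gamma functions, $P_m(u)=\Gamma(\cdot)^2/(\Gamma(\cdot-u/2)\Gamma(\cdot+u/2))$ with arguments near $m/2$.

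The next step is to evaluate the series $\sum_m \frac{(-1)^{m-1}}{m}\,\frac{P_m(u)}{P_m(v)}$. Splitting $m$ by parity turns it into ${}_3F_2$-type sums at argument $1$. We try to evaluate them by partial fractions in $m$: regard the summand as a function of $m$ and expand $\frac{1}{m}\frac{P_m(u)}{P_m(v)}$ through the Gamma-ratio telescoping trick used in the Zagier-type $\zeta(\{2\}^a,3,\{2\}^b)$ evaluation. Concretely, we write $\frac{1}{m}$ as a difference $F(m+2)-F(m)$ up to terms involving $\psi$-functions. This yields an expression of the shape
\[
\cos(\pi v/2)\,A(u,v)+\cos(\pi u/2)\,B(u,v),
\]
where $A$ and $B$ are Taylor series whose coefficients are combinations of $\zeta(\overline{2r+1})$ and $\zeta(2r+1)$, the latter being converted via $\zeta(2r+1)=-\frac{2^{2r}}{2^{2r}-1}\zeta(\overline{2r+1})$. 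That conversion is the source of the factor $\frac{2^{2r}}{2^{2r}-1}$.

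Finally we compare coefficients of $x^{2a}y^{2b}$. We use \eqref{eq:bsz}, which gives $\sum_n(-1)^n\bar{s}(\{2\}^n)z^{2n}=\cos(\pi z/\sqrt2)$, and identify the binomials $\binom{2r}{2a}$ and $\binom{2r}{2b}$ as coming from the expansion of $(u\pm v)^{2r}$ after symmetrizing. The case $a=0$ is exceptional because the sum starts at $m$ with no odd variables below it, so $P_m(u)\equiv1$ contributes no $x$-dependence. That case must be computed separately: directly, $\bar{s}(1,\{2\}^b)$ involves $\sum_m(-1)^{m-1}/m$ times tails of odd products. This separate computation produces $-2^{2r+1}+\delta_{r,b}$, where the $\delta_{r,b}$ term is the contribution of $\zeta(\overline{2b+1})$-type terms from $r=b$, with $\zeta(\bar1)=-\log2$ appearing for $r=0$.

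The main obstacle is the closed-form evaluation of the alternating hypergeometric sum over $m$. If the telescoping approach fails, we would instead prove the identity by reducing it to Theorem~\ref{thm:sidentity} combined with the iterated-integral duality swapping $s$ and $\bar{s}$, or verify it by showing that both sides satisfy the same recursion in $a$ obtained by partial fractions of $\frac{1}{n^2(n^2-m^2)}$.
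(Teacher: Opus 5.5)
\textbf{There is a genuine gap at the central step.}

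Your nested-sum formula for $\bar{s}(\{2\}^a,1,\{2\}^b)$ and the product form of the generating function are correct, with one small slip. The product over odd $n>m$ is $\cos(\pi v/2)/P_{m+1}(v)$, which differs from $\cos(\pi v/2)/P_m(v)$ when $m$ is odd. The shape you predict for the answer also agrees with the paper's closed form $\widetilde{G}$.

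The missing step is the one that carries all the content: proving that $\sum_{m}\frac{(-1)^{m-1}}{m}\frac{P_m(u)}{P_{m+1}(v)}$ actually equals that closed form. The mechanism you suggest cannot do this as described.
\begin{itemize}
\item For $u\neq v$ this is a non-terminating hypergeometric series at argument $1$.
\item A telescoping decomposition $t_m=T_{m+2}-T_m$ of a hypergeometric term (Gosper-type) only produces boundary terms, i.e.\ rational functions and Gamma ratios.
\item The answer, however, contains the digamma functions $A(x\pm y)$ and $B(x\pm y)$.
\item The phrase ``up to terms involving $\psi$-functions'' names no summation or transformation formula that would produce them.
\end{itemize}
Only on the diagonal $u=v$ does the ratio collapse to a rational function of $m$, so that the sum becomes elementary. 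Your fallbacks (a duality exchanging $s$ and $\bar{s}$, an unspecified recursion in $a$) are not substantiated either.

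\textbf{The missing idea is Zagier's interpolation argument.} The paper uses it precisely to avoid summing the series at generic arguments. In the paper's normalization the coefficients are $(-2)^{a+b}$ and the products are $\prod(1-x^2/(n-\frac12)^2)$; in your variables the interpolation nodes are the odd integers $u$. One proves three things.
\begin{enumerate}
\item $G(x,y)$ and $\widetilde{G}(x,y)$ are entire and $O(e^{\pi|x|})$ in $x$ for fixed $y$.
\item They agree at $x=n-\frac12$. There $(\frac12-x)_{k-1}=(1-n)_{k-1}$ vanishes for $k>n$, so the series terminates. After partial fractions in $y$, the claim reduces to a finite binomial identity, proved by a generating function in $n$.
\item They agree on the diagonal $x=y$, where both sides equal $\frac{\cos\pi x}{2}(2A(2x)-A(x)+2\log 2)$.
\end{enumerate}
Carlson's theorem then gives $G(x-\frac12,y)-\widetilde{G}(x-\frac12,y)=c\sin\pi x$, and the diagonal agreement forces $c=0$. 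Extracting coefficients, using $\zeta(2r+1)=-\frac{2^{2r}}{2^{2r}-1}\zeta(\overline{2r+1})$ and $\bar{s}(\{2\}^r)=2^{-r}\pi^{2r}/(2r)!$, yields the theorem.

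In particular, $a=0$ needs no separate computation. The exceptional $\beta_r^{0,b}$ just records the extra terms $\cos\pi y\,A(y)$ and $2\cos\pi y\,(B(2y)+\log 2)$ in $\widetilde{G}$, which affect only the $x^0$ coefficients.
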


We prove these theorems following the approach of \cite{MR2993756,MR4377309}.
First, we provide the proof of Theorem~\ref{thm:sidentity}.
Let $F(x, y)$ and $\widehat{F}(x, y)$ be generating functions
\begin{align}
    F(x, y) & = \sum_{a, b \ge 0} (-2)^{a+b}\, s(\{2\}^a, 1, \{2\}^b)\, x^{2a+1} y^{2b}, \\
    \widehat{F}(x, y) & = \sum_{a, b \ge 0} (-2)^{a+b} \sum_{r=0}^{a+b} \frac{(-1)^r}{2^r} 
        \alpha_r^{a, b}\, \zeta(\overline{2r+1})\, s(\{2\}^{a+b-r})\, x^{2a+1}y^{2b}.
\end{align}
Our goal is to show that $F(x, y) = \widehat{F}(x, y)$ 
for complex numbers $x$ and $y$ satisfying $|x| + |y| < 1$.

\begin{prop} \label{prop:F}
    We have
    \[
    F(x, y) = x \frac{\sin \pi y}{\pi y} \sum_{k > 0}
    \frac{(1-x)_{k-1}(1+x)_{k-1}}{(1-y)_k (1+y)_k} \qty(\frac{k}{2} - \frac{k^2 - x^2}{2k+1})
    - x \frac{\sin \pi y}{\pi y},
    \]
    where $(a)_k = a(a+1)\cdots(a+k-1)$ is the ascending Pochhammer symbol.
\end{prop}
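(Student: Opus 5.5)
The plan is to unfold the definition of $s$ into a single nested series, sum the generating function over $a$ and $b$ inside the series, and recognise the result as finite products that are Pochhammer quotients.

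\emph{Unfolding $s$.} At a position with $k_i=2$, summing over $\epsilon_i$ gives $(1+(-1)^{n})/n^2$. This vanishes for odd $n$ and equals $2/n^2 = 1/(2p^2)$ for $n=2p$. At the position with $k_i=1$ the sign is fixed, giving $(-1)^m/m$. Hence
\[
(-2)^{a+b}\, s(\{2\}^a,1,\{2\}^b) = \sum_{m\ge1}\frac{(-1)^m}{m}\,(-1)^a e_a\Bigl(\bigl\{\tfrac1{p^2}\bigr\}_{p<m/2}\Bigr)\,(-1)^b e_b\Bigl(\bigl\{\tfrac1{q^2}\bigr\}_{q>m/2}\Bigr),
\]
where $e_a$ denotes the elementary symmetric function. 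Multiplying by $x^{2a+1}y^{2b}$ and summing over $a,b$ converts the elementary symmetric functions into products. We obtain
\[
F(x,y) = x\sum_{m\ge1}\frac{(-1)^m}{m}\prod_{p<m/2}\Bigl(1-\frac{x^2}{p^2}\Bigr)\prod_{q>m/2}\Bigl(1-\frac{y^2}{q^2}\Bigr).
\]
Using $\prod_{q\ge1}(1-y^2/q^2)=\sin\pi y/(\pi y)$, the tail product becomes $\frac{\sin\pi y}{\pi y}\prod_{q\le m/2}(1-y^2/q^2)^{-1}$.

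\emph{Splitting by parity.} Set $P_k=\frac{(1-x)_{k-1}(1+x)_{k-1}}{(1-y)_k(1+y)_k}$. We use
\[
\prod_{p=1}^{k-1}\Bigl(1-\frac{x^2}{p^2}\Bigr)=\frac{(1-x)_{k-1}(1+x)_{k-1}}{((k-1)!)^2},\qquad \prod_{q=1}^{k}\Bigl(1-\frac{y^2}{q^2}\Bigr)=\frac{(1-y)_k(1+y)_k}{(k!)^2}.
\]
\begin{itemize}
\item For $m=2k$ with $k\ge1$, we have $p\le k-1$ and $q\le k$. The term is $\frac{1}{2k}\cdot k^2 P_k=\frac k2 P_k$.
\item For $m=2k+1$ with $k\ge1$, we have $p\le k$ and $q\le k$. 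The extra factor $(1-x^2/k^2)$ turns the ratio into $(k^2-x^2)P_k$, so the term is $-\frac{k^2-x^2}{2k+1}P_k$.
\item The term $m=1$ has empty products and contributes $-1$.
\end{itemize}
Pairing the terms $m=2k$ and $m=2k+1$ and multiplying by $x\frac{\sin\pi y}{\pi y}$ gives exactly the stated formula, including the final term $-x\frac{\sin\pi y}{\pi y}$.

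\emph{Analytic justification (main obstacle).} The series over $m$ is only conditionally convergent, because of the $(-1)^m/m$ factor, while we interchange it with the power-series summation in $x,y$. I would handle this in three steps.
\begin{enumerate}
\item For $|x|+|y|<1$, bound the double sum over $(a,b)$ for each fixed $m$ by the majorant $\prod_{p}(1+|x|^2/p^2)\prod_q(1+|y|^2/q^2)$, so that swapping the sums over $a,b$ with the product is legitimate.
\item Show that the grouped series in $k$ converges absolutely and locally uniformly. The bracket equals $\frac{k/2+x^2}{2k+1}$, which is $O(1)$. From the product formulas, $P_k=\frac{\prod_{p<k}(1-x^2/p^2)}{k^2\prod_{q\le k}(1-y^2/q^2)}=O(k^{-2})$ uniformly on compacta where $y\notin\mathbb Z\setminus\{0\}$.
\item Justify passing from the per-coefficient identity to the grouped sum. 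For each fixed $(a,b)$, the series defining $s(\{2\}^a,1,\{2\}^b)$ converges when summed in blocks of consecutive $m$. Pairing consecutive $m$ is then a partial-sum subsequence and is harmless.
\end{enumerate}
Finally, compare Taylor coefficients, using uniform convergence on a polydisc and Weierstrass's theorem, to conclude that the two sides of the proposition agree as analytic functions.
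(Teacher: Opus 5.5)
Your proposal is correct and follows the paper's route. The paper also unfolds the even positions, turns the elementary symmetric sums into the products $\prod(1-x^2/p^2)$ and $\prod(1-y^2/q^2)$, and converts these to Pochhammer symbols. The one difference is the pairing: you pair $m=2k$ with $m=2k+1$ and isolate $m=1$, which lands directly on the stated form. The paper instead pairs $2k-1$ with $2k$, which produces the boundary correction $\delta_{b>0}$ and the intermediate form \eqref{eq:Fprod} (reused later in Propositions~\ref{prop:anF} and~\ref{prop:diagF}), and then shifts the index of the second term, which amounts to your regrouping. Your Weierstrass argument for interchanging the $m$-sum with the $(a,b)$-sum is a genuine addition, since the paper performs that interchange without comment.
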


\begin{prf}
    By the definition of M$s$Vs, we have
    \begin{align}
        s(\{2\}^a, 1, \{2\}^b) 
        & = \sum_{\epsilon_i, \epsilon'_j \in \{\pm 1\}} 
            \zeta\binom{2, \dots, 2, 1, 2, \dots, 2}{\epsilon_1, \dots, \epsilon_a, -1, \epsilon'_1, \dots, \epsilon'_b} \\
        & = \sum_{0 < n_1 < \dots < n_a < k < m_1 < \dots < m_b} \prod_{i=1}^{a}\frac{1+(-1)^{n_i}}{n_i^2}
            \frac{(-1)^k}{k} \prod_{j=1}^{b}\frac{1+(-1)^{m_j}}{m_j^2} \\
        & = 2^{a+b} \sum_{\substack{0 < n_1 < \dots < n_a < k < m_1 < \dots < m_b \\ n_i, m_j: \text{even}}}
            \frac{1}{n_1^2 \cdots n_a^2} \frac{(-1)^k}{k} \frac{1}{m_1^2 \cdots m_b^2} \\
        & \begin{aligned}
        & = 2^{-(a+b)} \sum_{0 < n_1 < \dots < n_a < k < m_1 < \dots < m_b}
            \frac{1}{n_1^2 \cdots n_a^2} \frac{1}{m_1^2 \cdots m_b^2} \qty(\frac{1}{2k} - \frac{1}{2k-1}) \\
        & \qquad - \delta_{b > 0}\ 2^{-(a+b)} \sum_{0 < n_1 < \dots < n_a < k = m_1 < \dots < m_b}
            \frac{1}{n_1^2 \cdots n_a^2} \frac{1}{m_1^2 \cdots m_b^2} \frac{1}{2k-1}. \label{eq:sseries}
        \end{aligned}
    \end{align}
    Hence, we obtain
    \begin{align}
        F(x, y) 
        & = \sum_{a, b \ge 0} (-2)^{a+b}\, s(\{2\}^a, 1, \{2\}^b)\, x^{2a+1} y^{2b} \\
        & = x \sum_{k > 0} \prod_{0 < n < k} \qty(1-\frac{x^2}{n^2})
            \prod_{m > k} \qty(1-\frac{y^2}{m^2}) \qty(\frac{1}{2k} - \frac{1}{2k-1}\qty(1-\frac{y^2}{k^2})) \label{eq:Fprod}\\
        & = x \frac{\sin \pi y}{\pi y} \sum_{k > 0}
            \frac{(1-x)_{k-1}(1+x)_{k-1}}{(1-y)_k (1+y)_k} \qty(\frac{k}{2} - \frac{k^2 - y^2}{2k-1}).
    \end{align}
    Since for $k > 1$, we have
    \[
    \frac{(1-x)_{k-1}(1+x)_{k-1}}{(1-y)_k (1+y)_k} \frac{k^2 - y^2}{2k-1}
    = \frac{(1-x)_{k-2}(1+x)_{k-2}}{(1-y)_{k-1} (1+y)_{k-1}} \frac{(k-1)^2 - x^2}{2k-1},
    \]
    shifting the index of the second term gives
    \[
    F(x, y) = x \frac{\sin \pi y}{\pi y} \sum_{k > 0}
        \frac{(1-x)_{k-1}(1+x)_{k-1}}{(1-y)_k (1+y)_k} \qty(\frac{k}{2} - \frac{k^2 - x^2}{2k+1})
        - x \frac{\sin \pi y}{\pi y},
    \]
    as claimed.
\end{prf}

\begin{dfn}
    For $|z| < 1$, we define the functions $A(z)$ and $B(z)$ by the power series
    \[
    A(z) = \sum_{r \ge 1} \zeta(2r+1)\, z^{2r}, \quad 
    B(z) = \sum_{r \ge 1} (1-2^{-2r})\, \zeta(2r+1)\, z^{2r} = - \sum_{r \ge 1} \zeta(\overline{2r+1})\, z^{2r}.
    \]
    These series converge only for $|z| < 1$, but they admit analytic continuations to 
    meromorphic functions with simple poles at $\Z \setminus \{0\}$.
    Indeed, they can be also written as
    \[
    A(z) = \sum_{n \ge 1} \frac{z^{2}}{n(n^2 - z^2)}, \quad 
    B(z) = \sum_{n \ge 1} (-1)^{n-1}\frac{z^{2}}{n(n^2 - z^2)}.
    \]
\end{dfn}

\begin{rem} \label{rem:AB}
    These functions can be expressed in terms of the digamma function $\psi(z) = \Gamma'(z)/\Gamma(z)$ as
    \begin{align}
        A(z) & = \psi(1) - \frac{1}{2}\, \qty(\psi(1+z) + \psi(1-z)), \label{eq:digamma} \\
        B(z) & = A(z) - A\qty(\frac{z}{2}). \label{eq:AB}
    \end{align}
\end{rem}

\begin{prop} \label{prop:wiF}
    For $|x| + |y| < 1$, we have
    \[
    \widehat{F}(x, y) = \frac{\sin\pi y}{2\pi}\, (A(x+y) - A(x-y)) - \frac{\sin \pi x}{2\pi}\, (B(x+y) + B(x-y) + 2 \log 2).
    \]
\end{prop}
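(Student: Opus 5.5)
The plan is a direct computation: expand $\widehat{F}(x, y)$, insert the closed form \eqref{eq:sz} for $s(\{2\}^{a+b-r})$, and reorganize the triple sum over $(a, b, r)$ as a sum over $r$ and $n := a+b-r \ge 0$. All sums will be absolutely convergent for $|x|+|y|<1$, since then $|x\pm y|<1$ and $|\zeta(\overline{2r+1})|\le \zeta(3)$. Hence the rearrangements are legitimate, and every power series that appears converges to the functions $A$, $B$ of the Definition in the disc of radius $1$.

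\emph{Step 1 (simplify the coefficient).} Using $s(\{2\}^n)=2^{-n}\pi^{2n}/(2n+1)!$, the powers of $2$ cancel:
\[
(-2)^{a+b}\,\frac{(-1)^r}{2^r}\,2^{-n} \;=\; (-1)^{a+b+r} \;=\; (-1)^n .
\]
So the $(a,b,r)$-term of $\widehat{F}$ becomes
\[
(-1)^n\frac{\pi^{2n}}{(2n+1)!}\,\zeta(\overline{2r+1})\,\alpha_r^{a,b}\,x^{2a+1}y^{2b}, \qquad a+b=n+r .
\]
I will split $\alpha_r^{a,b}$ into its two binomial pieces and treat them separately. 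For each piece I fix $n$ and $r$ and sum over the pair $(a,b)$ with $a+b=n+r$.

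\emph{Step 2 (the $\binom{2r}{2b}$ piece).} Here $b\le r$, so $a-n=r-b\ge 0$. This lets me write $x^{2a+1}y^{2b}=x^{2n+1}\,x^{2r-2b}y^{2b}$. Summing over $b$ gives the even part of $(x+y)^{2r}$:
\[
x^{2n+1}\cdot\tfrac12\bigl((x+y)^{2r}+(x-y)^{2r}\bigr).
\]
Summing over $n$ then yields $\sum_n(-1)^n\pi^{2n}x^{2n+1}/(2n+1)! = \sin(\pi x)/\pi$. Summing over $r$, the term $r=0$ contributes $\zeta(\overline{1})=-\log 2$. For $r\ge1$, the Definition gives $\sum_{r\ge1}\zeta(\overline{2r+1})z^{2r}=-B(z)$. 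Together these produce exactly
\[
-\frac{\sin\pi x}{2\pi}\bigl(B(x+y)+B(x-y)+2\log 2\bigr).
\]

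\emph{Step 3 (the $-\frac{2^{2r}}{2^{2r}-1}\binom{2r}{2a+1}$ piece, $r\ge1$).} Now $2a+1\le 2r$ forces $b\ge n$, so I write $x^{2a+1}y^{2b}=y^{2n+1}\,x^{2a+1}y^{2r-2a-1}$. Summing over $a$ gives the part of $(x+y)^{2r}$ that is odd in $x$:
\[
\tfrac12\bigl((x+y)^{2r}-(x-y)^{2r}\bigr).
\]
Summing over $n$ produces $\sin(\pi y)/\pi$. For the coefficient, the identity $\zeta(\overline{2r+1})=-(1-2^{-2r})\zeta(2r+1)$, read off from the two expressions for $B$, gives
\[
-\frac{2^{2r}}{2^{2r}-1}\,\zeta(\overline{2r+1})=\zeta(2r+1).
\]
Summing over $r$ therefore gives
\[
\frac{\sin\pi y}{2\pi}\bigl(A(x+y)-A(x-y)\bigr).
\]
Adding Steps 2 and 3 proves the proposition.

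The only step needing care is the index bookkeeping in Steps 2 and 3. One must check that the binomial's support constraint ($b\le r$, respectively $2a+1\le 2r$) is exactly what makes the exponent of $x$, respectively $y$, at least $2n+1$, so that the sum over $n$ factors out as a sine. One must also track the $r=0$ convention: the first piece vanishes at $r=0$, while the second piece at $r=0$ is the source of the $\log 2$ term. The rest is routine.
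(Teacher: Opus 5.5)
Your proof is correct and follows the paper's argument: split $\alpha_r^{a,b}$ into its two binomial pieces, convert $-\frac{2^{2r}}{2^{2r}-1}\zeta(\overline{2r+1})$ into $\zeta(2r+1)$, insert the closed form of $s(\{2\}^n)$, and resum each piece as a sine series times the even or odd binomial part of $(x\pm y)^{2r}$. One imprecision is your stated reason for absolute convergence, which is not $|x\pm y|<1$ (that alone would not suffice). Running the same computation with $|x|,|y|$ in place of $x,y$ gives majorants such as $\frac{\sinh\pi|y|}{2\pi}\sum_{r}\zeta(2r+1)\bigl((|x|+|y|)^{2r}-(|x|-|y|)^{2r}\bigr)$, and these converge precisely because $|x|+|y|<1$; this is how the paper justifies the rearrangement.
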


\begin{prf}
    Applying the identity
    \[
    \frac{2^{2r}}{2^{2r}-1}\, \zeta(\overline{2r+1}) = - \zeta(2r+1) \quad (r \ge 1),
    \]
    we have
    \begin{align}
        \widehat{F}(x, y)
        & = \sum_{a, b \ge 0} (-2)^{a+b} \sum_{r=0}^{a+b} \frac{(-1)^r}{2^r} 
            \alpha_r^{a, b}\, \zeta(\overline{2r+1})\, s(\{2\}^{a+b-r})\, x^{2a+1}y^{2b} \\
        & \begin{aligned}
            & = - \sum_{a, b \ge 0} (-2)^{a+b} \sum_{r=1}^{a+b} \frac{(-1)^r}{2^r} 
                \frac{2^{2r}}{2^{2r}-1}\binom{2r}{2a+1} \zeta(\overline{2r+1})\, s(\{2\}^{a+b-r})\, x^{2a+1}y^{2b} \\
            & \qquad\qquad + \sum_{a, b \ge 0} (-2)^{a+b} \sum_{r=0}^{a+b} \frac{(-1)^r}{2^r}
                \binom{2r}{2b} \zeta(\overline{2r+1})\, s(\{2\}^{a+b-r})\, x^{2a+1}y^{2b}
        \end{aligned} \\
        & \begin{aligned}
            & = \sum_{a, b \ge 0} (-2)^{a+b} \sum_{r=1}^{a+b} \frac{(-1)^r}{2^r} 
                \binom{2r}{2a+1} \zeta(2r+1)\, s(\{2\}^{a+b-r})\, x^{2a+1}y^{2b} \\
            & \qquad\qquad + \sum_{a, b \ge 0} (-2)^{a+b} \sum_{r=0}^{a+b} \frac{(-1)^r}{2^r}
                \binom{2r}{2b} \zeta(\overline{2r+1})\, s(\{2\}^{a+b-r})\, x^{2a+1}y^{2b}.
        \end{aligned}
    \end{align}
    To find the valid range for interchanging the order of summation, 
    let us examine the following positive-term series:
    \[
    \sum_{a, b \ge 0} 2^{a+b} \sum_{r=1}^{a+b} \frac{1}{2^r} 
        \binom{2r}{2a+1} \zeta(2r+1)\, s(\{2\}^{a+b-r})\, |x|^{2a+1}|y|^{2b}.
    \]
    By \eqref{eq:sz}, we have
    \begin{align}
        & \sum_{a, b \ge 0} 2^{a+b} \sum_{r=1}^{a+b} \frac{1}{2^r} 
            \binom{2r}{2a+1} \zeta(2r+1)\, s(\{2\}^{a+b-r})\, |x|^{2a+1}|y|^{2b} \\
        = & \sum_{r \ge 1, s \ge 0} \frac{\pi^{2s}}{(2s+1)!}\, |y|^{2s+1}\, \zeta(2r+1)
            \sum_{a=0}^{r-1} \binom{2r}{2a+1} |x|^{2a+1}|y|^{2r-(2a+1)} \\
        = &\, \frac{\sinh\pi |y|}{2\pi} \sum_{r \ge 1} \zeta(2r+1)\, ((|x|+|y|)^{2r} - (|x|-|y|)^{2r}).
    \end{align}
    Therefore, the series converges for $|x|+|y| < 1$. 
    In this domain, we have
    \begin{align}
        & \sum_{a, b \ge 0} (-2)^{a+b} \sum_{r=1}^{a+b} \frac{(-1)^r}{2^r} 
            \binom{2r}{2a+1} \zeta(2r+1)\, s(\{2\}^{a+b-r})\, x^{2a+1}y^{2b} \\
        = & \sum_{r \ge 1, s \ge 0} \frac{(-1)^s\pi^{2s}}{(2s+1)!}\, y^{2s+1}\, \zeta(2r+1)
            \sum_{a=0}^{r-1} \binom{2r}{2a+1} x^{2a+1}y^{2r-(2a+1)} \\
        = &\, \frac{\sin\pi y}{2\pi} \sum_{r \ge 1} \zeta(2r+1)\, ((x+y)^{2r} - (x-y)^{2r}) \\
        = &\, \frac{\sin\pi y}{2\pi}\, (A(x+y) - A(x-y)).
    \end{align}
    Similarly, for $|x|+|y| < 1$, we have
    \begin{align}
        & \sum_{a, b \ge 0} (-2)^{a+b} \sum_{r=0}^{a+b} \frac{(-1)^r}{2^r}
            \binom{2r}{2b} \zeta(\overline{2r+1})\, s(\{2\}^{a+b-r})\, x^{2a+1}y^{2b} \\
        = & \sum_{r, s \ge 0} \frac{(-1)^s\pi^{2s}}{(2s+1)!}\, x^{2s+1}\, \zeta(\overline{2r+1})
            \sum_{b=0}^{r} \binom{2r}{2b}x^{2r-2b}y^{2b} \\
        = &\, \frac{\sin \pi x}{2\pi} \sum_{r \ge 0} \zeta(\overline{2r+1})\, ((x+y)^{2r} + (x-y)^{2r}) \\
        = & - \frac{\sin \pi x}{2\pi}\, (B(x+y) + B(x-y) + 2 \log 2).
    \end{align}
    In the last equality, we used $\zeta(\overline{1}) = -\log2$.
    Consequently, we obtain
    \[
    \widehat{F}(x, y) = \frac{\sin\pi y}{2\pi}\, (A(x+y) - A(x-y)) - \frac{\sin \pi x}{2\pi}\, (B(x+y) + B(x-y) + 2 \log 2),
    \]
    which completes the proof.
\end{prf}

For convenience, we denote the right-hand side of the equation in Proposition~\ref{prop:wiF} by $\widetilde{F}(x, y)$.
\begin{prop} \label{prop:anF}
    Both $F(x, y)$ and $\widetilde{F}(x, y)$ are entire functions on $\C \times \C$.
    Moreover, for fixed $y \in \C$,
    \[
    F(x,y),\ \widetilde{F}(x,y)=O(e^{\pi |x|}) \quad (|x|\to\infty).
    \]
\end{prop}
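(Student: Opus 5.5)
We treat the two functions separately, starting each from its explicit formula: Proposition~\ref{prop:F} for $F$ and Proposition~\ref{prop:wiF} (the definition of $\widetilde{F}$) for $\widetilde{F}$.

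\textbf{The function $\widetilde{F}$.} By Remark~\ref{rem:AB}, $A$ and $B$ are meromorphic on $\C$, with at most simple poles at $\Z\setminus\{0\}$. Since $A(z)=\sum_{n\ge1} z^2/(n(n^2-z^2))$, the residue of $A$ at $z=\pm n$ is $-1/2$, independent of $n$; for $B$ it is $(-1)^n/2$.

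Fix $y$. Look at the term $\frac{\sin\pi y}{2\pi}(A(x+y)-A(x-y))$ as a function of $x$. It has poles only where $x\pm y\in\Z\setminus\{0\}$, and there the factor $\sin\pi y$ does not vanish unless $y\in\Z$. So this term alone is not entire, and the poles must cancel against the second term. At $x=n-y$ the first term has residue $-\frac{\sin\pi y}{4\pi}$. The second term has residue $-\frac{\sin\pi x}{2\pi}\cdot\frac{(-1)^n}{2}$, evaluated at $\sin\pi(n-y)=(-1)^{n+1}\sin\pi y$, which gives $+\frac{\sin \pi y}{4\pi}$. The two cancel. The poles at $x=y+n$ cancel by the analogous computation. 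Hence $\widetilde{F}$ is holomorphic off a removable analytic set, so it is entire in both variables: a locally bounded function on $\C^2$ that is holomorphic off a hypersurface extends holomorphically, by Riemann extension. The degenerate cases $y\in\Z$ (or $x\pm y=0$) are handled by this extension, or directly.

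For the growth bound we use the digamma expressions \eqref{eq:digamma}, \eqref{eq:AB}. For instance $\psi(1+z)+\psi(1-z)=2\psi(1+z)+\pi\cot\pi z - 1/z$, and $\psi(1+z)=O(\log|z|)$ away from the negative axis. Since $\sin\pi x\cdot\cot\pi(x\pm y)$ stays $O(e^{\pi|x|})$ away from the poles, the bound holds there. Near the cancelled poles we apply the maximum principle on small circles around each pole.

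\textbf{The function $F$.} We use the formula of Proposition~\ref{prop:F}. The term
\[
\frac{(1-x)_{k-1}(1+x)_{k-1}}{(1-y)_k(1+y)_k}=\frac{\prod_{n<k}(n^2-x^2)}{\prod_{m\le k}(m^2-y^2)}
\]
has poles in $y$ at $\pm m$, and these are killed by $\sin\pi y/(\pi y)$. For convergence, rewrite the sum in its earlier product form \eqref{eq:Fprod}:
\[
x\prod_{n<k}\Bigl(1-\tfrac{x^2}{n^2}\Bigr)\prod_{m>k}\Bigl(1-\tfrac{y^2}{m^2}\Bigr)\,c_k(y),
\]
which is entire termwise. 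Then show uniform convergence on compacta. Using $\prod_{n<k}(1-x^2/n^2)\approx \frac{\sin\pi x}{\pi x}\cdot\frac{\Gamma(k+x)\Gamma(k-x)}{\Gamma(k)^2}$ (a Gamma ratio), the summand behaves like $k^{2\Re\ldots}$. The bracket $\frac{k}{2}-\frac{k^2-x^2}{2k+1}$ is $O(1)$, whereas the naive pair of terms $\frac{1}{2k}-\frac{1}{2k-1}$ is only $O(k^{-2})$ after combination. The delicate point is that for large $|x|$ the Gamma ratio grows like $k^{2|x|}$, so the series converges only in a region; entireness in $x$ therefore needs analytic continuation rather than absolute convergence.

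This is the main obstacle. We would address it by writing the $k$-sum as a hypergeometric ${}_3F_2$-type series, or better, by a partial fractions/telescoping argument that expresses $F$ in closed form through $\psi$, giving $F$ a representation like that of $\widetilde{F}$. Alternatively, fix $x$ in a bounded region and exploit the symmetry of the product form. Failing that, we would establish the $O(e^{\pi|x|})$ bound from the prefactor $\sin\pi x$ after factoring out $\frac{\sin\pi x}{\pi x}$ from the $n$-product as above, noting that the remaining $\Gamma$-ratio sum is a Dougall-type ${}_5F_4$/${}_3F_2$ evaluated at argument $1$, whose continuation in $x$ is meromorphic with at most polynomial growth.
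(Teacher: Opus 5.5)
\textbf{The part on $\widetilde{F}$.} This is essentially the paper's argument: the simple poles along $x\pm y\in\mathbb{Z}\setminus\{0\}$ cancel between the $A$- and $B$-terms, and the growth then follows from the digamma asymptotics. There is one harmless slip. $A$ is even, so $\operatorname{Res}_{z=n}A=-\operatorname{sgn}(n)/2$ and $\operatorname{Res}_{z=n}B=(-1)^n\operatorname{sgn}(n)/2$; these are not independent of the sign of $n$. Because both residues flip together for $n<0$, your cancellation still holds. For joint holomorphy, a cleaner route than asserting local boundedness is to look at the principal part along $x+y=n$, which is $-\frac{\operatorname{sgn} n}{2}\,\frac{\sin\pi y+(-1)^n\sin\pi x}{x+y-n}$. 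Its numerator is entire and vanishes on the hyperplane, hence is divisible by $x+y-n$.

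\textbf{The genuine gap is $F$.} You prove neither that $F$ is entire nor that $F=O(e^{\pi|x|})$, and the obstacle you identify does not exist. The Gamma ratio $\Gamma(k+x)\Gamma(k-x)/\Gamma(k)^2$ does not grow like $k^{2|x|}$. Its modulus behaves like $k^{\operatorname{Re}x}k^{-\operatorname{Re}x}$ and it tends to $1$; equivalently, $\prod_{n<k}(1-x^2/n^2)\to\sin(\pi x)/(\pi x)$. What you need is the following elementary bound, uniform in $k$:
\[
\Bigl|\prod_{0<n<k}\Bigl(1-\frac{x^2}{n^2}\Bigr)\Bigr|\le\prod_{n\ge1}\Bigl(1+\frac{|x|^2}{n^2}\Bigr)=\frac{\sinh\pi|x|}{\pi|x|}.
\]
The same bound holds for $\prod_{m>k}(1-y^2/m^2)$. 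Moreover,
\[
\frac{1}{2k}-\frac{1}{2k-1}\Bigl(1-\frac{y^2}{k^2}\Bigr)=\frac{2y^2/k-1}{2k(2k-1)}=O(k^{-2})
\]
locally uniformly in $y$. Hence \eqref{eq:Fprod} is a series of entire functions converging absolutely and locally uniformly on $\mathbb{C}^2$, so $F$ is entire. To connect this with the defining power series, note that the same estimates with $|x|,|y|$ justify rearranging it into \eqref{eq:Fprod} everywhere; alternatively, bound the Taylor coefficients by $\zeta(\{2\}^{a+b+1})+\zeta(\{2\}^{a+b})$, as the paper does. For fixed $y$ one then reads off directly
\[
|F(x,y)|\le C_y\,|x|\,\frac{\sinh\pi|x|}{\pi|x|}\sum_k k^{-2}=O(e^{\pi|x|}).
\]
The hypergeometric, Dougall-type and closed-form alternatives you list are not carried out, and they are unnecessary. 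As written, the half of the proposition concerning $F$ remains unproved.
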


\begin{prf}
    From \eqref{eq:sseries}, we have
    \begin{align}
        & \abs{(-2)^{a+b} s(\{2\}^a, 1, \{2\}^b)} \\
        \le & \sum_{0 < n_1 < \dots < n_a < k < m_1 < \dots < m_b} 
            \frac{1}{n_1^2 \cdots n_a^2} \frac{1}{m_1^2 \cdots m_b^2} \qty(\frac{1}{2k-1} - \frac{1}{2k}) \\
        & \qquad + \delta_{b > 0}\ \sum_{0 < n_1 < \dots < n_a < m_1 < \dots < m_b}
            \frac{1}{n_1^2 \cdots n_a^2} \frac{1}{m_1^2 \cdots m_b^2} \frac{1}{2m_1-1} \\
        \le & \, \zeta(\{2\}^{a+b+1}) + \zeta(\{2\}^{a+b}) \\
        = & \, \frac{\pi^{2a+2b+2}}{(2a+2b+3)!} + \frac{\pi^{2a+2b}}{(2a+2b+1)!}.
    \end{align}
    This estimate shows that the defining power series for $F(x, y)$ converges absolutely and uniformly 
    on every compact subset of $\C \times \C$, and hence $F(x, y)$ is entire.
    For a fixed $y \in \C$, a simple calculation shows that
    \[
    \frac{1}{2k} - \frac{1}{2k-1}\qty(1-\frac{y^2}{k^2}) = O\qty(\frac{1}{k^2}) \quad (k \to \infty).
    \]
    Moreover, we have
    \[
    \prod_{m > k} \qty(1-\frac{y^2}{m^2}) = O(1) \quad (k \to \infty),
    \]
    since
    \[
    \abs{\prod_{m > k} \qty(1-\frac{y^2}{m^2})}
    \le \prod_{m > k} \qty(1+\frac{|y|^2}{m^2})
    \le \prod_{m > 0} \qty(1+\frac{|y|^2}{m^2}) 
    = \frac{\sinh\pi|y|}{\pi|y|}.
    \]
    Hence, by \eqref{eq:Fprod}, in order to establish the estimate $F(x,y) = O(e^{\pi |x|})$, 
    it suffices to show that
    \[
    x \sum_{k > 0} \frac{1}{k^2} \abs{\prod_{0 < n < k} \qty(1-\frac{x^2}{n^2})} = O(e^{\pi|x|}).
    \]
    This required bound follows from
    \[
    \abs{\prod_{0 < n < k} \qty(1-\frac{x^2}{n^2})} 
    \le \prod_{0 < n < k} \qty(1+\frac{|x|^2}{n^2})
    \le \prod_{n > 0} \qty(1+\frac{|x|^2}{n^2})
    = \frac{\sinh\pi|x|}{\pi|x|}
    \]
    together with the fact that $\sum_{k > 0} \frac{1}{k^2} = \frac{\pi^2}{6}$.

    For $\widetilde{F}(x, y)$, we only need to show that
    \[
    \sin\pi y\, A(x+y) - \sin\pi x\, B(x+y)
    \]
    is entire and satisfies the estimate $O(e^{\pi|x|})$ for each fixed $y$.
    Indeed, by replacing $y$ with $-y$, the same can be said for $\sin\pi y\, A(x-y) + \sin\pi x\, B(x-y)$,
    and the desired result follows from the definition of $\widetilde{F}(x, y)$.
    The functions $A(z)$ and $B(z)$ are meromorphic on $\C$ with only simple poles at points in $\Z\setminus\{0\}$.
    Their residues at $z = n \in \Z\setminus\{0\}$ are given by
    \[
    \Res_{z = n} A(z) = -\frac{\sgn n}{2}, \qquad \Res_{z = n} B(z) = (-1)^{n}\frac{\sgn n}{2}.
    \]
    When $x+y = n \in \Z$, we have $\sin\pi x = (-1)^{n-1} \sin\pi y$,
    so the potential poles in $\sin\pi y\, A(x+y) - \sin\pi x\, B(x+y)$ are canceled.
    It remains to verify that
    \[
    \sin\pi y\, A(x+y) - \sin\pi x\, B(x+y) = O(e^{\pi|x|}),
    \]
    which follows from the estimates
    \begin{align}
        A(z) & = -\gamma -\frac{\pi}{2}\cot\pi z -\log z + O(z^{-1}), \\
        B(z) & = -\log2 + \frac{\pi}{2}\csc\pi z+O(z^{-1})
    \end{align}
    as $|z| \to \infty$ for $\abs{\arg z} < \pi$.
\end{prf}

\begin{prop} \label{prop:diagF}
    For $x \in \C$, we have
    \[
    F(x, x) = \widetilde{F}(x, x).
    \]
\end{prop}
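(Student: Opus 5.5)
The plan is to specialize both closed forms to the diagonal $y = x$, where everything collapses to elementary series, and compare. I will first prove the identity for $x \in \C \setminus \Z$ and then extend it to all $x$. The extension is immediate because both sides are entire by Proposition~\ref{prop:anF}.

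\textbf{The left-hand side.} I start from the expression for $F$ in Proposition~\ref{prop:F}. At $y = x$ the Pochhammer ratio telescopes:
\[
\frac{(1-x)_{k-1}(1+x)_{k-1}}{(1-x)_k(1+x)_k} = \frac{1}{(k-x)(k+x)} = \frac{1}{k^2-x^2}.
\]
Since also $x\frac{\sin\pi x}{\pi x} = \frac{\sin \pi x}{\pi}$, this gives
\[
F(x,x) = \frac{\sin\pi x}{\pi}\left(\sum_{k>0}\left(\frac{k}{2(k^2-x^2)} - \frac{1}{2k+1}\right) - 1\right).
\]
Here $\frac{k}{2(k^2-x^2)} - \frac{1}{2k+1} = O(k^{-2})$, so the series converges absolutely.

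\textbf{The right-hand side.} Setting $y = x$ in $\widetilde{F}$ and using $A(0) = B(0) = 0$ gives
\[
\widetilde{F}(x,x) = \frac{\sin\pi x}{2\pi}\bigl(A(2x) - B(2x) - 2\log 2\bigr).
\]
By \eqref{eq:AB} with $z = 2x$ we have $A(2x) - B(2x) = A(x)$, hence
\[
\widetilde{F}(x,x) = \frac{\sin \pi x}{2\pi}\bigl(A(x) - 2\log 2\bigr).
\]

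\textbf{Reducing to a numerical identity.} It now suffices to show, for $x \notin \Z$,
\[
\sum_{k>0}\left(\frac{k}{k^2-x^2} - \frac{2}{2k+1}\right) - 2 = A(x) - 2\log 2 .
\]
I use the partial-fraction form $A(x) = \sum_{n\ge1}\frac{x^2}{n(n^2-x^2)}$ and the identity
\[
\frac{x^2}{n(n^2-x^2)} = \frac{n}{n^2-x^2} - \frac1n.
\]
Subtracting termwise (both series converge absolutely), the $x$-dependence cancels. The claim reduces to the constant identity
\[
\sum_{k\ge1}\left(\frac1k - \frac{2}{2k+1}\right) = 2 - 2\log 2 .
\]
This follows by writing the left side as $2\sum_{k\ge1}\bigl(\frac{1}{2k} - \frac{1}{2k+1}\bigr)$ and comparing with $\log 2 = 1 - \sum_{k\ge1}\bigl(\frac{1}{2k}-\frac{1}{2k+1}\bigr)$, which is the alternating harmonic series.

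The only step needing care is bookkeeping. I must make sure the factor $x$ in front of $F$ combines with $\frac{\sin\pi x}{\pi x}$ correctly, and that the constant term $-x\frac{\sin \pi y}{\pi y}$ from Proposition~\ref{prop:F} produces exactly the $-2$ that is matched by the $\log 2$ constant. I expect no genuine analytic obstacle. The two sides agree on $\C\setminus\Z$, and both are entire, so they agree everywhere.
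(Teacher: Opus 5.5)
Your proof is correct and follows the paper's route. Both arguments reduce $F(x,x)$ and $\widetilde{F}(x,x)$ to $\frac{\sin\pi x}{2\pi}\bigl(A(x)-2\log 2\bigr)$, using $A(2x)-B(2x)=A(x)$ on the right-hand side and an alternating-harmonic evaluation on the left-hand side. The only difference is your starting point: you use the shifted form in Proposition~\ref{prop:F}, whereas the paper uses the product form \eqref{eq:Fprod}. As a result your constant is $\sum_{k\ge1}\bigl(\frac{1}{k}-\frac{2}{2k+1}\bigr)=2-2\log2$ instead of the paper's $\sum_{k\ge1}\bigl(\frac{2}{2k-1}-\frac{1}{k}\bigr)=2\log 2$. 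Your continuity step for $x\in\Z$ is extra care that the paper leaves implicit.
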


\begin{prf}
    By \eqref{eq:Fprod}, we have
    \begin{align}
            F(x, x) 
        & = \frac{\sin \pi x}{\pi} \sum_{k > 0} \qty(1-\frac{x^2}{k^2})^{-1}
            \qty(\frac{1}{2k} - \frac{1}{2k-1}\qty(1-\frac{x^2}{k^2})) \\
        & = \frac{\sin \pi x}{2\pi} \sum_{k > 0} \qty(\frac{1}{k\qty(1 - \frac{x^2}{k^2})} - \frac{1}{k-\frac{1}{2}}) \\
        & = \frac{\sin \pi x}{2\pi} \sum_{k > 0} \qty(\frac{x^2}{k(k^2-x^2)} - \qty(\frac{1}{k-\frac{1}{2}} - \frac{1}{k})) \\
        & = \frac{\sin\pi x}{2\pi}\, (A(x) - 2\log2).
    \end{align}
    On the other hand, by the identity \eqref{eq:AB},
    we have
    \begin{align}
        \widetilde{F}(x, x) & = \frac{\sin\pi x}{2\pi}\, (A(2x) - B(2x) - 2\log 2) \\
        & = \frac{\sin\pi x}{2\pi}\, (A(x) - 2\log2).
    \end{align}
    Thus, we obtain $F(x, x) = \widetilde{F}(x, x)$, completing the proof.
\end{prf}

\begin{prop} \label{prop:intF}
    For $n \in \Z$ and $y \in \C$, we have
    \[
    F(n, y) = \widetilde{F}(n, y).
    \]
\end{prop}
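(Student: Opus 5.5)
We first reduce to $n \ge 1$. For $n=0$ both sides vanish: $F(0,y)=0$ because of the overall factor $x$ in \eqref{eq:Fprod}, and $\widetilde{F}(0,y)=\frac{\sin\pi y}{2\pi}(A(y)-A(-y))=0$ because $A$ is even. Both $F$ and $\widetilde{F}$ are odd in $x$. For $F$ this is clear from the powers $x^{2a+1}$. For $\widetilde{F}$ it follows from $A$ and $B$ being even: replacing $x$ by $-x$ swaps $A(x+y)-A(x-y)$ with its negative and changes the sign of $\sin\pi x$. Hence we may assume $n \ge 1$.

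For integer $n\ge1$ the right-hand side simplifies, since $\sin\pi n=0$:
\[
\widetilde{F}(n,y)=\frac{\sin\pi y}{2\pi}\bigl(A(n+y)-A(n-y)\bigr).
\]
Using \eqref{eq:digamma} and the recurrence $\psi(z+1)=\psi(z)+1/z$, we shift $\psi(1\pm(n+y))$ and $\psi(1\pm(n-y))$ back to arguments $1\pm y$. The infinite parts cancel, so $A(n+y)-A(n-y)$ is an explicit finite sum of terms $\frac{1}{j\pm y}$ with $|j|\le n$. Thus $\widetilde{F}(n,y)=\sin\pi y\cdot R_n(y)$ for an explicit rational function $R_n$.

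On the left, put $x=n$ in \eqref{eq:Fprod}. The factor $\prod_{0<m<k}(1-n^2/m^2)$ vanishes for $k>n$, so the sum is finite. Writing $\prod_{m>k}(1-y^2/m^2)=\frac{\sin\pi y}{\pi y}\prod_{m\le k}(1-y^2/m^2)^{-1}$, we get $F(n,y)=\sin\pi y\cdot Q_n(y)$. Here
\[
Q_n(y)=\frac{n}{\pi y}\sum_{k=1}^{n}\frac{\prod_{0<m<k}(1-n^2/m^2)}{\prod_{0<m\le k}(1-y^2/m^2)}\Bigl(\frac{1}{2k}-\frac{1}{2k-1}\Bigl(1-\frac{y^2}{k^2}\Bigr)\Bigr),
\]
which is again rational in $y$ (equivalently, the truncated Pochhammer sum of Proposition~\ref{prop:F}). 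So the claim reduces to the rational-function identity $Q_n=R_n$.

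To prove $Q_n=R_n$, we compare partial fraction decompositions. Both are even in $y$, regular at $y=0$, have at most simple poles at $y=\pm m$ with $1\le m\le n$, and tend to $0$ as $y\to\infty$ (each term of $Q_n$ is $O(y^{-1})$ after accounting for the factor $1/y$). It therefore suffices to show that their residues at $y=m$ agree for $1\le m\le n$. For $R_n$ these residues are read off directly from the finite digamma expansion. For $Q_n$, only the terms with $k\ge m$ contribute to the residue at $y=m$. Each such residue is a product of ratios of factorial-type quantities, and we have to sum these over $m\le k\le n$.

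This residue sum is the main obstacle. The first thing to try is to exhibit it as a telescoping sum in $k$, mirroring the index shift already used in the proof of Proposition~\ref{prop:F}. Concretely, we look for a closed-form antiderivative $T(k)$ with $T(k)-T(k-1)$ equal to the $k$-th residue contribution. If no such $T$ is apparent, the fallback is induction on $n$: compare $Q_{n+1}-Q_n$ with $R_{n+1}-R_n$, where the latter is just a few simple fractions. Throughout, the known diagonal value $F(n,n)$ from Proposition~\ref{prop:diagF} serves as a consistency check on the constants.
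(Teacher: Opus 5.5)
Your overall strategy matches the paper's: reduce to $n\ge 1$, note that at $x=n$ the series for $F$ truncates to $1\le k\le n$, write both sides as $\sin\pi y$ times a rational function, and compare partial fractions. However, the proposal stops exactly where the content of the proposition lies. One has $R_n(y)=-\frac{1}{2\pi}\bigl(\sum_{|j|<n}\frac{1}{y-j}+\frac12\sum_{|j|=n}\frac{1}{y-j}\bigr)$, with sums over integers $j$. The residue of $Q_n$ at $y=i$ is $-\frac{1}{\pi}$ times the left-hand side below. So matching residues at $y=i$, $1\le i\le n$, is exactly the identity
\[
\sum_{k=i}^{n}(-1)^{k-i}\binom{2k}{k-i}\left(\binom{n+k}{2k+1}+\frac{1}{4}\binom{n+k-1}{2k-1}\right)
=\begin{cases}
\frac{1}{2} & \text{if } 1\le i<n,\\
\frac{1}{4} & \text{if } i=n.
\end{cases}
\]
You do not prove this identity; you only name two strategies, and neither is routine here. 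You exhibit no antiderivative $T(k)$, and there is no a priori reason the partial sums of this definite sum have a closed form. For the induction, $Q_{n+1}-Q_n$ is not a few simple fractions: the weights $\prod_{0<m<k}(1-n^2/m^2)$ change with $n$ in every term. You would therefore need a recurrence in $n$ for the entire residue sum, which is the same problem again.

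The paper settles the identity by summing over $n$ first. One has $\sum_{n\ge k}\bigl(\binom{n+k}{2k+1}+\frac14\binom{n+k-1}{2k-1}\bigr)x^n=\frac{(1+x)^2}{4(1-x)^2}\cdot\frac{x^k}{(1-x)^{2k}}$. The sum over $k$ then becomes $\frac{x^i}{(1-x)^{2i}}\sum_{m\ge 0}\binom{2m+2i}{m}t^m$ with $t=-x/(1-x)^2$. The classical formula $\sum_{m\ge0}\binom{2m+p}{m}t^m=\frac{1}{\sqrt{1-4t}}\bigl(\frac{1-\sqrt{1-4t}}{2t}\bigr)^p$, together with $\sqrt{1-4t}=\frac{1+x}{1-x}$, collapses the generating function to $\frac{x^i}{4}\cdot\frac{1+x}{1-x}=\frac14 x^i+\frac12\sum_{n>i}x^n$. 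Without this step, or some other proof of the identity, what you have is a reduction rather than a proof.

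There is also an error in your reduction to residues at $y=m\ge 1$. Since $F(n,y)$ is even in $y$ and $\sin\pi y$ is odd, $Q_n$ and $R_n$ are odd, not even, and neither is regular at $y=0$. Already $Q_1(y)=\frac{1}{\pi y}\bigl(\frac{1}{2(1-y^2)}-1\bigr)$ has residue $-\frac{1}{2\pi}$ at $0$, matching the $j=0$ term of $R_n$; equivalently $F(n,0)=\widetilde{F}(n,0)=-\frac12$. So equal residues at $\pm1,\dots,\pm n$ only give $Q_n-R_n=c/y$.

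This is easy to repair. For example, compare the $1/y$ terms at infinity: $\lim_{y\to\infty}yQ_n(y)=\lim_{y\to\infty}yR_n(y)=-\frac{n}{\pi}$, where only the $k=1$ term of $Q_n$ contributes. But the extra condition is needed. In the paper it is built in, because the $F$-side is expanded in the combinations $\frac{1}{y-i}+\frac{1}{y+i}-\frac{2}{y}$ plus the explicit term $-\frac{n}{\pi y}$.
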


\begin{prf}
    We can assume that $n$ is a positive integer.
    By the definition of $\widetilde{F}(x, y)$, we have
    \[
    \widetilde{F}(n, y) = \frac{\sin\pi y}{2\pi}\, (A(y+n) - A(y-n)).
    \]
    By the identity \eqref{eq:digamma}, together with the functional equation $\psi(z+1) - \psi(z) = \frac{1}{z}$, we have
    \[
    A(z+1) - A(z) = - \frac{1}{2} \qty(\frac{1}{z+1} + \frac{1}{z}).
    \]
    Thus, it follows that
    \begin{align}
        A(y+n) - A(y-n)
        & = \sum_{i=-n}^{n-1} (A(y+i+1) - A(y+i)) \\
        & = - \frac{1}{2} \sum_{i=-n}^{n-1} \qty(\frac{1}{y+i+1} + \frac{1}{y+i}) \\
        & = - \sideset{}{^*}\sum_{|i| \le n} \frac{1}{y-i},
    \end{align}
    where the symbol $\sideset{}{^*}\sum_{|i| \le n}$ denotes the weighted sum
    \[
    \sideset{}{^*}\sum_{|i| \le n} a_i \coloneqq \sum_{|i| < n} a_i + \frac{1}{2}\sum_{|i| = n} a_i.
    \]
    Therefore, we obtain
    \[
    \widetilde{F}(n, y) = - \frac{\sin\pi y}{2\pi} \sideset{}{^*}\sum_{|i| \le n} \frac{1}{y-i}.
    \]
    On the other hand, by Proposition~\ref{prop:F} and the partial fraction expansion
    \[
    \frac{1}{(1-y)_k (1+y)_k} = \sum_{i = 1}^{k} \frac{(-1)^i\, i}{(k-i)!(k+i)!} \qty(\frac{1}{y-i} - \frac{1}{y+i}),
    \]
    we have
    \begin{align}
        & \qquad F(x, y) \\
        & \begin{aligned}
            & = x\frac{\sin \pi y}{\pi y} \sum_{k > 0} \sum_{i = 1}^{k} (-1)^{i}
                \frac{(1-x)_{k-1}(1+x)_{k-1}}{(k-i)!(k+i)!}\, i\qty(\frac{1}{y-i} - \frac{1}{y+i})
                \qty(\frac{k}{2} - \frac{k^2 - x^2}{2k+1}) \\
            & \qquad\qquad - x \frac{\sin \pi y}{\pi y}
        \end{aligned} \\
        & \begin{aligned}
            & = x \frac{\sin \pi y}{\pi} \sum_{k > 0} \sum_{i = 1}^{k} (-1)^{i}
                \frac{(1-x)_{k-1}(1+x)_{k-1}}{(k-i)!(k+i)!} \qty(\frac{1}{y-i} + \frac{1}{y+i} - \frac{2}{y})
                \qty(\frac{k}{2} - \frac{k^2 - x^2}{2k+1}) \\
            & \qquad\qquad - x \frac{\sin \pi y}{\pi y}.
        \end{aligned}
    \end{align}
    By setting $x = n \in \Z$, we obtain
    \begin{align}
        & \qquad F(n, y) \\
        & \begin{aligned}
            & = n \frac{\sin \pi y}{\pi} \sum_{k > 0} \sum_{i = 1}^{k} (-1)^{i}
                \frac{(1-n)_{k-1}(1+n)_{k-1}}{(k-i)!(k+i)!} \qty(\frac{1}{y-i} + \frac{1}{y+i} - \frac{2}{y})
                \qty(\frac{k}{2} - \frac{k^2 - n^2}{2k+1}) \\
            & \qquad\qquad - n \frac{\sin \pi y}{\pi y}
        \end{aligned} \\
        & \begin{aligned}
            & = - \frac{\sin \pi y}{\pi} \sum_{k = 1}^{n} \sum_{i = 1}^{k} (-1)^{k-i}
                \frac{(n-k+1)_{2k-1}}{(k-i)!(k+i)!} \qty(\frac{1}{y-i} + \frac{1}{y+i} - \frac{2}{y})
                \qty(\frac{k}{2} + \frac{n^2 - k^2}{2k+1}) \\
            & \qquad\qquad - n \frac{\sin \pi y}{\pi y}
        \end{aligned} \\
        & \begin{aligned}
            & = - \frac{\sin \pi y}{\pi} \sum_{i = 1}^{n} \qty(\frac{1}{y-i} + \frac{1}{y+i} - \frac{2}{y}) 
                \sum_{k = i}^{n} (-1)^{k-i} \frac{(n-k+1)_{2k-1}}{(k-i)!(k+i)!} \qty(\frac{k}{2} + \frac{n^2 - k^2}{2k+1}) \\
            & \qquad\qquad - \frac{\sin \pi y}{\pi}\, \frac{n}{y}.
        \end{aligned}
    \end{align}
    Therefore, it suffices to show that the following identity holds for $1 \le i \le n$:
    \[
    \sum_{k = i}^{n} (-1)^{k-i} \frac{(n-k+1)_{2k-1}}{(k-i)!(k+i)!} \qty(\frac{k}{2} + \frac{n^2 - k^2}{2k+1}) =
    \begin{cases}
        \frac{1}{2} & \text{if} \ 1 \le i < n, \\
        \frac{1}{4} & \text{if} \ i = n.
    \end{cases}
    \]
    We prove this identity by considering the generating function with respect to $n$.
    For a fixed $i \ge 1$, we define $a_{n, k}$ by
    \[
    a_{n, k} \coloneqq \frac{(n-k+1)_{2k-1}}{(k-i)!(k+i)!} \qty(\frac{k}{2} + \frac{n^2 - k^2}{2k+1})
        = \binom{2k}{k-i} \qty(\binom{n+k}{2k+1} + \frac{1}{4} \binom{n+k-1}{2k-1}).
    \]
    We then define the generating function $F_i(x)$ by
    \[
    F_i(x) = \sum_{n = i}^{\infty} \sum_{k = i}^{n} (-1)^{k-i}\, a_{n, k}\, x^n.
    \]
    By the binomial theorem, for $0 \le r < 1$, we have
    \begin{align}
        \sum_{n = i}^{\infty} \sum_{k = i}^{n} a_{n, k}\, r^n
        & = \sum_{k = i}^{\infty} \sum_{n = k}^{\infty} a_{n, k}\, r^n \\
        & = \sum_{k = i}^{\infty} \binom{2k}{k-i}
            \sum_{n = k}^{\infty} \qty(\binom{n+k}{2k+1} + \frac{1}{4} \binom{n+k-1}{2k-1})\, r^n \\
        & = \sum_{k = i}^{\infty} \binom{2k}{k-i}
            \qty(\frac{r^{k+1}}{(1-r)^{2k+2}} + \frac{1}{4} \frac{r^k}{(1-r)^{2k}}) \\
        & = \frac{1}{4} \frac{(1+r)^2}{(1-r)^2} \sum_{k = i}^{\infty}
            \binom{2k}{k-i} \frac{r^k}{(1-r)^{2k}} \\
        & = \frac{r^i}{4} \frac{(1+r)^2}{(1-r)^{2i+2}}
            \sum_{m = 0}^{\infty} \binom{2m+2i}{m} \frac{r^m}{(1-r)^{2m}}.
    \end{align}
    The series 
    \[
    \sum_{m = 0}^{\infty} \binom{2m+p}{m}\, t^m = \frac{1}{\sqrt{1-4t}}\qty(\frac{1-\sqrt{1-4t}}{2t})^{p}
    \]
    has radius of convergence $\frac{1}{4}$.
    In our case, this corresponds to the condition
    \[
    \frac{r}{(1-r)^2} < \frac{1}{4} \iff r < 3-2\sqrt{2} \quad (\text{for} \ 0 \le r < 1).
    \]
    Therefore, the order of summation may be interchanged for $|x| < 3-2\sqrt{2}$.
    In this domain, we have
    \begin{align}
        F_i(x)
        & = \sum_{n = i}^{\infty} \sum_{k = i}^{n} (-1)^{k-i}\, a_{n, k}\, x^n \\
        & = \sum_{k = i}^{\infty} (-1)^{k-i} \sum_{n = k}^{\infty} a_{n, k}\, x^n \\
        & = \frac{x^i}{4} \frac{(1+x)^2}{(1-x)^{2i+2}}
            \sum_{m = 0}^{\infty} (-1)^m \binom{2m+2i}{m} \frac{x^m}{(1-x)^{2m}} \\
        & = \frac{x^i}{4} \frac{1+x}{1-x} \\
        & = \frac{1}{4}\, x^i + \frac{1}{2} \sum_{n > i} x^n,
    \end{align}
    as desired.
\end{prf}

\begin{prf}[of Theorem~\ref{thm:sidentity}]
    Let $f(x) = F(x, y) - \widetilde{F}(x, y)$ for each fixed $y$.
    By Proposition~\ref{prop:anF}, $f$ is an entire function
    satisfying $f(x) = O(e^{\pi|x|})$ as $|x| \to \infty$.
    Moreover, by Proposition~\ref{prop:intF}, $f$ vanishes at all integers.
    Therefore, by \cite[Th\'eor\`eme fondamental]{zbMATH02591652},
    $f(x)$ is a constant multiple of $\sin\pi x$.
    Finally, Proposition~\ref{prop:diagF} yields $f(x) = 0$,
    which implies $F(x, y) = \widehat{F}(x, y)$ for $|x| + |y| < 1$
    by Proposition~\ref{prop:wiF}.
\end{prf}

Next, we turn to the proof of Theorem~\ref{thm:bsidentity}.
Let $G(x, y)$ and $\widehat{G}(x, y)$ be generating functions
\begin{align}
    G(x, y) & = \sum_{a, b \ge 0} (-2)^{a+b}\, \bar{s}(\{2\}^a, 1, \{2\}^b)\, x^{2a} y^{2b}, \\
    \widehat{G}(x, y) & = \sum_{a, b \ge 0} (-2)^{a+b} \sum_{r=0}^{a+b} \frac{(-1)^r}{2^r}
        \beta_r^{a, b}\, \zeta(\overline{2r+1})\, \bar{s}(\{2\}^{a+b-r})\, x^{2a}y^{2b}.
\end{align}

\begin{prop} \label{prop:G}
    We have
    \begin{align}
        G(x, y) 
        & = \cos\pi y \sum_{k > 0} \frac{(\frac{1}{2}-x)_{k-1}(\frac{1}{2}+x)_{k-1}}{(\frac{1}{2}-y)_k (\frac{1}{2}+y)_k}
        \qty(\frac{2k-1}{4} - \frac{(k-\frac{1}{2})^2 - x^2}{2k}) \\
        & = \cos\pi y \sum_{k > 1} \frac{(\frac{1}{2}-x)_{k-1}(\frac{1}{2}+x)_{k-1}}{(\frac{1}{2}-y)_k (\frac{1}{2}+y)_k}
            \qty(\frac{2k-1}{4} - \frac{(k-\frac{1}{2})^2 - y^2}{2k-2})
            + \frac{\cos\pi y}{1-4y^2}.
    \end{align}
\end{prop}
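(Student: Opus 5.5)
The plan mirrors the proof of Proposition~\ref{prop:F}: first write $\bar{s}(\{2\}^a,1,\{2\}^b)$ as an explicit nested series over odd integers, then sum the generating function $G(x,y)$ as an infinite product, and finally rewrite that product with Pochhammer symbols and shift the summation index.

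\emph{Step 1: the nested series.} Expand the definition. For each entry $k_i=2$, summing $\epsilon_i\,\epsilon_i^{n_i}$ over $\epsilon_i\in\{\pm1\}$ gives $1-(-1)^{n_i}$. This is $2$ when $n_i$ is odd and $0$ otherwise. For the entry $1$ the sign is fixed at $-1$, so together with the prefactor $\epsilon=-1$ it contributes $(-1)^{k-1}/k$. Hence
\[
\bar{s}(\{2\}^a,1,\{2\}^b)=2^{a+b}\sum \frac{1}{n_1^2\cdots n_a^2}\,\frac{(-1)^{k-1}}{k}\,\frac{1}{m_1^2\cdots m_b^2},
\]
where the sum runs over $0<n_1<\dots<n_a<k<m_1<\dots<m_b$ with all $n_i,m_j$ odd and $k$ arbitrary.

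\emph{Step 2: grouping the middle variable.} Write $n_i=2p_i-1$ and $m_j=2q_j-1$. The parity of $k$ is free, so pair the middle terms as $-\frac{1}{2j}+\frac{1}{2j+1}$, with one boundary term. Concretely, for each $k\ge1$ (a new summation parameter) collect the contributions with $p_i\le k-1$ and $q_j\ge k$, treating separately the boundary case $q_1=k$, just as the $\delta_{b>0}$ term was treated in \eqref{eq:sseries}. Each odd square now carries the weight $(-2)\cdot 2/(2p-1)^2=-1/(p-\frac12)^2$. Multiplying by $x^{2a}y^{2b}$ and summing over $a,b$ then gives
\[
G(x,y)=\sum_{k>0}\prod_{p=1}^{k-1}\Bigl(1-\frac{x^2}{(p-\frac12)^2}\Bigr)\prod_{q>k}\Bigl(1-\frac{y^2}{(q-\frac12)^2}\Bigr)\,w_k(y).
\]
Here $w_k(y)$ is an explicit middle factor of the form $\frac{1}{2k-1}-\frac{1}{2k}$, corrected by a $\bigl(1-\frac{y^2}{(k-\frac12)^2}\bigr)$ term coming from the boundary case.

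\emph{Step 3: Pochhammer form.} Use $\prod_{q\ge1}\bigl(1-\frac{y^2}{(q-\frac12)^2}\bigr)=\cos\pi y$. Then
\[
\prod_{q>k}\Bigl(1-\frac{y^2}{(q-\frac12)^2}\Bigr)=\cos\pi y\,\frac{(\frac12)_k^2}{(\frac12-y)_k(\frac12+y)_k},
\qquad
\prod_{p<k}\Bigl(1-\frac{x^2}{(p-\frac12)^2}\Bigr)=\frac{(\frac12-x)_{k-1}(\frac12+x)_{k-1}}{(\frac12)_{k-1}^2}.
\]
The leftover ratio $(\frac12)_k^2/(\frac12)_{k-1}^2=(k-\frac12)^2$ multiplies $w_k(y)$. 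This should produce the second displayed form directly: a $k=1$ term $\cos\pi y/((\frac12-y)(\frac12+y))\cdot\frac14=\frac{\cos\pi y}{1-4y^2}$, plus the sum over $k>1$ with factor $\frac{2k-1}{4}-\frac{(k-\frac12)^2-y^2}{2k-2}$. Which form comes out first depends on how the pairing in Step 2 is arranged. Either way, the other form follows from the telescoping identity
\[
\frac{(\frac12-x)_{k-1}(\frac12+x)_{k-1}}{(\frac12-y)_k(\frac12+y)_k}\bigl((k-\tfrac12)^2-y^2\bigr)=\frac{(\frac12-x)_{k-2}(\frac12+x)_{k-2}}{(\frac12-y)_{k-1}(\frac12+y)_{k-1}}\bigl((k-\tfrac32)^2-x^2\bigr),
\]
which shifts $k\mapsto k+1$ in the $y$-dependent piece and converts it into the $x$-dependent piece $\frac{(k-\frac12)^2-x^2}{2k}$.

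\emph{Main obstacle.} The delicate step is the bookkeeping in Step 2. One has to choose the pairing of the alternating terms $(-1)^{k-1}/k$ between consecutive odd integers, and handle correctly the boundary case where $k$ is adjacent to $m_1$. Otherwise the constant term $\frac{\cos\pi y}{1-4y^2}$ and the denominators $2k$ versus $2k-2$ will not match. I would fix the pairing by checking the case $a=b=0$, where the statement reduces to $\bar{s}(1)=\log 2$, and the case $a=0,\ b=1$ against the explicit series before writing the general computation. Absolute convergence for interchanging the sums can be justified by the same bounds as in Proposition~\ref{prop:anF}.
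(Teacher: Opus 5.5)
Your plan is correct and is essentially the paper's argument: the nested series over odd integers, the product form via $\prod_{q\ge1}\bigl(1-\frac{y^2}{(q-\frac12)^2}\bigr)=\cos\pi y$, the Pochhammer rewrite, and an index shift. The only difference is the pairing, which is the mirror image of the paper's. You group $K=2k-2$ with $K=2k-1$, so the boundary case is $q_1=k$, as in \eqref{eq:sseries}. This gives the second form directly, with $w_1=1$ and
\[
w_k(y)=\frac{1}{2k-1}-\frac{1}{2k-2}\Bigl(1-\frac{y^2}{(k-\frac12)^2}\Bigr)\quad (k\ge2).
\]
Note that the denominator is $2k-2$, not $2k$ as you wrote. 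The paper instead groups $2k-1$ with $2k$, so its boundary case is $n_a=k$ on the $x$-side; it obtains the first form and then shifts to get the second. In either direction, do the shift on partial sums, since the two pieces (the $\frac{2k-1}{4}$ part and the correction part) generically diverge separately.
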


\begin{prf}
    By the definition of M$\bar{s}$Vs, we have
    \begin{align}
        & \quad \bar{s}(\{2\}^a, 1, \{2\}^b) \\
        & = - \sum_{\epsilon_i, \epsilon'_j \in \{\pm 1\}} \epsilon_1\cdots\epsilon_a\, \epsilon'_1\cdots\epsilon'_b\, 
            \zeta\binom{2, \dots, 2, 1, 2, \dots, 2}{\epsilon_1, \dots, \epsilon_a, -1, \epsilon'_1, \dots, \epsilon'_b} \\
        & = - \sum_{0 < n_1 < \dots < n_a < k < m_1 < \dots < m_b} \prod_{i=1}^{a}\frac{1-(-1)^{n_i}}{n_i^2}
            \frac{(-1)^k}{k} \prod_{j=1}^{b}\frac{1-(-1)^{m_j}}{m_j^2} \\
        & = - 2^{a+b} \sum_{\substack{0 < n_1 < \dots < n_a < k < m_1 < \dots < m_b \\ n_i, m_j: \text{odd}}}
            \frac{1}{n_1^2 \cdots n_a^2} \frac{(-1)^k}{k} \frac{1}{m_1^2 \cdots m_b^2} \\
        & \begin{aligned}
        & = 2^{-(a+b)} \sum_{0 < n_1 < \dots < n_a < k < m_1 < \dots < m_b}
            \prod_{i=1}^{a}\frac{1}{(n_i-\frac{1}{2})^2} \prod_{j=1}^{b}\frac{1}{(m_j-\frac{1}{2})^2} \qty(\frac{1}{2k-1} - \frac{1}{2k}) \\
        & \qquad - \delta_{a > 0}\ 2^{-(a+b)} \sum_{0 < n_1 < \dots < n_a = k < m_1 < \dots < m_b}
            \prod_{i=1}^{a}\frac{1}{(n_i-\frac{1}{2})^2} \prod_{j=1}^{b}\frac{1}{(m_j-\frac{1}{2})^2} \frac{1}{2k}. \label{eq:bsseries}
        \end{aligned}
    \end{align}
    Hence,
    \begin{align}
        & \quad G(x, y) \\
        & = \sum_{a, b \ge 0} (-2)^{a+b}\, \bar{s}(\{2\}^a, 1, \{2\}^b)\, x^{2a} y^{2b} \\
        & = \sum_{k > 0} \prod_{0 < n < k} \qty(1-\frac{x^2}{(n-\frac{1}{2})^2})
            \prod_{m > k} \qty(1-\frac{y^2}{(m-\frac{1}{2})^2}) \qty(\frac{1}{2k-1} - \frac{1}{2k}\qty(1-\frac{x^2}{(k-\frac{1}{2})^2})) \label{eq:Gprod} \\
        & = \cos\pi y \sum_{k > 0} \frac{(\frac{1}{2}-x)_{k-1}(\frac{1}{2}+x)_{k-1}}{(\frac{1}{2}-y)_k (\frac{1}{2}+y)_k}
            \qty(\frac{2k-1}{4} - \frac{(k-\frac{1}{2})^2 - x^2}{2k}).
    \end{align}
    Since
    \[
    \frac{(\frac{1}{2}-x)_{k-1}(\frac{1}{2}+x)_{k-1}}{(\frac{1}{2}-y)_k (\frac{1}{2}+y)_k} \frac{(k-\frac{1}{2})^2 - x^2}{2k}
    = \frac{(\frac{1}{2}-x)_{k}(\frac{1}{2}+x)_{k}}{(\frac{1}{2}-y)_{k+1} (\frac{1}{2}+y)_{k+1}} \frac{(k+\frac{1}{2})^2 - y^2}{2k},
    \]
    shifting the index in the second term gives
    \[
    G(x, y) = \cos\pi y \sum_{k > 1} \frac{(\frac{1}{2}-x)_{k-1}(\frac{1}{2}+x)_{k-1}}{(\frac{1}{2}-y)_k (\frac{1}{2}+y)_k}
    \qty(\frac{2k-1}{4} - \frac{(k-\frac{1}{2})^2 - y^2}{2k-2}) + \frac{\cos\pi y}{1-4y^2},
    \]
    as desired.
\end{prf}

\begin{prop} \label{prop:wiG}
    For $|x|, |y| < \frac{1}{2}$, we have
    \begin{equation}
        \begin{aligned}
            \widehat{G}(x, y) & = - \frac{\cos\pi y}{2}\, (A(x+y) + A(x-y) - 2A(y)) \\
            & \qquad - \frac{\cos\pi x}{2}\, (B(x+y) + B(x-y) + 2\log 2) + 2\cos\pi y\, (B(2y) + \log2).
        \end{aligned}
    \end{equation}
\end{prop}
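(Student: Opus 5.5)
The plan is to follow the proof of Proposition~\ref{prop:wiF} closely. We expand $\widehat{G}(x,y)$ termwise, reorganize by $n = a+b-r$, and recognize $A$, $B$ and cosines as the resulting generating series.

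\emph{Normalization.} By \eqref{eq:bsz}, $\bar{s}(\{2\}^n) = 2^{-n}\pi^{2n}/(2n)!$. Since $(-2)^{a+b}(-1)^r2^{-r} = (-2)^{n}$, every summand of $\widehat{G}$ takes the form
\[
\frac{(-1)^n\pi^{2n}}{(2n)!}\,\beta_r^{a,b}\,\zeta(\overline{2r+1})\,x^{2a}y^{2b}.
\]
Summing over $n$ will therefore produce $\cos\pi x$ or $\cos\pi y$, depending on which variable absorbs the exponent $2n$.

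\emph{Splitting.} We split the double sum into the range $a \ge 1$ and the range $a = 0$, since $\beta_r^{a,b}$ is given by different formulas there. This gives four pieces.

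(i) The first part of $\beta$ for $a \ge 1$. Using $\frac{2^{2r}}{2^{2r}-1}\zeta(\overline{2r+1}) = -\zeta(2r+1)$ for $r \ge 1$, this piece becomes $-\zeta(2r+1)\binom{2r}{2a}$. Here $1 \le a \le r$ and $b = n + (r-a)$, so $y^{2b} = y^{2n}y^{2(r-a)}$. Summing over $n$ gives $\cos\pi y$, and summing over $a$ gives
\[
\sum_{a=1}^{r}\binom{2r}{2a}x^{2a}y^{2r-2a} = \tfrac12\bigl((x+y)^{2r}+(x-y)^{2r}\bigr) - y^{2r}.
\]
The piece is therefore $-\frac{\cos\pi y}{2}\,(A(x+y)+A(x-y)-2A(y))$.

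(ii) The part $\binom{2r}{2b}$ for $a \ge 1$. Here $b \le r$ and $a = n + (r-b)$, so summing over $n$ gives $\cos\pi x$. If we first sum over all $a \ge 0$, the computation is identical to the second half of Proposition~\ref{prop:wiF}. Using $\zeta(\overline 1) = -\log 2$, it gives $-\frac{\cos\pi x}{2}\,(B(x+y)+B(x-y)+2\log 2)$. From this we must subtract the contribution of $a = 0$, which forces $n = 0$ and $b = r$. That contribution is $\sum_{r\ge0}\zeta(\overline{2r+1})y^{2r} = -B(y)-\log 2$.

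(iii) The part $-2^{2r+1}$ for $a = 0$. Here $b = n + r$, so this piece equals
\[
-2\cos\pi y\sum_{r\ge0}\zeta(\overline{2r+1})(2y)^{2r} = 2\cos\pi y\,(B(2y)+\log 2).
\]

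(iv) The part $\delta_{r,b}$ for $a = 0$. Here $n = 0$ and the piece equals $\sum_r\zeta(\overline{2r+1})y^{2r}$. This exactly cancels the correction subtracted in (ii).

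Adding (i) through (iv) yields the claimed formula.

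\emph{Main obstacle: justifying the rearrangements.} We handle this as in Proposition~\ref{prop:wiF}. We replace every quantity by its absolute value, which turns the cosine sums into $\cosh\pi|x|$ or $\cosh\pi|y|$. We bound $|\zeta(\overline{2r+1})| \le \zeta(2r+1)$ and $|2^{2r+1}\zeta(\overline{2r+1})|$ by a constant times $2^{2r}$. This reduces absolute convergence to the convergence of $\sum_r \zeta(2r+1)(|x|+|y|)^{2r}$ and $\sum_r (2|y|)^{2r}$. Both converge precisely when $|x|+|y|<1$ and $|y|<\frac12$, which is guaranteed by the hypothesis $|x|,|y|<\frac12$. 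The bookkeeping of the boundary terms ($a=0$, $r=0$, and the convention for $\frac{2^{2r}}{2^{2r}-1}$ at $r=0$) is the most delicate part, so we will check it carefully.
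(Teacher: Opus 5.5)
Your proof is correct and follows essentially the same route as the paper: you rewrite each summand via \eqref{eq:bsz} and split $\beta_r^{a,b}$ into its parts, then resum over $n=a+b-r$ to obtain $\cos\pi x$ or $\cos\pi y$ and over $r$ to obtain $A$ and $B$, justifying the rearrangement by absolute convergence as in Proposition~\ref{prop:wiF}. The only difference is bookkeeping: the paper absorbs the $\delta_{r,b}$ term at $a=0$ directly into the $\binom{2r}{2b}$ sum taken over all $a\ge 0$ (since $\binom{2r}{2b}=\delta_{r,b}$ when $a=0$ and $r\le b$), whereas you add and subtract that $a=0$ contribution, which amounts to the same thing.
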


\begin{prf}
    By the definition of $\beta_r^{a, b}$, we have
    \begin{align}
        \widehat{G}(x, y)
        & = \sum_{a, b \ge 0} (-2)^{a+b} \sum_{r=0}^{a+b} \frac{(-1)^r}{2^r} 
            \beta_r^{a, b}\, \zeta(\overline{2r+1})\, \bar{s}(\{2\}^{a+b-r})\, x^{2a}y^{2b} \\
        & \begin{aligned}
            & = \sum_{a \ge 1, b \ge 0} (-2)^{a+b} \sum_{r=1}^{a+b} \frac{(-1)^r}{2^r} 
                \frac{2^{2r}}{2^{2r}-1} \binom{2r}{2a} \zeta(\overline{2r+1})\, \bar{s}(\{2\}^{a+b-r})\, x^{2a}y^{2b} \\
            & \qquad\qquad + \sum_{a, b \ge 0} (-2)^{a+b} \sum_{r=0}^{a+b} \frac{(-1)^r}{2^r}
                \binom{2r}{2b} \zeta(\overline{2r+1})\, \bar{s}(\{2\}^{a+b-r})\, x^{2a}y^{2b} \\
            & \qquad\qquad - \sum_{b \ge 0} (-2)^{b} \sum_{r=0}^{b} \frac{(-1)^r}{2^r}\, 2^{2r+1}\, 
                \zeta(\overline{2r+1})\, \bar{s}(\{2\}^{b-r})\, y^{2b}.
        \end{aligned}
    \end{align}
    Applying the identity
    \[
    \frac{2^{2r}}{2^{2r}-1}\, \zeta(\overline{2r+1}) = - \zeta(2r+1) \quad (r \ge 1),
    \]
    we obtain
    \begin{equation}
        \widehat{G}(x, y)
        \begin{aligned}[t]
            & = - \sum_{a \ge 1, b \ge 0} (-2)^{a+b} \sum_{r=1}^{a+b} \frac{(-1)^r}{2^r} 
                \binom{2r}{2a} \zeta(2r+1)\, \bar{s}(\{2\}^{a+b-r})\, x^{2a}y^{2b} \\
            & \qquad\qquad + \sum_{a, b \ge 0} (-2)^{a+b} \sum_{r=0}^{a+b} \frac{(-1)^r}{2^r}
                \binom{2r}{2b} \zeta(\overline{2r+1})\, \bar{s}(\{2\}^{a+b-r})\, x^{2a}y^{2b} \\
            & \qquad\qquad - \sum_{b \ge 0} (-2)^{b} \sum_{r=0}^{b} \frac{(-1)^r}{2^r}\, 2^{2r+1}\, 
                \zeta(\overline{2r+1})\, \bar{s}(\{2\}^{b-r})\, y^{2b}.
        \end{aligned}
    \end{equation}
    By an argument similar to that for $\widehat{F}(x, y)$, 
    we may interchange the order of summation if $|x|, |y| < \frac{1}{2}$.
    In this domain, it follows from the identity~\eqref{eq:bsz} that
    \begin{align}
        \widehat{G}(x, y)
        & = - \sum_{r \ge 1, s \ge 0} (-2)^s\, \bar{s}(\{2\}^s)\, 
            \zeta(2r+1) \sum_{a=1}^{r} \binom{2r}{2a} x^{2a}y^{2(r+s-a)} \\
        & \qquad\qquad + \sum_{r, s \ge 0} (-2)^s\, \bar{s}(\{2\}^s)\, \zeta(\overline{2r+1})
            \sum_{b=0}^{r} \binom{2r}{2b} x^{2(r+s-b)}y^{2b} \\
        & \qquad\qquad - \sum_{r, s \ge 0} (-2)^{s}\, \bar{s}(\{2\}^s)\, 2^{2r+1}\, \zeta(\overline{2r+1})\, y^{2(r+s)} \\
        & = - \sum_{r \ge 1, s \ge 0} \frac{(-1)^s\pi^{2s}}{(2s)!}\, y^{2s}\, \zeta(2r+1)
            \sum_{a=1}^{r} \binom{2r}{2a} x^{2a}y^{2r-2a} \\
        & \qquad\qquad + \sum_{r, s \ge 0} \frac{(-1)^s\pi^{2s}}{(2s)!}\, x^{2s}\, \zeta(\overline{2r+1})
            \sum_{b=0}^{r} \binom{2r}{2b}x^{2r-2b}y^{2b} \\
        & \qquad\qquad - \sum_{r, s \ge 0} \frac{(-1)^s\pi^{2s}}{(2s)!}\, y^{2s}\, 2^{2r+1}\, \zeta(\overline{2r+1})\, y^{2r} \\
        & = - \frac{\cos\pi y}{2} \sum_{r \ge 1} \zeta(2r+1)\, ((x+y)^{2r} + (x-y)^{2r} - 2y^{2r}) \\
        & \qquad\qquad + \frac{\cos\pi x}{2} \sum_{r \ge 0} \zeta(\overline{2r+1})\, ((x+y)^{2r} + (x-y)^{2r}) \\
        & \qquad\qquad - 2\cos\pi y \sum_{r \ge 0} \zeta(\overline{2r+1})\, (2y)^{2r} \\
        & = - \frac{\cos\pi y}{2}\, (A(x+y) + A(x-y) - 2A(y)) \\
        & \qquad - \frac{\cos\pi x}{2}\, (B(x+y) + B(x-y) + 2\log 2) + 2\cos\pi y\, (B(2y)+\log2).
    \end{align}
    In the last equality, we used $\zeta(\overline{1}) = -\log2$.
    This completes the proof.
\end{prf}

We denote the right-hand side of the equation in Proposition~\ref{prop:wiG} by $\widetilde{G}(x, y)$.

\begin{prop} \label{prop:anG}
    Both $G(x, y)$ and $\widetilde{G}(x, y)$ are entire functions on $\C \times \C$.
    Moreover, for fixed $y \in \C$,
    \[
    G(x,y),\ \widetilde{G}(x,y)=O(e^{\pi |x|}) \quad (|x|\to\infty).
    \]
\end{prop}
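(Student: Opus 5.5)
I will follow the proof of Proposition~\ref{prop:anF} closely, treating $G$ and $\widetilde{G}$ separately.

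\emph{The function $G$.} I start from \eqref{eq:bsseries}. Bounding termwise as before, using $\frac{1}{2k-1}-\frac{1}{2k}\le \frac{1}{(2k-1)^2}$ and $\frac{1}{2k}\le\frac{1}{2k-1}\cdot\frac{1}{1}$, gives
\[
\abs{(-2)^{a+b}\bar{s}(\{2\}^a,1,\{2\}^b)}
\le C\,\bigl(t'(\{2\}^{a+b+1})+t'(\{2\}^{a+b})\bigr),
\]
where $t'(\{2\}^n)=\sum_{0<n_1<\dots<n_n}\prod (n_i-\tfrac12)^{-2}=\frac{\pi^{2n}}{(2n)!}$ by \eqref{eq:bsz}. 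This is majorized by $\cosh$-type coefficients, so the power series defining $G$ converges absolutely and locally uniformly on $\C\times\C$. Hence $G$ is entire.

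For the growth bound with $y$ fixed, I use \eqref{eq:Gprod}. The bracket satisfies
\[
\frac{1}{2k-1}-\frac{1}{2k}\Bigl(1-\frac{x^2}{(k-\frac12)^2}\Bigr)=O(k^{-2})+O\Bigl(\frac{|x|^2}{k^3}\Bigr).
\]
The tail product in $y$ is bounded by $\prod_m\bigl(1+|y|^2/(m-\tfrac12)^2\bigr)=\cosh\pi|y|$. The partial product in $x$ is bounded by $\cosh\pi|x|$.

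The extra factor $|x|^2$ is the one point needing care. I would remove it the same way the proof of Proposition~\ref{prop:anF} avoided it, by using the shifted second expression in Proposition~\ref{prop:G}. There the bracket $\frac{2k-1}{4}-\frac{(k-\frac12)^2-y^2}{2k-2}$ involves only $y$ and is $O(1/k)$. Combined with the ratio of Pochhammer symbols, rewritten as $\prod_{n<k}(1-x^2/(n-\frac12)^2)\prod_{m\ge k}(\cdots)^{-1}$-type factors, the summand becomes $O(k^{-2})\cosh\pi|x|$. More precisely, the summand is $\cos\pi y$ times
\[
\frac{\prod_{n<k}(1-x^2/(n-\frac12)^2)}{\prod_{n\le k}(1-y^2/(n-\frac12)^2)}\cdot\frac{1}{((k-1)!)^2\ldots}
\]
and the normalizing Gamma ratios contribute an extra $1/k$. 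This gives $G=O(e^{\pi|x|})$.

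\emph{The function $\widetilde{G}$.} I would check pole cancellation and then growth. Up to terms depending only on $y$, $\widetilde{G}$ is
\[
-\tfrac12\bigl(\cos\pi y\,A(x\pm y)+\cos\pi x\,B(x\pm y)\bigr).
\]
The $y$-only terms are $\cos\pi y\,(A(y)+2B(2y)+2\log 2)$. Their poles cancel because $A(y)-A(y/2)=B(y)$ gives $2B(2y)+A(y)=2A(2y)-A(y)$. Combining residues, at $y=n$ odd this is $-2\cdot\frac12\cdot\frac{\sgn}{2}\cdots$, which is killed by $\cos\pi y$. At $y=n/2$ with $n$ odd, the zero of $\cos\pi y$ cancels the pole of $B(2y)$. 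At $y=n$ even I will check the residues directly; I expect the residue of $A(y)+2B(2y)$ to be $-\frac{\sgn n}{2}+2\cdot\frac12\cdot\frac{\sgn n}{2}=0$.

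For the mixed terms, at $x+y=n$ we have $\cos\pi x=(-1)^n\cos\pi y$. The residue of $\cos\pi y\,A+\cos\pi x\,B$ is then $\cos\pi y\bigl(-\frac{\sgn n}{2}+\frac{\sgn n}{2}\bigr)=0$, and the same holds for $x-y$. So $\widetilde{G}$ is entire in $x$ for each $y$. Joint entirety then follows, since the singular set is removable by Hartogs/Riemann extension.

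The growth $O(e^{\pi|x|})$ follows from the asymptotics of $A$ and $B$ quoted in the proof of Proposition~\ref{prop:anF}. The $\log z$ and constant terms are multiplied by $\cos\pi y$ or $\cos\pi x=O(e^{\pi|\operatorname{Im}x|})$. The $\cot$ and $\csc$ terms combine with $\cos\pi x$, $\cos\pi y$ into bounded multiples of $e^{\pi|x|}$ away from the real axis. Near the real axis, where the poles sit, I use the maximum principle on small discs around the integers. The main obstacle is the careful uniform estimate of $G$ via the shifted series; the rest is routine.
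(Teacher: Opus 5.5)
Your proposal follows the paper's proof and it goes through: a termwise coefficient bound gives entireness of $G$, the shifted second expression in Proposition~\ref{prop:G} removes $x$ from the bracket, and for $\widetilde{G}$ you use residue cancellation in $\cos\pi y\,A(x\pm y)+\cos\pi x\,B(x\pm y)$ and in $\cos\pi y\,(2A(2y)-A(y))$, together with the asymptotics of $A$ and $B$. Two slips should be corrected, though neither affects the conclusion.

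First, in the shifted series the Pochhammer bracket equals $\frac{4y^2-2k+1}{8(k-1)}$. This is $O(1)$, not $O(1/k)$. The Pochhammer ratio supplies the factor $(k-\frac12)^{-2}$, not an extra $1/k$. So the summand is exactly
$\prod_{0<n<k}\bigl(1-\frac{x^2}{(n-\frac12)^2}\bigr)\prod_{m>k}\bigl(1-\frac{y^2}{(m-\frac12)^2}\bigr)\bigl(\frac{1}{2k-1}-\frac{1}{2k-2}\bigl(1-\frac{y^2}{(k-\frac12)^2}\bigr)\bigr)$,
whose last factor is $O(k^{-2})$. Your final bound $O(k^{-2})\cosh\pi|x|$ is therefore right.

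Second, $\cos\pi y$ does not vanish at odd integers, so it removes only the half-integer poles. At every nonzero integer $y$, odd or even, the poles cancel because the residues of $A(y)$ and $2B(2y)$ are opposite. This is exactly the computation you wrote for the ``even'' case.
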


\begin{prf}
    From \eqref{eq:bsseries}, we have
    \begin{align}
        & \abs{(-2)^{a+b}\, \bar{s}(\{2\}^a, 1, \{2\}^b)} \\
        \le & \sum_{0 < n_1 < \dots < n_a < k < m_1 < \dots < m_b}
            \prod_{i=1}^{a}\frac{4}{(2n_i-1)^2} \prod_{j=1}^{b}\frac{4}{(2m_j-1)^2} \qty(\frac{1}{2k-1} - \frac{1}{2k}) \\
        & \qquad + \delta_{a > 0}\ \sum_{0 < n_1 < \dots < n_a < m_1 < \dots < m_b}
            \prod_{i=1}^{a}\frac{4}{(2n_i-1)^2} \prod_{j=1}^{b}\frac{4}{(2m_j-1)^2} \frac{1}{2n_a} \\
        \le &\, 4^{a+b}\, \zeta(\{2\}^{a+b+1}) + 4^{a+b}\, \zeta(\{2\}^{a+b}) \\
        = &\, \frac{4^{a+b}\pi^{2a+2b+2}}{(2a+2b+3)!} + \frac{4^{a+b}\pi^{2a+2b}}{(2a+2b+1)!}.
    \end{align}
    This estimate shows that the defining power series for $G(x, y)$ converges absolutely and uniformly 
    on every compact subset of $\C \times \C$, and hence $G(x, y)$ is entire.
    By Proposition~\ref{prop:G}, we have
    \begin{align}
        G(x, y)
        & = \cos\pi y \sum_{k > 1} \frac{(\frac{1}{2}-x)_{k-1}(\frac{1}{2}+x)_{k-1}}{(\frac{1}{2}-y)_k (\frac{1}{2}+y)_k}
            \qty(\frac{2k-1}{4} - \frac{(k-\frac{1}{2})^2 - y^2}{2k-2}) + \frac{\cos\pi y}{1-4y^2} \\
        & \begin{aligned}
            & = \sum_{k > 1} \prod_{0 < n < k} \qty(1-\frac{x^2}{(n-\frac{1}{2})^2}) \prod_{m > k} \qty(1-\frac{y^2}{(m-\frac{1}{2})^2}) \\
            & \qquad\qquad\qquad \times \qty(\frac{1}{2k-1} - \frac{1}{2k-2}\qty(1-\frac{y^2}{(k-\frac{1}{2})^2})) + \frac{\cos\pi y}{1-4y^2}.
        \end{aligned}
    \end{align}
    As in the case of $F(x, y)$, for a fixed $y \in \C$, we have
    \[
    \frac{1}{2k-1} - \frac{1}{2k-2}\qty(1-\frac{y^2}{(k-\frac{1}{2})^2}) = O\qty(\frac{1}{k^2})
    \]
    and
    \[
    \prod_{m > k} \qty(1-\frac{y^2}{(m-\frac{1}{2})^2}) = O(1)
    \]
    as $k \to \infty$.
    Thus, using the estimate
    \[
    \sum_{k > 1} \frac{1}{k^2} \abs{\prod_{0 < n < k} \qty(1-\frac{x^2}{(n-\frac{1}{2})^2})} = O(e^{\pi|x|}),
    \]
    we obtain the desired estimate $G(x,y) = O(e^{\pi |x|})$.

    By an argument similar to that used for $\widetilde{F}(x, y)$, the functions
    \[
    \cos\pi y\, A(x\pm y) + \cos\pi x\, B(x\pm y)
    \]
    are entire and satisfy the estimate $O(e^{\pi|x|})$ for each fixed $y$.
    By the definition of $\widetilde{G}(x, y)$, it suffices to show that 
    \[
    \cos\pi y\, (A(y) + 2B(2y))
    \]
    is entire.
    By the identity \eqref{eq:AB}, this function coincides with $\cos\pi y\, (2A(2y) - A(y))$.
    When $y$ is a half-integer, the poles of $A(2y)$ are canceled by the zeros of $\cos\pi y$.
    When $y$ is an integer, the residues of $2A(2y)$ and $A(y)$ coincide,
    and hence their poles cancel.
    This completes the proof.
\end{prf}

\begin{prop} \label{prop:diagG}
    For $x \in \C$, we have
    \[
    G(x, x) = \widetilde{G}(x, x).
    \]
\end{prop}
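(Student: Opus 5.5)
We will compute both sides of the claimed identity in closed form in terms of the digamma function $\psi$, and compare them using the duplication formula.

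\emph{The left-hand side.} Put $y=x$ in \eqref{eq:Gprod}. The two products combine into the full Weierstrass product for the cosine, with the single factor at $n=k$ removed:
\[
\prod_{0<n<k}\Bigl(1-\tfrac{x^2}{(n-\frac12)^2}\Bigr)\prod_{m>k}\Bigl(1-\tfrac{x^2}{(m-\frac12)^2}\Bigr)
=\cos\pi x\,\Bigl(1-\tfrac{x^2}{(k-\frac12)^2}\Bigr)^{-1}.
\]
The $k$-th term of \eqref{eq:Gprod} then becomes
\[
\cos\pi x\,\Bigl(\frac{1}{(2k-1)\bigl(1-x^2/(k-\frac12)^2\bigr)}-\frac1{2k}\Bigr).
\]
A partial fraction decomposition gives
\[
\frac{1}{(2k-1)\bigl(1-x^2/(k-\frac12)^2\bigr)}=\frac14\Bigl(\frac1{k-\frac12-x}+\frac1{k-\frac12+x}\Bigr).
\]
Apply the standard series $\sum_{k\ge1}\bigl(\frac1k-\frac1{k+a-1}\bigr)=\psi(a)-\psi(1)$ with $a=\frac12\mp x$. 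This yields
\[
G(x,x)=-\frac{\cos\pi x}{4}\bigl(\psi(\tfrac12-x)+\psi(\tfrac12+x)-2\psi(1)\bigr).
\]
This holds first away from the half-integers and then everywhere, since both sides are entire by Proposition~\ref{prop:anG}.

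\emph{The right-hand side.} Put $y=x$ in the definition of $\widetilde G$. Use $A(0)=B(0)=0$, and eliminate $B(2x)$ via \eqref{eq:AB} in the form $B(2x)=A(2x)-A(x)$. Collecting terms should give
\[
\widetilde G(x,x)=\cos\pi x\,\bigl(A(2x)-\tfrac12A(x)+\log2\bigr).
\]

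\emph{Comparing the two.} Substitute \eqref{eq:digamma} for $A(2x)$ and $A(x)$. The claim then reduces to an identity among $\psi(1\pm2x)$, $\psi(1\pm x)$ and $\psi(\frac12\pm x)$. We will prove it with the Legendre duplication formula
\[
\psi(2z)=\tfrac12\psi(z)+\tfrac12\psi(z+\tfrac12)+\log2,
\]
applied at $z=\frac12\pm x$, so that $\psi(1\pm2x)$ is expressed through $\psi(\frac12\pm x)$ and $\psi(1\pm x)$. The terms $\psi(1\pm x)$ should then cancel against $-\frac12A(x)$. The constants should match via $\psi(\frac12)=\psi(1)-2\log2$; equivalently, this can be checked by comparing both sides at $x=0$.

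The only real obstacle is this final bookkeeping of constants and $\log2$ terms. If the duplication formula route becomes messy, we will instead compare Taylor coefficients at $x=0$. Both sides are even entire functions divided by $\cos\pi x$. Their coefficients are explicit combinations of $\zeta(2r+1)$ and $(1-2^{-2r})\zeta(2r+1)$, by the power series of $A$ and $B$ together with $\psi(\frac12+x)+\psi(\frac12-x)=2\psi(\frac12)-2\sum_{r\ge1}(2^{2r+1}-1)\zeta(2r+1)x^{2r}$, and these match term by term.
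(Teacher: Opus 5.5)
Your proposal is correct and is essentially the paper's argument: you set $y=x$ in \eqref{eq:Gprod}, collapse the two products into $\cos\pi x$, and reduce $\widetilde G(x,x)$ via $A(0)=B(0)=0$ and \eqref{eq:AB} to $\frac{\cos\pi x}{2}\bigl(2A(2x)-A(x)+2\log2\bigr)$. The only difference is on the left-hand side: the paper identifies the series directly as $\frac{\cos\pi x}{2}\bigl(A(2x)+B(2x)+2\log2\bigr)$ (since $A(2x)+B(2x)$ is twice the odd-$n$ part of the series for $A(2x)$, and $\sum_{k}(\frac{1}{k-1/2}-\frac1k)=2\log2$) and then needs only \eqref{eq:AB}, whereas you pass through $\psi(\frac12\pm x)$ and Legendre duplication, where the $\log2$ terms cancel outright, so you need neither $\psi(\frac12)=\psi(1)-2\log2$ nor the fallback Taylor comparison.
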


\begin{prf}
    By \eqref{eq:Gprod}, we have
    \begin{align}
        G(x, x)
        & = \cos\pi x \sum_{k > 0} \qty(1-\frac{x^2}{(k-\frac{1}{2})^2})^{-1}
            \qty(\frac{1}{2k-1} - \frac{1}{2k}\qty(1-\frac{x^2}{(k-\frac{1}{2})^2})) \\
        & = \frac{\cos\pi x}{2} \sum_{k > 0} \qty(\frac{1}{(k-\frac{1}{2})\qty(1-\frac{x^2}{(k-\frac{1}{2})^2})} - \frac{1}{k}) \\
        & = \frac{\cos\pi x}{2} \sum_{k > 0} \qty(\frac{x^2}{(k-\frac{1}{2})\qty((k-\frac{1}{2})^2 - x^2)} + \frac{1}{k-\frac{1}{2}} - \frac{1}{k}) \\
        & = \frac{\cos\pi x}{2} \qty(2\sum_{k > 0}\frac{(2x)^2}{(2k-1)((2k-1)^2 - (2x)^2)} + 2\log2) \\
        & = \frac{\cos\pi x}{2}\, \qty(A(2x) + B(2x) + 2\log2).
    \end{align}
    By the definition of $\widetilde{G}(x, y)$, we have
    \[
    \widetilde{G}(x, x)
    \begin{aligned}[t]
        & = - \frac{\cos\pi x}{2}\, (A(2x) - 2A(x)) - \frac{\cos\pi x}{2}\, (B(2x) + 2\log2) \\
        & \qquad\qquad  + 2\cos\pi x\, (B(2x) + \log2).
    \end{aligned}
    \]
    By the identity \eqref{eq:AB}, we obtain 
    \[
    G(x, x) = \frac{\cos\pi x}{2}\, \qty(2A(2x) - A(x) + 2\log2) = \widetilde{G}(x, x).
    \] 
    This completes the proof.
\end{prf}

\begin{prop} \label{prop:intG}
    For $n \in \Z$ and $y \in \C$, we have
    \[
    G\qty(n-\frac{1}{2}, y) = \widetilde{G}\qty(n-\frac{1}{2}, y).
    \]
\end{prop}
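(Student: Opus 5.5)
By the evenness of $G$ and $\widetilde{G}$ in $x$, it suffices to treat $x=n-\frac12$ with $n\ge1$. The plan mirrors the proof of Proposition~\ref{prop:intF}. We compute both sides explicitly as functions of $y$ and show that they agree after a partial fraction decomposition in $y$.

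\textbf{The side $\widetilde{G}$.} Put $x=n-\frac12$. Then $\cos\pi x=0$, so the $B(x\pm y)$ term disappears and
\[
\widetilde{G}\qty(n-\tfrac12,y)=-\frac{\cos\pi y}{2}\qty(A(y+n-\tfrac12)+A(y-n+\tfrac12)-2A(y))+2\cos\pi y\,(B(2y)+\log2).
\]
Next I rewrite $A(y)$ and $B(2y)$ using \eqref{eq:AB}:
\[
B(2y)=A(2y)-A(y).
\]
I then need a relation between $A(y+\frac12)$, $A(y-\frac12)$, $A(y)$ and $A(2y)$. This should come from the duplication formula for $\psi$ combined with \eqref{eq:digamma}. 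Telescoping with $A(z+1)-A(z)=-\frac12\qty(\frac1{z+1}+\frac1z)$ then expresses
\[
A(y+n-\tfrac12)+A(y-n+\tfrac12)
\]
as $A(y+\frac12)+A(y-\frac12)$ plus a finite sum of terms $\frac{1}{y\pm(j-\frac12)}$. The result is that $\widetilde{G}(n-\frac12,y)$ equals $\cos\pi y$ times $2\log 2$-type constants, plus a finite weighted sum of $\frac1{y-(j-\frac12)}$ over half-integers with $|j-\frac12|<n$. The log and transcendental parts should cancel once the duplication formula is used.

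\textbf{The side $G$.} I use the first expression in Proposition~\ref{prop:G}. At $x=n-\frac12$ the factor $(\frac12-x)_{k-1}=(1-n)_{k-1}$ vanishes for $k>n$, so the sum over $k$ is finite, running over $1\le k\le n$. I expand
\[
\frac{1}{(\frac12-y)_k(\frac12+y)_k}
\]
in partial fractions in $y$, with poles at $y=\pm(i-\frac12)$ for $1\le i\le k$. Swapping the sums over $k$ and $i$ then reduces the statement to a finite binomial identity: for each $1\le i\le n$, a specific alternating sum over $k$ from $i$ to $n$ must equal the coefficient read off from the $\widetilde{G}$ side. By analogy with Proposition~\ref{prop:intF}, I expect this coefficient to be constant for $i<n$ and different at $i=n$, possibly with an extra correction coming from the $2A(y)$ term.

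\textbf{The finite identity.} I will prove it as before, by forming the generating function in $n$ for fixed $i$. The plan is to rewrite the summand as
\[
\binom{2k-1}{k-i}\qty(\dots)
\]
with binomials in $n+k$, sum over $n\ge k$ by the binomial theorem, and then sum over $k$ using
\[
\sum_m\binom{2m+p}{m}t^m=\frac{1}{\sqrt{1-4t}}\qty(\frac{1-\sqrt{1-4t}}{2t})^p
\]
at $t=-x/(1-x)^2$. At this value the square root becomes rational, namely $(1+x)/(1-x)$, which should produce a rational function whose coefficients are the claimed values.

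\textbf{Main obstacle.} I expect the hardest step to be the first one. The goal there is to bring the $\widetilde{G}$ side into a purely rational form in $y$ times $\cos\pi y$, which requires the half-integer shifts of $A$ to combine exactly with $A(y)$ and $A(2y)$ through the digamma duplication formula $\psi(z)+\psi(z+\frac12)=2\psi(2z)-2\log2$. Only after that is the coefficient matching well defined. The binomial identity itself should be routine, as in Proposition~\ref{prop:intF}.
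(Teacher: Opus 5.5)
Your proposal is correct and follows essentially the same route as the paper. You truncate the first expression of Proposition~\ref{prop:G} at $k\le n$, expand $1/((\frac12-y)_k(\frac12+y)_k)$ in partial fractions, and reduce to a finite binomial identity proved by the generating-function argument of Proposition~\ref{prop:intF}, using $\sqrt{1-4t}=(1+x)/(1-x)$ at $t=-x/(1-x)^2$. On the $\widetilde{G}$ side you use $\cos\pi x=0$, \eqref{eq:AB}, the duplication formula and telescoping, exactly as the paper does.

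Your hedges resolve cleanly. Duplication gives $2A(2y)-A(y)+2\log2=\frac12\bigl(A(y-\frac12)+A(y+\frac12)\bigr)-\frac14\bigl(\frac{1}{y-\frac12}-\frac{1}{y+\frac12}\bigr)$. Hence all constants cancel and there is no extra correction from the $A(y)$ term. The required coefficients are exactly $\frac12$ for $i<n$ and $\frac14$ for $i=n$, as in Proposition~\ref{prop:intF}.
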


\begin{prf}
    We can assume that $n$ is a positive integer.
    By Proposition~\ref{prop:G} and the partial fraction expansion
    \[
    \frac{1}{(\frac{1}{2}-y)_k (\frac{1}{2}+y)_k} = \sum_{i=1}^{k} \frac{(-1)^i}{(k-i)!(k+i-1)!} \qty(\frac{1}{y-\qty(i-\frac{1}{2})} - \frac{1}{y+\qty(i-\frac{1}{2})}),
    \]
    we have
    \begin{equation}
        G(x, y)
        \begin{aligned}[t]
            = \cos\pi y \sum_{k > 0} \sum_{i=1}^{k} (-1)^i & \frac{(\frac{1}{2}-x)_{k-1} (\frac{1}{2}+x)_{k-1}}{(k-i)!(k+i-1)!} 
                \qty(\frac{1}{y-\qty(i-\frac{1}{2})} - \frac{1}{y+\qty(i-\frac{1}{2})}) \\
            & \times \qty(\frac{2k-1}{4} - \frac{(k-\frac{1}{2})^2 - x^2}{2k}). 
        \end{aligned}
    \end{equation}
    By setting $x = n-\frac{1}{2}$, we obtain
    \begin{align}
        & \quad G\qty(n-\frac{1}{2}, y) \\
        & \begin{aligned}[t]
            = \cos\pi y \sum_{k > 0} \sum_{i=1}^{k} & (-1)^i \frac{(1-n)_{k-1} (n)_{k-1}}{(k-i)!(k+i-1)!} 
                \qty(\frac{1}{y-\qty(i-\frac{1}{2})} - \frac{1}{y+\qty(i-\frac{1}{2})}) \\
            & \times \qty(\frac{2k-1}{4} - \frac{(k-\frac{1}{2})^2 - (n-\frac{1}{2})^2}{2k})
        \end{aligned} \\
        & \begin{aligned}[t]
            = - \cos\pi y \sum_{k = 1}^{n} \sum_{i=1}^{k} & (-1)^{k-i} \frac{(n-k+1)_{2k-2}}{(k-i)!(k+i-1)!} 
                \qty(\frac{1}{y-\qty(i-\frac{1}{2})} - \frac{1}{y+\qty(i-\frac{1}{2})}) \\
            & \times \qty(\frac{2k-1}{4} + \frac{(n-k)(n+k-1)}{2k})
        \end{aligned} \\
        & \begin{aligned}[t]
            & = - \cos\pi y \sum_{i=1}^{n} \qty(\frac{1}{y-\qty(i-\frac{1}{2})} - \frac{1}{y+\qty(i-\frac{1}{2})}) \\
            & \qquad \times \sum_{k = i}^{n} (-1)^{k-i} \frac{(n-k+1)_{2k-2}}{(k-i)!(k+i-1)!} \qty(\frac{2k-1}{4} + \frac{(n-k)(n+k-1)}{2k})
        \end{aligned} \\
            & \begin{aligned}[t]
            & = - \cos\pi y \sum_{i=1}^{n} \qty(\frac{1}{y-\qty(i-\frac{1}{2})} - \frac{1}{y+\qty(i-\frac{1}{2})}) \\
            & \qquad \times \sum_{k = i}^{n} (-1)^{k-i} \binom{2k-1}{k-i} \qty(\binom{n+k-1}{2k} + \frac{1}{4}\binom{n+k-2}{2k-2}).
        \end{aligned}
    \end{align}
    As in the case of Proposition~\ref{prop:intF}, we have the following identity for $1 \le i \le n$:
    \[
    \sum_{k = i}^{n} (-1)^{k-i} \binom{2k-1}{k-i} \qty(\binom{n+k-1}{2k} + \frac{1}{4}\binom{n+k-2}{2k-2}) =
    \begin{cases}
        \frac{1}{2} & \text{if} \ i < n, \\
        \frac{1}{4} & \text{if} \ i = n.
    \end{cases}
    \]
    Therefore, we obtain
    \[
    G\qty(n-\frac{1}{2}, y) = - \frac{\cos\pi y}{2} \sideset{}{^*}\sum_{\substack{|i| < n \\ i: \text{half-integer}}} \frac{\mathrm{sgn}\, i}{y - i},
    \]
    where the symbol $\sideset{}{^*}\sum_{\substack{|i| < n \\ i: \text{half-integer}}}$ denotes the weighted sum
    \[
    \sideset{}{^*}\sum_{\substack{|i| < n \\ i: \text{half-integer}}} a_i \coloneqq 
        \sum_{\substack{|i| < n-\frac{1}{2} \\ i: \text{half-integer}}} a_i + \frac{1}{2}\sum_{|i| = n-\frac{1}{2}} a_i.
    \]
    On the other hand, by the identity \eqref{eq:AB}, we have
    \begin{align}
        \widetilde{G}\qty(n-\frac{1}{2}, y)
        & \begin{aligned}[t]
            & = - \frac{\cos\pi y}{2} \qty(A\qty(y+n-\frac{1}{2}) + A\qty(y-n+\frac{1}{2})) \\
            & \qquad\qquad + \cos\pi y\, (A(y) + 2B(2y) + 2\log2)
        \end{aligned} \\
        & \begin{aligned}[t]
            & = - \frac{\cos\pi y}{2} \qty(A\qty(y+n-\frac{1}{2}) + A\qty(y-n+\frac{1}{2})) \\
            & \qquad\qquad + \cos\pi y\, (2A(2y) - A(y) + 2\log2).
        \end{aligned}
    \end{align}
    By \eqref{eq:digamma} and the duplication formula of the digamma function
    \[
    \psi(2z) = \frac{1}{2} \qty(\psi(z) + \psi\qty(z+\frac{1}{2})) + \log2,
    \]
    we have
    \[
    2A(2y) - A(y) + 2\log2 = \frac{1}{2}\, \qty(A\qty(y-\frac{1}{2}) + A\qty(y+\frac{1}{2})) - \frac{1}{4}\qty(\frac{1}{y-\frac{1}{2}} - \frac{1}{y+\frac{1}{2}}).
    \]
    Moreover, from the functional equation
    \[
    A(z+1) - A(z) = - \frac{1}{2} \qty(\frac{1}{z+1} + \frac{1}{z}),
    \]
    we obtain
    \begin{align}
        A\qty(y+n-\frac{1}{2}) - A\qty(y+\frac{1}{2})
        & = \sum_{i=1}^{n-1} \qty(A\qty(y+i+\frac{1}{2}) - A\qty(y+i-\frac{1}{2})) \\
        & = - \frac{1}{2} \sum_{i=1}^{n-1} \qty(\frac{1}{y+i+\frac{1}{2}} + \frac{1}{y+i-\frac{1}{2}}),
    \end{align}
    and
    \begin{align}
        A\qty(y-n+\frac{1}{2}) - A\qty(y-\frac{1}{2})
        & = - \sum_{i=1}^{n-1} \qty(A\qty(y-i+\frac{1}{2}) - A\qty(y-i-\frac{1}{2})) \\
        & = \frac{1}{2} \sum_{i=1}^{n-1} \qty(\frac{1}{y-i+\frac{1}{2}} + \frac{1}{y-i-\frac{1}{2}}).
    \end{align}
    Therefore, we have
    \begin{align}
        \widetilde{G}\qty(n-\frac{1}{2}, y)
        & \begin{aligned}[t]
            & = - \frac{\cos\pi y}{2}\, \qty(A\qty(y+n-\frac{1}{2}) - A\qty(y+\frac{1}{2})) \\
            & \qquad\qquad - \frac{\cos\pi y}{2}\, \qty(A\qty(y-n+\frac{1}{2}) - A\qty(y-\frac{1}{2})) \\
            & \qquad\qquad - \frac{\cos\pi y}{4}\, \qty(\frac{1}{y-\frac{1}{2}} - \frac{1}{y+\frac{1}{2}})
        \end{aligned} \\
        & = - \frac{\cos\pi y}{2} \sideset{}{^*}\sum_{\substack{|i| < n \\ i: \text{half-integer}}} \frac{\mathrm{sgn}\, i}{y - i}.
    \end{align}
    This completes the proof.
\end{prf}

\begin{prf}[of Theorem~\ref{thm:bsidentity}]
    Let $g(x) = G(x-\frac{1}{2}, y) - \widetilde{G}(x-\frac{1}{2}, y)$ for each fixed $y$.
    By Proposition~\ref{prop:anG}, $g$ is an entire function
    satisfying $g(x) = O(e^{\pi|x|})$ as $|x| \to \infty$.
    Moreover, by Proposition~\ref{prop:intG}, $g$ vanishes at all integers.
    Therefore, by \cite[Th\'eor\`eme fondamental]{zbMATH02591652},
    $g(x)$ is a constant multiple of $\sin\pi x$.
    Finally, Proposition~\ref{prop:diagG} yields $g(x) = 0$,
    which implies $G(x, y) = \widehat{G}(x, y)$ for $|x|, |y| < \frac{1}{2}$ by Proposition~\ref{prop:wiG}.
\end{prf}

\section{Motivic setup}
In this section, we briefly recall the notion of motivic alternating multiple zeta values,
and introduce motivic multiple $s$ values and motivic multiple $\bar{s}$ values.
\subsection{Mixed Tate motives over $\Z$ or $\Z[1/2]$}
Let $\MT(\Z[1/i])$ be the category of mixed Tate motives over $\Z[1/i]$ for $i = 1, 2$.
The category $\MT(\Z[1/i])$ is a Tannakian category over $\Q$ equipped with two fiber functors, 
the de Rham fiber functor $\omega_{\dR}$ and the Betti fiber functor $\omega_{\B}$.
Let $G_{\dR}^i$ denote the Tannaka group with respect to the de Rham fiber functor $\omega_\dR$.
It admits a semidirect product decomposition
\[
G_\dR^i \cong U_\dR^i \rtimes \Gm,
\]
where $U_\dR^i$ is the pro-unipotent part of $G_\dR^i$.
The function ring $\A^i = \O(U_\dR^i)$ is a Hopf algebra over $\Q$, 
and the coproduct is denoted by $\Delta \colon \A^i \to \A^i \otimes_\Q \A^i$.
The action of $\Gm$ endows $\A^i$ with a grading by non-negative integers, called the weight:
\[
\A^i = \bigoplus_{N \ge 0} \A_N^i.
\]
Let $\P^i$ be the $\Q$-algebra of effective motivic periods of $\MT(\Z[1/i])$.
It is also graded by non-negative integers, called the weight.
The algebra $\P^i$ is an $\A^i$-comodule, with coaction
\[
\Delta \colon \P^i \longrightarrow \A^i \otimes_\Q \P^i,
\]
which preserves the weight.
We have a ring homomorphism
\[
\per \colon \P^i \longrightarrow \C,
\]
called the period map.
Let $\Lm \in \P^i_1$ denote the Lefschetz motivic period, 
which is mapped to $2\pi i$ by the period map.
Then there is a non-canonical isomorphism of $\Q$-algebras (\cite[\S 2.1.2 (2.9)]{glanois:tel-01347989})
\[
\P^i \cong \A^i \otimes_\Q \Q[\Lm].
\]
This isomorphism is also compatible with the $\A^i$-comodule structures,
where $\A^i \otimes \Q[\Lm]$ is regarded as an $\A^i$-comodule with $\Delta(\Lm)=1\otimes\Lm$.
By sending $\Lm$ to zero, we obtain the projection
\begin{equation}
    \pia \colon \P^i \longrightarrow \P^i/\Lm \cong \A^i. \label{eq:proj}
\end{equation}
The algebra $\P^i$ has an involution $F_{\infty} \colon \P^i \to \P^i$ induced by complex conjugation,
which is called the real Frobenius.
The Lefschetz motivic period $\Lm$ is anti-invariant under $F_\infty$, i.e., $F_\infty(\Lm) = - \Lm$.
Let $\H^i$ be the subspace of $\P^i$ consisting of $F_\infty$-invariant elements.
Since the real Frobenius is compatible with the coaction, we obtain the map
\[
\Delta \colon \H^i \longrightarrow \A^i \otimes_\Q \H^i.
\]
Furthermore, we have a non-canonical isomorphism (\cite[\S 2.1.2 (2.10)]{glanois:tel-01347989})
\[
\H^i \cong \A^i \otimes_\Q \Q[(\Lm)^2].
\]

Henceforth, we write $\A$, $\P$, and $\H$ for $\A^2$, $\P^2$, and $\H^2$, respectively.
We retain the superscripts only when it is necessary to distinguish between $i=1$ and $i=2$.
By Borel's computation of the $K$-theory of number fields, we have
\[
\mathrm{Ext}^1_{\MT(\Z[1/2])}(\Q(0), \Q(n)) =
\begin{cases}
    \Q & \text{if}\ n = 2r+1 \ge 1, \\
    0 & \text{otherwise}.
\end{cases}
\]
Consequently, the Hopf algebra $\A$ is (non-canonically) isomorphic to the following graded Hopf algebra.
Let $A$ denote the graded Hopf algebra
\[
A = \Q\langle f_1, f_3, f_5, \dots \rangle.
\]
As a vector space, $A$ is the free associative algebra generated by 
$f_{2r+1}\ (r \ge 0)$ in degree $2r+1$.
The product is the shuffle product.
The coproduct is given by deconcatenation, i.e.,
\[
\Delta(f_{r_1}\cdots f_{r_n}) = \sum_{k = 0}^{n} f_{r_1}\cdots f_{r_k} \otimes f_{r_{k+1}}\cdots f_{r_n}.
\]
Then, there is a non-canonical isomorphism of graded Hopf algebras (\cite[\S 2.3 (2.35)]{glanois:tel-01347989})
\[
\A \cong A.
\]
Let $H$ be the $A$-comodule $H = A \otimes \Q[g_1^2]$, where $g_1$ is a formal symbol in degree 1,
and the coaction $\Delta \colon H \to A \otimes H$ is determined by $\Delta(g_1^2) = 1 \otimes g_1^2$.
Then there is an isomorphism of graded comodules
\begin{equation}
    \H \cong H, \label{eq:isom}
\end{equation}
which maps $(\Lm)^2$ to $g_1^2$.

\subsection{Motivic iterated integral}
For $a_0, \dots, a_{n+1} \in \C$ with $a_0 \ne a_1$ and $a_n \ne a_{n+1}$,
and a piecewise smooth path $\gamma \colon [0, 1] \to \C$ such that
$\gamma(0) = a_0,\; \gamma(1) = a_{n+1},\; \gamma((0, 1)) \subseteq \C \setminus \{a_1, \dots, a_n\}$,
the iterated integral is defined by 
\[
I_{\gamma}(a_0; a_1, \dots, a_n; a_{n+1}) = \int_{0<t_1<\dots<t_n<1} \prod_{i=1}^{n} \frac{d\gamma(t_i)}{\gamma(t_i)-a_i}.
\]
Using the theory of tangential base points, the iterated integral can also be defined when $a_0=a_1$ or $a_n=a_{n+1}$.
The alternating multiple zeta value can be expressed as the following iterated integral (e.g.\ \cite[\S 1, (1.3)]{glanois:tel-01347989}):
\[
\zeta\binom{k_1, \dots, k_r}{\epsilon_1, \dots, \epsilon_r} = (-1)^r\, I_{\dch}(0; \eta_1, \{0\}^{k_1-1}, \dots, \eta_r, \{0\}^{k_r-1}; 1),
\]
where $\dch$ denotes the straight path from 0 to 1, and $\eta_i = \epsilon_i \cdots \epsilon_r$.

When $a_0, \dots, a_{n+1} \in \{0, \pm 1\}$, the iterated integral $I_{\gamma}(a_0; a_1, \dots, a_n; a_{n+1})$
is a period of $\MT(\Z[1/2])$ (\cite[Corollary 1.4]{MR2140264}), and it admits a motivic lift
\[
\Im_{\gamma}(a_0; a_1, \dots, a_n; a_{n+1}) \in \P_n.
\]
Its image under the period map is 
\[
\per(\Im_{\gamma}(a_0; a_1, \dots, a_n; a_{n+1})) = I_{\gamma}(a_0; a_1, \dots, a_n; a_{n+1}).
\]
For an index $(k_1, \dots, k_r) \in \Z_{>0}^r$ and signs $\epsilon_1, \dots, \epsilon_r \in \{\pm1\}$,
the motivic alternating multiple zeta value is defined by the following motivic iterated integral:
\[
\zm\binom{k_1, \dots, k_r}{\epsilon_1, \dots, \epsilon_r} = (-1)^r\, \Im_{\dch}(0; \eta_1, \{0\}^{k_1-1}, \dots, \eta_r, \{0\}^{k_r-1}; 1),
\]
where $\eta_i = \epsilon_i \cdots \epsilon_r$.
Since the path $\dch$ is invariant under complex conjugation,
the above motivic iterated integral is invariant under $F_\infty$
and defines an element of $\H^2$.
When the convergence condition $(k_r, \epsilon_r) \neq (1, 1)$ holds, its image under the period map is
\[
\per\qty(\zm\binom{k_1, \dots, k_r}{\epsilon_1, \dots, \epsilon_r}) = \zeta\binom{k_1, \dots, k_r}{\epsilon_1, \dots, \epsilon_r}.
\]
We often write $k_i$ for $\binom{k_i}{1}$ and
$\overline{k}_i$ for $\binom{k_i}{-1}$ as in the case of alternating multiple zeta values.
When all signs $\epsilon_i = 1$, we write
\[
\zm\binom{k_1, \dots, k_r}{1, \dots, 1} = \zm(k_1, \dots, k_r) \in \H^1,
\]
and refer to it as a motivic multiple zeta value.
The $\Q$-algebra $\H^1$ (resp.\ $\H^2$) is generated by motivic MZVs (resp.\ motivic AMZVs),
as proved by Brown for $\H^1$ (\cite[Corollary 1.2]{MR2993755}), 
and by Deligne for $\H^2$ (\cite[Th\'eor\`eme 7.2]{MR2737978}).
\begin{dfn}
    For an index $\kk = (k_1, \dots, k_r) \in \Z_{>0}^r$, a motivic multiple $s$ value $\sm(\kk)$ is defined by
    \[
    \sm(\kk) = \sum_{\substack{\epsilon_i \in \{\pm 1\} \ (k_i > 1) \\ \epsilon_i = -1 \ (k_i = 1)}} 
    \zm\binom{k_1, \dots, k_r}{\epsilon_1, \dots, \epsilon_r}.
    \]
    A motivic multiple $\bar{s}$ value $\bsm(\kk)$ is defined by
    \[
    \bsm(\kk) = \sum_{\substack{\epsilon_i \in \{\pm 1\} \ (k_i > 1) \\ \epsilon_i = -1 \ (k_i = 1)}} 
    \epsilon_1\cdots\epsilon_r\, \zm\binom{k_1, \dots, k_r}{\epsilon_1, \dots, \epsilon_r}.
    \]
\end{dfn}
By \cite[Proposition 10]{MR4377309}, if each $k_i$ is greater than 1, we have
\[
\sm(\kk) = 2^{\dep\kk - \wt\kk}\, \zm(\kk).
\]
In particular, 
\begin{equation}
    \sm(\{2\}^r) = 2^{-r}\, \zm(\{2\}^r). \label{eq:smzm}
\end{equation}
Note that $\sm(\{2\}^r)$ is a rational multiple of $\zm(2)^r$ by \cite[Lemma 3.4]{MR2993755}.

Let $\Ia(a_0; a_1, \dots, a_n; a_{n+1})$ denote the image of $\Im_{\gamma}(a_0; a_1, \dots, a_n; a_{n+1})$ 
under the projection $\pia \colon \P \to \A$.
Note that $\Ia(a_0; a_1, \dots, a_n; a_{n+1})$ does not depend on the choice of $\gamma$.

Motivic iterated integrals satisfy the following relations (see e.g., \cite[\S 4.2]{MR4906315}):
\begin{enumerate}
    \item \label{enum:unit} $\Ia(a_0; a_1) = 1$,
    \item \label{enum:triv} $\Ia(a_0; a_1, \dots, a_n; a_{n+1}) = 0$ if $a_0 = a_{n+1}$ and $n \ge 1$,
    \item \label{enum:zero} $\Ia(a_0; \{0\}^n; a_{n+1}) = 0$ if $n \ge 1$,
    \item \label{enum:pathcomp} Path composition: for $x \in \{0, \pm1\}$,
          \[
          \Ia(a_0; a_1, \dots, a_n; a_{n+1}) = \sum_{k=0}^{n} \Ia(a_0; a_1, \dots, a_k; x)\, \Ia(x; a_{k+1}, \dots, a_n; a_{n+1}),
          \]
    \item \label{enum:pathrev} Path reversal: 
          $\Ia(a_0; a_1, \dots, a_n; a_{n+1}) = (-1)^n\, \Ia(a_{n+1}; a_n, \dots, a_1; a_0)$,
    \item \label{enum:homothety} Homothety:
          $\Ia(a_0; a_1, \dots, a_n; a_{n+1}) = \Ia(-a_0; -a_1, \dots, -a_n; -a_{n+1})$.
\end{enumerate}

The coaction $\Delta \colon \P \to \A \otimes_\Q \P$ on motivic iterated integrals 
is given by the following explicit formula,
which goes back to Goncharov's computation of the coproduct (\cite[Theorem 1.2]{MR2140264})
and was reformulated by Brown in terms of the coaction (\cite[Theorem 2.4]{MR2993755}):
\begin{equation}
    \begin{aligned}
        & \quad \Delta(\Im_{\gamma}(a_0; a_1, \dots, a_n; a_{n+1})) \\
        & = \sum_{\substack{0 = i_0 < i_1 < \cdots \\ < i_k < i_{k+1} = n+1}} 
            \Bigg(\prod_{p=0}^{k} \Ia(a_{i_p}; a_{i_p+1}, \dots, a_{i_{p+1}-1}; a_{i_{p+1}})\Bigg) 
            \otimes \Im_{\gamma}(a_0; a_{i_1}, \dots, a_{i_k}; a_{n+1}). \label{eq:coaction}
    \end{aligned}
\end{equation}

As in the case of motivic iterated integrals, we denote $\pia(\zm(\kk)), \pia(\sm(\kk))$, and $\pia(\bsm(\kk))$ 
by $\za(\kk), \sa(\kk)$, and $\bsa(\kk)$, respectively.

\begin{prop} \label{prop:chooseisom}
    We can choose an isomorphism~\eqref{eq:isom} satisfying
    \[
    \zm(\overline{2r+1}) \longmapsto f_{2r+1}.
    \]
\end{prop}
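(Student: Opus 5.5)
We start from an arbitrary isomorphism $\phi\colon \H \cong H$ of graded comodules as in \eqref{eq:isom}, sending $(\Lm)^2$ to $g_1^2$. It will then be modified by an automorphism of $H$, so that each $\zm(\overline{2r+1})$ is sent exactly to $f_{2r+1}$.

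The first step is to record what the coaction does to $\zm(\overline{2r+1})$. By \eqref{eq:coaction}, applied to $\zm(\overline{2r+1}) = -\Im_{\dch}(0;-1,\{0\}^{2r};1)$, together with the relations \ref{enum:unit}--\ref{enum:zero}, the only surviving terms are
\[
\Delta\zm(\overline{2r+1}) = 1\otimes \zm(\overline{2r+1}) + \za(\overline{2r+1})\otimes 1 .
\]
Every intermediate quotient contains a factor $\Ia(0;\{0\}^m;0)$, $\Ia(0;\{0\}^m;1)$ or $\Ia(-1;\{0\}^m;\ast)$, and these vanish by \ref{enum:triv} and \ref{enum:zero}. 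The last of these also needs homothety or path composition through $0$. Hence $\za(\overline{2r+1})$ is primitive in $\A_{2r+1}$.

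The second step is to show that this element is nonzero. The period of $\zm(\overline{2r+1})$ is $\zeta(\overline{2r+1}) = -(1-2^{-2r})\zeta(2r+1)$ for $r\ge1$, and $-\log 2$ for $r=0$. Suppose $\za(\overline{2r+1})$ were $0$. Then $\zm(\overline{2r+1})$ would lie in $\Q(\Lm)^{2r+1}\cap\H$. This space is zero because the weight is odd and $\H\cong A\otimes\Q[g_1^2]$ has no pure $g_1$-part in odd weight. That contradicts the nonvanishing of the period. Alternatively, in $H$ one checks directly that a primitive element of odd weight $2r+1$ lying in $1\otimes H$-type parts is impossible, since the coinvariants of $H$ are $\Q[g_1^2]$, which is concentrated in even degree. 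So the image of $\zm(\overline{2r+1})$ in $H_{2r+1}$ is nonzero and primitive.

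The third step is to identify the primitive elements of odd weight. The primitives of $A$ in degree $2r+1$ are spanned by the generator $f_{2r+1}$ alone, since $A$ is cofree with deconcatenation. For $H = A\otimes\Q[g_1^2]$ in odd degree, an element $x$ with $\Delta x = 1\otimes x + x'\otimes 1$ must come from the $g_1^0$ component. So $\phi(\zm(\overline{2r+1})) = c_r f_{2r+1}$ with $c_r\in\Q^\times$. The subtle point is whether $\Delta$ on $\H$ has this exact form, with the reduced coaction landing in $\A\otimes\Q$. I expect this check, that the odd-weight part of $\H$ with trivial reduced coaction is one-dimensional and spanned by $f_{2r+1}$, to be the main, though mild, obstacle.

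Finally, define the graded Hopf algebra automorphism $\sigma$ of $A$ by $f_{2r+1}\mapsto c_r^{-1}f_{2r+1}$ on generators, extended to words letterwise. Rescaling letters is compatible with both the shuffle product and deconcatenation. Extend $\sigma$ to $H$ by the identity on $g_1^2$. Then $\sigma\circ\phi$ is still an isomorphism of graded comodules (the coaction is $\A$-equivariant after transporting via $\sigma$), it sends $(\Lm)^2$ to $g_1^2$, and it sends $\zm(\overline{2r+1})$ to $f_{2r+1}$, as required.
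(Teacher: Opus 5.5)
Your proposal is correct and follows the paper's proof. The coaction formula \eqref{eq:coaction} with relations \ref{enum:unit})--\ref{enum:zero}) gives $\Delta\zm(\overline{2r+1}) = 1\otimes\zm(\overline{2r+1}) + \za(\overline{2r+1})\otimes 1$, the structure of $H$ forces the image to be $c_r f_{2r+1}$, and the period map gives $c_r\neq 0$; you additionally make explicit the letterwise rescaling automorphism of $A$ that the paper leaves implicit.

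One small slip: your list of vanishing factors misses $\Ia(0;-1,\{0\}^m;0)$, which arises when $-1$ is dropped but a later $0$ is kept; it vanishes by \ref{enum:triv}). Also, your nonvanishing step can be shortened: $c_r=0$ would force $\zm(\overline{2r+1})=0$, contradicting $\zeta(\overline{2r+1})\neq 0$.
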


\begin{prf}
    By the definition of motivic AMZVs, we have 
    \[
    \zm(\overline{2r+1}) = -\Im_{\dch}(0; -1, \{0\}^{2r}; 1).
    \]
    By the property~\ref{enum:unit}), \ref{enum:triv}), and \ref{enum:zero}), 
    it follows from the formula~\eqref{eq:coaction} that
    \begin{equation}
        \Delta(\zm(\overline{2r+1})) = 1 \otimes \zm(\overline{2r+1}) + \za(\overline{2r+1}) \otimes 1. \label{eq:Delta}
    \end{equation}
    From the structure of $H$, the element $\zm(\overline{2r+1})$ is sent to $c_r f_{2r+1}$ for some $c_r \in \Q$ under the isomorphism.
    Applying the period map yields $c_r \ne 0$, this completes the proof.
\end{prf}

By \cite[Lemma 4.5]{MR3271288}, we have
\[
\zm(2) = - \frac{(\Lm)^2}{24}.
\]
Thus, we have an isomorphism
\[
\phi \colon \H \overset{\sim}{\longrightarrow} H = \Q\langle f_1, f_3, \dots \rangle \otimes_\Q \Q[f_2],
\]
which maps $\zm(\overline{2r+1})$ to $f_{2r+1}$ and $\zm(2)$ to $f_2 = - g_1^2/24$.

\subsection{Infinitesimal coaction}
The explicit formula for the coaction given in $\eqref{eq:coaction}$ contains rich information, 
but it is still complicated to compute.
In \cite{MR2993755}, Brown introduced a more tractable map called the infinitesimal coaction.
We denote the reduced coaction $\Delta - 1 \otimes \id \colon \H \to \A_{>0} \otimes \H$ by $\overline{\Delta}$.
\begin{dfn}
    Let $\L$ be the Lie coalgebra $\A_{>0}/(\A_{>0})^2$.
    For $r \ge 1$, define a map
    \[
    D_r \colon \H \longrightarrow \L_r \otimes_\Q \H
    \]
    to be the composition
    \[
    \H \overset{\overline{\Delta}}{\longrightarrow} \A_{>0} \otimes_\Q \H \longrightarrow \L \otimes_\Q \H \longrightarrow \L_r \otimes_\Q \H.
    \]
    This is a derivation, i.e.,
    \[
    D_r(\xi_1 \xi_2) = (1 \otimes \xi_1)D_r(\xi_2) + (1 \otimes \xi_2)D_r(\xi_1) \quad \text{for all} \ \xi_1, \xi_2 \in \H.
    \]
\end{dfn}
Let $\Il(a_0; a_1, \dots, a_n; a_{n+1})$ be the image of $\Ia(a_0; a_1, \dots, a_n; a_{n+1})$ under the natural projection $\A \to \L$.
Similarly, we write $\zl(\kk), \sl(\kk)$ and $\bsl(\kk)$ for the image of $\za(\kk), \sa(\kk)$ and $\bsa(\kk)$ under the projection, respectively.
Note that $\sl(\{2\}^d) = 0$ for $d \ge 0$, since $\sm(\{2\}^d)$ is a rational multiple of $\zm(2)^d$.
By the formula~$\eqref{eq:coaction}$, we have
\begin{equation}
    \begin{aligned}
        & \quad D_r(\Im_{\gamma}(a_0; a_1, \dots, a_n; a_{n+1})) \\
        & = \sum_{p = 0}^{n-r} \Il(a_p; a_{p+1}, \dots, a_{p+r}; a_{p+r+1})
        \otimes \Im_{\gamma}(a_0; a_1, \dots, a_p, a_{p+r+1}, \dots, a_n; a_{n+1}). \label{eq:infcoaction}
    \end{aligned}
\end{equation}
We express the right-hand side of the above equation as the following diagram.
\begin{center}
    \vspace{0.8cm}
    $\Im_{\gamma}(a_0; a_1, \dots, \tikznode{ap}{a_p}, \underbrace{a_{p+1}, \dots, a_{p+r}}_{\text{block of length}\ r}, \tikznode{apr}{a_{p+r+1}}, \dots, a_n; a_{n+1})$
    \begin{tikzpicture}[remember picture, overlay] 
        \draw ([yshift=3pt]ap.north) |- ++(0,0.4) -| ([yshift=3pt]apr.north);
    \end{tikzpicture}
    \vspace{0.3cm}
\end{center}
By abuse of notation, we also write $D_r$ for the map
\[
H \overset{\overline{\Delta}}{\longrightarrow} A_{>0} \otimes_\Q H \longrightarrow L \otimes_\Q H \longrightarrow L_r \otimes_\Q H,
\]
where $L$ is the Lie coalgebra $A_{>0}/(A_{>0})^2$.

\subsection{Coradical filtration}
\begin{dfn}
    For $d \ge -1$, we define the coradical filtration $C_d\H$ of $\H$ recursively by
    $C_{-1}\H = 0$ and
    \[
    C_d\H = \{a \in \H \mid \overline{\Delta}(a) \in \A \otimes C_{d-1}\H\}.
    \]
    Similarly, we define the coradical filtration $C_d\A$ of $\A$ by
    $C_{-1}\A = 0$ and
    \[
    C_d\A = \{a \in \A \mid \overline{\Delta}(a) \in \A \otimes C_{d-1}\A\}.
    \]
    We denote $C_d\H/C_{d-1}\H$ (resp.\ $C_d\A/C_{d-1}\A$) by $\gr_d\H$ (resp.\ $\gr_d\A$).
    We use the same notation for $H$ and $A$.
\end{dfn}
For $d \ge 1$, the coradical filtration satisfies
\[
\overline{\Delta}(C_d\H) \subseteq \sum_{i=1}^{d} C_i\A \otimes C_{d-i}\H.
\]
Thus, we obtain the map
\[
\widetilde{\Delta} \colon \gr_d\H \overset{\overline{\Delta}}{\longrightarrow} C_1\A \otimes_\Q \gr_{d-1}\H \longrightarrow C_1\L \otimes_\Q \gr_{d-1}\H,
\]
where $C_1\L$ is the image of $C_1\A$ under the projection $\A \to \L$.
\begin{lem} \label{lem:grcoaction}
    The map $\widetilde{\Delta}$ is an isomorphism.
\end{lem}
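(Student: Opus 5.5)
Everything can be transported to the explicit model. By the isomorphism $\phi\colon \H \xrightarrow{\sim} H = A\otimes_\Q \Q[f_2]$ and $\A\cong A$, which are compatible with coactions, the coradical filtrations, the maps $\overline{\Delta}$, and the projections to $C_1\L$ correspond. So it suffices to prove that
\[
\widetilde{\Delta}\colon \gr_d H \longrightarrow C_1 L \otimes_\Q \gr_{d-1} H
\]
is an isomorphism. We also need $\overline{\Delta}(C_d\H)\subseteq\sum_i C_i\A\otimes C_{d-i}\H$, which is stated just before the lemma, so $\widetilde{\Delta}$ is well defined.

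The first step is to compute the coradical filtration of $H$ explicitly. For $A=\Q\langle f_1,f_3,\dots\rangle$ with deconcatenation coproduct, $\overline{\Delta}(w)=\sum_{k\ge1} f_{r_1}\cdots f_{r_k}\otimes f_{r_{k+1}}\cdots f_{r_n}$. By induction on $d$, $C_dA$ is the span of words of length $\le d$. Since $\Delta(g_1^2)=1\otimes g_1^2$ and $f_2=-g_1^2/24$ is primitive-like, one has $\overline{\Delta}(w\,f_2^m)=\overline{\Delta}(w)(1\otimes f_2^m)$. Hence $C_dH$ is spanned by $w f_2^m$ with $\mathrm{length}(w)\le d$. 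To check this rigorously:

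\begin{itemize}
\item A combination of elements $w f_2^m$ has all its $f_2^m$-components independent.
\item In $\overline{\Delta}$ of a word of length $n$, the term with $k=1$ produces right factors of length exactly $n-1$.
\item These right factors are linearly independent modulo shorter words.
\end{itemize}

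Consequently $\gr_d H$ has basis the classes of $f_{r_1}\cdots f_{r_d} f_2^m$. In the same way, $C_1A$ is spanned by $1$ and the letters $f_{2r+1}$. Products of positive-degree elements have length $\ge 2$, so $C_1L\cong\bigoplus_{r\ge0}\Q f_{2r+1}$.

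The second step computes $\widetilde{\Delta}$ on this basis. In $\overline{\Delta}(f_{r_1}\cdots f_{r_d}f_2^m)$, the terms with $k\ge2$ have right factor of length $\le d-2$ and so vanish in $\gr_{d-1}$; they also have left factor of length $\ge2$, which is not in $C_1$ anyway. Hence
\[
\widetilde{\Delta}\bigl(f_{r_1}\cdots f_{r_d} f_2^m\bigr)= f_{r_1}\otimes f_{r_2}\cdots f_{r_d}f_2^m .
\]
This map sends the basis of $\gr_dH$ bijectively onto the basis $\{f_{2r+1}\otimes(\text{basis of }\gr_{d-1}H)\}$ of $C_1L\otimes\gr_{d-1}H$. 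It is therefore an isomorphism, and in each weight both sides are finite-dimensional.

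The main obstacle is the first step: the description of $C_dH$, specifically the inclusion "$\subseteq$". We must show that an element whose reduced coaction lands in $A\otimes C_{d-1}H$ has no words of length $>d$. I would handle it by taking the maximal length $n>d$ appearing, with its top-length part $\sum c_w w f_2^{m_w}$. Then I would look at the component of $\overline{\Delta}$ in $L_{\text{letter}}\otimes(\text{length }n-1)$. This component is $\sum c_w f_{r_1(w)}\otimes(\text{rest of }w)f_2^{m_w}$, which is nonzero because distinct words give distinct pairs (first letter, remainder). This contradicts membership in $A\otimes C_{d-1}H$, since $n-1\ge d$.
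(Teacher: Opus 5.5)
Your proposal is correct and follows the paper's proof: you transport along $\phi$ to $H$, identify $C_dA$ and $\mathrm{gr}_dH$ by word length in the letters $f_{2r+1}$ (with $f_2^m$ as an inert factor), and observe that $\widetilde{\Delta}$ sends the basis element $f_{r_1}\cdots f_{r_d}f_2^m$ to $\overline{f}_{r_1}\otimes f_{r_2}\cdots f_{r_d}f_2^m$, which is a bijection onto a basis of $C_1L\otimes\mathrm{gr}_{d-1}H$. Your maximal-length argument also supplies the justification of the description of $C_dH$ that the paper only asserts; when writing it up, index the top-length part by pairs $(w,m)$ rather than by $w$ alone, since a non-homogeneous element may contain the same word with several powers of $f_2$.
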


\begin{prf}
    By the isomorphism $\phi \colon \H \to H$, it suffices to show the same statement for $H$.
    By the definition of the coradical filtration, we have
    \[
    C_d A = \bigoplus_{0 \le d' \le d}\, \bigoplus_{r_1, \dots, r_{d'} \ge 1,\, \text{odd}} \Q\, f_{r_1}\cdots f_{r_{d'}}.
    \]
    Since the coaction acts trivially on $g_1$, it follows that
    \begin{equation}
        \gr_d H = \bigoplus_{k \ge 0} \bigoplus_{r_1, \dots, r_d \ge 1,\, \text{odd}} \Q\, f_{r_1}\cdots f_{r_d}\, g_1^{2k}. \label{eq:grU}
    \end{equation}
    Since the coproduct of $A$ is given by deconcatenation, we have
    \[
    \widetilde{\Delta}(f_{r_1}\cdots f_{r_d}\, g_1^{2k}) = \overline{f}_{\! r_1} \otimes f_{r_2}\cdots f_{r_d}\, g_1^{2k},
    \]
    where $\overline{f}_{\! r} \in L$ denotes the image of $f_r$ under the projection $A \to L$.
    Since the elements $\overline{f}_{\! r} \ (r \ge 1,\, \text{odd})$ are linearly independent, we obtain
    \[
    C_1 L = \bigoplus_{r_1 \ge 1,\, \text{odd}} \Q\, \overline{f}_{\! r_1}.
    \]
    Therefore, the map $\widetilde{\Delta} \colon \gr_d H \to C_1 L \otimes \gr_{d-1}H$ is an isomorphism.
\end{prf}
By the proof of Lemma~\ref{lem:grcoaction}, the structure of $C_1 L$ in each weight is
\[
C_1 L_N =
\begin{cases}
    \Q\, \overline{f}_{\! N} & \text{if} \ N: \text{odd}, \\
    0 & \text{if} \ N: \text{even}.
\end{cases}
\]
Thus, via the isomorphism $\phi$, the structure of $C_1\L$ in each weight is
\[
C_1\L_N =
\begin{cases}
    \Q\, \zl(\overline{N}) & \text{if} \ N: \text{odd}, \\
    0 & \text{if} \ N: \text{even}.
\end{cases}
\]
Therefore, we obtain the isomorphism
\begin{align}
\widetilde{\Delta} \colon \gr_d\H_N & \longrightarrow (C_1\L \otimes \gr_{d-1}\H)_N \\
& = \bigoplus_{1 \le 2r+1 \le N} C_1\L_{2r+1} \otimes \gr_{d-1}\H_{N-(2r+1)} \\
& = \bigoplus_{1 \le 2r+1 \le N} \Q\, \zl(\overline{2r+1}) \otimes \gr_{d-1}\H_{N-(2r+1)}.
\end{align}

\begin{lem} \label{lem:corad}
    Let $d \ge 1$.
    For $\xi \in \H_N$, $\xi \in C_d\H_N$ is equivalent to
    \[
    \forall r \ge 0, \ D_{2r+1}(\xi) \in \L_{2r+1} \otimes C_{d-1}\H.
    \]
\end{lem}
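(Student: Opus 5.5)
We transport everything through the isomorphism $\phi\colon \H \to H = \Q\langle f_1,f_3,\dots\rangle\otimes\Q[f_2]$ of Proposition~\ref{prop:chooseisom}. Since $\phi$ is an isomorphism of graded $A$-comodules, it preserves the coradical filtration and intertwines the maps $D_r$. It therefore suffices to prove the analogous statement for $H$: for $\xi\in H_N$ we have $\xi\in C_dH$ if and only if $D_{2r+1}(\xi)\in L_{2r+1}\otimes C_{d-1}H$ for all $r\ge0$. Here $L=A_{>0}/(A_{>0})^2$, and $C_dH$ is spanned by the words of length at most $d$ in the odd letters $f_{r_i}$, multiplied by powers of $g_1^2$, as in the proof of Lemma~\ref{lem:grcoaction}.

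The first step is to describe $D_{2r+1}$ explicitly on $H$. Products of two or more positive-degree elements vanish in $L$, so the only words in $A_{>0}$ that survive in $L$ modulo shuffles are, up to the free Lie coalgebra structure, indecomposables. The degree-$(2r+1)$ part $L_{2r+1}$ is the degree-$(2r+1)$ piece of the free Lie coalgebra on the $f$'s. Write $\xi=\sum_w c_w\, w\, g_1^{2k_w}$ with words $w=f_{r_1}\cdots f_{r_m}$. Deconcatenation then gives
\[
D_{2r+1}(w\,g_1^{2k})=\sum_{j}\overline{f_{r_1}\cdots f_{r_j}}\otimes f_{r_{j+1}}\cdots f_{r_m}\,g_1^{2k},
\]
where the sum runs over the prefixes of total degree $2r+1$ and the bar denotes the class in $L$. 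Every nonempty word has nonzero class in $L$: the shuffle algebra is free commutative on Lyndon words, so the decomposables are spanned by nontrivial shuffles, and a single word is never among them in a way that kills it for all choices. To avoid this subtlety, we will instead use the map $\overline{\Delta}$ together with the projection onto the summand $L_{2r+1}$ only where needed.

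For the forward direction, suppose $\xi\in C_d H$. Then $\overline{\Delta}\xi\in A\otimes C_{d-1}H$ by definition, and projecting to $L_{2r+1}\otimes H$ gives $D_{2r+1}(\xi)\in L_{2r+1}\otimes C_{d-1}H$. This direction is immediate and requires no explicit description of $D_{2r+1}$. The converse is the real content, and we argue by contraposition using the top length component. Let $m>d$ be the maximal word length occurring in $\xi$, and let $\xi_m$ be the length-$m$ part. It is nonzero modulo $C_{m-1}H$. Consider the component of $D_{2r+1}(\xi)$ in which the left factor comes from a prefix that is a single letter $f_{2r+1}$. The image $\overline{f}_{2r+1}\in C_1L_{2r+1}$ is nonzero, and the classes $\overline{f}_{2r+1}$ for odd $2r+1$ together with classes of longer Lie elements are linearly independent in $L$. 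We project $L_{2r+1}$ onto the line $\Q\,\overline{f}_{2r+1}$ by a functional that kills the images of longer words; this is possible because $L$ is graded by length as a quotient of the length-graded $A_{>0}$ by the length-homogeneous ideal $(A_{>0})^2$. Under this projection, $D_{2r+1}(\xi)$ maps to $\overline{f}_{2r+1}\otimes\partial_{2r+1}\xi$, where $\partial_{2r+1}$ deletes a leading $f_{2r+1}$. Since $\xi_m\ne0$, some $r$ gives $\partial_{2r+1}\xi_m\ne0$, and this is a nonzero element of length $m-1>d-1$. Hence $D_{2r+1}(\xi)\notin L_{2r+1}\otimes C_{d-1}H$, which is a contradiction.

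The main obstacle is the last step: justifying that the right tensor factor keeps its length $m-1$ modulo $C_{m-2}$. One must also check that the contributions from longer prefixes, which project to zero under the chosen functional, do not interfere. I expect to handle this by exploiting the grading of $A$ by word length, which is compatible with the shuffle product and with deconcatenation. This compatibility makes every map length-homogeneous and reduces the claim to the injectivity statement of Lemma~\ref{lem:grcoaction}, namely that $\widetilde{\Delta}$ is an isomorphism on $\gr_m H$.
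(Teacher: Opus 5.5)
Your argument is correct, but its key step differs from the paper's. Both proofs pass to $H$ and reduce to showing that $\partial_{2r+1}\xi$ (the part of $\xi$ whose words begin with $f_{2r+1}$, with that letter deleted) lies in $C_{d-1}H$. The difficulty is that $D_{2r+1}(\xi)$ also contains terms $\overline{u}\otimes v$ in which $u$ is a multi-letter prefix of degree $2r+1$.

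The paper removes these terms by successive subtraction. It treats $r=0,1,2,\dots$ in order. The remainder $\xi-\sum_{r'<r}f_{2r'+1}v_{r'}$ consists of words whose first letter has degree $\ge 2r+1$, so the only degree-$(2r+1)$ prefix is $f_{2r+1}$ itself. The subtracted part already lies in $C_dH$, and the ``only if'' direction controls it. This needs nothing about $L$ beyond $\overline{f}_{2r+1}\neq 0$.

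You instead use that the shuffle product is homogeneous in word length. Hence $L$ is length-graded, with length-one part $\bigoplus_r \Q\,\overline{f}_{2r+1}$. A functional $\lambda$ projecting onto this part kills every multi-letter prefix at once, so $(\lambda\otimes\mathrm{id})D_{2r+1}(\xi)=\partial_{2r+1}\xi$. This is non-iterative. It also makes your contrapositive top-length device unnecessary: the hypothesis gives $\partial_{2r+1}\xi\in C_{d-1}H$ for every $r$ directly. Therefore $\xi=c\,g_1^{N}+\sum_r f_{2r+1}\,\partial_{2r+1}\xi\in C_dH$, with $c=0$ unless $N$ is even.

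The ``main obstacle'' you flag at the end is not one. Contributions from longer prefixes are already annihilated by $\lambda$. Deleting a leading letter lowers length by exactly one, so the length-$(m-1)$ component of $\partial_{2r+1}\xi$ is $\partial_{2r+1}\xi_m$. No appeal to Lemma~\ref{lem:grcoaction} is needed.

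Drop the remark that every nonempty word has nonzero class in $L$: it is false, since $f_1f_1=\tfrac12\, f_1\shuffle f_1\in (A_{>0})^2$. You rightly do not use it.
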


\begin{prf}
    By the isomorphism $\phi$, it suffices to show the same assertion for $H$.
    The ``only~if'' direction follows immediately from the definition of the coradical filtration, 
    and we shall use it in the proof of the converse implication below.
    Let $\xi \in H_N$.
    Assume that $D_{2r+1}(\xi) \in L_{2r+1} \otimes C_{d-1}H$ for all $r \ge 0$.
    The element $\xi \in H_N$ can be uniquely written in the form
    \[
    \xi = \sum_{1 \le 2r+1 \le N} f_{2r+1}\, v_r,
    \]
    where $v_r \in H_{N-(2r+1)}$, and the multiplication $f_{2r+1} v_r$ is the concatenation.
    Then, we have
    \[
    D_1(\xi) = \overline{f}_{\! 1} \otimes v_0,
    \]
    which implies $v_0 \in C_{d-1}H$ by the assumption.
    Thus, $f_1v_0$ lies in $C_d H$.
    Next, we have
    \[
    D_3(\xi - f_1v_0) = \overline{f}_{\! 3} \otimes v_1.
    \]
    Since $f_1v_0 \in C_d H$, we have $D_3(f_1v_0) \in L_3 \otimes C_{d-1}H$.
    Therefore, it follows from the assumption that $D_3(\xi - f_1v_0)$ also lies in $L_3 \otimes C_{d-1}H$, which implies $v_1 \in C_{d-1}H$.
    Iterating this argument, for all $1 \le 2r+1 \le N$, we have $v_r \in C_{d-1}H$, which implies $\xi$ belongs to $C_d H$.
\end{prf}

In what follows, we only consider the motivic iterated integral $\Im_\gamma(-)$ 
for $\gamma=\dch$. 
Thus, we omit $\dch$ and simply write $\Im(-)$.
\begin{prop} \label{prop:deriv}
    Let $d \ge 0$ and $l_0, \dots, l_d \ge 0$.
    Then we have
    \[
    \sm(\{2\}^{l_0}, 1, \{2\}^{l_1}, 1, \dots, \{2\}^{l_{d-1}}, 1, \{2\}^{l_d}) \in C_d\H.
    \]
\end{prop}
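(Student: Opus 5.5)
We argue by induction on $d$, using Lemma~\ref{lem:corad} as the criterion. The case $d=0$ is immediate: by \eqref{eq:smzm}, $\sm(\{2\}^{l_0})$ is a rational multiple of $\zm(2)^{l_0}$, which lies in $C_0\H$. Indeed, $\Delta(\zm(2)) = 1\otimes \zm(2)$, since $\zm(2)$ is a multiple of $(\Lm)^2$.

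For $d\ge1$ we must show
\[
D_{2r+1}\bigl(\sm(\{2\}^{l_0},1,\dots,1,\{2\}^{l_d})\bigr)\in \L_{2r+1}\otimes C_{d-1}\H \quad\text{for all } r\ge0.
\]
We may even prove the stronger statement that the right-hand factors are $\Q$-linear combinations of elements of the form $\sm(\{2\}^{m_0},1,\dots,1,\{2\}^{m_{e}})$ with $e\le d-1$, possibly multiplied by $\zm(2)$-powers. If this is not literally true, we prove that they lie in $C_{d-1}\H$ by the induction hypothesis.

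The first step is to rewrite $\sm$ as a single sum of motivic iterated integrals in the letters $0,\pm1$ in a convenient form. Passing from $\epsilon_i$ to $\eta_i=\epsilon_i\cdots\epsilon_r$, each entry $k_i=2$ with free $\epsilon_i$ becomes a pair of letters $(\eta_i,0)$, and each entry $k_i=1$ with $\epsilon_i=-1$ becomes a single letter $\eta_i$. The constraints are as follows. Across a $2$-block the sign $\eta$ is free, and summing over $\epsilon_i$ is the same as summing over $\eta_i\in\{\pm1\}$ independently. Across a $1$ we have $\eta_i=-\eta_{i+1}$. Thus $\sm(\kk)$ equals $(-1)^r$ times a sum, over the free signs attached to the $2$'s, of $\Im(0;\,w;\,1)$. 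Here $w$ is built from the blocks $(\pm1,0)$ and from letters $\pm1$ for the $1$'s whose sign is forced by the next letter. Using homothety (property~\ref{enum:homothety})), the natural symmetry is $\eta\mapsto-\eta$ on a whole segment.

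Next we apply \eqref{eq:infcoaction} and sort the subsequences $(a_{p+1},\dots,a_{p+2r+1})$ according to the number $j$ of letters coming from $1$-entries they contain. If $j\ge1$, then after the quotient word is regrouped into $2$-blocks and $1$-letters, the remaining word has $d-j$ letters of the second kind. We check, by summing over the free signs inside and at the boundary of the cut segment, that the quotient factor is again a combination of $\sm$ of the same shape with $d-j\le d-1$ ones. The boundary letters $a_p,a_{p+r+1}$ may merge as $(\eta,0)$ followed by $0$, or create a mismatched sign, and these cases are handled by pairing terms. The induction hypothesis then puts this factor in $C_{d-1}\H$.

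If $j=0$, the cut segment lies inside a run of $2$-blocks. We pair each term with the one obtained by flipping all free signs in the segment. By homothety and path reversal (properties~\ref{enum:homothety}) and \ref{enum:pathrev})) together with the odd length $2r+1$, the left factors $\Il$ either cancel in pairs or combine into something whose quotient still has fewer than $d$ ones. Failing that, we show these contributions reproduce exactly the quotient shape with $d$ ones but with left factor zero in $\L$. This is modeled on $\sl(\{2\}^m)=0$ and on the computation in \cite[Prop.~10]{MR4377309} that $\sm(\kk)=2^{\dep\kk-\wt\kk}\zm(\kk)$ for $k_i\ge2$.

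The main obstacle is this bookkeeping of boundary effects and the $j=0$ cancellation. I would first treat $d=1$ by hand, showing that every $D_{2r+1}\sm(\{2\}^a,1,\{2\}^b)$ has right factor in $C_0\H$, i.e.\ a multiple of $\zm(2)^{a+b-r}$, consistent with Theorem~\ref{thm:sidentity}. From that case I would extract the general cancellation pattern before writing the induction.
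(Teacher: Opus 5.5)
Your strategy is the same as the paper's: induction on $d$ through Lemma~\ref{lem:corad}, expansion of $\sm$ into motivic iterated integrals with blocks $(\eta,0)$ and forced $1$-letters, formula \eqref{eq:infcoaction}, and a case analysis of the cut. Your argument for a cut containing no $1$-letter, of the form $0,\epsilon_1,0,\dots,\epsilon_k,0$ between two nonzero endpoints, is sound: homothety, path reversal and the odd length give $S=-S$ for the sum $S$ over the interior signs. That is a self-contained substitute for the citation of \cite[Lemma 9]{MR4377309} in the paper's case (B).

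The gap is in the case $j\ge1$, which is what carries the induction. There your one structural claim, that the quotient has $d-j$ ones, is false.
\begin{itemize}
\item Suppose the cut starts with the $0$ of a block $(\eta,0)$ and stops just before a nonzero letter $\eta'$. Then $\eta$ loses its $0$ and becomes a new $1$-letter of the quotient. (It is a legitimate $1$-letter only because property~\ref{enum:triv}) kills the terms with $\eta=\eta'$.) So the quotient has $d-j+1$ ones. For $j=1$ that is $d$, and the induction hypothesis does not apply.
\item In the other direction, suppose the left endpoint is a $1$-letter and the right endpoint is $0$. That letter acquires a $0$, and the count is $d-j-1$. This already occurs for $j=0$, for example the case $s=l_i+1$ of the paper's (D). So your $j=0$ case is not purely a cancellation inside a run of $2$-blocks.
\end{itemize}

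The missing idea is a parity count. A cut of the first kind consists of one $0$, some number $n$ of complete blocks $(\eta,0)$, and $j$ letters coming from $1$'s. Its length is therefore $1+2n+j$, and oddness forces $j$ to be even. Either $j=0$, which your vanishing argument handles, or $j\ge2$, and then the quotient has at most $d-1$ ones. In the paper's notation this is the remark that one may assume $i<j$ in (G) by parity.

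Your remedy, ``handled by pairing terms'', is not what makes these contributions harmless. They do not cancel in general: the (G)-type terms are exactly what produce the $\bsl$-terms in Proposition~\ref{prop:Delta}.

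You also still owe the factorization step. For each case, rescale the left factor by a boundary sign via homothety, so that after summing over the interior signs it no longer depends on the outer signs. Only then does the remaining sum reassemble into $\pm\sm$ of the new shape. The paper does this explicitly for (C), (D), (F), (G), (H); your ``We check\dots'' only asserts it.

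The paper's proof that (E) vanishes, via \eqref{eq:pathcomp}, is not needed for this proposition, only for Proposition~\ref{prop:Delta}.
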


\begin{prf}
    We proceed by induction on the degree $d$.
    When $d = 0$, since $\sm(\{2\}^{l_0})$ is a rational multiple of $\zm(2)^{l_0}$,
    we have $\phi(\sm(\{2\}^{l_0})) \in \Q\, f_2^{l_0} = C_0 H_{2l_0}$,
    which implies $\sm(\{2\}^{l_0}) \in C_0\H$.

    Let $d \ge 1$, and assume that the assertion holds for all $0 \le d' < d$.
    Set $\kk = (\{2\}^{l_0}, 1, \dots, 1, \{2\}^{l_d})$.
    By Lemma~\ref{lem:corad}, it suffices to show that 
    \[
    \forall r \ge 0, \ D_{2r+1}(\sm(\kk)) \in \L_{2r+1} \otimes C_{d-1}\H.
    \]
    By the definition of motivic M$s$Vs and motivic AMZVs, we have
    \begin{align}
        (-1)^{\dep\kk}\, \sm(\kk)
        & = \sum_{\substack{\eta_{k, l} \in \{\pm1\} \\ 0 \le k \le d \\ 1 \le l \le l_i}}
            \Im(0; \eta_{0, 1}, 0, \dots, \eta_{0, l_0}, 0, - \eta_{1, 1}, \eta_{1, 1}, 0, \\[-5ex]
        & \qquad\qquad\qquad\qquad\qquad \dots, \eta_{d-1, l_{d-1}}, 0, -\eta_{d, 1}, \eta_{d, 1}, 0, \dots, \eta_{d, l_d}, 0; 1).
    \end{align}
    For convenience, set
    \[
    \eta_{i,l_i+1} \coloneqq -\eta_{i+1,1} \quad \text{for} \ 0 \le i \le d-1, \qquad 
    \eta_{d,l_d+1} \coloneqq 1.
    \]
    We apply the map $D_{2r+1}$ to the right-hand side and compute the resulting expression using the formula \eqref{eq:infcoaction}.
    First, we consider the terms which do not contain the sequence $-\eta_{i, 1}, \eta_{i, 1}$.
    \begin{center}
        \vspace{1.0cm}
        $\Im(0; \dots, \eta_{i, 1}, 0, \dots, \tikznode{etais}{\eta_{i, s}}, \tikznode{zerois}{0}, \dots, \tikznode{etait}{\eta_{i, t}}, \tikznode{zeroit}{0}, \dots, \eta_{i, l_{i+1}}, \dots ; 1)$
        \begin{tikzpicture}[remember picture, overlay] 
            \draw ([yshift=3pt]zerois.north) -- ++(0,0.4) -| node[pos=0, left] {(A)} ([yshift=3pt]zeroit.north);
            
            \draw ([yshift=-3pt]etais.south) -- ++(0,-0.4) -| node[pos=0.9, below right] {(B)} ([yshift=-3pt]etait.south);
        \end{tikzpicture}
        \vspace{1.0cm}
    \end{center}
    Note that no term starting at 0 and ending at $\eta_{i, s}$ appears because of parity.
    The term (A) vanishes by \ref{enum:triv}).
    The term (B) is not zero in general, but after summing over $\eta_{i,s+1}, \ldots, \eta_{i,t-1} \in \{\pm1\}$,
    it vanishes by \cite[Lemma 9]{MR4377309}.

    Next, we consider the terms containing the sequence $-\eta_{i, 1}, \eta_{i, 1}$ and having 0 as one of their endpoints.
    \begin{center}
        \vspace{1.0cm}
        $\Im(0; \dots, \eta_{i, 1}, 0, \dots, \tikznode{etais}{\eta_{i, s}}, \tikznode{zerois}{0}, \dots, \eta_{i, l_i}, 0, -\eta_{i+1, 1}, \eta_{i+1, 1}, 0, \dots, \tikznode{zerojt}{0}, \tikznode{etajt1}{\eta_{j, t+1}}, \dots, \eta_{j, l_j}, \dots ; 1)$
        \begin{tikzpicture}[remember picture, overlay] 
            \draw ([yshift=3pt]zerois.north) -- ++(0,0.4) -| node[pos=0, left] {(C)} ([yshift=3pt]etajt1.north);
            
            \draw ([yshift=-3pt]etais.south) -- ++(0,-0.4) -| node[pos=0.9, below right] {(D)} ([yshift=-3pt]zerojt.south);
        \end{tikzpicture}
        \vspace{1.0cm}
    \end{center}
    For the term (C), by the property \ref{enum:homothety}), we have
    \begin{align}
        & \quad \sum_{\substack{\eta_{k, l} \in \{\pm1\} \\ 0 \le k \le d \\ 1 \le l \le l_i}}
            \Big(\Il(0; \eta_{i, s+1}, 0, \dots, \eta_{i, l_i}, 0, -\eta_{i+1, 1}, \eta_{i+1, 1}, 0, \dots, \eta_{j, t}, 0; \eta_{j, t+1}) \\[-2ex]
        & \qquad\qquad\qquad \otimes \Im(0; \dots, \eta_{i, 1}, 0, \dots, \eta_{i,s}, 0, \eta_{j, t+1}, \dots, \eta_{j, l_j}, 0, \dots; 1)\Big) \\[2ex]
        & = \sum_{\substack{\eta_{k, l} \in \{\pm1\} \\ 0 \le k \le d \\ 1 \le l \le l_i}}
            \Big(\Il(0; \eta_{j, t+1}^{-1}\eta_{i, s+1}, 0, \dots, \eta_{j, t+1}^{-1}\eta_{i, l_i}, 0, \dots, \eta_{j, t+1}^{-1}\eta_{j, t}, 0; 1) \\[-2ex]
        & \qquad\qquad\qquad \otimes \Im(0; \dots, \eta_{i, 1}, 0, \dots, \eta_{i,s}, 0, \eta_{j, t+1}, \dots, \eta_{j, l_j}, 0, \dots; 1)\Big) \\[2ex]
        & = \sum_{\eta_{i, s+1}, \dots, \eta_{j, t} \in \{\pm1\}}
            \Il(0; \eta_{i, s+1}, 0, \dots, \eta_{i, l_i}, 0, -\eta_{i+1, 1}, \eta_{i+1, 1}, 0, \dots, \eta_{j, t}, 0; 1) \\[1ex]
        & \qquad \otimes \sum_{\substack{\eta_{0, 1}, \dots, \eta_{i, s} \\ \eta_{j, t+1}, \dots, \eta_{d, l_d}} \in \{\pm1\}}
            \Im(0; \dots, \eta_{i, 1}, 0, \dots, \eta_{i,s}, 0, \eta_{j, t+1}, \dots, \eta_{j, l_j}, 0, \dots; 1) \\[1ex]
        & = \pm \sum_{\eta_{i, s+1}, \dots, \eta_{j, t} \in \{\pm1\}}
            \Il(0; \eta_{i, s+1}, 0, \dots, \eta_{i, l_i}, 0, -\eta_{i+1, 1}, \eta_{i+1, 1}, 0, \dots, \eta_{j, t}, 0; 1) \\
        & \qquad\qquad\qquad \otimes \sm(\{2\}^{l_0}, \dots, \{2\}^{l_{i-1}}, 1, \{2\}^{s+l_j-t}, 1, \{2\}^{l_{j+1}}, \dots, \{2\}^{l_d}).
    \end{align}
    Similarly, for the term (D), we have
    \begin{align}
        & \quad \sum_{\substack{\eta_{k, l} \in \{\pm1\} \\ 0 \le k \le d \\ 1 \le l \le l_i}}
            \Big(\Il(\eta_{i, s}; 0, \eta_{i, s+1}, \dots, \eta_{i, l_i}, 0, -\eta_{i+1, 1}, \eta_{i+1, 1}, 0, \dots, \eta_{j, t}; 0) \\[-2ex]
        & \qquad\qquad\qquad \otimes \Im(0; \dots, \eta_{i, 1}, 0, \dots, \eta_{i,s}, 0, \eta_{j, t+1}, \dots, \eta_{j, l_j}, 0, \dots; 1)\Big) \\[2ex]
        & = \sum_{\substack{\eta_{k, l} \in \{\pm1\} \\ 0 \le k \le d \\ 1 \le l \le l_i}}
            \Big(\Il(1; 0, \eta_{i, s}^{-1}\eta_{i, s+1}, \dots, \eta_{i, s}^{-1}\eta_{i, l_i}, 0, \dots, \eta_{i, s}^{-1}\eta_{j, t}; 0) \\[-2ex]
        & \qquad\qquad\qquad \otimes \Im(0; \dots, \eta_{i, 1}, 0, \dots, \eta_{i,s}, 0, \eta_{j, t+1}, \dots, \eta_{j, l_j}, 0, \dots; 1)\Big) \\[2ex]
        & = \sum_{\eta_{i, s+1}, \dots, \eta_{j, t} \in \{\pm1\}}
            \Il(1; 0, \eta_{i, s+1}, \dots, \eta_{i, l_i}, 0, -\eta_{i+1, 1}, \eta_{i+1, 1}, 0, \dots, \eta_{j, t}; 0) \\[1ex]
        & \qquad \otimes \sum_{\substack{\eta_{0, 1}, \dots, \eta_{i, s} \\ \eta_{j, t+1}, \dots, \eta_{d, l_d}} \in \{\pm1\}}
            \Im(0; \dots, \eta_{i, 1}, 0, \dots, \eta_{i,s}, 0, \eta_{j, t+1}, \dots, \eta_{j, l_j}, 0, \dots; 1) \\
        & = \pm \sum_{\eta_{i, s+1}, \dots, \eta_{j, t} \in \{\pm1\}}
            \Il(1; 0, \eta_{i, s+1}, \dots, \eta_{i, l_i}, 0, -\eta_{i+1, 1}, \eta_{i+1, 1}, 0, \dots, \eta_{j, t}; 0) \\[1ex]
        & \qquad\qquad\qquad \otimes \sm(\{2\}^{l_0}, \dots, \{2\}^{l_{i-1}}, 1, \{2\}^{s+l_j-t}, 1, \{2\}^{l_{j+1}}, \dots, \{2\}^{l_d}).
    \end{align}
    Since we consider the terms containing the sequence $-\eta_{i, 1}, \eta_{i, 1}$, it follows that $i < j$.
    Therefore, the right tensor components of both terms (C) and (D) are of the form 
    $\sm(\{2\}^{l'_0}, 1, \dots, 1, \{2\}^{l'_{d'}})$ with $0 \le d' < d$.
    Thus, they belong to $C_{d-1}\H$ by the induction hypothesis.

    Finally, we consider the terms containing the sequence $-\eta_{i, 1}, \eta_{i, 1}$ whose endpoints are both nonzero.
    \begin{center}
        \vspace{1.4cm}
        $\Im(0; \dots, \tikznode{-etai1}{-\eta_{i, 1}}, \eta_{i, 1}, 0, \dots, \tikznode{etais}{\eta_{i, s}}, 0, \dots, 0, \tikznode{etajt1}{\eta_{j, t+1}}, \dots, \eta_{j, l_j}, 0, -\eta_{j+1, 1}, \tikznode{etaj11}{\eta_{j+1, 1}}, \dots ; 1)$
        \begin{tikzpicture}[remember picture, overlay] 
            \draw ([xshift=-2pt, yshift=3pt]-etai1.north) -- ++(0,0.8) -| node[pos=0, left] {(H)} ([yshift=3pt]etaj11.north);

            \draw ([xshift=2pt, yshift=3pt]-etai1.north) -- ++(0,0.4) -| node[pos=0.6, right] {(F)} ([yshift=3pt]etajt1.north);
            
            \draw ([xshift=2pt, yshift=-3pt]etais.south) -- ++(0,-0.4) -| node[pos=0.6, right] {(E)} ([yshift=-3pt]etajt1.south);

            \draw ([xshift=-2pt, yshift=-3pt]etais.south) -- ++(0,-0.8) -| node[pos=0.9, below right] {(G)} ([yshift=-3pt]etaj11.south);
        \end{tikzpicture}
        \vspace{1.4cm}
    \end{center}
    Let $n \ge 1$ and $a_0, \dots, a_{n+1}, x \in \{0, \pm1\}$.
    Since the decomposable elements vanish in $\L = \A_{>0}/(\A_{>0})^2$, it follows that 
    \[
    \Il(a_0; a_1, \dots, a_k; x)\, \Il(x; a_{k+1}, \dots, a_n; a_{n+1}) = 0 \quad \text{for}\ 0 < k < n.
    \]
    Therefore, by the path composition property \ref{enum:pathcomp}), we have
    \begin{equation}
        \Il(a_0; a_1, \dots, a_n; a_{n+1}) = \Il(a_0; a_1, \dots, a_n; x) + \Il(x; a_1, \dots, a_n; a_{n+1}). \label{eq:pathcomp}
    \end{equation}
    For the term (E), if $\eta_{i, s} = \eta_{j, t+1}$, then it vanishes by the property~\ref{enum:triv}).
    Assume that $\eta_{i, s} = - \eta_{j, t+1}$.
    By \eqref{eq:pathcomp}, we obtain
    \begin{align}
        & \quad \sum_{\eta_{i, s+1}, \dots, \eta_{j, t} \in \{\pm1\}} \Il(\eta_{i, s}; 0, \eta_{i, s+1}, \dots, -\eta_{i+1, 1}, \eta_{i+1, 1}, \dots, \eta_{j, t}, 0; -\eta_{i, s}) \\[2ex]
        & = \sum_{\eta_{i, s+1}, \dots, \eta_{j, t} \in \{\pm1\}} \Il(\eta_{i, s}; 0, \eta_{i, s+1}, \dots, -\eta_{i+1, 1}, \eta_{i+1, 1}, \dots, \eta_{j, t}, 0; 0) \\
        & \qquad\qquad + \sum_{\eta_{i, s+1}, \dots, \eta_{j, t} \in \{\pm1\}} \Il(0; 0, \eta_{i, s+1}, \dots, -\eta_{i+1, 1}, \eta_{i+1, 1}, \dots, \eta_{j, t}, 0; -\eta_{i, s}) \\[2ex]
        & = \sum_{\eta_{i, s+1}, \dots, \eta_{j, t} \in \{\pm1\}} \Il(\eta_{i, s}; 0, \eta_{i, s+1}, \dots, -\eta_{i+1, 1}, \eta_{i+1, 1}, \dots, \eta_{j, t}, 0; 0) \\
        & \qquad\qquad + \sum_{\eta_{i, s+1}, \dots, \eta_{j, t} \in \{\pm1\}} \Il(0; 0, -\eta_{i, s+1}, \dots, \eta_{i+1, 1}, -\eta_{i+1, 1}, \dots, -\eta_{j, t}, 0; \eta_{i, s}) \\[2ex]
        & = \sum_{\eta_{i, s+1}, \dots, \eta_{j, t} \in \{\pm1\}} \Il(\eta_{i, s}; 0, \eta_{i, s+1}, \dots, -\eta_{i+1, 1}, \eta_{i+1, 1}, \dots, \eta_{j, t}, 0; 0) \\
        & \qquad\qquad + \sum_{\eta_{i, s+1}, \dots, \eta_{j, t} \in \{\pm1\}} \Il(0; 0, \eta_{i, s+1}, \dots, -\eta_{i+1, 1}, \eta_{i+1, 1}, \dots, \eta_{j, t}, 0; \eta_{i, s}) \\
        & = 0,
    \end{align}
    where we used the homothety in the second step.
    Thus, the term (E) vanishes whether $\eta_{i, s} = \eta_{j, t+1}$ or not.
    For the term (F), it follows from the property~\ref{enum:triv}) and \ref{enum:homothety}) that
    \begin{align}
        & \quad \sum_{\substack{\eta_{k, l} \in \{\pm1\} \\ 0 \le k \le d \\ 1 \le l \le l_i}}
            \Big(\Il(-\eta_{i, 1}; \eta_{i, 1}, 0, \dots, \eta_{i, l_i}, 0, -\eta_{i+1, 1}, \eta_{i+1, 1}, 0, \dots, \eta_{j, t}, 0; \eta_{j, t+1}) \\[-2ex]
        & \qquad\qquad\qquad \otimes \Im(0; \dots, \eta_{i-1, l_{i-1}}, 0, -\eta_{i, 1}, \eta_{j, t+1}, \dots, \eta_{j, l_j}, 0, \dots; 1)\Big) \\[2ex]
        & = \sum_{\substack{\eta_{k, l} \in \{\pm1\} \\ 0 \le k \le d \\ 1 \le l \le l_i \\ \eta_{i, 1} = \eta_{j, t+1}}}
            \Big(\Il(-\eta_{i, 1}; \eta_{i, 1}, 0, \dots, \eta_{i, l_i}, 0, -\eta_{i+1, 1}, \eta_{i+1, 1}, 0, \dots, \eta_{j, t}, 0; \eta_{i, 1}) \\[-4ex]
        & \qquad\qquad\qquad \otimes \Im(0; \dots, \eta_{i-1, l_{i-1}}, 0, -\eta_{j, t+1}, \eta_{j, t+1}, \dots, \eta_{j, l_j}, 0, \dots; 1)\Big) \\[2ex]
        & = \sum_{\substack{\eta_{k, l} \in \{\pm1\} \\ 0 \le k \le d \\ 1 \le l \le l_i \\ \eta_{i, 1} = \eta_{j, t+1}}}
            \Big(\Il(-1; 1, 0, \eta_{i, 1}^{-1}\eta_{i, 2}, 0, \dots, \eta_{i, 1}^{-1}\eta_{i, l_i}, 0, \dots, \eta_{i, 1}^{-1}\eta_{j, t}, 0; 1) \\[-4ex]
        & \qquad\qquad\qquad \otimes \Im(0; \dots, \eta_{i-1, l_{i-1}}, 0, -\eta_{j, t+1}, \eta_{j, t+1}, \dots, \eta_{j, l_j}, 0, \dots; 1)\Big) \\[2ex]
        & = \sum_{\eta_{i, 2}, \dots, \eta_{j, t} \in \{\pm1\}}
            \Il(-1; 1, 0, \eta_{i, 2}, 0, \dots, \eta_{i, l_i}, 0, -\eta_{i+1, 1}, \eta_{i+1, 1}, 0, \dots, \eta_{j, t}, 0; 1) \\[1ex]
        & \qquad \otimes \sum_{\substack{\eta_{0, 1}, \dots, \eta_{i-1, l_{i-1}} \\ \eta_{j, t+1}, \dots, \eta_{d, l_d}} \in \{\pm1\}}
            \Im(0; \dots, \eta_{i-1, l_{i-1}}, 0, -\eta_{j,t+1}, \eta_{j, t+1}, 0, \dots, \eta_{j, l_j}, 0, \dots; 1) \\
        & = \pm \sum_{\eta_{i, 2}, \dots, \eta_{j, t} \in \{\pm1\}}
            \Il(-1; 1, 0, \eta_{i, 2}, 0, \dots, \eta_{i, l_i}, 0, -\eta_{i+1, 1}, \eta_{i+1, 1}, 0, \dots, \eta_{j, t}, 0; 1) \\[1ex]
        & \qquad\qquad\qquad \otimes \sm(\{2\}^{l_0}, \dots, \{2\}^{l_{i-1}}, 1, \{2\}^{l_j-t}, 1, \{2\}^{l_{j+1}}, \dots, \{2\}^{l_d}).
    \end{align}
    Similarly, for the term (G), we have
    \begin{align}
        & \quad \sum_{\substack{\eta_{k, l} \in \{\pm1\} \\ 0 \le k \le d \\ 1 \le l \le l_i}}
            \Big(\Il(\eta_{i, s}; 0, \dots, \eta_{i, l_i}, 0, -\eta_{i+1, 1}, \eta_{i+1, 1}, 0, \dots, 0, -\eta_{j+1, 1}, ; \eta_{j+1, 1}) \\[-2ex]
        & \qquad\qquad\qquad \otimes \Im(0; \dots, \eta_{i, 1}, 0, \dots, \eta_{i, s-1}, 0, \eta_{i, s}, \eta_{j+1, 1}, 0, \dots; 1)\Big) \\[2ex]
        & = \sum_{\substack{\eta_{k, l} \in \{\pm1\} \\ 0 \le k \le d \\ 1 \le l \le l_i \\ \eta_{i, s} = -\eta_{j+1, 1}}}
            \Big(\Il(-\eta_{j+1, 1}; 0, \dots, \eta_{i, l_i}, 0, -\eta_{i+1, 1}, \eta_{i+1, 1}, 0, \dots, 0, -\eta_{j+1, 1}, ; \eta_{j+1, 1}) \\[-4ex]
        & \qquad\qquad\qquad \otimes \Im(0; \dots, \eta_{i, 1}, 0, \dots, \eta_{i, s-1}, 0, -\eta_{j+1, 1}, \eta_{j+1, 1}, 0, \dots; 1)\Big) \\[2ex]
        & = \sum_{\substack{\eta_{k, l} \in \{\pm1\} \\ 0 \le k \le d \\ 1 \le l \le l_i \\ \eta_{i, s} = -\eta_{j+1, 1}}}
            \Big(\Il(-1; 0, \eta_{j+1, 1}^{-1}\eta_{i, s+1}, 0, \dots, \eta_{j+1, 1}^{-1}\eta_{i, l_i}, 0, \dots, \eta_{j+1, 1}^{-1}\eta_{j, l_j}, 0, -1; 1) \\[-4ex]
        & \qquad\qquad\qquad \otimes \Im(0; \dots, \eta_{i, 1}, 0, \dots, \eta_{i, s-1}, 0, -\eta_{j+1, 1}, \eta_{j+1, 1}, 0, \dots; 1)\Big) \\[2ex]
        & = \sum_{\eta_{i, s+1}, \dots, \eta_{j, l_j} \in \{\pm1\}}
            \Il(-1; 0, \eta_{i, s+1}, \dots, \eta_{i, l_i}, 0, -\eta_{i+1, 1}, \eta_{i+1, 1}, 0, \dots, \eta_{j, l_j}, 0, -1; 1) \\[1ex]
        & \qquad \otimes \sum_{\substack{\eta_{0, 1}, \dots, \eta_{i, s-1} \\ \eta_{j+1, 1}, \dots, \eta_{d, l_d}} \in \{\pm1\}}
            \Im(0; \dots, \eta_{i, 1}, 0, \dots, \eta_{i, s-1}, 0, -\eta_{j+1, 1}, \eta_{j+1, 1}, 0, \dots; 1) \\
        & = \pm \sum_{\eta_{i, s+1}, \dots, \eta_{j, l_j} \in \{\pm1\}}
            \Il(-1; 0, \eta_{i, s+1}, \dots, \eta_{i, l_i}, 0, -\eta_{i+1, 1}, \eta_{i+1, 1}, 0, \dots, \eta_{j, l_j}, 0, -1; 1) \\[1ex]
        & \qquad\qquad\qquad \otimes \sm(\{2\}^{l_0}, \dots, \{2\}^{l_{i-1}}, 1, \{2\}^{s-1}, 1, \{2\}^{l_{j+1}}, \dots, \{2\}^{l_d}).
    \end{align}
    For the term (H), it follows from the property~\ref{enum:triv}) and \ref{enum:homothety}) that
    \begin{align}
        & \quad \sum_{\substack{\eta_{k, l} \in \{\pm1\} \\ 0 \le k \le d \\ 1 \le l \le l_i}}
            \Big(\Il(-\eta_{i, 1}; \eta_{i, 1}, 0, \dots, \eta_{i, l_i}, 0, -\eta_{i+1, 1}, \eta_{i+1, 1}, 0, \dots, \eta_{j, l_j}, 0, -\eta_{j+1, 1}; \eta_{j+1, 1}) \\[-2ex]
        & \qquad\qquad\qquad \otimes \Im(0; \dots, \eta_{i-1, l_{i-1}}, 0, -\eta_{i, 1}, \eta_{j+1, 1}, 0, \dots, \eta_{j+1, l_{j+1}}, 0, \dots; 1)\Big) \\[2ex]
        & = \sum_{\substack{\eta_{k, l} \in \{\pm1\} \\ 0 \le k \le d \\ 1 \le l \le l_i \\ \eta_{i, 1} = \eta_{j+1, 1}}}
            \Big(\Il(-\eta_{i, 1}; \eta_{i, 1}, 0, \dots, \eta_{i, l_i}, 0, -\eta_{i+1, 1}, \eta_{i+1, 1}, 0, \dots, \eta_{j, l_j}, 0, -\eta_{i, 1}; \eta_{i, 1}) \\[-4ex]
        & \qquad\qquad\qquad \otimes \Im(0; \dots, \eta_{i-1, l_{i-1}}, 0, -\eta_{j+1, 1}, \eta_{j+1, 1}, 0, \dots, \eta_{j+1, l_{j+1}}, 0, \dots; 1)\Big) \\[2ex]
        & = \sum_{\substack{\eta_{k, l} \in \{\pm1\} \\ 0 \le k \le d \\ 1 \le l \le l_i \\ \eta_{i, 1} = \eta_{j+1, 1}}}
            \Big(\Il(-1; 1, 0, \eta_{i, 1}^{-1}\eta_{i, 2}, 0, \dots, \eta_{i, 1}^{-1}\eta_{i, l_i}, 0, \dots, \eta_{i, 1}^{-1}\eta_{j, l_j}, 0, -1; 1) \\[-4ex]
        & \qquad\qquad\qquad \otimes \Im(0; \dots, \eta_{i-1, l_{i-1}}, 0, -\eta_{j+1, 1}, \eta_{j+1, 1}, 0, \dots, \eta_{j+1, l_{j+1}}, 0, \dots; 1)\Big) \\[2ex]
        & = \sum_{\eta_{i, 2}, \dots, \eta_{j, l_j} \in \{\pm1\}}
            \Il(-1; 1, 0, \eta_{i, 2}, 0, \dots, \eta_{i, l_i}, 0, -\eta_{i+1, 1}, \eta_{i+1, 1}, 0, \dots, \eta_{j, l_j}, 0, -1; 1) \\[1ex]
        & \qquad \otimes \sum_{\substack{\eta_{0, 1}, \dots, \eta_{i-1, l_{i-1}} \\ \eta_{j+1, 1}, \dots, \eta_{d, l_d}} \in \{\pm1\}}
            \Im(0; \dots, \eta_{i-1, l_{i-1}}, 0, -\eta_{j+1, 1}, \eta_{j+1, 1}, 0, \dots; 1)\Big) \\
        & = \pm \sum_{\eta_{i, 2}, \dots, \eta_{j, l_j} \in \{\pm1\}}
            \Il(-1; 1, 0, \eta_{i, 2}, 0, \dots, \eta_{i, l_i}, 0, -\eta_{i+1, 1}, \eta_{i+1, 1}, 0, \dots, \eta_{j, l_j}, 0, -1; 1) \\[1ex]
        & \qquad\qquad\qquad \otimes \sm(\{2\}^{l_0}, \dots, \{2\}^{l_{i-1}}, 1, \{2\}^{l_{j+1}}, \dots, \{2\}^{l_d}).
    \end{align}
    Since we consider the terms containing the sequence $-\eta_{i, 1}, \eta_{i, 1}$, it follows that $i \le j$.
    Moreover, for the terms (F) and (G), we can assume $i < j$ because of parity.
    Therefore, the right tensor components of the terms (F), (G), and (H) are of the form 
    $\sm(\{2\}^{l'_0}, 1, \dots, 1, \{2\}^{l'_{d'}})$ with $0 \le d' < d$.
    Thus, they belong to $C_{d-1}\H$ by the induction hypothesis.
    This completes the proof.
\end{prf}
Thus, we can consider the coradical graded motivic M$s$Vs
\[
\sm(\{2\}^{l_0}, 1, \dots, 1, \{2\}^{l_d}) \bmod C_{d-1}\H \in \gr_d\H.
\]

\begin{prop} \label{prop:Delta}
    For $d \ge 0$ and $l_0, \dots, l_d \ge 0$, let $\kk = (\{2\}^{l_0}, 1, \dots, 1, \{2\}^{l_d})$.
    Then the image of $\sm(\kk)$ under the isomorphism $\widetilde{\Delta} \colon \gr_d\H \to C_1\L \otimes \gr_{d-1}\H$
    can be written explicitly as follows:
    \begin{align}
        \widetilde{\Delta}(\sm(\kk))
        & = \sum_{i = 0}^{d-1} \sum_{\substack{0 \le a \le l_i-1+\delta_{i, 0} \\ 0 \le b \le l_{i+1}}} \Big(\sl(\{2\}^a, 1, \{2\}^b) \\
        & \qquad\qquad\quad \otimes \sm(\{2\}^{l_0}, \dots, \{2\}^{l_{i-1}}, 1, \{2\}^{l_i+l_{i+1}-(a+b)}, 1, \{2\}^{l_{i+2}}, \dots, \{2\}^{l_d})\Big) \\[-2ex]
        & \quad - \sum_{i = 0}^{d-1} \sum_{\substack{0 \le a \le l_i \\ 0 \le b \le l_{i+1}-1}} \Big(\sl(\{2\}^b, 1, \{2\}^a) \\
        & \qquad\qquad\quad \otimes \sm(\{2\}^{l_0}, \dots, \{2\}^{l_{i-1}}, 1, \{2\}^{l_i+l_{i+1}-(a+b)}, 1, \{2\}^{l_{i+2}}, \dots, \{2\}^{l_d})\Big) \\[-2ex]
        & \quad + \sum_{i = 1}^{d-1} \sum_{1 \le b \le l_{i+1}} \Big(\bsl(\{2\}^{l_i}, 1, \{2\}^b) \\
        & \qquad\qquad\qquad \otimes \sm(\{2\}^{l_0}, \dots, \{2\}^{l_{i-1}}, 1, \{2\}^{l_{i+1}-b}, 1, \{2\}^{l_{i+2}}, \dots, \{2\}^{l_d})\Big) \\
        & \quad - \sum_{i = 0}^{d-2} \sum_{1 \le a \le l_i} \Big(\bsl(\{2\}^{l_{i+1}}, 1, \{2\}^a) \\
        & \qquad\qquad\qquad \otimes \sm(\{2\}^{l_0}, \dots, \{2\}^{l_{i-1}}, 1, \{2\}^{l_i-a}, 1, \{2\}^{l_{i+2}}, \dots, \{2\}^{l_d})\Big).
    \end{align}
\end{prop}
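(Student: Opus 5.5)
The computation of $\widetilde{\Delta}(\sm(\kk))$ will come from the case analysis in the proof of Proposition~\ref{prop:deriv}, now tracked modulo $C_{d-2}\H$ on the right-hand side and projected onto $C_1\L$ on the left. By Lemma~\ref{lem:grcoaction} and Lemma~\ref{lem:corad}, $\widetilde{\Delta}(\sm(\kk))$ is the class of $\sum_{r\ge0} D_{2r+1}(\sm(\kk))$ in $C_1\L\otimes\gr_{d-1}\H$. So I will take the expansion of $D_{2r+1}$ via \eqref{eq:infcoaction} applied to $(-1)^{\dep\kk}\sm(\kk)$ written as a signed sum of $\Im(0;\dots;1)$, reuse the term types (A)--(H), and keep only those terms whose right factor lies in depth $d-1$ and is not in $C_{d-2}\H$.

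\textbf{Terms that drop out.} (A), (B) and (E) vanish as already shown. In (C), (D), (F) and (G) the right factor is $\sm(\dots)$ with $d-(j-i)$ ones in (C), (D) and $d-(j-i)-1$ ones in (F), (G). (H) removes $j-i+1$ ones. By Proposition~\ref{prop:deriv}, only terms losing exactly one "1" survive modulo $C_{d-2}\H$. This leaves (C), (D) with $j=i+1$, and (H) with $i=j$. (F), (G) always remove at least two ones, since $i<j$ by parity, so they die.

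\textbf{Identifying the left factors.} I will rewrite each surviving left factor as an $\sl$ or $\bsl$ value.

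For (C) with $j=i+1$, the left factor $\sum\Il(0;\eta,0,\dots,-\eta',\eta',0,\dots,\eta,0;1)$ is exactly $\pm\sl(\{2\}^{a},1,\{2\}^{b})$ with $a=l_i-s$ and $b=t$. Here I will recompute what the sum over the free signs gives, using the definition of $\sm$ through iterated integrals with $\eta_i=\epsilon_i\cdots\epsilon_r$.

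For (D), I will use path reversal \ref{enum:pathrev}) and \eqref{eq:pathcomp} with $x=0$, together with \ref{enum:triv}), to convert $\Il(1;0,\dots;0)$ into $-\Il(0;\dots;1)$ with the word reversed. This produces the $-\sl(\{2\}^b,1,\{2\}^a)$ terms with the reversed roles of $a,b$.

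For (H) with $i=j$, the factor $\Il(-1;1,0,\dots,0,-1;1)$ will be handled with \eqref{eq:pathcomp} at $x=0$. It splits into $\Il(-1;\dots;0)+\Il(0;\dots;1)$. After homothety and reversal, summing over the signs with the alternating weights gives the $\bsl$ values, because the $\epsilon$-product weighting arises from the change of base point $-1$. The two pieces should give the third and fourth sums. The block is anchored at the $-\eta_{i,1},\eta_{i,1}$ pair on one side or at the trailing $0,-\eta_{i+1,1}$ on the other, which is why the index ranges are $i\ge1$ and $i\le d-2$ respectively.

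\textbf{Bookkeeping and main obstacle.} The remaining work is matching summation ranges and boundary conventions. The $\delta_{i,0}$ in $a\le l_i-1+\delta_{i,0}$ comes from the leading $0$ at the start: when $i=0$ the block may start at $a_0=0$. The absence of $b=0$ in the $\bsl$ sums comes from $\bsl(\{2\}^{l},1)$ terms cancelling against pieces of (C)/(D) or vanishing. The endpoints $\eta_{d,l_d+1}=1$ must be treated separately.

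I expect the main obstacle to be getting every sign $\pm$ right: the $(-1)^{\dep}$ normalisations, the $\eta_i$ versus $\epsilon_i$ conversion, and the reversal signs $(-1)^n$. The hard part is checking that the two halves of (H) really produce $\bsl$ with the stated signs and ranges. I would first verify small cases, such as $d=1$ and $d=2$ with small $l_i$, directly from \eqref{eq:infcoaction} to fix the signs before writing the general argument.
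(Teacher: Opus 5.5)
You have the right overall framework: read $\widetilde{\Delta}(\sm(\kk))$ off the terms of $D_{2r+1}$ from Proposition~\ref{prop:deriv}, and keep those whose right factor has exactly $d-1$ ones. But you misclassify which term types survive, and as a result your plan cannot produce the two $\bsl$ sums.

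You say the right factors in (F) and (G) have $d-(j-i)-1$ ones. In fact they have $d-(j-i)$, exactly as in (C) and (D). In (F), the left factor $\Il(-\eta_{i,1};\eta_{i,1},0,\dots,\eta_{j,t},0;\eta_{j,t+1})$ vanishes by property~\ref{enum:triv}) unless $\eta_{i,1}=\eta_{j,t+1}$. In that case the two endpoint letters left in the quotient word read $-\eta_{j,t+1},\eta_{j,t+1}$, so they form a \emph{new} weight-one letter. The right factor is then $\sm(\{2\}^{l_0},\dots,\{2\}^{l_{i-1}},1,\{2\}^{l_j-t},1,\{2\}^{l_{j+1}},\dots,\{2\}^{l_d})$. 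In (G) one is forced to $\eta_{i,s}=-\eta_{j+1,1}$, and the same thing happens.

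So (F) and (G) with $j=i+1$ survive in $\mathrm{gr}_{d-1}\H$, and they are precisely the source of the third and fourth sums. For (F), \eqref{eq:pathcomp} at $x=0$ together with homothety turns $\sum\Il(-1;1,0,\eta_{i,2},0,\dots,\eta_{i+1,t},0;1)$ into $\sum\eta_{i,1}\Il(0;\eta_{i,1},0,\dots,\eta_{i+1,t},0;1)=\pm\bsl(\{2\}^{l_i},1,\{2\}^{t})$, with $1\le i\le d-1$ and $1\le t\le l_{i+1}$. For (G) one also uses path reversal and obtains $\bsl(\{2\}^{l_{i+1}},1,\{2\}^{a})$, with $0\le i\le d-2$ and $1\le a\le l_i$. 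Your heuristic for the ranges $i\ge1$ and $i\le d-2$ actually describes the (F) and (G) blocks, not two halves of (H).

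Conversely, (H) with $j=i$ contributes nothing. Its left factor has the single weight $2l_i+1$, and its right factor is $\sm(\dots,\{2\}^{l_{i-1}},1,\{2\}^{l_{i+1}},1,\dots)$. So at best it could account for a $b=0$ term of the third sum, which is absent from the statement; it can never produce the terms with $b\ge1$. And it is in fact zero. After splitting at $x=0$, reversal and homothety give $\sum\Il(-1;1,0,\eta_{i,2},0,\dots,\eta_{i,l_i},0,-1;0)=-\sum\Il(0;1,0,\eta_{i,2},0,\dots,\eta_{i,l_i},0,-1;1)$, so the two halves cancel identically.

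Carried out as written, your plan would recover only the first two sums, from (C) and (D). The step to repair is the count of ones for (F) and (G).
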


\begin{prf}
    The derivation
    \[
    D_{2r+1} \colon C_d\H \longrightarrow \L_{2r+1} \otimes C_{d-1}\H
    \]
    induces a map
    \[
    \gr_d D_{2r+1} \colon \gr_d\H \longrightarrow \L_{2r+1} \otimes \gr_{d-1}\H.
    \]
    This map coincides with the composition
    \[
    \gr_d\H  \overset{\widetilde{\Delta}}{\longrightarrow} C_1\L \otimes \gr_{d-1}\H \longrightarrow C_1\L_{2r+1} \otimes \gr_{d-1}\H \subseteq \L_{2r+1} \otimes \gr_{d-1}\H.
    \]
    Thus, we have $\widetilde{\Delta} = \bigoplus_{r \ge 0} \gr_d D_{2r+1}$.
    We computed $D_{2r+1}((-1)^{\dep\kk}\, \sm(\kk))$ in the proof of Proposition~\ref{prop:deriv}.
    It consists of the five types of terms (C), (D), (F), (G), and (H).
    Moreover, the right tensor component of each term is of the form
    \[
    \sm((\{2\}^{l'_0}, 1, \dots, 1, \{2\}^{l'_{d'}})) \quad \text{with}\ 0 \le d' < d.
    \]
    Therefore, to compute $\gr_d D_{2r+1}(\sm(\kk))$, it suffices to consider only those terms with $d' = d-1$.
    Consequently, in the terms (C) and (D), only the case $j = i+1$ contributes.
    \begin{center}
        \vspace{1.0cm}
        $\Im(0; \dots, \tikznode{etais}{\eta_{i, s}}, \tikznode{zerois}{0}, \eta_{i, s+1}, \dots, \eta_{i, l_i}, 0, -\eta_{i+1, 1}, \eta_{i+1, 1}, 0, \dots, \eta_{i+1, t}, \tikznode{zeroi1t}{0}, \tikznode{etai1t1}{\eta_{i+1, t+1}}, \dots ; 1)$
        \begin{tikzpicture}[remember picture, overlay] 
            \draw ([yshift=3pt]zerois.north) -- ++(0,0.4) -| node[pos=0, left] {(C)} ([yshift=3pt]etai1t1.north);
            
            \draw ([yshift=-3pt]etais.south) -- ++(0,-0.4) -| node[pos=0.9, below right] {(D)} ([yshift=-3pt]zeroi1t.south);
        \end{tikzpicture}
        \vspace{1.0cm}
    \end{center}
    For the term (C), the ranges of the summation over $i$, $s$, and $t$ are $0 \le i \le d-1$,
    $1 - \delta_{i, 0} \le s \le l_i$, and $0 \le t \le l_{i+1}$, respectively.
    Note that $s=0$ is allowed when $i = 0$.
    We have
    \begin{align}
        & \quad \sum_{\eta_{i, s+1}, \dots, \eta_{i+1, t} \in \{\pm1\}}
            \Il(0; \eta_{i, s+1}, 0, \dots, \eta_{i, l_i}, 0, -\eta_{i+1, 1}, \eta_{i+1, 1}, 0, \dots, \eta_{i+1, t}, 0; 1) \\[1ex]
        & \qquad \otimes \sum_{\substack{\eta_{0, 1}, \dots, \eta_{i, s} \\ \eta_{i+1, t+1}, \dots, \eta_{d, l_d}} \in \{\pm1\}}
            \Im(0; \dots, \eta_{i, 1}, 0, \dots, \eta_{i,s}, 0, \eta_{i+1, t+1}, \dots, \eta_{i+1, l_{i+1}}, 0, \dots; 1) \\[1ex]
        & = (-1)^{\dep\kk}\, \sl(\{2\}^{l_i-s}, 1, \{2\}^{t}) \\
        & \qquad\qquad\qquad \otimes \sm(\{2\}^{l_0}, \dots, \{2\}^{l_{i-1}}, 1, \{2\}^{s+l_{i+1}-t}, 1, \{2\}^{l_{i+2}}, \dots, \{2\}^{l_d}).
    \end{align}
    Thus, by the computation in the proof of Proposition~\ref{prop:deriv}, we obtain
    \begin{align}
        \text{(C)} 
        & = (-1)^{\dep\kk} \sum_{i = 0}^{d-1} \sum_{\substack{1 - \delta_{i, 0} \le s \le l_i \\ 0 \le t \le l_{i+1}}} \sl(\{2\}^{l_i-s}, 1, \{2\}^{t}) \\
        & \qquad\qquad\qquad \otimes \sm(\{2\}^{l_0}, \dots, \{2\}^{l_{i-1}}, 1, \{2\}^{s+l_{i+1}-t}, 1, \{2\}^{l_{i+2}}, \dots, \{2\}^{l_d}) \\
        & = (-1)^{\dep\kk} \sum_{i = 0}^{d-1} \sum_{\substack{0 \le a \le l_i-1+\delta_{i, 0} \\ 0 \le b \le l_{i+1}}} \sl(\{2\}^a, 1, \{2\}^b) \\
        & \qquad\qquad\qquad \otimes \sm(\{2\}^{l_0}, \dots, \{2\}^{l_{i-1}}, 1, \{2\}^{l_i+l_{i+1}-(a+b)}, 1, \{2\}^{l_{i+2}}, \dots, \{2\}^{l_d}).
    \end{align}
    For the term (D), the ranges of the summation over $i$, $s$, and $t$ are $0 \le i \le d-1$, 
    $1 \le s \le l_i+1$, and $1 \le t \le l_{i+1}$, respectively.
    By the property \ref{enum:pathrev}), we have
    \begin{align}
        & \quad \sum_{\eta_{i, s+1}, \dots, \eta_{i+1, t} \in \{\pm1\}}
            \Il(1; 0, \eta_{i, s+1}, \dots, \eta_{i, l_i}, 0, -\eta_{i+1, 1}, \eta_{i+1, 1}, 0, \dots, \eta_{i+1, t}; 0) \\[1ex]
        & \qquad \otimes \sum_{\substack{\eta_{0, 1}, \dots, \eta_{i, s} \\ \eta_{j, t+1}, \dots, \eta_{d, l_d}} \in \{\pm1\}}
            \Im(0; \dots, \eta_{i, 1}, 0, \dots, \eta_{i,s}, 0, \eta_{i+1, t+1}, \dots, \eta_{i+1, l_{i+1}}, 0, \dots; 1) \\
        & = - \sum_{\eta_{i, s+1}, \dots, \eta_{i+1, t} \in \{\pm1\}}
            \Il(0; \eta_{i+1, t}, 0, \dots, \eta_{i+1, 1}, -\eta_{i+1, 1}, 0, \eta_{i, l_i}, 0, \dots, \eta_{i, s+1}, 0; 1) \\[1ex]
        & \qquad \otimes \sum_{\substack{\eta_{0, 1}, \dots, \eta_{i, s} \\ \eta_{j, t+1}, \dots, \eta_{d, l_d}} \in \{\pm1\}}
            \Im(0; \dots, \eta_{i, 1}, 0, \dots, \eta_{i,s}, 0, \eta_{i+1, t+1}, \dots, \eta_{i+1, l_{i+1}}, 0, \dots; 1) \\
        & = - (-1)^{\dep\kk}\, \sl(\{2\}^{t-1}, 1, \{2\}^{l_i-s+1}) \\
        & \qquad\qquad\qquad \otimes \sm(\{2\}^{l_0}, \dots, \{2\}^{l_{i-1}}, 1, \{2\}^{s+l_{i+1}-t}, 1, \{2\}^{l_{i+2}}, \dots, \{2\}^{l_d}).
    \end{align}
    Therefore, by the computation in the proof of Proposition~\ref{prop:deriv}, we obtain
    \begin{align}
        \text{(D)} 
        & = - (-1)^{\dep\kk} \sum_{i = 0}^{d-1} \sum_{\substack{1 \le s \le l_i+1 \\ 1 \le t \le l_{i+1}}} \sl(\{2\}^{t-1}, 1, \{2\}^{l_i-s+1}) \\
        & \qquad\qquad\qquad \otimes \sm(\{2\}^{l_0}, \dots, \{2\}^{l_{i-1}}, 1, \{2\}^{s+l_{i+1}-t}, 1, \{2\}^{l_{i+2}}, \dots, \{2\}^{l_d}) \\
        & = - (-1)^{\dep\kk} \sum_{i = 0}^{d-1} \sum_{\substack{0 \le a \le l_i \\ 0 \le b \le l_{i+1}-1}} \sl(\{2\}^b, 1, \{2\}^a) \\
        & \qquad\qquad\qquad \otimes \sm(\{2\}^{l_0}, \dots, \{2\}^{l_{i-1}}, 1, \{2\}^{l_i+l_{i+1}-(a+b)}, 1, \{2\}^{l_{i+2}}, \dots, \{2\}^{l_d}).
    \end{align}

    Next, in the terms (F) and (G), only the case $j = i+1$ contributes.
    \begin{center}
        \vspace{1.0cm}
        $\Im(0; \dots, \tikznode{-etai1}{-\eta_{i, 1}}, \eta_{i, 1}, \dots, \tikznode{etais}{\eta_{i, s}}, \dots, -\eta_{i+1, 1}, \eta_{i+1, 1}, \dots, \tikznode{etai1t1}{\eta_{i+1, t+1}}, \dots, -\eta_{i+2, 1}, \tikznode{etai21}{\eta_{i+2, 1}}, \dots ; 1)$
        \begin{tikzpicture}[remember picture, overlay] 
            \draw ([xshift=2pt, yshift=3pt]-etai1.north) -- ++(0,0.4) -| node[pos=0.6, right] {(F)} ([yshift=3pt]etai1t1.north);
            
            \draw ([xshift=-2pt, yshift=-3pt]etais.south) -- ++(0,-0.4) -| node[pos=0, left] {(G)} ([yshift=-3pt]etai21.south);
        \end{tikzpicture}
        \vspace{1.0cm}
    \end{center}
    For the term (F), the ranges of the summation over $i$ and $t$ are 
    $1 \le i \le d-1$ and $1 \le t \le l_{i+1}$, respectively.
    By \eqref{eq:pathcomp} and the property \ref{enum:homothety}), it follows that
    \begin{align}
        & \quad \sum_{\eta_{i, 2}, \dots, \eta_{i+1, t} \in \{\pm1\}}
            \Il(-1; 1, 0, \eta_{i, 2}, 0, \dots, -\eta_{i+1, 1}, \eta_{i+1, 1}, 0, \dots, \eta_{i+1, t}, 0; 1) \\
        & = - \sum_{\eta_{i, 2}, \dots, \eta_{i+1, t} \in \{\pm1\}}
            \Il(0; 1, 0, \eta_{i, 2}, 0, \dots, -\eta_{i+1, 1}, \eta_{i+1, 1}, 0, \dots, \eta_{i+1, t}, 0; -1) \\
        & \qquad + \sum_{\eta_{i, 2}, \dots, \eta_{i+1, t} \in \{\pm1\}}
            \Il(0; 1, 0, \eta_{i, 2}, 0, \dots, -\eta_{i+1, 1}, \eta_{i+1, 1}, 0, \dots, \eta_{i+1, t}, 0; 1) \\
        & = - \sum_{\eta_{i, 2}, \dots, \eta_{i+1, t} \in \{\pm1\}}
            \Il(0; -1, 0, -\eta_{i, 2}, 0, \dots, \eta_{i+1, 1}, -\eta_{i+1, 1}, 0, \dots, -\eta_{i+1, t}, 0; 1) \\
        & \qquad + \sum_{\eta_{i, 2}, \dots, \eta_{i+1, t} \in \{\pm1\}}
            \Il(0; 1, 0, \eta_{i, 2}, 0, \dots, -\eta_{i+1, 1}, \eta_{i+1, 1}, 0, \dots, \eta_{i+1, t}, 0; 1) \\
        & = \sum_{\eta_{i, 1}, \dots, \eta_{i+1, t} \in \{\pm1\}}
            \eta_{i, 1}\, \Il(0; \eta_{i, 1}, 0, \eta_{i, 2}, 0, \dots, -\eta_{i+1, 1}, \eta_{i+1, 1}, 0, \dots, \eta_{i+1, t}, 0; 1) \\
        & = (-1)^{l_i+1+t}\, \bsl(\{2\}^{l_i}, 1, \{2\}^t).
    \end{align}
    By the computation in the proof of Proposition~\ref{prop:deriv}, we have
    \begin{align}
        \text{(F)}
        & = (-1)^{\dep\kk} \sum_{i = 1}^{d-1} \sum_{1 \le t \le l_{i+1}} \bsl(\{2\}^{l_i}, 1, \{2\}^t) \\
        & \qquad\qquad\qquad \otimes \sm(\{2\}^{l_0}, \dots, \{2\}^{l_{i-1}}, 1, \{2\}^{l_{i+1}-t}, 1, \{2\}^{l_{i+2}}, \dots, \{2\}^{l_d}).
    \end{align}
    For the term (G), the ranges of the summation over $i$ and $s$ are 
    $0 \le i \le d-2$ and $1 \le s \le l_i$, respectively.
    By the properties \ref{enum:pathrev}), \ref{enum:homothety}) and \eqref{eq:pathcomp}, it follows that
    \begin{align}
        & \quad \sum_{\eta_{i, s+1}, \dots, \eta_{i+1, l_{i+1}} \in \{\pm1\}}
            \Il(-1; 0, \eta_{i, s+1}, \dots, 0, -\eta_{i+1, 1}, \eta_{i+1, 1}, 0, \dots, \eta_{i+1, l_{i+1}}, 0, -1; 1) \\
        & = - \sum_{\eta_{i, s+1}, \dots, \eta_{i+1, l_{i+1}} \in \{\pm1\}}
            \Il(1; -1, 0, \eta_{i+1, l_{i+1}}, \dots, \eta_{i+1, 1}, -\eta_{i+1, 1}, 0, \dots, \eta_{i, s+1}, 0; -1) \\
        & = \sum_{\eta_{i, s+1}, \dots, \eta_{i+1, l_{i+1}} \in \{\pm1\}}
            \Il(0; -1, 0, \eta_{i+1, l_{i+1}}, \dots, \eta_{i+1, 1}, -\eta_{i+1, 1}, 0, \dots, \eta_{i, s+1}, 0; 1) \\
        & \quad - \sum_{\eta_{i, s+1}, \dots, \eta_{i+1, l_{i+1}} \in \{\pm1\}}
            \Il(0; -1, 0, \eta_{i+1, l_{i+1}}, \dots, \eta_{i+1, 1}, -\eta_{i+1, 1}, 0, \dots, \eta_{i, s+1}, 0; -1) \\
        & = \sum_{\eta_{i, s+1}, \dots, \eta_{i+1, l_{i+1}} \in \{\pm1\}}
            \Il(0; -1, 0, \eta_{i+1, l_{i+1}}, \dots, \eta_{i+1, 1}, -\eta_{i+1, 1}, 0, \dots, \eta_{i, s+1}, 0; 1) \\
        & \quad - \sum_{\eta_{i, s+1}, \dots, \eta_{i+1, l_{i+1}} \in \{\pm1\}}
            \Il(0; 1, 0, -\eta_{i+1, l_{i+1}}, \dots, -\eta_{i+1, 1}, \eta_{i+1, 1}, 0, \dots, -\eta_{i, s+1}, 0; 1) \\
        & = - \sum_{\eta_{i, s+1}, \dots, \eta_{i+2, 1} \in \{\pm1\}}
            \eta_{i+2, 1}\, \Il(0; \eta_{i+2, 1}, \dots, \eta_{i+1, 1}, -\eta_{i+1, 1}, 0, \dots, \eta_{i, s+1}, 0; 1) \\
        & = - (-1)^{l_{i+1}+l_i-s+2}\, \bsl(\{2\}^{l_{i+1}}, 1, \{2\}^{l_i-s+1}).
    \end{align}
    By the computation in the proof of Proposition~\ref{prop:deriv}, we have
    \begin{align}
        \text{(G)}
        & = -(-1)^{\dep\kk} \sum_{i = 0}^{d-2} \sum_{1 \le s \le l_i} \bsl(\{2\}^{l_{i+1}}, 1, \{2\}^{l_i-s+1}) \\
        & \qquad\qquad\qquad \otimes \sm(\{2\}^{l_0}, \dots, \{2\}^{l_{i-1}}, 1, \{2\}^{s-1}, 1, \{2\}^{l_{i+2}}, \dots, \{2\}^{l_d}) \\
        & = -(-1)^{\dep\kk} \sum_{i = 0}^{d-2} \sum_{1 \le a \le l_i} \bsl(\{2\}^{l_{i+1}}, 1, \{2\}^a) \\
        & \qquad\qquad\qquad \otimes \sm(\{2\}^{l_0}, \dots, \{2\}^{l_{i-1}}, 1, \{2\}^{l_i-a}, 1, \{2\}^{l_{i+2}}, \dots, \{2\}^{l_d}).
    \end{align}

    Finally, in the term (H), only the case $j = i$ contributes.
    \begin{center}
        \vspace{1.0cm}
        $\Im(0; \dots, \tikznode{-etai1}{-\eta_{i, 1}}, \eta_{i, 1}, 0, \dots, \eta_{i, l_i}, 0, -\eta_{i+1, 1}, \tikznode{etai11}{\eta_{i+1, 1}}, \dots ; 1)$
        \begin{tikzpicture}[remember picture, overlay] 
            \draw ([xshift=-2pt, yshift=3pt]-etai1.north) -- ++(0,0.4) -| node[pos=0, left] {(H)} ([yshift=3pt]etai11.north);
        \end{tikzpicture}
        \vspace{0.5cm}
    \end{center}
    By \eqref{eq:pathcomp}, it follows that
    \begin{align}
        & \quad \sum_{\eta_{i, 2}, \dots, \eta_{i, l_i} \in \{\pm1\}}
            \Il(-1; 1, 0, \eta_{i, 2}, 0, \dots, \eta_{i, l_i}, 0, -1; 1) \\
        & = \sum_{\eta_{i, 2}, \dots, \eta_{i, l_i} \in \{\pm1\}}
            \Il(-1; 1, 0, \eta_{i, 2}, 0, \dots, \eta_{i, l_i}, 0, -1; 0) \\
        & \qquad\qquad + \sum_{\eta_{i, 2}, \dots, \eta_{i, l_i} \in \{\pm1\}}
            \Il(0; 1, 0, \eta_{i, 2}, 0, \dots, \eta_{i, l_i}, 0, -1; 1).
    \end{align}
    By the properties~\ref{enum:pathrev}) and \ref{enum:homothety}), one can rewrite the first term as
    \begin{align}
        & \quad \sum_{\eta_{i, 2}, \dots, \eta_{i, l_i} \in \{\pm1\}}
            \Il(-1; 1, 0, \eta_{i, 2}, 0, \dots, \eta_{i, l_i}, 0, -1; 0) \\
        & = \sum_{\eta_{i, 2}, \dots, \eta_{i, l_i} \in \{\pm1\}}
            \Il(1; -1, 0, -\eta_{i, 2}, 0, \dots, -\eta_{i, l_i}, 0, 1; 0) \\
        & = - \sum_{\eta_{i, 2}, \dots, \eta_{i, l_i} \in \{\pm1\}}
            \Il(0; 1, 0, -\eta_{i, l_i}, 0, \dots, -\eta_{i, 2}, 0, -1; 1).
    \end{align}
    Therefore, we obtain
    \[
    \sum_{\eta_{i, 2}, \dots, \eta_{i, l_i} \in \{\pm1\}} \Il(-1; 1, 0, \eta_{i, 2}, 0, \dots, \eta_{i, l_i}, 0, -1; 1) = 0,
    \]
    which implies the term (H) vanishes by the computation in the proof of Proposition~\ref{prop:deriv}.
    Combining the above computations, we obtain the desired result.
\end{prf}

\section{Proof of the main theorem}
In this section, we prove the following motivic analogue of Theorem~\ref{thm:sbasis}.
\begin{thm} \label{thm:msbasis}
    The set
    \[
    \{\sm(k_1, \dots, k_r) \mid k_i \in \{1, 2\}\}
    \]
    forms a basis of the $\Q$-vector space of motivic alternating multiple zeta values.
\end{thm}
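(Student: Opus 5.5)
The plan is to follow Brown's strategy: prove by induction on $d$ that, for each weight $N$, the elements
\[
\sm(\{2\}^{l_0},1,\{2\}^{l_1},\dots,1,\{2\}^{l_d}) \bmod C_{d-1}\H_N, \qquad 2(l_0+\cdots+l_d)+d=N,
\]
form a $\Q$-basis of $\gr_d\H_N$. Proposition~\ref{prop:deriv} guarantees these elements lie in $C_d\H$. Since $C_d\H$ is exhaustive, the whole family is then a basis of $\H_N$, and hence of the space of motivic AMZVs, which is all of $\H$ by Deligne.

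\textbf{Counting.} First I compare sizes. Words in $\{1,2\}$ of weight $N$ containing exactly $d$ ones are counted by $\binom{N-d}{d'}$-type binomials. By \eqref{eq:grU}, $\dim\gr_d H_N$ is the number of pairs $(r_1,\dots,r_d;k)$ with $r_i$ odd and $\sum r_i+2k=N$. A word $(\{2\}^{l_0},1,\dots,1,\{2\}^{l_d})$ corresponds to the choice $r_i=2l_i+1$ for $i\ge1$ and $k=l_0$. This is a bijection, so the two sets have equal size, and it suffices to prove linear independence in $\gr_d\H_N$.

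\textbf{Base case and inductive step.} For $d=0$ the claim holds because $\sm(\{2\}^{l})$ is a nonzero multiple of $\zm(2)^l$. For $d\ge1$ I apply the isomorphism $\widetilde\Delta$ of Lemma~\ref{lem:grcoaction}. By Proposition~\ref{prop:Delta}, the left factors are $\sl(\{2\}^a,1,\{2\}^b)$ and $\bsl(\{2\}^a,1,\{2\}^b)$, and Theorems~\ref{thm:sidentity} and \ref{thm:bsidentity}, lifted to motivic identities, evaluate these. The lifting goes by Brown's argument: the difference between the two sides lies in $C_1\H$, and the coefficient of $\zm(\overline{2r+1})$ is then fixed by comparing periods with $\zeta(2)^m$ and $\zeta(\overline{2r+1})\zeta(2)^{m}$, using Zagier-type independence of weight-graded pieces. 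Projecting to $\L$ kills every product with $\zeta(2)^{m}$, $m>0$, leaving only the $r=a+b$ term:
\[
\sl(\{2\}^a,1,\{2\}^b)=\tfrac{(-1)^{a+b}}{2^{a+b}}\alpha^{a,b}_{a+b}\zl(\overline{2(a+b)+1}),
\]
and similarly for $\bsl$ with $\beta$. Consequently the component of $\widetilde\Delta(\sm(\kk))$ along $\zl(\overline{2r+1})$ is an explicit $\Q$-combination of $\sm$'s with $d-1$ ones, with coefficients given by sums of $\alpha$'s and $\beta$'s.

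\textbf{Triangularity, the main obstacle.} I would define a linear map
\[
\partial_{N,d}=\bigoplus_r \partial_{2r+1}\colon \gr_d\H_N\to\bigoplus_r\gr_{d-1}\H_{N-2r-1}
\]
and show that, after ordering indices by $l_1$ (the block deleted first) and then lexicographically, its matrix on our basis candidates is invertible modulo $2$-adic valuations, as in Brown's proof. Concretely, for the coefficient of $\zl(\overline{2r+1})$ with $r=l_0+l_1$ (merging the first two blocks), the leading contribution is
\[
\alpha^{a,b}_r\ \text{or}\ \beta\text{-terms},\quad \text{containing}\ \binom{2r}{2b}-\frac{2^{2r}}{2^{2r}-1}\binom{2r}{2a+1}.
\]
Its $2$-adic behavior should resemble Brown's $\binom{2r}{2a+1}\cdot$(power of $2$) and make the diagonal a $2$-adic unit while off-diagonal entries are $2$-adically smaller, in the sense that they carry extra factors of $2^{-1}$ from the $(-1)^r2^{-r}$ prefactors. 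I would first compute $v_2$ of each of the four kinds of term in Proposition~\ref{prop:Delta}, with $a=0$ handled separately via the $-2^{2r+1}+\delta$ form of $\beta$. I would then look for a filtration, for instance by $l_0$ or by the total weight $2(l_0+l_1)+1$ removed, in which the map is block upper triangular with invertible diagonal blocks, reducing to a matrix over $\Z_{(2)}$ that is invertible mod $2$. Proving this mod-$2$ invertibility for the combined $\alpha$/$\beta$ contributions is where I expect the difficulty. Once it is established, induction gives linear independence, and hence the theorem.
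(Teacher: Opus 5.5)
Your outline matches the paper's: graded induction on $d$ through $\widetilde\Delta$, the dimension count, the base case, and the reduction to evaluating $\sl(\{2\}^a,1,\{2\}^b)$ and $\bsl(\{2\}^a,1,\{2\}^b)$. The two steps that carry the content, however, are either wrong or missing.

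\textbf{The motivic lift is wrong as stated.} Knowing that the difference of the two sides of the $s$-identity lies in $C_1\H_N$, with $N=2(a+b)+1$, only tells you it equals $\sum_{r=0}^{a+b}c_r\,\zm(\overline{2r+1})\zm(2)^{a+b-r}$ with unknown $c_r\in\Q$. The vanishing of its period is a single linear relation among the real numbers $\zeta(\overline{2r+1})\pi^{2(a+b-r)}$. Concluding $c_r=0$ for all $r$ would need their $\Q$-linear independence, which is not known; in weight $3$ it already amounts to $\pi^2\log 2/\zeta(3)\notin\Q$. No ``independence of weight-graded pieces'' is available.

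What you need is an induction on $a+b$:
\begin{enumerate}
\item Compute $D_{2r+1}$ of the left side for $r<a+b$ from the depth-one coaction formula (Proposition~\ref{prop:ds1}), using the induction hypothesis for the lower-weight $\sl$.
\item Check that this equals $D_{2r+1}$ of the right side; this is the binomial identity of Lemma~\ref{lem:alpha}.
\item Invoke Glanois's theorem that the kernel of $D_{<N}$ on $\H_N$ is $\Q\,\zm(\overline N)$ (Theorem~\ref{thm:KerD}). Only one constant then remains, and the period fixes it.
\end{enumerate}
The $\bar s$ case is not ``similar'' for free. Proposition~\ref{prop:deriv} does not cover $\bsm$, so you need the separate computation of Proposition~\ref{prop:dbs}, which itself uses the already lifted $s$-identity, together with Lemma~\ref{lem:beta}.

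\textbf{Invertibility of $\partial_{N,d}$ is missing, and your heuristic for it does not hold.} You leave this step open. After the normalization $\zl(\overline{2r+1})\mapsto(-2)^r$, the prefactors $(-1)^r2^{-r}$ cancel exactly and all entries lie in $\Z_{(2)}$. Many off-diagonal entries are then $2$-adic units: for example $\alpha^{1,0}=1$ appears in row $1221$, column $12$ of $M_{6,2}$, while the diagonal entry of that column sits in row $1212$. So there is no diagonal dominance to exploit.

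The missing idea is the set of congruences $\alpha^{a,b}\equiv\alpha^{b,a}\equiv\beta^{a,b}\equiv\beta^{b,a}\pmod{2\Z_{(2)}}$ (Lemma~\ref{lem:twoprop}). With them, almost all terms in the formula for $\partial_{N,d}(\sm(w))$ cancel modulo $2$. What is left is the deconcatenation of the \emph{last} block, $\sum_{w=uv,\ \deg_1 v=1}c_v\,\sm(u)$ with $c_{2^a12^b}=\alpha^{a,b}$ (Proposition~\ref{prop:decon}).

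Under the order-preserving bijection $u\mapsto u12^r$ from $B'_{N,d}$ to $B_{N,d}$, this reduced matrix is lower triangular for the lexicographic order. Its diagonal entries are $\alpha^{0,r}\in 1+2\Z_{(2)}$, so $M_{N,d}$ is invertible over $\Z_{(2)}$. Without this step or an equivalent one, linear independence in $\gr_d\H_N$ is not established.
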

Theorem~\ref{thm:sbasis} is obtained by applying the period map to Theorem~\ref{thm:msbasis}.
To prove Theorem~\ref{thm:msbasis}, it suffices to show the following theorem.
\begin{thm} \label{thm:grbasis}
    For $N \ge d \ge 0$, the set
    \[
    \{\sm(k_1, \dots, k_r) \bmod C_{d-1}\H_N \mid k_i \in \{1, 2\},\; k_1+\dots+k_r = N,\; \#\{i \mid k_i = 1\} = d\}
    \]
    forms a basis of $\gr_d\H_N$.
\end{thm}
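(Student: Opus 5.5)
Counting dimensions is the first step. By \eqref{eq:grU}, $\dim_\Q \gr_d\H_N$ equals the number of tuples $(r_1,\dots,r_d)$ of odd positive integers with $r_1+\dots+r_d \le N$ and $N-\sum r_i$ even. Via $\phi$, the powers of $f_2$ fill the remaining even weight. The number of indices $(\{2\}^{l_0},1,\dots,1,\{2\}^{l_d})$ of weight $N$ is the number of $(l_0,\dots,l_d)\ge 0$ with $2\sum l_i = N-d$. Both counts equal the number of compositions of $(N-d)/2$ into $d+1$ nonnegative parts, using $r_i = 2m_i+1$. So it suffices to prove linear independence, or equivalently spanning, in $\gr_d\H_N$.

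I argue by induction on $d$. For $d=0$, $\gr_0\H_N$ is $\Q\,\zm(2)^{N/2}$ for $N$ even and $0$ for $N$ odd, and $\sm(\{2\}^{N/2}) = 2^{-N/2}\zm(\{2\}^{N/2})$ is a nonzero multiple of $\zm(2)^{N/2}$ by \eqref{eq:smzm} and \cite[Lemma 3.4]{MR2993755}. For $d\ge1$, Lemma~\ref{lem:grcoaction} makes $\widetilde\Delta\colon \gr_d\H_N \to \bigoplus_r \Q\,\zl(\overline{2r+1})\otimes \gr_{d-1}\H_{N-2r-1}$ an isomorphism. By the induction hypothesis the target has basis $\zl(\overline{2r+1})\otimes \sm(\kk')$, with $\kk'$ running over $\{1,2\}$-indices with $d-1$ ones. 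It therefore suffices to show that the matrix of $\widetilde\Delta$ on our candidate basis, in this target basis, is invertible.

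To compute the matrix entries, I express the coefficients in Proposition~\ref{prop:Delta} in terms of $\zl(\overline{2r+1})$. Applying $\pi_\mathfrak a$ and then the projection to $\L$ to Theorems~\ref{thm:sidentity} and \ref{thm:bsidentity}, once they are lifted to motivic identities, kills every product term. What survives is
\[
\sl(\{2\}^a,1,\{2\}^b) = \tfrac{(-1)^{a+b}}{2^{a+b}}\alpha^{a,b}_{a+b}\,\zl(\overline{2a+2b+1}),
\]
together with the analogous formula for $\bsl$ with $\beta^{a,b}_{a+b}$. Only the $r=a+b$ term survives because $\sl(\{2\}^m)=0$ for $m\ge1$. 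Lifting these identities requires Brown's standard argument. Both sides lie in $C_1\H$ by Proposition~\ref{prop:deriv}, and the coefficient of $\zm(2)^{n}$ ambiguity is determined by the period. One then uses that $C_1\H_N$ is spanned by $\zm(\overline{2r+1})\zm(2)^{(N-2r-1)/2}$, arguing by induction on weight through $D_{2r+1}$. Here I would follow \cite[Theorem 2]{MR4377309} and \cite{MR4906315}.

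The main obstacle is invertibility. I would order indices by $l_0$, or by a lexicographic order on $(l_0,\dots,l_d)$ taken from the left. I would then set up a $2$-adic triangularity argument in the style of Brown. One needs $\alpha^{a,b}_{a+b} = -\frac{2^{2r}}{2^{2r}-1}\binom{2r}{2a+1}+\binom{2r}{2b}$, with $r=a+b$, and $\beta^{a,b}_{a+b}$, multiplied by the extra factor $2^{-(a+b)}$, to have a controlled $2$-adic valuation. The dominant contributions should come from the $i=0$ term with $b=0$ and $a=l_0$ (so $s=0$), whose coefficient is $\binom{2r}{0}=1$ up to the factor $\pm 2^{-r}$. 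All other terms should be $2$-adically strictly smaller after normalization, or else lower in the chosen order. First I would check the $\bar s$ contributions, the terms involving $\bsl$: their factor $2^{2r+1}$ (when $a=0$), or the odd denominator $2^{2r}-1$ multiplied by a power of $2$, should make them subdominant. After rescaling by the appropriate power of $2$, the matrix then becomes triangular mod $2$ with $1$s on the diagonal, hence invertible. Verifying these valuation bounds case by case is where I expect the real work. If it fails, I would instead look for an exact triangular order in which the $s=0$ term is the unique leading one, and fall back on the dimension count for the final step.
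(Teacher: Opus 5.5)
The reduction you set up matches the paper: the dimension count, induction on $d$ through the isomorphism $\widetilde\Delta$, and entries expressed via $\sl(\{2\}^a,1,\{2\}^b)$ and $\bsl(\{2\}^a,1,\{2\}^b)$. The lifting sketch can be filled in, but you should note what it requires. Matching the $D_{2r+1}$ of both sides is exactly the binomial identity of Lemma~\ref{lem:alpha}. For $\bar s$ it is Lemma~\ref{lem:beta}, after a separate computation of $D_{2r+1}\bsm(\{2\}^a,1,\{2\}^b)$, which Proposition~\ref{prop:deriv} does not provide.

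\textbf{The gap is the invertibility step, and your heuristic for it is false.}

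First, the normalization. The factor $(-1)^r2^{-r}$ depends only on the column, so sending $\zl(\overline{2r+1})\mapsto(-2)^r$ removes it exactly. Every entry then becomes an integer combination of $\alpha^{a,b},\beta^{a,b}\in\Z_{(2)}$, and no valuation estimates arise.

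Second, the remaining entries are not $2$-adically small. Modulo $2$ one has $\alpha^{a,b}\equiv\beta^{a,b}\equiv\binom{a+b}{a}$. In particular $\beta^{0,b}=1-2^{2b+1}$ is a unit: you overlooked the $\delta_{r,b}$ and $\binom{2r}{2b}$ parts. So the $\bsl$-terms are not subdominant.

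Third, your designated leading term cancels. Take the (C)-block from the base point $0$ to $\eta_{1,t+1}$ with $t<l_1$. Shifting both endpoints one letter to the right gives the (D)-block from $\eta_{0,1}$ to the $0$ after $\eta_{1,t+1}$, with the same remainder. The combined coefficient is $\alpha^{l_0,t}-\alpha^{t,l_0}\in 2\Z_{(2)}$. Concretely:
\begin{itemize}
\item For $w=(1,2,2,2)$, the coefficient of $\zl(\overline{1})\otimes\sm(2,2,2)$ is $\alpha^{0,0}-\alpha^{0,0}=0$. Indeed $D_1\sm(1,\{2\}^b)=0$ for $b>0$, cf.\ Theorem~\ref{thm:unram1}.
\item For $w=(2,2,1,2)$, the normalized coefficient of $\zl(\overline{5})\otimes\sm(2)$ is $\alpha_2^{2,1}=6$.
\end{itemize}
Already for $d=1$ the reduced matrix is $\bigl(\binom{r}{l_1}\bigr)$. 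Its unit diagonal sits at $r=l_1$, while your pairing $r=l_0$ hits zeros (e.g.\ $\binom{0}{3}$ for $w=(1,2,2,2)$). So neither the dominant-term argument nor the fallback of making the $s=0$ term uniquely leading can succeed.

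The missing idea is the mod-$2$ reduction of Proposition~\ref{prop:decon}. By Lemma~\ref{lem:twoprop}, the quantities $\alpha^{a,b},\alpha^{b,a},\beta^{a,b},\beta^{b,a}$ are all congruent modulo $2$. Regroup the (C), (D), (F), (G) contributions target by target. Then:
\begin{itemize}
\item each combined coefficient is a difference of two such quantities, hence lies in $2\Z_{(2)}$;
\item the boundary terms $\pm\alpha^{0,l_i}$ cancel exactly;
\item the only survivors are the terms whose block ends at the terminal point $1$.
\end{itemize}
The survivors have no partner because the shifted block would end one letter past the terminal point, which does not exist. At the base point the partner always exists, which is why the left end dies. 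The result is $\partial_{N,d}(\sm(w))\equiv\sum_{w=uv,\ \deg_1 v=1}\alpha^{a,b}\,\sm(u)$ modulo $2\Z_{(2)}$, where $v=2^a12^b$. This is deconcatenation at the \emph{right} end, not the left.

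Your lexicographic order is the right one, but the pairing must be the order-preserving bijection $B'_{N,d}\to B_{N,d}$, $u\mapsto u12^r$. With it, the reduced matrix is lower triangular with diagonal entries $\alpha^{0,r}\equiv 1$. Hence $M_{N,d}$ is invertible and the induction closes.
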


First, we lift the relations for M$s$Vs in Theorem~\ref{thm:sidentity} and M$\bar{s}$Vs in Theorem~\ref{thm:bsidentity}
to the motivic setting.
\begin{thm} \label{thm:msidentity}
    Let $a$ and $b$ be non-negative integers.
    Then we have
    \begin{equation}
    \sm(\{2\}^a, 1, \{2\}^b)
        = \sum_{r=0}^{a+b} \frac{(-1)^r}{2^r} \alpha_r^{a, b}\, \zm(\overline{2r+1})\, \sm(\{2\}^{a+b-r}), \label{eq:msidentity}
    \end{equation}
    where $\alpha_r^{a, b}$ is the rational number defined by
    \[
    \alpha_r^{a, b} = - \frac{2^{2r}}{2^{2r}-1} \binom{2r}{2a+1} + \binom{2r}{2b}.
    \]
    For $r=0$, the term involving $\frac{2^{2r}}{2^{2r}-1}$ is understood to be zero.
\end{thm}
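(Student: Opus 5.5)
We prove the identity by induction on the weight $N=2(a+b)+1$, following Brown's lifting argument for $\zm(\{2\}^a,3,\{2\}^b)$. Let $\xi$ denote the difference of the two sides of \eqref{eq:msidentity}; it is an element of $\H_N$ with odd $N$. The strategy has two parts. First, show that $D_{2r+1}(\xi)=0$ for all $1\le 2r+1<N$. Second, conclude that $\xi$ lies in the kernel of $\bigoplus_{2r+1<N}D_{2r+1}$ in weight $N$. Via $\phi$, this kernel is $\Q f_N\oplus\Q f_2^{N/2}$, and since $N$ is odd it is just $\Q\,\zm(\overline{N})$. We then pin down the coefficient of $\zm(\overline{N})$ using the period map and Theorem~\ref{thm:sidentity}. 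Because $\per(\xi)=0$ and $\zeta(\overline{N})\ne0$, that coefficient vanishes, so $\xi=0$.

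\textbf{The right-hand side.} By Proposition~\ref{prop:chooseisom}, $D_{2r+1}\zm(\overline{2r'+1})$ equals $\zl(\overline{2r+1})\otimes1$ when $r=r'$ and $0$ otherwise. We also have $D_{2r+1}\sm(\{2\}^m)=0$, since $\sm(\{2\}^m)\in\Q\zm(2)^m$. Since $D_{2r+1}$ is a derivation, applying it to the right-hand side for $2r+1<N$ gives
\[
\frac{(-1)^r}{2^r}\alpha_r^{a,b}\,\zl(\overline{2r+1})\otimes\sm(\{2\}^{a+b-r}).
\]

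\textbf{The left-hand side.} Here $d=1$, and the computation in the proof of Proposition~\ref{prop:deriv} applies with $l_0=a$, $l_1=b$. Only the terms (C) and (D) survive; (F), (G), (H) need $d\ge2$ or vanish. The result is
\[
D_{2r+1}\sm(\kk)=\sum \sl(\{2\}^{a'},1,\{2\}^{b'})\otimes\sm(\{2\}^{a+b-a'-b'})-\sum \sl(\{2\}^{b'},1,\{2\}^{a'})\otimes\sm(\{2\}^{a+b-a'-b'}),
\]
where $a'+b'=r$. The ranges are $0\le a'\le a$, $0\le b'\le b$ in the first sum and $0\le a'\le a$, $0\le b'\le b-1$ in the second; Proposition~\ref{prop:Delta} with $d=1$ gives exactly this shape. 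For $2r+1<N$ the weight of each $\sl$ factor is less than $N$. So by the induction hypothesis, projected to $\L$ where products vanish, we get
\[
\sl(\{2\}^{a'},1,\{2\}^{b'})=\frac{(-1)^{r}}{2^{r}}\alpha_{r}^{a',b'}\,\zl(\overline{2r+1}),
\]
since only the $r=a'+b'$ term with $\sm(\{2\}^0)=1$ survives. Substituting, the claim $D_{2r+1}\xi=0$ reduces to a purely combinatorial identity: for fixed $a,b,r$,
\[
\sum_{\substack{a'+b'=r\\ a'\le a,\ b'\le b}}\alpha_r^{a',b'}-\sum_{\substack{a'+b'=r\\ a'\le a,\ b'\le b-1}}\alpha_r^{b',a'}=\alpha_r^{a,b}.
\]
In both sums we take $a'$ at least $a-r$, and similarly bound $b'$ from below, and the right-hand side has the prefactor $2^{-r}(-1)^r$ already cancelled.

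\textbf{Main obstacle.} The step I expect to be hardest is verifying this binomial identity. After inserting $\alpha_r^{x,y}=-\frac{2^{2r}}{2^{2r}-1}\binom{2r}{2x+1}+\binom{2r}{2y}$ with $x+y=r$, both sums become partial alternating-parity sums of $\binom{2r}{\cdot}$. I would try to show that they telescope into $\binom{2r}{2b}$ and $\binom{2r}{2a+1}$ boundary terms, using $\binom{2r}{k}=\binom{2r}{2r-k}$ to pair the $\binom{2r}{2x+1}$ parts of the first sum with the $\binom{2r}{2y}$ parts of the second. If the range bookkeeping becomes messy, an alternative is available. The same identity must hold numerically, because Theorem~\ref{thm:sidentity} holds for every weight. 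One can then argue via linear independence of the coefficients coming from the generating function in Proposition~\ref{prop:wiF}: summing the left side over all $a,b$ with weights $x^{2a+1}y^{2b}$ gives a product of geometric-type series, which can be checked directly against $\frac{\sin\pi y}{2\pi}(A(x+y)-A(x-y))$-type expansions.

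The base case $a=b=0$ is immediate: $\sm(1)=\zm(\overline1)$, and $\alpha_0^{0,0}=1$.
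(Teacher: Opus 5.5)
Your proposal is correct and follows the paper's proof essentially step for step. The shared steps are: induction on $a+b$; the $d=1$ case of the (C)/(D) computation for the left side (the paper's Proposition~\ref{prop:ds1}); the derivation property for the right side; reduction to the coefficient identity of Lemma~\ref{lem:alpha}; and Glanois's kernel theorem (Theorem~\ref{thm:KerD}) together with the period map and Theorem~\ref{thm:sidentity} to kill the leftover multiple of $\zm(\overline{2a+2b+1})$.

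The one loose point is the lemma itself. Treat the $\frac{2^{2r}}{2^{2r}-1}$-parts and the unit parts separately rather than cross-pairing them. In each, the symmetry $\binom{2r}{2b'+1}=\binom{2r}{2a'-1}$ (resp.\ $\binom{2r}{2a'}=\binom{2r}{2b'}$) identifies the subtracted sum with the positive one, after the shift $(a',b')\mapsto(a'-1,b'+1)$ in the first case, up to boundary terms. This leaves only $\binom{2r}{2a+1}$ (resp.\ $\binom{2r}{2b}$). Your ``numerical'' fallback would not work, since reading off coefficients from Theorem~\ref{thm:sidentity} would need the unknown $\Q$-linear independence of the products $\zeta(\overline{2r+1})\pi^{2m}$.
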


\begin{thm} \label{thm:mbsidentity}
    Let $a$ and $b$ be non-negative integers.
    Then we have
    \begin{equation}
    \bsm(\{2\}^a, 1, \{2\}^b)
        = \sum_{r=0}^{a+b} \frac{(-1)^r}{2^r} \beta_r^{a, b}\, \zm(\overline{2r+1})\, \bsm(\{2\}^{a+b-r}), \label{eq:mbsidentity}
    \end{equation}
    where $\beta_r^{a, b}$ is the rational number defined by
    \[
    \beta_r^{a, b} =
        \begin{dcases}
            \frac{2^{2r}}{2^{2r}-1} \binom{2r}{2a} + \binom{2r}{2b} & \text{if} \ a \ge 1, \\
            -2^{2r+1} + \delta_{r, b} & \text{if} \ a = 0.\\
        \end{dcases}
    \]
    For $r=0$, the term involving $\frac{2^{2r}}{2^{2r}-1}$ is understood to be zero.
\end{thm}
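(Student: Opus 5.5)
We argue by induction on $a+b$, following Brown's lifting argument \cite{MR2993755} as adapted in \cite{MR4377309,MR4906315}. Set $N = 2(a+b)+1$ and let $\xi \in \H_N$ be the left-hand side of \eqref{eq:mbsidentity} minus its right-hand side. It suffices to show two things:
\begin{enumerate}
    \item $D_{2r+1}(\xi) = 0$ for every $r$ with $1 \le 2r+1 < N$;
    \item this forces $\xi \in \Q\,\zm(\overline{N})$.
\end{enumerate}
Point (ii) follows from the structure of $H$ via $\phi$. An element of $H_N$ killed by all $D_{2r+1}$ with $2r+1<N$ has, in the decomposition $\xi = \sum f_{2r+1} v_r$ from the proof of Lemma~\ref{lem:corad}, all $v_r = 0$ except $v_{(N-1)/2} \in H_0 = \Q$. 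Hence it lies in $\Q f_N = \Q\,\phi(\zm(\overline N))$. Applying the period map and Theorem~\ref{thm:bsidentity} then gives $\per(\xi) = 0$. Since $\zeta(\overline N) \ne 0$, the rational coefficient vanishes, so $\xi = 0$.

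\emph{Right-hand side.} The factor $\bsm(\{2\}^{k})$ is a rational multiple of $\zm(2)^k$ (as for $\sm$, using \cite[Proposition 10]{MR4377309}), so $D_{2r+1}$ kills it. By \eqref{eq:Delta}, $D_{2r+1}(\zm(\overline{2r'+1})) = \delta_{r,r'}\,\zl(\overline{2r+1}) \otimes 1$. By the derivation property, $D_{2r+1}$ of the right-hand side is therefore
\[
\frac{(-1)^r}{2^r}\beta_r^{a,b}\,\zl(\overline{2r+1}) \otimes \bsm(\{2\}^{a+b-r}).
\]

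\emph{Left-hand side.} We compute $D_{2r+1}(\bsm(\{2\}^a,1,\{2\}^b))$ by rerunning the case analysis (A)--(H) of Proposition~\ref{prop:deriv} with $d=1$, now carrying the weights $\epsilon_1\cdots\epsilon_r$. Since $d=1$, the families (F), (G), (H) either do not occur or vanish as before. The cancellations (A), (B), (E) use only summation over a free sign, so they should survive the twist, possibly with the homothety now producing a sign $\eta$ that converts $s$-type sums into $\bar s$-type ones and vice versa. What remains are terms (C) and (D) of the shape
\[
\sl(\{2\}^{a'},1,\{2\}^{b'}) \otimes \bsm(\{2\}^{k})
\quad\text{or}\quad
\bsl(\{2\}^{a'},1,\{2\}^{b'}) \otimes \bsm(\{2\}^{k}),
\]
with $a'+b'=r$ and $a'+b'+k=a+b$. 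Every left factor has weight $2r+1<N$, so the induction hypothesis applies. Theorem~\ref{thm:msidentity} (proved in parallel, or beforehand by the same scheme) and Theorem~\ref{thm:mbsidentity} in lower weight hold for these indices. Projecting to $\L$ kills all products, so only the $r'=a'+b'$ term survives, where $\sm(\{2\}^0)=\bsm(\{2\}^0)=1$. This gives
\[
\sl(\{2\}^{a'},1,\{2\}^{b'}) = \frac{(-1)^{r}}{2^{r}}\alpha_{r}^{a',b'}\,\zl(\overline{2r+1}),
\qquad
\bsl(\{2\}^{a'},1,\{2\}^{b'}) = \frac{(-1)^{r}}{2^{r}}\beta_{r}^{a',b'}\,\zl(\overline{2r+1}).
\]
The case $r=0$, i.e.\ $D_1$ with $\zl(\overline 1)$, is treated the same way and is the source of the $\delta_{r,b}$ and $-2^{2r+1}$ terms when $a=0$.

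\emph{Main obstacle.} After the substitutions above, (i) reduces to a finite identity among $\alpha$'s and $\beta$'s: a sum over the admissible $(a',b')$ with $a'+b'=r$ should equal $\beta_r^{a,b}$, separately for $a\ge1$ and $a=0$. I expect this step, together with getting the signs and summation ranges right in the twisted (C)/(D) terms, to be the hardest. To verify the identity, I would first try to reduce it to the binomial sums $\sum_{a'}\binom{2r}{2a'+1}$ and $\sum_{a'}\binom{2r}{2a'}$, using that $\frac{2^{2r}}{2^{2r}-1}$ is constant in $a',b'$. If that fails, I would compare generating functions in the style of Section~3.
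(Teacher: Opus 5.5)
Your framework matches the paper's. You induct on $a+b$, compare $D_{2r+1}$ of both sides for $2r+1<N$, use the kernel of $D_{<N}$ in weight $N$ (which you correctly derive from the structure of $H$ instead of quoting Theorem~\ref{thm:KerD}), and kill the constant with the period map and Theorem~\ref{thm:bsidentity}.

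\textbf{The gap.} The computation of $D_{2r+1}(\bsm(\{2\}^a,1,\{2\}^b))$ and the coefficient identity it leads to are only anticipated, not carried out. The shape you anticipate, a sum over $(a',b')$ with $a'+b'=r$ equal to $\beta_r^{a,b}$, is also not what happens. The missing observation is where the weight $\eta_1=\epsilon_1\cdots\epsilon_r$ goes. Write
$(-1)^{a+b+1}\bsm(\{2\}^a,1,\{2\}^b)=\sum\eta_1\,\Im(0;\eta_1,0,\dots,\eta_a,0,-\eta'_1,\eta'_1,0,\dots,\eta'_b,0;1)$, with $\eta_1=-\eta'_1$ if $a=0$.
\begin{itemize}
\item Every surviving subsequence that does not start at the base point $a_0=0$ leaves $\eta_1$ in the right-hand factor. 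These give exactly the $\sl$-terms of Proposition~\ref{prop:ds1}, now tensored with $\bsm(\{2\}^{a+b-r})$, except that the first sum runs only over $0\le a'\le a-1$.
\item The missing $a'=a$ term comes from the subsequence starting at $a_0$ and ending at $\eta'_{r-a+1}$. That subsequence carries $\eta_1$ into the left factor. After homothety it becomes the single term $\delta_{r\ge a}\,\bsl(\{2\}^a,1,\{2\}^{r-a})\otimes\bsm(\{2\}^{a+b-r})$, with the same $a$ and no sum.
\end{itemize}
Theorem~\ref{thm:msidentity} and Lemma~\ref{lem:alpha} then reduce the $\sl$-part to $\frac{(-1)^r}{2^r}(\alpha_r^{a,b}-\delta_{r\ge a}\alpha_r^{a,r-a})\,\zl(\overline{2r+1})$. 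The induction hypothesis turns the $\bsl$-term into $\frac{(-1)^r}{2^r}\delta_{r\ge a}\beta_r^{a,r-a}\,\zl(\overline{2r+1})$. What remains is Lemma~\ref{lem:beta}, $\alpha_r^{a,b}-\delta_{r\ge a}\alpha_r^{a,r-a}=\beta_r^{a,b}-\delta_{r\ge a}\beta_r^{a,r-a}$. This is immediate from $\binom{2r}{2a+1}=\binom{2r}{2a}=0$ for $r<a$, and, when $a=0$, from $\binom{2r}{2b}=\delta_{r,b}$ for $r\le b$. Once the terms are identified correctly, no binomial-sum manipulation or generating functions are needed. Without this identification, your step (i) is not established.

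\textbf{The $a=0$ terms.} Your attribution of these to $D_1$ is wrong. For $a=0$ the term $\bsl(1,\{2\}^r)$ appears for every $r$. For $r<b$ its combined coefficient is $-\alpha_r^{r,0}+\beta_r^{0,r}=-1+(1-2^{2r+1})=-2^{2r+1}$. The $\delta_{r,b}$ sits in the top weight $r=b$, which $D_{<N}$ never sees. It enters only through the lower-weight values $\beta_r^{0,r}=1-2^{2r+1}$, and hence ultimately through the period identity.

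\textbf{A minor point.} The reference you cite for $\bsm(\{2\}^k)\in\Q\,\zm(2)^k$ is the one the paper uses for $\sm$. For $\bsm$ it is safer to argue as the paper does, via the (A)/(B) cancellation of Proposition~\ref{prop:deriv}. The weight $\eta_1$ is never among the summed intermediate signs of a (B)-term, so $D_{2r+1}(\bsm(\{2\}^k))=0$.
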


Brown lifted Zagier's 2-3-2 formula to its motivic version via the following theorem.
For $N \ge 1$, we write $D_{<N}$ for $\bigoplus_{1 \le 2r+1 < N} D_{2r+1}$.
\begin{thm}[{{\cite[Theorem 3.3]{MR2993755}}}]
    Let $N \ge 2$.
    The kernel of $D_{<N}$ on $\H^1$ is one-dimensional in weight $N$,
    and is spanned by $\zm(N)$\textup{:}
    \[
    \Ker D_{<N} \cap \H_N^1 = \zm(N)\, \Q.
    \]
\end{thm}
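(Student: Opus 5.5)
The plan is to compute the kernel of $D_{<N}$ in the larger comodule of all $F_\infty$-invariant effective motivic periods of $\MT(\Z)$, and then intersect with $\H^1$. This avoids needing to know in advance that $\H^1$ exhausts those periods.

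First, the structure. Exactly as in the discussion preceding Lemma~\ref{lem:grcoaction}, but for $\MT(\Z)$, Borel's computation gives $\mathrm{Ext}^1_{\MT(\Z)}(\Q(0),\Q(n))=\Q$ exactly for odd $n\ge 3$. Hence there is a non-canonical isomorphism of graded comodules
\[
\phi^1\colon \H^{\MT(\Z)} \overset{\sim}{\longrightarrow} H^1 := \Q\langle f_3,f_5,\dots\rangle\otimes_\Q\Q[f_2],
\]
with deconcatenation coaction and $\Delta(f_2)=1\otimes f_2$. The inclusion $\H^1\subseteq\H^{\MT(\Z)}$ is compatible with $\Delta$, so it is compatible with each $D_{2r+1}$. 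Since the $D_{2r+1}$ are defined intrinsically from the coaction, they are transported by $\phi^1$. On the model they act by stripping the leftmost letter:
\[
D_{2r+1}(f_{a_1}\cdots f_{a_n}f_2^k)=\delta_{a_1,2r+1}\,\overline f_{2r+1}\otimes f_{a_2}\cdots f_{a_n}f_2^k .
\]
This holds because the quotient to $L=A_{>0}/(A_{>0})^2$ kills every left factor of length $\ge 2$.

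Second, compute the kernel in the model. Write a weight-$N$ element as
\[
\xi=\sum_{2r+1\le N} f_{2r+1}v_r + c\,f_2^{N/2},
\]
where the last term is present only for $N$ even. Then $D_{2r+1}(\xi)=\overline f_{2r+1}\otimes v_r$, and the map $v\mapsto \overline f_{2r+1}\otimes v$ is injective. Therefore $D_{<N}(\xi)=0$ forces $v_r=0$ for all $2r+1<N$. The kernel of $D_{<N}$ in weight $N$ is thus $\Q f_N$ if $N$ is odd and $\Q f_2^{N/2}$ if $N$ is even; in both cases it is one-dimensional.

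Third, locate $\zm(N)$ in this kernel. Write $\zm(N)=-\Im(0;1,\{0\}^{N-1};1)$ and apply formula~\eqref{eq:infcoaction}. Every subsequence of length $r<N$ of $(1,0,\dots,0)$ is one of the following:
\begin{itemize}
\item a block of $0$'s between endpoints that are both $0$, which vanishes by property~\ref{enum:triv});
\item a block of $0$'s with one endpoint $1$, which is $\Il(1;\{0\}^r;0)$ or $\Il(0;\{0\}^r;1)$ and vanishes by properties~\ref{enum:zero}) and~\ref{enum:pathrev});
\item a block starting at the letter $1$, whose left endpoint is $0$ and whose right endpoint is $0$, which again vanishes by property~\ref{enum:triv}).
\end{itemize}
This is the same computation as in Proposition~\ref{prop:chooseisom}, and it gives $D_{<N}\zm(N)=0$. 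Moreover $\zm(N)\ne 0$, since its period is $\zeta(N)\ne0$.

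Finally, combine the steps. $\Ker D_{<N}\cap\H^1_N$ contains the nonzero element $\zm(N)$ and sits inside the one-dimensional space $\Ker D_{<N}\cap\H^{\MT(\Z)}_N$, so it equals $\Q\,\zm(N)$.

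The main obstacle is the first step. One must justify that the comodule structure of $\H^{\MT(\Z)}$ is the free shuffle model, including the fact that $\zm(2)$ corresponds to a multiple of $f_2$ with trivial coaction, and that the infinitesimal coactions are compatible with the inclusion of $\H^1$. I would take this from the cited structure results (the $i=1$ analogues of \eqref{eq:isom}) rather than reprove it.
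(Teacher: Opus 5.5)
The paper does not prove this statement; it quotes it from Brown. Your overall strategy is essentially Brown's: pass to the model $\Q\langle f_3,f_5,\dots\rangle\otimes\Q[f_2]$, compute the kernel there, and check via \eqref{eq:infcoaction} that $\zm(N)$ is a nonzero element of it. Working in the full $F_\infty$-invariant part also correctly avoids circularity with Brown's generation theorem. However, one step is false as written.

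\textbf{The gap.} The formula $D_{2r+1}(f_{a_1}\cdots f_{a_n}f_2^k)=\delta_{a_1,2r+1}\,\overline{f}_{\!2r+1}\otimes f_{a_2}\cdots f_{a_n}f_2^k$ is wrong, and so is its justification. The quotient $L=A_{>0}/(A_{>0})^2$ kills shuffle products of nonempty words, not words of length $\ge 2$; it has a basis indexed by Lyndon words.

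For example, in multidegree $\{3,3,5\}$ the decomposables are spanned by $2f_3f_3f_5+f_3f_5f_3$, $f_3f_5f_3+2f_5f_3f_3$ and $f_3f_3f_5+f_3f_5f_3+f_5f_3f_3$. These span a plane that does not contain $f_3f_3f_5$. Hence, in weight $13$,
\[
D_{11}(f_3f_3f_5f_2)=\overline{f_3f_3f_5}\otimes f_2\neq0,
\]
although the first letter is not $f_{11}$, and $D_{11}$ is a component of $D_{<13}$. So the identity $D_{2r+1}(\xi)=\overline{f}_{\!2r+1}\otimes v_r$ in your second step fails in general. In particular, $D_{2r+1}(\xi)=0$ does not by itself give $v_r=0$.

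\textbf{The repair.} The conclusion is still true, and either of two short arguments fixes it.

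(a) Argue by induction on $r$, as in the proof of Lemma~\ref{lem:corad}. Once $v_{r'}=0$ for all $r'<r$, every word occurring in $\xi$ begins with a letter of weight $\ge 2r+1$. So the only left factor of weight $2r+1$ is the single letter $f_{2r+1}$, and then $D_{2r+1}(\xi)=\overline{f}_{\!2r+1}\otimes v_r$ does hold.

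(b) Take the linear form on $A_{2r+1}$ that extracts the coefficient of the one-letter word $f_{2r+1}$. It vanishes on $(A_{>0})^2$, because a shuffle of two nonempty words contains only words of length $\ge 2$. Hence it descends to $\lambda\colon L_{2r+1}\to\Q$, and $(\lambda\otimes\mathrm{id})D_{2r+1}(\xi)=v_r$.

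Either way, $D_{<N}(\xi)=0$ forces $v_r=0$ for all $2r+1<N$, and the rest of your argument goes through.

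\textbf{Two smaller points.}

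First, your case list in the third step omits the block $\{0\}^{N-1}$ whose endpoints are both $1$, namely $\Il(1;\{0\}^{N-1};1)$. It occurs in $D_{N-1}$ when $N$ is even, and it vanishes by property~\ref{enum:triv}).

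Second, you list the identification of $\zm(2)$ with a multiple of $f_2$ as an obstacle, but it is not needed. Only the shape of the model and the fact that $\Delta((\Lm)^2)=1\otimes(\Lm)^2$ enter the kernel computation.
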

To prove Theorems~\ref{thm:msidentity} and~\ref{thm:mbsidentity}, we use the following alternating analogue
due to Glanois.
\begin{thm}[{{\cite[Corollary 2.4.5]{glanois:tel-01347989}}}] \label{thm:KerD}
    Let $N \ge 1$.
    The kernel of $D_{<N}$ on $\H^2$ is one-dimensional in weight $N$,
    and is spanned by $\zm(\overline{N})$\textup{:}
    \[
    \Ker D_{<N} \cap \H_N^2 = \zm(\overline{N})\, \Q.
    \]
\end{thm}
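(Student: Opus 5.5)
By Proposition~\ref{prop:chooseisom} and the discussion following it, the isomorphism $\phi\colon \H\to H=\Q\langle f_1,f_3,\dots\rangle\otimes_\Q\Q[f_2]$ is compatible with the coaction. So I would prove the statement in $H$ and transport it back; note that $\H^2=\H$ here. One preliminary remark about $L=A_{>0}/(A_{>0})^2$: for each odd $2r+1$, the functional ``coefficient of the one-letter word $f_{2r+1}$'' on $A_{2r+1}$ kills every shuffle product of two positive-weight elements, because such products involve only words of length $\ge 2$. It therefore descends to a linear map $\pi_{2r+1}\colon L_{2r+1}\to\Q$.

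\textbf{Step 1: expand $\xi$.} Take $\xi\in H_N$. Using the basis of words times powers of $f_2$, as in the proof of Lemma~\ref{lem:corad}, write
\[
\xi=\sum_{1\le 2r+1\le N} f_{2r+1}\,v_r \;+\; c\,f_2^{N/2},
\]
with $v_r\in H_{N-(2r+1)}$. Here $c=0$ unless $N$ is even.

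\textbf{Step 2: extract the coefficients $v_r$.} Since the coproduct is deconcatenation and $f_2$ is primitive-free (its reduced coaction vanishes), the only terms of $\overline{\Delta}(\xi)$ of weight $2r+1$ in the left factor come from prefixes of the words. The only prefix that is the single letter $f_{2r+1}$ occurs in the summand $f_{2r+1}v_r$. Hence
\[
(\pi_{2r+1}\otimes\mathrm{id})\,D_{2r+1}(\xi)=v_r .
\]
The main care needed in the whole argument is exactly here: $D_{2r+1}$ itself also has contributions from longer prefixes of weight $2r+1$, and these need not vanish in $L$. The functional $\pi_{2r+1}$ is what removes them, so we never need $D_{2r+1}(\xi)$ to equal $\bar f_{2r+1}\otimes v_r$ on the nose.

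\textbf{Step 3: compute the kernel in $H$.} If $D_{<N}(\xi)=0$, Step 2 gives $v_r=0$ for all $2r+1<N$. What remains is:
\begin{itemize}
\item for $N$ odd, $\xi=v_{(N-1)/2}\,f_N$ with $v_{(N-1)/2}\in H_0=\Q$;
\item for $N$ even, $\xi=c\,f_2^{N/2}$.
\end{itemize}
Conversely, $f_N$ and $f_2^{N/2}$ have zero reduced coaction in weights below $N$, so they lie in the kernel. Thus the kernel is one-dimensional, spanned by $f_N$ if $N$ is odd and by $f_2^{N/2}$ if $N$ is even. The case $N=1$ is included: $D_{<1}=0$ and $H_1=\Q f_1$.

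\textbf{Step 4: identify the generator with $\zm(\overline{N})$.} The coaction computation \eqref{eq:Delta} in the proof of Proposition~\ref{prop:chooseisom} is valid for every $N$, not only odd $N$. It uses only properties \ref{enum:unit}), \ref{enum:triv}) and \ref{enum:zero}) applied to $\Im(0;-1,\{0\}^{N-1};1)$. So $\zm(\overline{N})$ lies in $\Ker D_{<N}$ for all $N$.
\begin{itemize}
\item For $N$ odd, $\phi(\zm(\overline{N}))=f_N$ by the choice of $\phi$.
\item For $N$ even, $\zm(\overline{N})$ is nonzero, since its period $\zeta(\overline{N})=-(1-2^{1-N})\zeta(N)\neq0$. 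It therefore spans the one-dimensional kernel $\Q f_2^{N/2}$.
\end{itemize}
In both cases $\Ker D_{<N}\cap\H^2_N=\zm(\overline{N})\,\Q$.
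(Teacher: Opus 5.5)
Your argument is correct. The paper gives no proof of its own here; the statement is quoted from Glanois's thesis (Corollary 2.4.5). What you have written is therefore a self-contained derivation from the comodule isomorphism $\phi\colon\H\cong H$, which is the standard way such kernel statements are obtained.

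Inside the paper, the closest analogue is the proof of Lemma~\ref{lem:corad}. That proof performs the same first-letter extraction. It handles the contributions of longer prefixes of weight $2r+1$ by successively subtracting $f_1v_0, f_3v_1,\dots$, which yields $D_{2r+1}(\xi-\sum_{s<r}f_{2s+1}v_s)=\overline{f}_{\!2r+1}\otimes v_r$. Your functional $\pi_{2r+1}$ is well defined on $L_{2r+1}$, because shuffle products of nonempty words involve only words of length at least $2$. It does the same job in one step and recovers each $v_r$ independently, without the inductive bookkeeping.

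One minor point: Step~4 does not need the normalization of Proposition~\ref{prop:chooseisom}. Your even-weight argument works verbatim for odd $N$: $\zm(\overline{N})$ lies in the kernel by the coaction formula, it is nonzero because its period is nonzero, and the kernel is one-dimensional.
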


\begin{prop} \label{prop:ds1}
    Let $a$ and $b$ be non-negative integers.
    Then for $0 \le r \le a+b$, we have
    \begin{align}
        & \quad D_{2r+1}(\sm(\{2\}^a, 1, \{2\}^b)) \\
        = & \qty(\sum_{\substack{0 \le a' \le a \\ 0 \le b' \le b \\ a' + b' = r}} \sl(\{2\}^{a'}, 1, \{2\}^{b'})
        - \sum_{\substack{0 \le a' \le a \\ 0 \le b' \le b-1 \\ a' + b' = r}} \sl(\{2\}^{b'}, 1, \{2\}^{a'}))
        \otimes \sm(\{2\}^{a+b-r}).
    \end{align}    
\end{prop}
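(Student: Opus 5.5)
We would obtain this by specializing the term-by-term computation of $D_{2r+1}$ from the proofs of Propositions~\ref{prop:deriv} and~\ref{prop:Delta} to depth $d=1$, that is, to $\kk=(\{2\}^{a},1,\{2\}^{b})$ with $l_0=a$ and $l_1=b$. The key point is that for $d=1$ every surviving right-hand tensor factor is already of depth $d'=0$. So no passage to $\gr_d$ is needed, and the identity should hold on the nose in $\L_{2r+1}\otimes\H$. We write
\[
(-1)^{\dep\kk}\sm(\kk)=\sum \Im(0;\eta_{0,1},0,\dots,\eta_{0,a},0,-\eta_{1,1},\eta_{1,1},0,\dots,\eta_{1,b},0;1),
\]
with the convention $\eta_{1,b+1}=1$, and apply \eqref{eq:infcoaction} to each summand.

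First we would sort the subsequences of length $2r+1$ by their endpoints, exactly as in the proof of Proposition~\ref{prop:deriv}. Cuts not meeting the pair $-\eta_{1,1},\eta_{1,1}$ are of types (A) and (B). Type (A) vanishes by \ref{enum:triv}), and type (B) vanishes after summing over the interior signs by \cite[Lemma 9]{MR4377309}. Cuts with both endpoints nonzero that straddle the pair are of type (E), which vanishes by the argument given there. Types (F), (G) and (H) require a second pair $-\eta_{i,1},\eta_{i,1}$ or $-\eta_{j+1,1},\eta_{j+1,1}$. With only one letter $1$ in the index, this forces $i\ge 1$ together with $j+1\le d=1$, or $i\le d-2=-1$, so none of these types occur. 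Hence only types (C) and (D) remain, with $i=0$ and $j=1$. Here we must check that cuts whose right endpoint is the terminal point $1$ are already accounted for as the case $t=b$ of (C) via the convention $\eta_{1,b+1}=1$. Likewise, cuts starting at the initial $0$ are the case $s=0$ of (C), which is allowed since $i=0$.

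Next we would take over verbatim the evaluations of (C) and (D) from the proof of Proposition~\ref{prop:Delta}, now with $i=0$. Term (C) contributes $(-1)^{\dep\kk}\sl(\{2\}^{a-s},1,\{2\}^{t})\otimes\sm(\{2\}^{s+b-t})$ for $0\le s\le a$ and $0\le t\le b$. Term (D), after path reversal \ref{enum:pathrev}), contributes $-(-1)^{\dep\kk}\sl(\{2\}^{t-1},1,\{2\}^{a-s+1})\otimes\sm(\{2\}^{s+b-t})$ for $1\le s\le a+1$ and $1\le t\le b$. We then set $a'=a-s$, $b'=t$ in (C) and $a'=a-s+1$, $b'=t-1$ in (D). Imposing the weight condition $2a'+2b'+1=2r+1$, i.e.\ $a'+b'=r$, makes the right factor $\sm(\{2\}^{a+b-r})$ in both cases. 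The ranges become $0\le a'\le a$, $0\le b'\le b$ for (C), and $0\le a'\le a$, $0\le b'\le b-1$ for (D). Multiplying through by $(-1)^{\dep\kk}$ cancels the sign and gives the stated formula.

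The main obstacle is the bookkeeping in the first step. We must confirm that the vanishing of (A), (B) and (E) is exact rather than merely modulo $C_{d-1}\H$; this holds because those arguments were identities in $\L$. We must also confirm that boundary cuts touching $a_0=0$ or $a_{n+1}=1$ are neither lost nor double counted. We would check this directly by listing, for small $a$ and $b$, all cuts of odd length and matching them against the index ranges of (C) and (D).
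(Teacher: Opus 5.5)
Your proposal is correct and follows essentially the paper's route. The paper packages your observation that nothing is discarded at $d=1$ into a commutative square identifying $D_{2r+1}$ on $C_1\H$ with the $(2r+1)$-component of $\widetilde{\Delta}$ on $\gr_1\H$, using $\gr_0\H=C_0\H$, and then quotes the $d=1$ case of Proposition~\ref{prop:Delta}, whose proof is exactly the (C)/(D) computation you redo by hand. One harmless overstatement: at $d=1$, cuts of type (E) never occur at all, since a cut from an $\eta$ in the first block to an $\eta$ in the second block has even length.
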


\begin{prf}
    We have the following commutative diagram:
    \[
    \begin{tikzcd}[
        column sep=1.5em,
        row sep=2em
        ]
        {C_1\H} &&&& {C_1\L_{2r+1} \otimes C_0\H} \\
        \\
        {\gr_1\H} && {C_1\L \otimes \gr_0\H} && {C_1\L_{2r+1} \otimes \gr_0\H.}
        \arrow["{D_{2r+1}}", from=1-1, to=1-5]
        \arrow[two heads, from=1-1, to=3-1]
        \arrow[equals, from=1-5, to=3-5]
        \arrow["{\widetilde{\Delta}}", from=3-1, to=3-3]
        \arrow[two heads, from=3-3, to=3-5]
    \end{tikzcd}
    \]
    Therefore, by the case $d=1$ of Proposition~\ref{prop:Delta}, we obtain
    \begin{align}
        D_{2r+1}(\sm(\{2\}^a, 1, \{2\}^b))
        & = \sum_{\substack{0 \le a' \le a \\ 0 \le b' \le b \\ a' + b' = r}} 
            \sl(\{2\}^{a'}, 1, \{2\}^{b'}) \otimes \sm(\{2\}^{a+b-r}) \\
        & \qquad - \sum_{\substack{0 \le a' \le a \\ 0 \le b' \le b-1 \\ a' + b' = r}} 
            \sl(\{2\}^{b'}, 1, \{2\}^{a'}) \otimes \sm(\{2\}^{a+b-r}),
    \end{align}
    as desired.
\end{prf}

\begin{lem} \label{lem:alpha}
    Let $a$ and $b$ be non-negative integers.
    Then for $0 \le r \le a+b$, we have
    \[
    \sum_{\substack{0 \le a' \le a \\ 0 \le b' \le b \\ a' + b' = r}} \alpha_r^{a', b'}
    - \sum_{\substack{0 \le a' \le a \\ 0 \le b' \le b-1 \\ a' + b' = r}} \alpha_r^{b', a'}
    = \alpha_r^{a, b}.
    \]
\end{lem}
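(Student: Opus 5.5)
The plan is to prove the identity of Lemma~\ref{lem:alpha} by direct substitution and telescoping. It is a purely combinatorial statement about binomial coefficients, so no motivic input is needed. Fix $r$ with $0 \le r \le a+b$ and write $c_r = \frac{2^{2r}}{2^{2r}-1}$ for $r \ge 1$ and $c_0 = 0$. Then $\alpha_r^{a,b} = -c_r\binom{2r}{2a+1} + \binom{2r}{2b}$. Every term in both sums has the same $r$, so I will treat the $c_r$-part and the remaining part separately and show that each telescopes.

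In both sums $b' = r - a'$, so I parametrize everything by $a'$. In the first sum $a'$ ranges over $L \le a' \le U$ with $L = \max(0, r-b)$ and $U = \min(a, r)$. The summand is
\[
\alpha_r^{a', r-a'} = -c_r\binom{2r}{2a'+1} + \binom{2r}{2a'},
\]
using $\binom{2r}{2r-2a'} = \binom{2r}{2a'}$. In the second sum the constraint $b' \le b-1$ gives $L' \le a' \le U$ with $L' = \max(0, r-b+1)$. The summand is
\[
\alpha_r^{r-a', a'} = -c_r\binom{2r}{2a'-1} + \binom{2r}{2a'},
\]
using $\binom{2r}{2r-2a'+1} = \binom{2r}{2a'-1}$. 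The hypothesis $r \le a+b$ guarantees $L \le U$, so the first range is nonempty.

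For the $c_r$-part, shift the index in the second sum by one ($a' \mapsto a'+1$). The difference of the two odd-binomial sums then collapses to the single term $\binom{2r}{2U+1}$. I will check the two cases of $L$. If $r \ge b$, then $L' - 1 = L$ and the ranges match except at the top. If $r < b$, then $L = L' = 0$ and the extra bottom term is $\binom{2r}{-1} = 0$. Finally, $\binom{2r}{2U+1} = \binom{2r}{2a+1}$ in both cases: if $U = a$ this is immediate, and if $U = r < a$ both sides vanish because $2a+1 > 2r$. This yields the contribution $-c_r\binom{2r}{2a+1}$.

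For the even part, the two sums share all terms with $a' \ge L'$. If $r \ge b$, the difference is the single term $\binom{2r}{2L} = \binom{2r}{2r-2b} = \binom{2r}{2b}$. If $r < b$, then $L = L'$ and the difference is $0$, which again equals $\binom{2r}{2b}$ since $2b > 2r$. Adding the two parts gives exactly $\alpha_r^{a,b}$. The case $r = 0$ is covered by the convention $c_0 = 0$.

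I expect no real obstacle. The only point needing care is the boundary bookkeeping: the $\max$/$\min$ in the ranges, and the vanishing of out-of-range binomial coefficients, which makes the case splits $r \ge b$ versus $r < b$ and $a \le r$ versus $a > r$ uniform.
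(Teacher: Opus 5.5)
Your proof is correct and follows essentially the same route as the paper: both split $\alpha_r$ into the $\frac{2^{2r}}{2^{2r}-1}$-part and the even-binomial part. Both then telescope the odd binomials via $\binom{2r}{2b'+1}=\binom{2r}{2a'-1}$ and an index shift, and cancel the even binomials via $\binom{2r}{2a'}=\binom{2r}{2b'}$, with your $\max$/$\min$ parametrization by $a'$ making explicit the boundary terms the paper handles implicitly (vanishing of $\binom{2r}{-1}$, and of $\binom{2r}{2a+1}$, $\binom{2r}{2b}$ when out of range).
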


\begin{prf}
    By the definition of $\alpha_r^{a, b}$, it suffices to show the following two identities:
    \[
    \sum_{\substack{0 \le a' \le a \\ 0 \le b' \le b \\ a' + b' = r}} \binom{2r}{2a'+1}
        - \sum_{\substack{0 \le a' \le a \\ 0 \le b' \le b-1 \\ a' + b' = r}} \binom{2r}{2b'+1}
    = \binom{2r}{2a+1},
    \]
    and
    \[
    \sum_{\substack{0 \le a' \le a \\ 0 \le b' \le b \\ a' + b' = r}} \binom{2r}{2b'}
        - \sum_{\substack{0 \le a' \le a \\ 0 \le b' \le b-1 \\ a' + b' = r}} \binom{2r}{2a'}
    = \binom{2r}{2b}.
    \]
    The former identity follows from
    \[
    \sum_{\substack{0 \le a' \le a \\ 0 \le b' \le b-1 \\ a' + b' = r}} \binom{2r}{2b'+1}
    = \sum_{\substack{0 \le a' \le a \\ 0 \le b' \le b-1 \\ a' + b' = r}} \binom{2r}{2a'-1}
    = \sum_{\substack{0 \le a' \le a-1 \\ 1 \le b' \le b \\ a' + b' = r}} \binom{2r}{2a'+1},
    \]
    where we used $2a'-1 = 2(a'-1)+1$ and shifted the indices in the last step.
    The latter identity follows from
    \[
    \sum_{\substack{0 \le a' \le a \\ 0 \le b' \le b-1 \\ a' + b' = r}} \binom{2r}{2a'}
    = \sum_{\substack{0 \le a' \le a \\ 0 \le b' \le b-1 \\ a' + b' = r}} \binom{2r}{2b'},
    \]
    which completes the proof.
\end{prf}

\begin{prf}[of Theorem~\ref{thm:msidentity}]
    We proceed by induction on $a+b$. 
    The case $a=b=0$ is trivial, so we assume $a+b > 0$.
    Let $R^{a, b}$ and $L^{a, b}$ denote the right-hand side and the left-hand side of \eqref{eq:msidentity}, respectively.
    By the induction hypothesis, for $a'+b' < a+b$, we have
    \[
    \sl(\{2\}^{a'}, 1, \{2\}^{b'}) = \frac{(-1)^{a'+b'}}{2^{a'+b'}} \alpha_{a'+b'}^{a', b'}\, \zl(\overline{2a'+2b'+1}).
    \]
    Thus, for $0 \le r < a+b$, Proposition~\ref{prop:ds1} implies that
    \[
    D_{2r+1}(L^{a, b}) = \frac{(-1)^r}{2^r} \qty(\sum_{\substack{0 \le a' \le a \\ 0 \le b' \le b \\ a' + b' = r}} \alpha_r^{a', b'}
        - \sum_{\substack{0 \le a' \le a \\ 0 \le b' \le b-1 \\ a' + b' = r}} \alpha_r^{b', a'})
        \zl(\overline{2r+1}) \otimes \sm(\{2\}^{a+b-r}).
    \]
    By Lemma~\ref{lem:alpha}, we obtain
    \[
    D_{2r+1}(L^{a, b}) = \frac{(-1)^r}{2^r} \alpha_r^{a, b}\, \zl(\overline{2r+1}) \otimes \sm(\{2\}^{a+b-r}).
    \]
    On the other hand, since $D_{2r+1}$ is a derivation, we have
    \begin{equation}
        \begin{aligned}[t]
            D_{2r+1}(R^{a, b}) = \sum_{s = 0}^{a+b} \frac{(-1)^s}{2^s} \alpha_s^{a, b}\, 
                \Big((1 \otimes & \zm(\overline{2s+1})) D_{2r+1}(\sm(\{2\}^{a+b-s})) \\
            & + (1 \otimes \sm(\{2\}^{a+b-r})) D_{2r+1}(\zm(\overline{2s+1}))\Big).
        \end{aligned}
    \end{equation}
    By equation~\eqref{eq:Delta}, we have
    \[
    D_{2r+1}(\zm(\overline{2s+1})) = \delta_{r, s}\, \zl(\overline{2r+1}) \otimes 1.
    \]
    By the computation of the terms of type (A) and (B) in the proof of 
    Proposition~\ref{prop:deriv}, it follows that
    \[
    D_{2r+1}(\sm(\{2\}^d)) = 0\quad \text{for all}\ d \ge 0.
    \]
    Thus, we obtain
    \[
    D_{2r+1}(R^{a, b}) = \frac{(-1)^r}{2^r} \alpha_r^{a, b}\, \zl(\overline{2r+1}) \otimes \sm(\{2\}^{a+b-r}) = D_{2r+1}(L^{a, b}).
    \]
    Hence,
    \[
    D_{<2a+2b+1}(L^{a, b} - R^{a, b})  = 0.
    \]
    By Theorem~\ref{thm:KerD}, this implies that there exists a constant $c \in \Q$ such that
    $L^{a, b} - R^{a, b} = c\, \zm(\overline{2a+2b+1})$.
    Applying the period map yields $c = 0$, which completes the proof.
\end{prf}

The proof of Theorem~\ref{thm:mbsidentity} is similar.
\begin{prop} \label{prop:dbs}
    Let $a$ and $b$ be non-negative integers.
    Then for $0 \le r \le a+b$, we have
    \begin{equation}
        \begin{aligned}
            D_{2r+1}(\bsm(\{2\}^a, 1, \{2\}^b)) & = \frac{(-1)^r}{2^r}(\alpha_r^{a, b} - \delta_{r \ge a}\, \alpha_r^{a, r-a})\,
                \zl(\overline{2r+1}) \otimes \bsm(\{2\}^{a+b-r}) \\
            & \qquad + \delta_{r \ge a}\, \bsl(\{2\}^{a}, 1, \{2\}^{r-a}) \otimes \bsm(\{2\}^{a+b-r}).
        \end{aligned}
    \end{equation}
\end{prop}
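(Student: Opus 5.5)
We compute $D_{2r+1}$ of $\bsm(\{2\}^a,1,\{2\}^b)$ directly from \eqref{eq:infcoaction}, redoing the case analysis of Proposition~\ref{prop:deriv} with $d=1$ and the weights $\epsilon_1\cdots\epsilon_r$ carried along. Written as an iterated integral, $(-1)^{\dep\kk}\bsm(\{2\}^a,1,\{2\}^b)$ is a sum over $\eta$'s of $\Im(0;\eta_{0,1},0,\dots,\eta_{0,a},0,-\eta_{1,1},\eta_{1,1},0,\dots,\eta_{1,b},0;1)$. The only change from the $s$-case is an extra sign factor, which is a product of ratios $\eta_{0,l}/\eta_{0,l+1}$ and the like. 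For $a\ge1$, the factor $\epsilon$ attached to the entry $1$ is fixed ($=-1$), but the entry $\eta_{0,a}/(-\eta_{1,1})$ contributes a sign coupling the two blocks.

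We then split the subsequences removed by $D_{2r+1}$ into the types (A)--(H) as before.

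\textbf{Types (A), (B).} These avoid the pair $-\eta_{1,1},\eta_{1,1}$ and still vanish after summation. For (B), the signed version of \cite[Lemma 9]{MR4377309} applies; equivalently, as in \cite{MR4377309}, the even/odd statement is symmetric under $\eta\mapsto -\eta$ inside the block.

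\textbf{Types (C), (D).} These contain the pair and have an endpoint $0$. After homothety, the left factor becomes a sum of $\Il$'s. Because of the extra signs, the sum over the interior $\eta$'s splits: the sign factors inside the removed block give $\bsl$ or $\sl$ depending on how the signs telescope. I expect the interior factors to multiply to a weighted product, giving $\sl$-type terms times $\bsm(\{2\}^{a+b-r})$ on the right. Then, using Theorem~\ref{thm:msidentity} for $\sl(\{2\}^{a'},1,\{2\}^{b'})$ with $a'+b'=r$ (legitimate since $\sl$ of weight $2r+1$ lies in $C_1\L_{2r+1}=\Q\,\zl(\overline{2r+1})$), and the summation of Lemma~\ref{lem:alpha}, these contributions should assemble into $\frac{(-1)^r}{2^r}\alpha_r^{a,b}\,\zl(\overline{2r+1})\otimes\bsm(\{2\}^{a+b-r})$.

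\textbf{The exceptional term.} When $r\ge a$, the removed block starts exactly at the first letter, so the right-hand integral starts $0;\,\eta_{1,t+1}$. The sign bookkeeping now fails to telescope in the same way: the sign that made the block an $s$-type sum is absent, and the left factor is $\bsl(\{2\}^a,1,\{2\}^{r-a})$ rather than $\sl(\{2\}^a,1,\{2\}^{r-a})$. This accounts for the correction: we subtract the $\alpha_r^{a,r-a}$ term that Lemma~\ref{lem:alpha} would have counted, and add $\bsl(\{2\}^a,1,\{2\}^{r-a})\otimes\bsm(\{2\}^{a+b-r})$.

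\textbf{Types (E)--(H).} These need nonzero endpoints across the pair, which requires at least two $1$'s, or (H) with $j=i$. They vanish exactly as before, since the (E) and (H) computations only used path composition, reversal and homothety, which are compatible with the sign weights.

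The main obstacle is the sign bookkeeping in (C)/(D): checking precisely which removed blocks yield $\sl$ versus $\bsl$, and verifying that the boundary case $s=0$ (block starting right after the initial $0$) is the unique source of the $\bsl$ term. I would first test this on $a=1,b=0$ and $r=0,1$ by hand, then organise the general case by the position of the block's left endpoint relative to the first letter.
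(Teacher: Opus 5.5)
Your outline has the same architecture as the paper's proof. Blocks not touching the first letter reproduce the $\sl$-terms of Proposition~\ref{prop:ds1} with $\bsm$ on the right, and the block starting at the base point produces the $\bsl$-term. Theorem~\ref{thm:msidentity} together with Lemma~\ref{lem:alpha} then finishes. However, the step you flag as ``the main obstacle'' and leave at ``I expect'' is essentially the whole content of the proposition, and the idea that resolves it is missing.

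The weight is not a product of ratios coupling the two blocks. The nonzero letters of the word are the partial products $\epsilon_i\cdots\epsilon_{a+b+1}$, so the weight $\epsilon_1\cdots\epsilon_{a+b+1}$ is simply the first letter:
\[
(-1)^{a+b+1}\,\bsm(\{2\}^a,1,\{2\}^b)=\sum_{\eta_i,\eta'_j\in\{\pm1\}}\eta_1\,\Im(0;\eta_1,0,\dots,\eta_a,0,-\eta'_1,\eta'_1,0,\dots,\eta'_b,0;1),
\]
with $\eta_1:=-\eta'_1$ when $a=0$. Without this observation, two of your assertions are unsupported: that the interior signs ``split'' so the right factor becomes $\bsm(\{2\}^{a+b-r})$, and that the $s=0$ block is the unique source of $\bsl$.

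With it, the bookkeeping is immediate.
\begin{itemize}
\item If the removed block does not contain the first letter, $\eta_1$ stays in the quotient word. The left factor is then literally the unweighted sum from the $s$-case, and the right factor is $\pm\bsm(\{2\}^{a+b-r})$. In particular, type (B) needs only \cite[Lemma 9]{MR4377309} as it stands, not a signed version.
\item If the block does contain the first letter, it starts at the base point $0$. By parity it ends either at a $0$ before the pair, which gives zero by property~\ref{enum:triv}), or at $\eta'_{t+1}$ with $t=r-a$, which forces $r\ge a$. Homothety by $\eta'_{t+1}$ splits the weight as $(\eta'_{t+1}\eta_1)\cdot\eta'_{t+1}$, i.e.\ into the first letters of the normalized left word and of the right word. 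This gives exactly $\bsl(\{2\}^a,1,\{2\}^{r-a})\otimes\bsm(\{2\}^{a+b-r})$.
\end{itemize}

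Hence the $\sl$-part is the expression of Proposition~\ref{prop:ds1}, except that its first sum is restricted to $a'\le a-1$. Theorem~\ref{thm:msidentity} turns each $\sl$ into a multiple of $\zl(\overline{2r+1})$, and Lemma~\ref{lem:alpha} then yields exactly $\alpha_r^{a,b}-\delta_{r\ge a}\,\alpha_r^{a,r-a}$.

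Finally, types (E)--(H) do not occur at all when there is a single entry $1$. Types (F)--(H) need a second pair, and (E) has even length, so no compatibility of those computations with sign weights needs checking.
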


\begin{prf}
    By the definition of the motivic M$\bar{s}$Vs, we have
    \[
    (-1)^{a+b+1}\, \bsm(\{2\}^a, 1, \{2\}^b) = \sum_{\eta_i, \eta'_j \in \{\pm1\}} \eta_1\Im(0; \eta_1, 0, \dots, \eta_a, 0, -\eta'_1, \eta'_1, 0, \dots, \eta'_b, 0; 1).
    \]
    When $a=0$, we understand $\eta_1$ to be $-\eta'_1$.
    By applying $D_{2r+1}$, we get three types of terms as shown in the following diagram.
    \begin{center}
        \vspace{1.4cm}
        $\Im(\tikznode{zero0}{0}; \eta_1, 0, \dots, \tikznode{etas}{\eta_s}, \tikznode{zero1}{0}, \eta_{s+1}, 0, \dots, \tikznode{-eta'1}{-\eta'_1}, \tikznode{eta'1}{\eta'_1}, 0, \dots, \eta'_t, \tikznode{zero2}{0}, \tikznode{eta'tplus1}{\eta'_{t+1}}, 0, \dots, \eta'_b, 0; 1)$
        \begin{tikzpicture}[remember picture, overlay] 
            \draw ([yshift=3pt]zero0.north) -- ++(0,0.8) -| node[pos=0, left] {(C)} ([xshift=2pt, yshift=3pt]eta'tplus1.north);

            \draw ([yshift=3pt]zero1.north) -- ++(0,0.4) -| node[pos=0, left] {(A)} ([xshift=-2pt, yshift=3pt]eta'tplus1.north);
            
            \draw ([yshift=-3pt]etas.south) -- ++(0,-0.4) -| node[pos=0.9, below right] {(B)} ([yshift=-3pt]zero2.south);
        \end{tikzpicture}
        \vspace{1.0cm}
    \end{center}
    For the type (A), by the property~\ref{enum:homothety}), we have
    \begin{align}
        & \sum_{\substack{1 \le s \le a \\ 0 \le t \le b \\ a-s+t=r}} \sum_{\eta_i, \eta'_j \in \{\pm1\}}
            \Big(\Il(0; \eta_{s+1}, 0, \dots, -\eta'_1, \eta'_1, 0, \dots, \eta'_t, 0; \eta'_{t+1}) \\[-2ex]
        & \qquad\qquad\qquad\qquad\qquad \otimes \eta_1\Im(0; \eta_1, 0, \dots, \eta_s, 0, \eta'_{t+1}, 0, \dots, \eta'_b, 0; 1)\Big) \\[2ex]
        = & \sum_{\substack{1 \le s \le a \\ 0 \le t \le b \\ a-s+t=r}} \sum_{\eta_i, \eta'_j \in \{\pm1\}}
            \Big(\Il(0, \eta'^{\,-1}_{t+1}\,\eta_{s+1}, 0, \dots, -\eta'^{\,-1}_{t+1}\,\eta'_1, \eta'^{\,-1}_{t+1}\,\eta'_1, 0, \dots, \eta'^{\,-1}_{t+1}\,\eta'_t, 0; 1) \\[-2ex]
        & \qquad\qquad\qquad\qquad\qquad \otimes \eta_1\Im(0; \eta_1, 0, \dots, \eta_s, 0, \eta'_{t+1}, 0, \dots, \eta'_b, 0; 1)\Big) \\[2ex]
        = & \sum_{\substack{1 \le s \le a \\ 0 \le t \le b \\ a-s+t=r}} \sum_{\substack{\eta_{s+1}, \dots, \eta_a \\ \eta'_1, \dots, \eta'_t} \in \{\pm1\}}
            \Il(0; \eta_{s+1}, 0, \dots, -\eta'_1, \eta'_1,0, \dots, \eta'_t, 0; 1) \\
        & \qquad\qquad\qquad \otimes \sum_{\substack{\eta_1, \dots, \eta_s \\ \eta'_{t+1}, \dots, \eta'_b} \in \{\pm1\}}
            \eta_1\Im(0; \eta_1, 0, \dots, \eta_s, 0, \eta'_{t+1}, 0, \dots, \eta'_b, 0; 1) \\[1ex]
        = &\, (-1)^{a+b+1} \sum_{\substack{1 \le s \le a \\ 0 \le t \le b \\ a-s+t=r}}
            \sl(\{2\}^{a-s}, 1, \{2\}^{t}) \otimes \bsm(\{2\}^{a+b-r}) \\
        = &\, (-1)^{a+b+1} \sum_{\substack{0 \le a' \le a-1 \\ 0 \le b' \le b \\ a' + b' = r}}
            \sl(\{2\}^{a'}, 1, \{2\}^{b'}) \otimes \bsm(\{2\}^{a+b-r}).
    \end{align}
    For the type (B), by the properties~\ref{enum:pathrev}) and \ref{enum:homothety}), we have
    \begin{align}
        & \sum_{\substack{1 \le s \le a+1 \\ 1 \le t \le b \\ a-s+t=r}} \sum_{\eta_i, \eta'_j \in \{\pm1\}} 
            \Big(\Il(\eta_s; 0, \eta_{s+1}, 0, \dots, -\eta'_1, \eta'_1, 0, \dots, \eta'_t; 0) \\[-2ex]
        & \qquad\qquad\qquad\qquad\qquad \otimes \eta_1\Im(0; \eta_1, 0, \dots, \eta_s, 0, \eta'_{t+1}, 0, \dots, \eta'_b, 0; 1)\Big) \\[2ex]
        = & - \sum_{\substack{1 \le s \le a+1 \\ 1 \le t \le b \\ a-s+t=r}} \sum_{\eta_i, \eta'_j \in \{\pm1\}} 
            \Big(\Il(0; \eta_s^{-1}\,\eta'_t, 0, \dots, \eta_s^{-1}\,\eta'_1, -\eta_s^{-1}\,\eta'_1, 0, \dots, \eta_s^{-1}\,\eta_{s+1}, 0; 1) \\[-2ex]
        & \qquad\qquad\qquad\qquad\qquad \otimes \eta_1\Im(0; \eta_1, 0, \dots, \eta_s, 0, \eta'_{t+1}, 0, \dots, \eta'_b, 0; 1)\Big) \\[2ex]
        = & - \sum_{\substack{1 \le s \le a+1 \\ 1 \le t \le b \\ a-s+t=r}} \sum_{\substack{\eta_{s+1}, \dots, \eta_a \\ \eta'_1, \dots, \eta'_t} \in \{\pm1\}}
            \Il(0; \eta'_t, 0, \dots, \eta'_2, 0, \eta'_1, -\eta'_1,0, \eta_a, 0, \dots, \eta_{s+1}, 0; 1) \\
        & \qquad\qquad\qquad \otimes \sum_{\substack{\eta_1, \dots, \eta_s \\ \eta'_{t+1}, \dots, \eta'_b} \in \{\pm1\}}
            \eta_1\Im(0; \eta_1, 0, \dots, \eta_s, 0, \eta'_{t+1}, 0, \dots, \eta'_b, 0; 1) \\[1ex]
        = & -(-1)^{a+b+1} \sum_{\substack{1 \le s \le a+1 \\ 1 \le t \le b \\ a-s+t=r}}
            \sl(\{2\}^{t-1}, 1, \{2\}^{a+1-s}) \otimes \bsm(\{2\}^{a+b-r}) \\
        = & -(-1)^{a+b+1} \sum_{\substack{0 \le a' \le a \\ 0 \le b' \le b-1 \\ a' + b' = r}}
            \sl(\{2\}^{b'}, 1, \{2\}^{a'}) \otimes \bsm(\{2\}^{a+b-r}).
    \end{align}
    For the type (C), by the property~\ref{enum:homothety}), we have
    \begin{align}
        & \delta_{r \ge a} \sum_{\eta_i, \eta'_j \in \{\pm1\}} 
            \Big(\eta_1\Il(0; \eta_1, 0, \dots, \eta_a, 0, -\eta'_1, \eta'_1, 0, \dots, \eta'_{r-a}, 0; \eta'_{r-a+1}) \\[-2ex]
        & \qquad\qquad\qquad\qquad \otimes \Im(0; \eta'_{r-a+1}, 0, \dots, \eta'_{b}, 0; 1)\Big) \\[2ex]
        = &\, \delta_{r \ge a} \sum_{\eta_i, \eta'_j \in \{\pm1\}} 
            \Big(\eta_1\Il(0; \eta'^{\,-1}_{r-a+1}\,\eta_1, 0, -\eta'^{\,-1}_{r-a+1}\,\eta'_1, \eta'^{\,-1}_{r-a+1}\,\eta'_1, 0, \dots, \eta'^{\,-1}_{r-a+1}\,\eta'_{r-a}, 0; 1) \\[-2ex]
        & \qquad\qquad\qquad\qquad \otimes \Im(0; \eta'_{r-a+1}, 0, \dots, \eta'_{b}, 0; 1)\Big) \\[2ex]
        = &\, \delta_{r \ge a} \sum_{\substack{\eta_1, \dots, \eta_a \\ \eta'_1, \dots, \eta'_{r-a}} \in \{\pm1\}} 
            \eta_1\Il(0; \eta_1, 0, \dots, \eta_a, 0, -\eta'_1, \eta'_1, 0, \dots, \eta'_{r-a}, 0; 1) \\
        & \qquad\qquad\qquad \otimes \sum_{\eta'_{r-a+1}, \dots, \eta'_b \in \{\pm1\}}
            \eta'_{r-a+1}\Im(0; \eta'_{r-a+1}, 0, \dots, \eta'_{b}, 0; 1) \\[1ex]
        = &\, \delta_{r \ge a}\, (-1)^{a+b+1}\, \bsl(\{2\}^a, 1, \{2\}^{r-a}) \otimes \bsm(\{2\}^{a+b-r}).
    \end{align}
    Therefore, by Theorem~\ref{thm:msidentity} and Lemma~\ref{lem:alpha}, we obtain
    \begin{align}
        & \qquad D_{2r+1}(\bsm(\{2\}^a, 1, \{2\}^b)) \\
        & = \qty(\sum_{\substack{0 \le a' \le a-1 \\ 0 \le b' \le b \\ a' + b' = r}} \sl(\{2\}^{a'}, 1, \{2\}^{b'})
            - \sum_{\substack{0 \le a' \le a \\ 0 \le b' \le b-1 \\ a' + b' = r}} \sl(\{2\}^{b'}, 1, \{2\}^{a'}))
            \otimes \bsm(\{2\}^{a+b-r}) \\
        & \qquad\qquad + \delta_{r \ge a}\, \bsl(\{2\}^a, 1, \{2\}^{r-a}) \otimes \bsm(\{2\}^{a+b-r}) \\
        & = \frac{(-1)^r}{2^r} \qty(\sum_{\substack{0 \le a' \le a-1 \\ 0 \le b' \le b \\ a' + b' = r}} \alpha_r^{a', b'}
            - \sum_{\substack{0 \le a' \le a \\ 0 \le b' \le b-1 \\ a' + b' = r}} \alpha_r^{b', a'})
            \zl(\overline{2r+1}) \otimes \bsm(\{2\}^{a+b-r}) \\
        & \qquad\qquad + \delta_{r \ge a}\, \bsl(\{2\}^a, 1, \{2\}^{r-a}) \otimes \bsm(\{2\}^{a+b-r}) \\
        & = \frac{(-1)^r}{2^r}(\alpha_r^{a, b} - \delta_{r \ge a}\, \alpha_r^{a, r-a})\,
            \zl(\overline{2r+1}) \otimes \bsm(\{2\}^{a+b-r}) \\
        & \qquad\qquad + \delta_{r \ge a}\, \bsl(\{2\}^a, 1, \{2\}^{r-a}) \otimes \bsm(\{2\}^{a+b-r}).
    \end{align}
    This completes the proof.
\end{prf}

\begin{lem} \label{lem:beta}
    Let $a$ and $b$ be non-negative integers.
    Then for $0 \le r \le a+b$, we have
    \[
    \alpha_r^{a, b} - \delta_{r \ge a}\, \alpha_r^{a, r-a} = \beta_r^{a, b} - \delta_{r \ge a}\, \beta_r^{a, r-a}.
    \]
\end{lem}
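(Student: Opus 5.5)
The identity is purely combinatorial, so I will verify it by direct case analysis from the definitions of $\alpha_r^{a,b}$ and $\beta_r^{a,b}$. Throughout, write $c_r = \frac{2^{2r}}{2^{2r}-1}$ for $r \ge 1$ and $c_0 = 0$, in line with the convention in Theorems~\ref{thm:msidentity} and~\ref{thm:mbsidentity}. I will use that $\binom{2r}{m} = 0$ whenever $m > 2r$, and the symmetry $\binom{2r}{2r-2a} = \binom{2r}{2a}$.

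First I compute the left-hand side. By definition, $\alpha_r^{a,r-a} = -c_r\binom{2r}{2a+1} + \binom{2r}{2r-2a} = -c_r\binom{2r}{2a+1} + \binom{2r}{2a}$.
\begin{itemize}
\item If $r \ge a$, the $c_r$-terms of $\alpha_r^{a,b}$ and $\alpha_r^{a,r-a}$ cancel, so the left-hand side equals $\binom{2r}{2b} - \binom{2r}{2a}$.
\item If $r < a$, then $2a+1 > 2r$, so the $c_r$-term of $\alpha_r^{a,b}$ vanishes, and the left-hand side equals $\binom{2r}{2b}$.
\end{itemize}

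Next I treat the right-hand side in the case $a \ge 1$. Here $\beta_r^{a,r-a} = c_r\binom{2r}{2a} + \binom{2r}{2a}$.
\begin{itemize}
\item If $r \ge a$, the $c_r$-terms cancel, and the right-hand side equals $\binom{2r}{2b} - \binom{2r}{2a}$.
\item If $r < a$, then $\binom{2r}{2a} = 0$, so the right-hand side equals $\binom{2r}{2b}$.
\end{itemize}
Both subcases agree with the corresponding left-hand sides.

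Finally I treat the case $a = 0$. Now $\delta_{r\ge a} = 1$ for every $r$, and $r \le a+b = b$. The right-hand side is
\[
(-2^{2r+1} + \delta_{r,b}) - (-2^{2r+1} + \delta_{r,r}) = \delta_{r,b} - 1.
\]
The left-hand side is $\binom{2r}{2b} - \binom{2r}{0} = \binom{2r}{2b} - 1$. Since $r \le b$, we have $\binom{2r}{2b} = \delta_{r,b}$, so the two sides agree. This argument holds for $r = 0$ as well, because no $c_r$-terms appear in this case.

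No step is a genuine obstacle. The only points needing care are the $r=0$ convention for $c_0$ and the vanishing of binomial coefficients beyond the top entry $2r$; both are handled above.
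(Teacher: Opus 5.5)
Your proof is correct and follows essentially the same route as the paper: expand both sides from the definitions and use that $\binom{2r}{2a+1}$ and $\binom{2r}{2a}$ vanish for $r<a$. Your explicit case split also spells out the $a=0$ step $\binom{2r}{2b}=\delta_{r,b}$ for $r\le b$, which the paper's ``it suffices to show'' reduction uses without stating it.
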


\begin{prf}
    By the definition of $\alpha_r^{a, b}$ and $\beta_r^{a, b}$, it suffices to show the following identity:
    \[
    - \binom{2r}{2a+1} + \delta_{r \ge a}\, \binom{2r}{2a+1} =
    \begin{dcases}
        \binom{2r}{2a} - \delta_{r \ge a}\, \binom{2r}{2a} & \text{if} \ a \ge 1, \\
        0 & \text{if} \ a = 0.
    \end{dcases}
    \]
    This follows immediately from the fact that $\binom{2r}{2a+1} = \binom{2r}{2a} = 0$ for $r < a$.
\end{prf}

\begin{prf}[of Theorem~\ref{thm:mbsidentity}]
    We proceed by induction on $a+b$. 
    The case $a=b=0$ is trivial, so we assume $a+b > 0$.
    Let $\overline{R}^{a, b}$ and $\overline{L}^{a, b}$ denote the right-hand side and the left-hand side of \eqref{eq:mbsidentity}, respectively.
    By the induction hypothesis, for $a'+b' < a+b$, we have
    \[
    \bsl(\{2\}^{a'}, 1, \{2\}^{b'}) = \frac{(-1)^{a'+b'}}{2^{a'+b'}} \beta_{a'+b'}^{a', b'}\, \zl(\overline{2a'+2b'+1}).
    \]
    Thus, for $0 \le r < a+b$, Proposition~\ref{prop:dbs} implies that
    \[
    D_{2r+1}(\overline{L}^{a, b}) = \frac{(-1)^r}{2^r} \qty(\alpha_r^{a, b} - \delta_{r \ge a}\, \alpha_r^{a, r-a} + \delta_{r \ge a}\, \beta_r^{a, r-a})
        \zl(\overline{2r+1}) \otimes \bsm(\{2\}^{a+b-r}).
    \]
    By Lemma~\ref{lem:beta}, we obtain
    \[
    D_{2r+1}(\overline{L}^{a, b}) = \frac{(-1)^r}{2^r} \beta_r^{a, b}\, \zl(\overline{2r+1}) \otimes \bsm(\{2\}^{a+b-r}).
    \]
    On the other hand, by the same argument as in the proof of 
    Theorem~\ref{thm:msidentity}, we have
    \[
    D_{2r+1}(\overline{R}^{a, b}) = \frac{(-1)^r}{2^r} \beta_r^{a, b}\, \zl(\overline{2r+1}) \otimes \bsm(\{2\}^{a+b-r}) = D_{2r+1}(\overline{L}^{a, b}).
    \]
    Hence,
    \[
    D_{<2a+2b+1}(\overline{L}^{a, b} - \overline{R}^{a, b})  = 0.
    \]
    By Theorem~\ref{thm:KerD}, this implies that there exists a constant $c \in \Q$ such that
    $\overline{L}^{a, b} - \overline{R}^{a, b} = c\, \zm(\overline{2a+2b+1})$.
    Applying the period map yields $c = 0$, which completes the proof.
\end{prf}

From now on, we simply write $\alpha^{a, b}$ and $\beta^{a, b}$ for $\alpha_{a+b}^{a, b}$ and $\beta_{a+b}^{a, b}$, respectively.
\begin{cor} \label{cor:coeff}
    Let $a$ and $b$ be non-negative integers.
    Then we have
    \begin{align}
        \sl(\{2\}^{a}, 1, \{2\}^{b}) & = \frac{(-1)^{a+b}}{2^{a+b}} \alpha^{a, b}\, \zl(\overline{2a+2b+1}), \\
        \bsl(\{2\}^{a}, 1, \{2\}^{b}) & = \frac{(-1)^{a+b}}{2^{a+b}} \beta^{a, b}\, \zl(\overline{2a+2b+1}).
    \end{align}
\end{cor}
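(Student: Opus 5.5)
The plan is to obtain the corollary by projecting the motivic identities of Theorems~\ref{thm:msidentity} and~\ref{thm:mbsidentity} down to the Lie coalgebra $\L$. First apply $\pia\colon \H \to \A$ to \eqref{eq:msidentity}, which gives an identity in $\A$. Then compose with the quotient map $\A_{>0} \to \L = \A_{>0}/(\A_{>0})^2$. This map kills all products of positive-weight elements.

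On the right-hand side of \eqref{eq:msidentity}, each term is $\frac{(-1)^r}{2^r}\alpha_r^{a,b}\,\zm(\overline{2r+1})\,\sm(\{2\}^{a+b-r})$. We treat these terms by the size of $r$.
\begin{itemize}
\item For $r<a+b$, the factor $\sm(\{2\}^{a+b-r})$ has positive weight $2(a+b-r)$. It is a rational multiple of $\zm(2)^{a+b-r}$, which is a multiple of $(\Lm)^{2(a+b-r)}$. So $\pia$ already sends it to zero; in any case it is a product of positive-weight elements and vanishes in $\L$.
\item Only the term $r=a+b$ survives. There $\sm(\{2\}^0)=1$, and this term contributes $\frac{(-1)^{a+b}}{2^{a+b}}\alpha_{a+b}^{a,b}\,\zl(\overline{2a+2b+1})$.
\end{itemize}
The left-hand side projects to $\sl(\{2\}^a,1,\{2\}^b)$ by definition. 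This gives the first formula, with $\alpha^{a,b}=\alpha_{a+b}^{a,b}$.

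The second formula follows identically from Theorem~\ref{thm:mbsidentity}. Again $\bsm(\{2\}^{a+b-r})$ is a rational multiple of $\zm(2)^{a+b-r}$: by the identity $\bsm(\kk)=2^{\dep\kk}\,t^{\mathfrak m}$-type relation, or by the same argument used for $\sm$ via \cite[Proposition 10]{MR4377309}. Alternatively, and more safely, it lies in $C_0\H$ of positive weight and is therefore killed by $\pia$. Only $r=a+b$ survives, which yields $\beta^{a,b}$.

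There is no real obstacle. The one point to check is that $\pia(\bsm(\{2\}^n))=0$ for $n\ge1$. I would settle it directly: the proof of Proposition~\ref{prop:deriv} shows that terms of types (A) and (B) vanish, which gives $D_{2r+1}(\bsm(\{2\}^n))=0$ for all $r$. Theorem~\ref{thm:KerD} then puts it in even weight, where $D_{<N}$-kernel considerations via $\phi$ force it into $\Q f_2^n$, so its image in $\A$ vanishes.
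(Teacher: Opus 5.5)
Your proposal is correct and is exactly the paper's argument: project Theorems~\ref{thm:msidentity} and~\ref{thm:mbsidentity} to $\L$. There, every term with $r<a+b$ is a product of two positive-weight elements and vanishes, leaving only the $r=a+b$ term with $\sm(\{2\}^0)=\bsm(\{2\}^0)=1$. That same decomposability observation already disposes of the factors $\bsm(\{2\}^{a+b-r})$, so your final check that $\pia(\bsm(\{2\}^n))=0$ is harmless but unnecessary.
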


\begin{prf}
    This follows immediately from Theorems~\ref{thm:msidentity} and \ref{thm:mbsidentity}.
\end{prf}

\begin{dfn}
    For each $N, d \ge 1$, let $\partial_{N, d}$ denote the isomorphism
    \[
    \partial_{N, d} \colon \gr_d\H_N \stackrel{\sim}{\longrightarrow} \bigoplus_{1 \le 2r+1 \le N} \gr_{d-1}\H_{N-(2r+1)} = \gr_{d-1}\H_{<N}
    \]
    induced by 
    \[
    \widetilde{\Delta} \colon \gr_d\H_N \stackrel{\sim}{\longrightarrow} \bigoplus_{1 \le 2r+1 \le N} \Q\, \zl(\overline{2r+1}) \otimes \gr_{d-1}\H_{N-(2r+1)}
    \]
    followed by sending $\zl(\overline{2r+1})$ to $(-2)^r$.
\end{dfn}

By Proposition~\ref{prop:Delta} and Corollary~\ref{cor:coeff}, we have
\begin{align}
    & \quad \partial_{N, d}(\sm(\{2\}^{l_0}, 1, \dots, 1, \{2\}^{l_d})) \\
    & = \sum_{i = 0}^{d-1} \sum_{\substack{0 \le a \le l_i-1+\delta_{i, 0} \\ 0 \le b \le l_{i+1}}}
        \alpha^{a, b}\, \sm(\{2\}^{l_0}, \dots, \{2\}^{l_{i-1}}, 1, \{2\}^{l_i+l_{i+1}-(a+b)}, 1, \{2\}^{l_{i+2}}, \dots, \{2\}^{l_d}) \\
    & \qquad - \sum_{i = 0}^{d-1} \sum_{\substack{0 \le a \le l_i \\ 0 \le b \le l_{i+1}-1}} 
        \alpha^{b, a}\, \sm(\{2\}^{l_0}, \dots, \{2\}^{l_{i-1}}, 1, \{2\}^{l_i+l_{i+1}-(a+b)}, 1, \{2\}^{l_{i+2}}, \dots, \{2\}^{l_d}) \\[-3ex]
    & \qquad + \sum_{i = 1}^{d-1} \sum_{1 \le b \le l_{i+1}} 
        \beta^{l_i, b}\, \sm(\{2\}^{l_0}, \dots, \{2\}^{l_{i-1}}, 1, \{2\}^{l_{i+1}-b}, 1, \{2\}^{l_{i+2}}, \dots, \{2\}^{l_d}) \\
    & \qquad - \sum_{i = 0}^{d-2} \sum_{1 \le a \le l_i} 
        \beta^{l_{i+1}, a}\, \sm(\{2\}^{l_0}, \dots, \{2\}^{l_{i-1}}, 1, \{2\}^{l_i-a}, 1, \{2\}^{l_{i+2}}, \dots, \{2\}^{l_d}). \label{eq:partial}
\end{align}

Let $\{1, 2\}^{\times}$ denote the set of finite words over the alphabet $\{1 ,2\}$.
For each $i \in \{1, 2\}$ and each word $w \in \{1, 2\}^{\times}$, 
we write $\deg_i w$ for the number of occurrences of $i$ in $w$.
We define the weight of $w \in \{1, 2\}^{\times}$ by 
$|w| = \deg_1 w + 2\deg_2 w$.

\begin{dfn}
    For $N \ge d \ge 1$ with $N \equiv d \pmod 2$,
    we define the sets $B_{N, d}$ and $B'_{N, d}$ by
    \begin{align}
        B_{N, d} & = \{w \in \{1, 2\}^{\times} \mid |w| = N,\; \deg_1w = d\}, \\
        B'_{N, d} & = \{w \in \{1, 2\}^{\times} \mid |w| < N,\; \deg_1w = d-1\}.
    \end{align}
    In the case $d = 1$, the set $B'_{N, d}$ also includes the empty word.
    We equip both sets with the lexicographic order induced by $1<2$.
\end{dfn}
If we write $N = 2m+d$, then clearly
\[
\# B_{N, d} = \binom{m+d}{d} = \sum_{0 \le m' \le m} \binom{m'+d-1}{d-1} = \# B'_{N, d}.
\]
\begin{exa}
    For $N = 6, d = 2$, we have
    \begin{align}
        B_{6, 2} & = \{1122, 1212, 1221, 2112, 2121, 2211\}, \\
        B'_{6, 2} & = \{1, 12, 122, 21, 212, 221\}.
    \end{align}
\end{exa}

Let $N \ge d \ge 1$ satisfy $N \equiv d \pmod 2$.
By \eqref{eq:partial}, $\partial_{N, d}(\sm(w))\ (w \in B_{N, d})$ can be written in the form
\[
\partial_{N, d}(\sm(w)) = \sum_{w' \in B'_{N, d}} f_{w'}^{w}\, \sm(w'),
\]
where $f_{w'}^{w}$ is a $\Z$-linear combination of $\alpha^{a, b}$ and $\beta^{a, b}$.

\begin{dfn}
    For $N \ge d \ge 1$ with $N \equiv d \pmod 2$,
    let $M_{N, d}$ be the matrix
    \[
    M_{N, d} = (f_{w'}^{w})_{\,w \in B_{N, d},\, w' \in B'_{N, d}}.
    \]
\end{dfn}

\begin{exa}
    For $N = 6, d = 2$, the matrix $M_{6, 2}$ is as follows.
    All blank entries are zero.
    \begin{table}[H]
        \renewcommand{\arraystretch}{1.2}
        \centering
        \begin{tabular}{|c|c|c|c|c|c|c|}
            \hline
            $ $ & 1 & 12 & 122 & 21 & 212 & 221 \\ 
            \hline 
            1122 & $\beta^{0, 2}$                 & $-(\alpha^{1, 0} - \beta^{0, 1})$ &                 &                                 &                 &                 \\
            1212 & $\beta^{1, 1}$                 & $\alpha^{0, 1}$                   &                 &                                 &                 &                 \\
            1221 & $\alpha^{0, 2}$                & $\alpha^{1, 0}$                   & $\alpha^{0, 0}$ & $\alpha^{0, 1} - \alpha^{1, 0}$ &                 &                 \\
            2112 &                                & $\alpha^{1, 0} - \beta^{0, 1}$    &                 & $\beta^{0, 1}$                  &                 &                 \\
            2121 & $\alpha^{1, 1} - \beta^{1, 1}$ &                                   &                 & $\alpha^{1, 0}$                 & $\alpha^{0, 0}$ &                 \\
            2211 & $\alpha^{2, 0} - \beta^{0, 2}$ &                                   &                 & $\alpha^{1, 0} - \beta^{0, 1}$  &                 & $\alpha^{0, 0}$ \\
            \hline 
        \end{tabular}
    \end{table}
\end{exa}

By the definition of $\alpha_r^{a, b}$ and $\beta_r^{a, b}$, 
it follows that $\alpha_r^{a, b}, \beta_r^{a, b} \in \Z_{(2)} = \{a/b \mid b: \text{odd}\}$.
\begin{lem} \label{lem:twoprop}
The elements $\alpha^{a, b}, \beta^{a, b} \in \Z_{(2)}$ satisfy the following properties\textup{:}
    \begin{enumerate}
        \item $\alpha^{a, b} -  \alpha^{b, a} \in 2\Z_{(2)},$
        \item $\alpha^{a, b} - \beta^{a, b} \in 2\Z_{(2)},$
        \item $\beta^{a, b} -  \beta^{b, a} \in 2\Z_{(2)},$
        \item $\alpha^{a, b} - \beta^{b, a} \in 2\Z_{(2)}.$
    \end{enumerate}
\end{lem}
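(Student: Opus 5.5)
The plan is to verify the four congruences directly from the explicit formulas, with $r=a+b$ throughout. First record
\[
\alpha^{a,b} = -\frac{2^{2r}}{2^{2r}-1}\binom{2r}{2a+1} + \binom{2r}{2b},\qquad
\beta^{a,b} = \frac{2^{2r}}{2^{2r}-1}\binom{2r}{2a} + \binom{2r}{2b}\ (a\ge1),\qquad
\beta^{0,b} = -2^{2r+1}+\delta_{r,b} = -2^{2b+1}+1 .
\]
The key observation is that for $r\ge1$ the factor $\frac{2^{2r}}{2^{2r}-1}$ lies in $4\Z_{(2)}\subseteq 2\Z_{(2)}$, because $2^{2r}-1$ is odd. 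For $r=0$ this term is declared to be zero. Hence, modulo $2\Z_{(2)}$, we get
\[
\alpha^{a,b}\equiv\binom{2r}{2b},\qquad \beta^{a,b}\equiv\binom{2r}{2b}\ (a\ge1),\qquad \beta^{0,b}\equiv 1 .
\]

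Next, since $r=a+b$, we have $\binom{2r}{2b}=\binom{2r}{2r-2b}=\binom{2r}{2a}$. This symmetry gives $\alpha^{a,b}\equiv\alpha^{b,a}$ modulo $2\Z_{(2)}$, which is property (1).

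For the remaining properties I need $\binom{2r}{2b}$ modulo $2$. By Lucas' theorem (or the identity $\binom{2r}{2b}\equiv\binom{r}{b}\pmod 2$, which follows from $(1+x)^{2r}\equiv(1+x^2)^r$), we have $\binom{2r}{2b}\equiv\binom{r}{b}=\binom{a+b}{b}$. The case $a=0$ is the only one needing care. There $\beta^{0,b}\equiv1$, while $\binom{2b}{2b}=1$, so $\beta^{0,b}\equiv\binom{2r}{2b}$ holds also when $a=0$. Thus uniformly $\beta^{a,b}\equiv\binom{2r}{2b}\equiv\alpha^{a,b}$, which gives (2).

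Properties (3) and (4) then follow by combining the previous ones. Using the symmetry $\binom{2r}{2b}=\binom{2r}{2a}$, we get $\beta^{a,b}\equiv\binom{2r}{2a}\equiv\beta^{b,a}$, which is (3). Likewise $\alpha^{a,b}\equiv\binom{2r}{2b}\equiv\beta^{b,a}$, which is (4); here the case $b=0$ in $\beta^{b,a}$ is handled exactly as above.

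The only step that needs attention is the boundary bookkeeping: the convention at $r=0$, and the $a=0$ definition of $\beta$, whose term $-2^{2r+1}$ is even and whose $\delta_{r,b}$ equals $1$ because $r=b$. Otherwise the argument is a direct check.
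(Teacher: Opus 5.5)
Your argument is correct and is essentially the paper's proof. Both rest on three facts: $\frac{2^{2r}}{2^{2r}-1}\in 2\Z_{(2)}$ for $r\ge 1$, $\binom{2r}{2a}=\binom{2r}{2b}$ when $r=a+b$, and the special value $\beta^{0,b}=1-2^{2b+1}$. The paper writes out the differences in (i) and (iv) explicitly and obtains (ii) and (iii) by combining them, whereas you reduce every $\alpha^{a,b},\beta^{a,b}$ to the common residue $\binom{2(a+b)}{2b}$, which gives all four at once. The appeal to Lucas' theorem is superfluous, since the parity of $\binom{2r}{2b}$ never enters your argument.
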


\begin{prf}
   It suffices to prove i) and iv).
    By a direct calculation, we have
    \[
    \alpha^{a, b} -  \alpha^{b, a} = - \frac{2^{2a+2b}}{2^{2a+2b}-1} \qty(\binom{2a+2b}{2a+1} - \binom{2a+2b}{2b+1}),
    \]
    and
    \[
    \alpha^{a, b} - \beta^{b, a} = 
    \begin{dcases}
        - \frac{2^{2a+2b}}{2^{2a+2b}-1} \qty(\binom{2a+2b}{2a+1} + \binom{2a+2b}{2a}) & \text{if} \ b \ge 1, \\
        2^{2a+1} & \text{if} \ b = 0.
    \end{dcases}
    \]
    Since the right-hand sides clearly belong to $2\Z_{(2)}$, this completes the proof.
\end{prf}

We also write $c_{2^a12^b}$ for $\alpha^{a, b}$.
\begin{prop} \label{prop:decon}
    Let $w \in B_{N, d}$.
    Then the coefficient of $\sm(w') \ (w' \in B'_{N, d})$ in 
    \[
    \partial_{N, d}(\sm(w)) - \sum_{\substack{w = uv \\ \deg_1 v = 1}} c_v\, \sm(u)
    \]
    lies in $2\Z_{(2)}$.
\end{prop}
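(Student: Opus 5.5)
The plan is to work directly with the explicit formula \eqref{eq:partial} and reduce every coefficient modulo $2\Z_{(2)}$. Write $w = 2^{l_0}12^{l_1}1\cdots12^{l_d}$. The factorisations $w = uv$ with $\deg_1 v = 1$ are exactly $v = 2^a12^{l_d}$ and $u = 2^{l_0}1\cdots12^{l_{d-1}-a}$ with $0\le a\le l_{d-1}$, and $c_v = \alpha^{a,l_d}$. So the claim says that, modulo $2\Z_{(2)}$, everything in \eqref{eq:partial} cancels except the terms $\alpha^{a,l_d}\sm(u)$.

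Each summand in \eqref{eq:partial} is indexed by a position $i$ and a target word $w'$, obtained from $w$ by deleting one letter $1$ and shrinking the adjacent $2$-blocks. I will group the summands by target word and pair them as follows.

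\begin{enumerate}
\item A term $\alpha^{a,b}$ of the first sum and the term $-\alpha^{b,a}$ of the second sum with the same $(i,a,b)$ have the same target. When both lie in range, they cancel modulo $2$ by Lemma~\ref{lem:twoprop}~(i).
\item The unpaired first-sum terms have $b = l_{i+1}$. For $1\le a\le l_i$ and $i\le d-2$, they are matched with the fourth sum's $-\beta^{l_{i+1},a}$, and these cancel by Lemma~\ref{lem:twoprop}~(iv).
\item The unpaired second-sum terms have $a = l_i$ with $i\ge1$. They are matched with the third sum's $\beta^{l_i,b}$ for $1\le b\le l_{i+1}$, again by Lemma~\ref{lem:twoprop}~(iv).
\item The first-sum term at $i$ with $a=0$, $b=l_{i+1}$ and the second-sum term at $i+1$ with $a=l_{i+1}$, $b=0$ both equal $\pm\alpha^{0,l_{i+1}}$ and have the same target, so they cancel exactly.
\item At $i=d-1$, the first-sum terms with $b=l_d$ have no partner. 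These are exactly the deconcatenation terms $\alpha^{a,l_d}\sm(u)$ for $a\le l_{d-1}-1+\delta_{d-1,0}$.
\item The missing deconcatenation term with $a = l_{d-1}$, for $d\ge2$, is supplied by the third-sum term $\beta^{l_{d-1},l_d}$, via Lemma~\ref{lem:twoprop}~(ii).
\end{enumerate}

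The remaining leftovers are the third-sum terms $\beta^{l_i,l_{i+1}}$ for $1\le i\le d-2$, together with boundary cases where some $l_i$ vanishes. For these I will use explicit parities rather than pairings. For $a\ge1$ and $r=a+b$ one has
\[
\beta^{a,b} \equiv \binom{2r}{2a}+\binom{2r}{2b} = 2\binom{2r}{2a} \equiv 0 \pmod{2\Z_{(2)}},
\]
since the factor $\frac{2^{2r}}{2^{2r}-1}$ is even for $r\ge1$. Also $\beta^{0,b} = 1-2^{2b+1}\equiv 1$, $\alpha^{0,b}=1$, and in general $\alpha^{a,b}\equiv\binom{2r}{2b}$. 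Hence $\beta^{l_i,l_{i+1}}$ vanishes modulo $2$ unless $l_i=0$. When $l_i=0$, its target word coincides with that of the second-sum term $-\alpha^{0,l_{i+1}}$ at position $i+1$, and the two odd coefficients cancel.

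The main obstacle is exactly this degenerate bookkeeping: when some $l_i=0$, different index pairs collapse to the same target word, so one term may be consumed by two pairings. I would handle it by treating each target word $w'$ separately. For fixed $w'$, I would list all $(i,a,b)$ and all four sums producing it, and verify the parity identity using only the congruences above ($\alpha^{a,b}\equiv\binom{2r}{2b}$, $\beta^{a,b}\equiv\delta_{a,0}$) together with Lemma~\ref{lem:twoprop}. If the pairing scheme becomes too tangled, the fallback is to do the whole computation modulo $2$ with these explicit residues, reducing the claim to a combinatorial identity among binomial coefficients modulo $2$.
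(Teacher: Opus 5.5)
Your items 1--6 are exactly the paper's regrouping: the paired terms (Ia), the matches (IIIa) and (IIa), the exact cancellation (IIb)$+$(IIIb), the terms (IIId), and the $i=d-1$ term of (IIc) via Lemma~\ref{lem:twoprop}~(ii). The final step, however, fails for two reasons.

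First, the residue you rely on, $\beta^{a,b}\equiv\delta_{a,0}$, is false. For $r=a+b\ge1$ the term $\frac{2^{2r}}{2^{2r}-1}\binom{2r}{2a}$ lies in $4\Z_{(2)}$, so for $a\ge1$ it drops out and $\beta^{a,b}\equiv\binom{2r}{2b}$, not $\binom{2r}{2a}+\binom{2r}{2b}$. In fact $\beta^{a,b}\equiv\alpha^{a,b}\equiv\binom{2a+2b}{2a}$ for all $a,b$. This is already forced by Lemma~\ref{lem:twoprop}~(ii) together with your own correct residue for $\alpha^{a,b}$. For example, $\beta^{1,0}=7/3$ and $\beta^{1,2}=635/21$ are $2$-adic units.

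Second, item 2 is off by one. For $i\ge1$ the first sum only runs over $0\le a\le l_i-1$. So the fourth-sum term $-\beta^{l_{i+1},l_i}\,\sm(2^{l_0}1\cdots12^{l_{i-1}}112^{l_{i+2}}1\cdots12^{l_d})$ with $a=l_i$, $1\le i\le d-2$, has no first-sum partner and silently drops out of your bookkeeping.

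The two slips mask each other, but neither leftover is even on its own. Take $d=3$, $l_1=1$, $l_2=2$. The coefficient of $\sm(2^{l_0}112^{l_3})$ in $\partial_{N,3}(\sm(w))$ comes only from two terms: $\beta^{1,2}$ (third sum, $i=1$, $b=2$) and $-\beta^{2,1}$ (fourth sum, $i=1$, $a=1$). Each of these is odd; only their sum is even (here it is $0$). Your argument would instead declare the first even by a false congruence and cancel the second against a nonexistent term.

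The missing step is the paper's pairing (IIc)$+$(IIIc). For $1\le i\le d-2$, the third-sum term with $b=l_{i+1}$ and the fourth-sum term with $a=l_i$ have the same target $2^{l_0}1\cdots12^{l_{i-1}}112^{l_{i+2}}1\cdots12^{l_d}$. Their combined coefficient $\beta^{l_i,l_{i+1}}-\beta^{l_{i+1},l_i}$ lies in $2\Z_{(2)}$ by Lemma~\ref{lem:twoprop}~(iii). For $i=d-1$ there is no such partner, which is precisely why your item 6 has to use (ii) together with the deconcatenation term instead.

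With this pairing in place of the parity claim, your plan becomes the paper's proof. Your fallback of reducing everything modulo $2$ could also be made to work, but only with the corrected residues $\alpha^{a,b}\equiv\beta^{a,b}\equiv\binom{2a+2b}{2a}$.
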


\begin{prf}
    Let $w = 2^{l_0}1\cdots12^{l_d} \in B_{N, d}$.
    We rewrite \eqref{eq:partial} as follows:
    \begin{align}
        \raisebox{6ex}[0pt][0pt]{$ \partial_{N, d}(\sm(w)) $}
        & \begin{aligned}
            & = \sum_{i = 0}^{d-1} \sum_{\substack{0 \le a \le l_i-1+\delta_{i, 0} \\ 0 \le b \le l_{i+1}}}
                \alpha^{a, b}\, \sm(2^{l_0}\cdots2^{l_{i-1}}12^{l_i+l_{i+1}-(a+b)}12^{l_{i+2}}\cdots12^{l_d}) \\
            & \quad - \sum_{i = 0}^{d-1} \sum_{\substack{0 \le a \le l_i \\ 0 \le b \le l_{i+1}-1}} 
                \alpha^{b, a}\, \sm(2^{l_0}\cdots2^{l_{i-1}}12^{l_i+l_{i+1}-(a+b)}12^{l_{i+2}}\cdots12^{l_d}) 
        \end{aligned} \tag{I}\label{eq:I} \\
        & \quad + \sum_{i = 1}^{d-1} \sum_{1 \le b \le l_{i+1}} \beta^{l_i, b}\, 
            \sm(2^{l_0}\cdots2^{l_{i-1}}12^{l_{i+1}-b}12^{l_{i+2}}\cdots12^{l_d}) \tag{II}\label{eq:II} \\
        & \quad - \sum_{i = 0}^{d-2} \sum_{1 \le a \le l_i} \beta^{l_{i+1}, a}\, 
            \sm(2^{l_0}\cdots2^{l_{i-1}}12^{l_i-a}12^{l_{i+2}}\cdots12^{l_d}). \tag{III}\label{eq:III}
    \end{align}
    \begin{align}
        \eqref{eq:I}
        & = \sum_{i = 0}^{d-1} \sum_{\substack{0 \le a \le l_i-1+\delta_{i, 0} \\ 0 \le b \le l_{i+1}-1}} (\alpha^{a, b} - \alpha^{b, a})\, 
            \sm(2^{l_0}\cdots2^{l_{i-1}}12^{l_i+l_{i+1}-(a+b)}12^{l_{i+2}}\cdots12^{l_d}) \tag{Ia}\label{eq:Ia} \\
        & \qquad - \sum_{i = 1}^{d-1} \sum_{\substack{0 \le b \le l_{i+1}-1}} \alpha^{b, l_i}\, 
            \sm(2^{l_0}\cdots2^{l_{i-1}}12^{l_{i+1}-b}12^{l_{i+2}}\cdots12^{l_d}) \tag{Ib}\label{eq:Ib} \\
        & \qquad + \sum_{i = 0}^{d-1} \sum_{\substack{0 \le a \le l_i-1+\delta_{i, 0}}} \alpha^{a, l_{i+1}}\, 
            \sm(2^{l_0}\cdots2^{l_{i-1}}12^{l_i-a}12^{l_{i+2}}\cdots12^{l_d}). \tag{Ic}\label{eq:Ic}
        \end{align}
    \begin{align}
        \eqref{eq:Ib} + \eqref{eq:II}
        & = - \sum_{i = 1}^{d-1} \sum_{\substack{0 \le b \le l_{i+1}-1}} \alpha^{b, l_i}\, 
            \sm(2^{l_0}\cdots2^{l_{i-1}}12^{l_{i+1}-b}12^{l_{i+2}}\cdots12^{l_d}) \\
        & \quad + \sum_{i = 1}^{d-1} \sum_{1 \le b \le l_{i+1}} \beta^{l_i, b}\, 
            \sm(2^{l_0}\cdots2^{l_{i-1}}12^{l_{i+1}-b}12^{l_{i+2}}\cdots12^{l_d}) \\
        & = - \sum_{i = 1}^{d-1} \sum_{\substack{1 \le r \le l_{i+1}-1}} (\alpha^{b, l_i} - \beta^{l_i, b}) \\[-1ex]
        & \qquad\qquad\qquad\qquad\qquad \times \sm(2^{l_0}\cdots2^{l_{i-1}}12^{l_{i+1}-b}12^{l_{i+2}}\cdots12^{l_d}) \tag{IIa}\label{eq:IIa} \\
        & \qquad - \sum_{i = 1}^{d-1} \alpha^{0, l_i}\, \sm(2^{l_0}1\cdots12^{l_{i-1}}12^{l_{i+1}}1\cdots12^{l_d}) \tag{IIb}\label{eq:IIb}\\
        & \qquad + \sum_{i = 1}^{d-1} \beta^{l_i, l_{i+1}}\, \sm(2^{l_0}1\cdots12^{l_{i-1}}112^{l_{i+2}}1\cdots12^{l_d}). \tag{IIc}\label{eq:IIc}
    \end{align}
    \begin{align}
        \eqref{eq:Ic} + \eqref{eq:III}
        & = \sum_{i = 0}^{d-1} \sum_{\substack{0 \le a \le l_i-1+\delta_{i, 0}}} \alpha^{a, l_{i+1}}\, 
            \sm(2^{l_0}\cdots2^{l_{i-1}}12^{l_i-a}12^{l_{i+2}}\cdots12^{l_d}) \\
        & \qquad - \sum_{i = 0}^{d-2} \sum_{1 \le a \le l_i} \beta^{l_{i+1}, a}\, 
            \sm(2^{l_0}\cdots2^{l_{i-1}}12^{l_i-a}12^{l_{i+2}}\cdots12^{l_d}) \\
        & = \sum_{i = 0}^{d-2} \sum_{1 \le a \le l_i-1+\delta_{i, 0}} (\alpha^{a, l_{i+1}} - \beta^{l_{i+1}, a}) \\[-1ex]
        & \qquad\qquad\qquad\qquad\qquad \times \sm(2^{l_0}\cdots2^{l_{i-1}}12^{l_i-a}12^{l_{i+2}}\cdots12^{l_d}) \tag{IIIa}\label{eq:IIIa} \\
        & \qquad + \sum_{i = 0}^{d-2} \alpha^{0, l_{i+1}}\, \sm(2^{l_0}1\dots12^{l_i}12^{l_{i+2}}1\dots12^{l_d}) \tag{IIIb}\label{eq:IIIb} \\
        & \qquad - \sum_{i = 1}^{d-2} \beta^{l_{i+1}, l_i}\, \sm(2^{l_0}1\dots12^{l_{i-1}}112^{l_{i+2}}1\dots12^{l_d}) \tag{IIIc}\label{eq:IIIc} \\
        & \qquad + \sum_{0 \le a \le l_{d-1}-1+\delta_{d-1, 0}} \alpha^{a, l_d}\, \sm(2^{l_0}1\dots12^{l_{d-2}}12^{l_{d-1}-a}). \tag{IIId}\label{eq:IIId}
    \end{align}
    Since the terms \eqref{eq:IIb} and \eqref{eq:IIIb} cancel each other, we obtain
    \begin{align}
        \partial_{N, d}(\sm(w))
        & = \eqref{eq:I} + \eqref{eq:II} + \eqref{eq:III} \\
        & = \eqref{eq:Ia} + \eqref{eq:Ib} + \eqref{eq:Ic} + \eqref{eq:II} + \eqref{eq:III} \\
        & = \eqref{eq:Ia} + \eqref{eq:IIa} + \eqref{eq:IIIa} + \eqref{eq:IIc} + \eqref{eq:IIIc} + \eqref{eq:IIId}.
    \end{align}
    By Lemma~\ref{lem:twoprop}, for $w' \in B'_{N, d}$, the coefficient of $\sm(w')$ in $\eqref{eq:Ia} + \eqref{eq:IIa} + \eqref{eq:IIIa}$
    lies in $2\Z_{(2)}$.
    Thus, it suffices to show that the coefficient of $\sm(w')$ in 
    \[
    \eqref{eq:IIc} + \eqref{eq:IIIc} + \eqref{eq:IIId} - \sum_{\substack{w = uv \\ \deg_1 v = 1}} c_v\, \sm(u)
    \]
    lies in $2\Z_{(2)}$.
    If $d = 1$, then $\eqref{eq:IIc} = \eqref{eq:IIIc} = 0$, so we have
    \begin{align}
    \eqref{eq:IIc} + \eqref{eq:IIIc} + \eqref{eq:IIId}
    & = \sum_{0 \le a \le l_{d-1}} \alpha^{a, l_d}\, \sm(2^{l_0}1\dots12^{l_{d-2}}12^{l_{d-1}-a}) \\
    & = \sum_{0 \le a \le l_{d-1}} c_{2^a12^{l_d}}\, \sm(2^{l_0}1\dots12^{l_{d-2}}12^{l_{d-1}-a}) \\
    & = \sum_{\substack{w = uv \\ \deg_1 v = 1}} c_v\, \sm(u).
    \end{align}
    If $d \ge 2$, we have
    \begin{align}
        \eqref{eq:IIc} + \eqref{eq:IIIc} + \eqref{eq:IIId}
        & = \sum_{i = 1}^{d-1} \beta^{l_i, l_{i+1}}\, 
            \sm(2^{l_0}1\cdots12^{l_{i-1}}112^{l_{i+2}}1\cdots12^{l_d}) \\
        & \qquad\qquad - \sum_{i = 1}^{d-2} \beta^{l_{i+1}, l_i}\, 
            \sm(2^{l_0}1\dots12^{l_{i-1}}112^{l_{i+2}}1\dots12^{l_d}) \\
        & \qquad\qquad + \sum_{0 \le a \le l_{d-1}-1} \alpha^{a, l_d}\, 
            \sm(2^{l_0}1\dots12^{l_{d-2}}12^{l_{d-1}-a}) \\
        & = \sum_{i = 1}^{d-2}(\beta^{l_i, l_{i+1}} - \beta^{l_{i+1}, l_i})\,
            \sm(2^{l_0}1\dots12^{l_{i-1}}112^{l_{i+2}}1\dots12^{l_d}) \\
        & \qquad\qquad + \beta^{l_{d-1}, l_d}\, \sm(2^{l_0}1\cdots12^{l_{d-2}}1) \\
        & \qquad\qquad + \sum_{0 \le a \le l_{d-1}-1} \alpha^{a, l_d}\, 
            \sm(2^{l_0}1\dots12^{l_{d-2}}12^{l_{d-1}-a}) \\
        & = \sum_{i = 1}^{d-2}(\beta^{l_i, l_{i+1}} - \beta^{l_{i+1}, l_i})\,
            \sm(2^{l_0}1\dots12^{l_{i-1}}112^{l_{i+2}}1\dots12^{l_d}) \\
        & \qquad\qquad - (\alpha^{l_{d-1}, l_d} - \beta^{l_{d-1}, l_d})\, \sm(2^{l_0}1\cdots12^{l_{d-2}}1) \\
        & \qquad\qquad + \sum_{0 \le a \le l_{d-1}} \alpha^{a, l_d}\, 
            \sm(2^{l_0}1\dots12^{l_{d-2}}12^{l_{d-1}-a}) \\
        & = \sum_{i = 1}^{d-2}(\beta^{l_i, l_{i+1}} - \beta^{l_{i+1}, l_i})\,
            \sm(2^{l_0}1\dots12^{l_{i-1}}112^{l_{i+2}}1\dots12^{l_d}) \\
        & \qquad\qquad - (\alpha^{l_{d-1}, l_d} - \beta^{l_{d-1}, l_d})\, \sm(2^{l_0}1\cdots12^{l_{d-2}}1) \\
        & \qquad\qquad + \sum_{\substack{w = uv \\ \deg_1 v = 1}} c_v\, \sm(u).
    \end{align}    
    Thus, the proposition follows from Lemma~\ref{lem:twoprop}.
\end{prf}

\begin{thm} \label{thm:matrix}
    For $N \ge d \ge 1$ with $N \equiv d \pmod 2$, $M_{N, d}$ is invertible.
\end{thm}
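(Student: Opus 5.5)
The plan is to prove that $\det M_{N,d}$ is a $2$-adic unit: reduce the matrix modulo $2\Z_{(2)}$ and show it becomes unitriangular after a suitable ordering. All entries $f^{w}_{w'}$ lie in $\Z_{(2)}$, so $\det M_{N,d}\in\Z_{(2)}$. If its image in $\Z_{(2)}/2\Z_{(2)}\cong\mathbb{F}_2$ is nonzero, then $\det M_{N,d}\neq 0$. By Proposition~\ref{prop:decon}, modulo $2\Z_{(2)}$ the row of $w$ agrees with the row of the simpler ``deconcatenation'' map
\[
w\longmapsto \sum_{\substack{w=uv\\ \deg_1 v=1}} c_v\,\sm(u).
\]
So it suffices to show that the matrix of this map, with rows indexed by $B_{N,d}$ and columns by $B'_{N,d}$, is invertible mod $2$.

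First I determine the parity of the coefficients $c_{2^a12^b}=\alpha^{a,b}$ in the relevant cases. Write $w=2^{l_0}1\cdots 2^{l_{d-1}}12^{l_d}$. The factorizations $w=uv$ with $\deg_1 v=1$ are exactly $v=2^a12^{l_d}$ with $0\le a\le l_{d-1}$, and then $u=2^{l_0}1\cdots12^{l_{d-1}-a}$. For $a=0$ we have
\[
c_{12^{l_d}}=\alpha^{0,l_d}=-\tfrac{2^{2r}}{2^{2r}-1}\binom{2r}{1}+\binom{2r}{2r},\qquad r=l_d.
\]
The first term lies in $2\Z_{(2)}$ (or is zero when $r=0$), so $c_{12^{l_d}}$ is odd. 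For $a\ge1$ I only need that $c_v\in\Z_{(2)}$; its parity is irrelevant.

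Next I define $\varphi\colon B_{N,d}\to B'_{N,d}$ by deleting the final $1$ together with the trailing $2$'s, so $\varphi(w)=2^{l_0}1\cdots12^{l_{d-1}}$ (the empty word when $d=1$ and $l_0=0$). This word has $d-1$ ones and weight $<N$, so it lies in $B'_{N,d}$.
\begin{itemize}
\item $\varphi$ is injective: $w$ is recovered from $\varphi(w)$ by appending $12^{l_d}$, where $l_d$ is forced by $|w|=N$.
\item $\varphi$ is bijective, since $\#B_{N,d}=\#B'_{N,d}$ was computed above. (Surjectivity also follows because $N-|u|$ is odd and positive for $u\in B'_{N,d}$, by the parity condition.)
\end{itemize}

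Now I index the rows by $\varphi(w)$ and order both rows and columns by the lexicographic order on $B'_{N,d}$. Modulo $2$, row $\varphi(w)$ has entry $c_{12^{l_d}}\equiv1$ in column $\varphi(w)$. Its only other possibly nonzero entries sit in columns $u=2^{l_0}1\cdots12^{l_{d-1}-a}$ with $a\ge1$. Each such $u$ is a proper prefix of $\varphi(w)$, hence strictly smaller in lexicographic order. Distinct values of $a$ give distinct $u$, so no two terms fall in the same column. Therefore the reduced matrix is lower unitriangular, its determinant is $1$ in $\mathbb{F}_2$, and $\det M_{N,d}$ is a $2$-adic unit, hence nonzero.

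The one delicate point is checking that $\alpha^{0,b}$ is odd for every $b\ge0$, because the general $\alpha^{a,b}$ need not be odd (for example $\binom{4}{2}$ is even). Choosing $\varphi$ to remove exactly the $a=0$ block is what makes the argument go through.
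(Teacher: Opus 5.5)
Your proof is correct and is essentially the paper's argument. Both reduce modulo $2\Z_{(2)}$, use Proposition~\ref{prop:decon} to replace each row by the deconcatenation row, and observe that under the bijection $u \mapsto u12^r$ (your $\varphi$ is its inverse) the reduced matrix is lower unitriangular in lexicographic order, with diagonal entries $\alpha^{0,r} \in 1+2\Z_{(2)}$. The only difference is cosmetic: you read off triangularity row by row (the columns are proper prefixes of $\varphi(w)$), whereas the paper reads it column by column (the rows are $u2^a12^{r-a} \ge u12^r$).
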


\begin{prf}
    Since $M_{N, d}$ is a matrix over $\Z_{(2)}$, it suffices to show that
    the matrix $\overline{M}_{N, d} \coloneqq M_{N, d} \bmod 2\Z_{(2)}$ is invertible.
    Consider the map
    \begin{align}
        B'_{N, d} & \longrightarrow B_{N, d} \\
        u\         & \longmapsto u12^{r},
    \end{align}
    where $r$ is the unique integer such that $|u12^{r}| = N$.
    This map is an order-preserving bijection.
    By Proposition~\ref{prop:decon}, all non-zero entries of $\overline{M}_{N, d}$
    in the column indexed by $u$ lie in the rows indexed by $u2^a12^{r-a}\ (0 \le a \le r)$.
    Since $u2^a12^{r-a} \ge u12^r$, $\overline{M}_{N, d}$ is lower-triangular.
    Moreover, its diagonal entries are of the form $c_{12^r} = \alpha^{0, r}$,
    which belongs to $1+2\Z_{(2)}$.
    Therefore, $\overline{M}_{N, d}$ is lower-triangular with all diagonal entries equal to 1,
    and hence invertible.
\end{prf}

\begin{prf}[of Theorem~\ref{thm:grbasis}]
    If $N \equiv d+1 \pmod 2$, the set
    \[
    \{\sm(k_1, \dots, k_r) \in \gr_d\H_N \mid k_i \in \{1, 2\},\; k_1+\dots+k_r = N,\; \#\{i \mid k_i = 1\} = d\}
    \]
    is empty.
    On the other hand, by the structure of $\gr_d H$ in \eqref{eq:grU}, we have $\gr_d H_N = 0$.
    Hence $\gr_d\H_N = 0$ via the isomorphism $\phi \colon \H \to H$, and there is nothing to prove.
    Thus, we may assume that $N \equiv d \pmod 2$.
    We proceed by induction on $d$.
    If $d = 0$, then $N$ is even, and we have $\gr_0\H_N = \Q\, \zm(2)^{N/2}$.
    Therefore, the assertion follows since $\sm(\{2\}^{N/2})$ is a constant multiple of $\zm(2)^{N/2}$.
    Now suppose that $d \ge 1$.
    By the induction hypothesis, the set $\{\sm(w') \mid w' \in B'_{N, d}\}$ is a basis of $\gr_{d-1}\H_{<N}$.
    Hence, Theorem~\ref{thm:matrix} implies that 
    $\{\partial_{N, d}(\sm(w)) \mid w \in B_{N, d}\}$ is a basis of $\gr_{d-1}\H_{<N}$.
    Since $\partial_{N, d}$ is an isomorphism, we conclude that 
    $\{\sm(w) \mid w \in B_{N, d}\}$ is a basis of $\gr_d\H_N$, as desired.
\end{prf}

\section{Unramified multiple $s$ values}
In this section, we study unramified multiple $s$ values.
A motivic alternating multiple zeta value is called unramified
if it is a $\Q$-linear combination of motivic multiple zeta values.
As we have seen, if all $k_i$ are greater than 1, 
\[
\sm(\kk) = 2^{\dep\kk - \wt\kk}\, \zm(\kk) \in \H^1.
\]
Glanois established a criterion for motivic alternating multiple zeta values 
to be unramified in terms of the derivations $D_r$.
\begin{thm}[{{\cite[Corollary 5.1.3]{glanois:tel-01347989}}}] \label{thm:descent}
    Let $\xi \in \H^2$ be a motivic alternating multiple zeta value.
    Then, $\xi \in \H^1$, i.e., $\xi$ is a $\Q$-linear combination
    of motivic multiple zeta values if and only if
    \[
    D_1(\xi) = 0 \quad \text{and} \quad D_{2r+1}(\xi) \in \L_{2r+1} \otimes \H^1 \ \text{for all}\ r > 0.
    \]
\end{thm}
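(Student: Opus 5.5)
The plan is to transport the problem to the explicit comodule model, as in the proofs of Lemma~\ref{lem:grcoaction} and Lemma~\ref{lem:corad}. There the statement becomes a combinatorial one about which words in the letters $f_{2r+1}$ occur.

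\emph{Only if.} The inclusion $\MT(\Z)\subset\MT(\Z[1/2])$ induces an inclusion $\H^1\hookrightarrow\H^2$ compatible with the coactions. So for $\xi\in\H^1$, $D_{2r+1}(\xi)$ lies in the image of $\L^1_{2r+1}\otimes\H^1$. Since $\mathrm{Ext}^1_{\MT(\Z)}(\Q(0),\Q(1))=0$, we have $\L^1_1=0$, hence $D_1(\xi)=0$. This direction is formal.

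\emph{If.} First I would fix a compatible model. I would choose the non-canonical isomorphism $\phi\colon\H^2\cong H=\Q\langle f_1,f_3,\dots\rangle\otimes\Q[f_2]$ so that $\phi(\H^1)=\Q\langle f_3,f_5,\dots\rangle\otimes\Q[f_2]$. This should be possible by choosing free generators $\sigma_{2r+1}$ of $\mathrm{Lie}\,U^2_{\dR}$ such that $\sigma_1$ together with its iterated brackets spans the kernel of $\mathrm{Lie}\,U^2_\dR\to\mathrm{Lie}\,U^1_\dR$, and the $\sigma_{2r+1}$ with $r\ge1$ map to free generators of $\mathrm{Lie}\,U^1_\dR$. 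Dually, $\A^1\subset\A^2$ is then the sub-shuffle-Hopf algebra of words avoiding $f_1$. I expect this step to be the main obstacle: one must check that the splitting of $\Gm$ and the choice of generators can be made simultaneously for both groups, along the lines of Glanois's \S2.3. I would try to derive it from the fact that $U^2_\dR\to U^1_\dR$ is a surjection of free pro-unipotent groups with a graded section.

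With this model fixed, $\xi\in\H^1$ means precisely that no word containing $f_1$ occurs in $\phi(\xi)$. I would prove this by induction on the weight $N$. Write $\phi(\xi)=\sum_{r}f_{2r+1}v_r$ using concatenation, as in the proof of Lemma~\ref{lem:corad}.
\begin{enumerate}
\item The hypothesis $D_1(\xi)=\overline{f}_{\!1}\otimes v_0=0$ gives $v_0=0$.
\item For $r\ge1$, $D_{2r+1}(\xi)$ is the sum over words of the class in $L_{2r+1}$ of the length-$(2r+1)$ prefix, tensored with the remaining suffix.
\item Pair $L_{2r+1}$ against a basis of Lie elements dual to the Lyndon words. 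The hypothesis $D_{2r+1}(\xi)\in\L_{2r+1}\otimes\H^1$ then says that, for each fixed Lie pairing, the corresponding $\Q$-combination of suffixes lies in $\Q\langle f_3,\dots\rangle\otimes\Q[f_2]$.
\item Pairing with $\sigma_{2r+1}$ itself, which is dual to the single letter $f_{2r+1}$, yields $v_r\in\phi(\H^1)$ directly.
\end{enumerate}
Thus every word of $\phi(\xi)$ begins with some $f_{2r+1}$, $r\ge1$, followed by an $f_1$-free word, so it contains no $f_1$. The only remaining term is the $C_0$-part $\Q f_2^{N/2}$, which is already in $\H^1$. Hence $\xi\in\H^1$.

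As a cross-check on the last step, I would rederive it via Theorem~\ref{thm:KerD} and Brown's kernel theorem. One constructs $\eta\in\H^1_N$ with $D_{<N}(\eta)=D_{<N}(\xi)$, using the already established surjectivity of the derivations on $\H^1$ onto integrable elements. Then $\xi-\eta\in\Q\,\zm(\overline N)$, and the coefficient is forced to be zero unless $N$ is odd with $N\geq 3$. In that case $\zm(\overline N)$ is a rational multiple of $\zm(N)\in\H^1$ anyway.
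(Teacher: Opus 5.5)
The paper gives no proof of this statement; it is quoted from Glanois's thesis, so your argument has to stand on its own.

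Your ``only if'' direction is fine. For ``if'', assume you have a model with $\phi(\H^1)=\Q\langle f_3,f_5,\dots\rangle\otimes\Q[f_2]$. Then your steps 1 and 4 already finish the proof. Step 3 is unnecessary, and step 4 needs no Lie elements. The linear form ``coefficient of the one-letter word $f_{2r+1}$'' on $A_{2r+1}$ vanishes on $(A_{>0})^2$, because a shuffle of two nonempty words contains only words of length $\ge 2$. So it descends to $L_{2r+1}$ and sends $D_{2r+1}(\phi(\xi))$ to $v_r$.

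So the whole content of the theorem lies in the step you flag as the main obstacle and leave open. The route you suggest for it covers only half of what is needed.

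Concretely, two things must be shown.

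(a) At the level of $\A$. A graded section of $U^2_\dR\to U^1_\dR$ does not by itself show that $\sigma_1$, together with lifts of free generators of $\mathfrak g^1=\mathrm{Lie}\,U^1_\dR$, freely generates $\mathfrak g^2=\mathrm{Lie}\,U^2_\dR$. The clean argument goes through abelianizations. The map $\mathfrak g^2_{\mathrm{ab}}\to\mathfrak g^1_{\mathrm{ab}}$ is surjective, and both sides are one-dimensional in each odd degree $\ge 3$ (Borel for $\Z$ and $\Z[1/2]$). Hence homogeneous lifts $\sigma_{2r+1}$ ($r\ge1$) of free generators $\tau_{2r+1}$ of $\mathfrak g^1$, together with any $0\ne\sigma_1\in\mathfrak g^2_1$, map to a basis of $\mathfrak g^2_{\mathrm{ab}}$, and therefore freely generate $\mathfrak g^2$. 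The kernel of $U\mathfrak g^2\to U\mathfrak g^1$ is then spanned by words containing $\sigma_1$, and its annihilator $\A^1$ is spanned by the $f_1$-free words.

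(b) At the level of $\H$, which your proposal does not address. The comodule isomorphism $\H^2\cong\A^2\otimes\Q[g_1^2]$ must restrict to one for $\H^1$. Take any graded comodule isomorphism $\theta\colon\H^2\to\A^2\otimes\Q[g_1^2]$ and put $\rho=(\varepsilon\otimes\mathrm{id})\circ\theta$, with $\varepsilon$ the counit.
\begin{itemize}
\item Since $\theta$ is a comodule map, $\theta=(\mathrm{id}\otimes\rho)\circ\Delta$.
\item Since $\Delta(\H^1)\subset\A^1\otimes\H^1$, this gives $\theta(\H^1)\subset\A^1\otimes\Q[g_1^2]$.
\item Equality follows by comparing graded dimensions, using $\H^1\cong\A^1\otimes\Q[\Lm^2]$.
\end{itemize}
Composing $\theta$ with the compatible isomorphism $\A^2\cong A$ from (a) produces your $\phi$, and then your argument goes through.

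Finally, the cross-check does not work as stated. The hypothesis only puts the right-hand factors of $D_{2r+1}(\xi)$ in $\H^1$. The left-hand factors lie in $\L^2_{2r+1}$, which for $r\ge2$ is strictly larger than the image of $\L^1_{2r+1}$ (e.g.\ the class of $f_1f_1f_3$ in weight $5$). So you cannot find $\eta\in\H^1_N$ with $D_{<N}(\eta)=D_{<N}(\xi)$ until you know those left factors come from $\L^1$, and that is essentially what the main argument proves. Also, for even $N$ the coefficient of $\zm(\overline N)$ need not vanish; what saves you is that $\zm(\overline N)\in\H^1$ for all $N\ge2$.
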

Applying this theorem, we determine which multiple $s$ values of the form 
\[
\sm(\{2\}^{l_0}, 1, \dots, 1, \{2\}^{l_d})
\]
are unramified for $d = 1, 2$.

\begin{thm} \label{thm:unram1}
    Let $a, b \ge 0$.
    Then we have
    \[
    \sm(\{2\}^a, 1, \{2\}^b) \in \H^1 \iff b > 0.
    \]
    Moreover, for an odd integer $N = 2m+1 \ge 1$, the set
    \[
    \{\sm(\{2\}^a, 1, \{2\}^b) \mid a+b = m,\; b>0\}
    \]
    is a basis of $C_1\H^2_N\, \cap \H^1 = C_1\H^1_N$.
\end{thm}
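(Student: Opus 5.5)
The plan is to read off the descent question directly from the motivic evaluation in Theorem~\ref{thm:msidentity}. That theorem gives
\[
\sm(\{2\}^a, 1, \{2\}^b) = \sum_{r=0}^{a+b} \frac{(-1)^r}{2^r}\alpha_r^{a,b}\,\zm(\overline{2r+1})\,\sm(\{2\}^{a+b-r}),
\]
and each $\sm(\{2\}^{k})$ is a rational multiple of $\zm(2)^k\in\H^1$.

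First I would show that $\zm(\overline{2r+1})\in\H^1$ for $r\ge1$, namely $\zm(\overline{2r+1}) = -(1-2^{-2r})\zm(2r+1)$. The element $\zm(2r+1)\in\H^2_{2r+1}$ satisfies $D_{<2r+1}(\zm(2r+1))=0$: in formula~\eqref{eq:infcoaction} applied to $\Im(0;1,\{0\}^{2r};1)$, every proper subsequence $\Il$ either has equal endpoints or consists only of zeros. Hence, by Theorem~\ref{thm:KerD}, $\zm(2r+1)$ is a rational multiple of $\zm(\overline{2r+1})$, and the constant is fixed by the period map using $\zeta(\overline{2r+1})=-(1-2^{-2r})\zeta(2r+1)$.

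Next I would check that the $r=0$ coefficient is $\alpha_0^{a,b}=\binom{0}{2b}=\delta_{b,0}$. If $b>0$, every term is then a product of elements of $\H^1$, so $\sm(\{2\}^a,1,\{2\}^b)\in\H^1$. If $b=0$, I would apply $D_1$ using Proposition~\ref{prop:ds1} (or directly the displayed formula, since $D_1$ kills $\zm(\overline{2r+1})$ for $r\ge1$ and kills $\sm(\{2\}^k)$). This gives
\[
D_1(\sm(\{2\}^a,1))=\zl(\overline{1})\otimes\sm(\{2\}^a)\neq0,
\]
because $\zl(\overline1)\ne0$ in $\L_1$ and $\sm(\{2\}^a)\neq 0$. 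Theorem~\ref{thm:descent} then shows that $\sm(\{2\}^a,1)\notin\H^1$.

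For the basis statement, I would first justify the equality $C_1\H^2_N\cap\H^1=C_1\H^1_N$. The inclusion $\H^1\subset\H^2$ is compatible with the coactions through the map $\A^1\to\A^2$, which is an injective morphism of Hopf algebras (dual to the surjection $G^2_\dR\to G^1_\dR$). By induction on $d$, an element of $\H^1$ then lies in $C_d\H^1$ if and only if it lies in $C_d\H^2$. Next I would compute the dimension. Under Brown's isomorphism $\H^1\cong\Q\langle f_3,f_5,\dots\rangle\otimes\Q[f_2]$, the space $C_1\H^1_N$ with $N=2m+1$ is spanned by $f_{2k+1}f_2^{m-k}$ for $1\le k\le m$, so it has dimension $m$. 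The proposed set also has exactly $m$ elements, indexed by $b=1,\dots,m$. These elements lie in $C_1$ by Proposition~\ref{prop:deriv} and in $\H^1$ by the first part. They are linearly independent already modulo $C_0\H_N$ by Theorem~\ref{thm:grbasis} with $d=1$, because they form a subset of that basis. A count of dimensions then finishes the proof.

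The main obstacle is the bookkeeping in the basis part: making the compatibility of the coradical filtrations of $\H^1$ and $\H^2$ precise, and quoting correctly the structure of $C_1\H^1_N$ (which has no weight-one generator). The descent step itself is routine once the motivic identity for $\zm(\overline{2r+1})$ is in hand.
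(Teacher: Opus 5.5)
Your proof is correct. For the equivalence $\sm(\{2\}^a,1,\{2\}^b)\in\H^1\iff b>0$ it takes a different route from the paper; for the basis statement it follows the paper.

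\textbf{The equivalence.} The paper does not use the closed evaluation at this point. It reads two facts off Proposition~\ref{prop:ds1}:
\begin{itemize}
\item every $D_{2r+1}(\sm(\{2\}^a,1,\{2\}^b))$ has right-hand factor $\sm(\{2\}^{a+b-r})\in\H^1$;
\item $D_1(\sm(\{2\}^a,1,\{2\}^b))=(1-\delta_{b>0})\,\sl(1)\otimes\sm(\{2\}^{a+b})$.
\end{itemize}
It then applies Glanois's criterion (Theorem~\ref{thm:descent}) in both directions.

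You instead combine Theorem~\ref{thm:msidentity} with two inputs: $\alpha_0^{a,b}=\delta_{b,0}$, and the motivic relation $\zm(\overline{2r+1})=-(1-2^{-2r})\,\zm(2r+1)$. You obtain the latter correctly from Theorem~\ref{thm:KerD}, since $\zm(2r+1)$ is killed by $D_{<2r+1}$.

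What your route buys:
\begin{itemize}
\item unramifiedness for $b>0$ becomes manifest;
\item you get an explicit polynomial in $\zm(2)$ and the $\zm(2r+1)$ (it reproduces, for instance, $s(1,2)=-\frac{5}{8}\zeta(3)$);
\item for $b=0$ you need only the easy direction of the descent criterion.
\end{itemize}
The cost is that this part of your proof rests on the full evaluation, and hence on the analytic work of Section~3. The paper's proof of the equivalence uses only the coaction computation. That is exactly the mechanism that carries over to depth two in Theorem~\ref{thm:unram2}, where no closed formula is available.

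\textbf{The basis statement.} Here you do what the paper does: linear independence from Theorem~\ref{thm:grbasis} (equivalently Theorem~\ref{thm:msbasis}), plus a dimension count. Two small differences:
\begin{itemize}
\item Your justification of $C_1\H^2_N\cap\H^1=C_1\H^1_N$ via the injective Hopf map $\A^1\to\A^2$ fills in a point the paper only asserts.
\item Your count $\dim C_1\H^1_N=m$ from the $f$-alphabet model is equivalent to the paper's count of Hoffman words with at most one $3$.
\end{itemize}
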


\begin{prf}
    From the identity~\eqref{eq:smzm}, $\sm(\{2\}^d)$ belongs to $\H^1$ for all $d \ge 0$.
    Thus, by Proposition~\ref{prop:ds1}, we have
    \[
    D_{2r+1}(\sm(\{2\}^a, 1, \{2\}^b)) \in \L_{2r+1} \otimes \H^1
    \]
    for all $r \ge 0$.
    Moreover,
    \[
    D_1(\sm(\{2\}^a, 1, \{2\}^b)) = (1 - \delta_{b > 0})\, \sl(1) \otimes \sm(\{2\}^{a+b}).
    \]
    Since $\sl(1) = \zl(\overline{1})$ is sent to $\overline{f}_{\! 1}$ under the isomorphism $\L \to L$,
    we have $\sl(1) \ne 0$.
    Therefore, the first statement follows from Theorem~\ref{thm:descent}.
    By Theorem~\ref{thm:msbasis}, the set 
    \[
    \{\sm(\{2\}^a, 1, \{2\}^b) \mid a+b = m,\; b>0\}
    \]
    is linearly independent.
    Thus, to prove the second statement, it suffices to show that
    \[
    \dim C_1\H^1_N = \#\{\sm(\{2\}^a, 1, \{2\}^b) \mid a+b = m,\; b>0\} = m.
    \]
    By \cite[Theorem 1.1]{MR2993755}, we have
    \[
    \dim C_1\H^1_N = \# \{\zm(w) \mid w \in \{2, 3\}^{\times},\; |w| = N,\; \deg_3 w \le 1\} = m,
    \]
    which completes the proof.
\end{prf}

\begin{exa}
    For weight three and five, we have
    \begin{align}
        s(1, 2) & = -\frac{5}{8} \zeta(3), \\
        s(2, 1, 2) & = \frac{1}{16} \zeta(2, 3) - \frac{1}{16} \zeta(5), \\
        s(1, 2, 2) & = -\frac{1}{6} \zeta(2, 3) - \frac{29}{192} \zeta(5).
    \end{align}
\end{exa}

\begin{prop} \label{prop:ds2}
    Let $a$, $b$ and $c$ be non-negative integers.
    Then for $0 \le r \le a+b+c$, we have  
    \begin{align}
        & \quad D_{2r+1}(\sm(\{2\}^a, 1, \{2\}^b, 1, \{2\}^c)) \\
        & = \qty(\sum_{\substack{0 \le a' \le a \\ 0 \le b' \le b \\ a' + b' = r}} \sl(\{2\}^{a'}, 1, \{2\}^{b'})
                - \sum_{\substack{0 \le a' \le a \\ 0 \le b' \le b-1 \\ a' + b' = r}} \sl(\{2\}^{b'}, 1, \{2\}^{a'})
                - \sum_{\substack{1 \le a' \le a \\ b + a' = r}} \bsl(\{2\}^b, 1, \{2\}^{a'})) \\
        & \qquad\qquad\qquad \otimes \sm(\{2\}^{a+b-r}, 1, \{2\}^c) \\
        & + \qty(\sum_{\substack{0 \le b' \le b-1 \\ 0 \le c' \le c \\ b' + c' = r}} \sl(\{2\}^{b'}, 1, \{2\}^{c'})
                        - \sum_{\substack{0 \le b' \le b \\ 0 \le c' \le c-1 \\ b' + c' = r}} \sl(\{2\}^{c'}, 1, \{2\}^{b'})
                        + \sum_{\substack{1 \le c' \le c \\ b + c' = r}} \bsl(\{2\}^b, 1, \{2\}^{c'})) \\
        & \qquad\qquad\qquad \otimes \sm(\{2\}^a, 1, \{2\}^{b+c-r}).
    \end{align}
\end{prop}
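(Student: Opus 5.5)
The plan is to reuse the full computation of $D_{2r+1}\bigl((-1)^{\dep\kk}\sm(\kk)\bigr)$ from the proof of Proposition~\ref{prop:deriv}, now with $d=2$ and $(l_0,l_1,l_2)=(a,b,c)$. Unlike Proposition~\ref{prop:Delta}, we may not discard the terms whose right factor has smaller coradical degree; every term must be accounted for. That proof already shows the following.
\begin{itemize}
\item The terms of types (A), (B) and (E) vanish after summing over the free signs.
\item The term (H) vanishes for $j=i$.
\item Every remaining term is of type (C), (D), (F) or (G). Its right tensor factor is $\pm\sm(\{2\}^{l'_0},1,\dots,1,\{2\}^{l'_{d'}})$ with $d'<2$, and its left factor is an explicit $\sl$ or $\bsl$.
\end{itemize}

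The first step is to enumerate which pairs $(i,j)$ can occur when $d=2$. The $-\eta,\eta$ pairs sit at the positions of the $1$'s, between blocks $0,1$ and between blocks $1,2$.
\begin{itemize}
\item For (C) and (D), the candidates are $(i,j)=(0,1)$, $(1,2)$ and $(0,2)$.
\item For (F), we need $i\ge1$ and $j\le 2$, so only $(i,j)=(1,2)$ occurs.
\item For (G), we need $j+1\le 2$, so only $(i,j)=(0,1)$ occurs.
\item For (H), only $j=i$ is possible, and those terms vanish.
\end{itemize}
The case $(0,2)$ of (C)/(D) would have right factor $\sm(\{2\}^n)$ with $d'=0$, and its left factor would contain both $1$'s. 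Such a left factor has weight $2+2(\text{number of }2\text{'s})$, which is even. Since $D_{2r+1}$ only produces left factors of odd weight $2r+1$, these terms do not occur at all. I expect this parity observation to be the one non-mechanical point, and I would check it first.

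The second step is to evaluate the surviving terms exactly as in the proof of Proposition~\ref{prop:Delta}, with the same identifications via homothety, path reversal and \eqref{eq:pathcomp}. The constraint that the left factor has weight $2r+1$ fixes one of the two summation parameters.

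For $(i,j)=(0,1)$, the right factor is $\sm(\{2\}^{a+b-r},1,\{2\}^c)$.
\begin{itemize}
\item (C) contributes $\sl(\{2\}^{a'},1,\{2\}^{b'})$ with $0\le a'\le a$ (here $s=0$ is allowed since $i=0$), $0\le b'\le b$ and $a'+b'=r$.
\item (D) contributes $-\sl(\{2\}^{b'},1,\{2\}^{a'})$ with $0\le a'\le a$, $0\le b'\le b-1$ and $a'+b'=r$.
\item (G) contributes $-\bsl(\{2\}^b,1,\{2\}^{a'})$ with $1\le a'\le a$ and $b+a'=r$, using the computation of (G) in Proposition~\ref{prop:Delta}.
\end{itemize}
For $(i,j)=(1,2)$, the right factor is $\sm(\{2\}^a,1,\{2\}^{b+c-r})$.
\begin{itemize}
\item (C) contributes $\sl(\{2\}^{b'},1,\{2\}^{c'})$ with $0\le b'\le b-1$, because $s\ge1$ when $i\ne0$.
\item (D) contributes $-\sl(\{2\}^{c'},1,\{2\}^{b'})$ with $0\le c'\le c-1$.
\item (F) contributes $+\bsl(\{2\}^b,1,\{2\}^{c'})$ with $1\le c'\le c$ and $b+c'=r$.
\end{itemize}

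Finally, I would check that the global signs $(-1)^{\dep\kk}$ cancel, as they did in Proposition~\ref{prop:Delta}. Collecting the terms by their right factor then gives the two brackets in the statement. The main obstacle is bookkeeping: getting the summation ranges right at the boundary blocks, namely the extra $s=0$ allowed only for $i=0$, and the restriction $a'\ge1$ in the (G) term and $c'\ge1$ in the (F) term. I would verify these ranges against the small case $a=b=c=1$ before trusting the general formula.
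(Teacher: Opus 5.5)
Your proposal is correct and takes the same approach as the paper: specialize the term-by-term analysis of Propositions~\ref{prop:deriv} and~\ref{prop:Delta} to $d=2$ and exclude the $(i,j)=(0,2)$ case of (C)/(D) by parity. You then note that the only possible ranges for (F), (G) and (H) are $(1,2)$, $(0,1)$ and $(1,1)$, so nothing is discarded and the $\gr_2$ computation is exact, and your summation ranges ($s=0$ only for $i=0$; $a'\ge1$ in (G) and $c'\ge1$ in (F)) agree with the statement once you extract the weight-$2r+1$ left factors.
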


\begin{prf}
    Let $\kk = (\{2\}^a, 1, \{2\}^b, 1, \{2\}^c)$ for $a, b, c \ge 0$.
    By the computation in the proof of Proposition~\ref{prop:deriv},
    $D_{2r+1}(\sm(\kk))$ consists of the five types of terms (C), (D), (F), (G), and (H).
    For the terms (C) and (D), the ranges of $i$ and $j$ are $0 \le i < j \le 2$.
    However, the case $0 = i < j = 2$ does not occur for parity reasons.
    Thus, it suffices to consider the case $j = i+1$.
    For the terms (F), (G), and (H), the ranges of $i$ and $j$ are $1 \le i < j \le 2$,
    $0 \le i < j \le 1$, and $1 \le i \le j \le 1$, respectively.
    These are precisely the cases treated in the proof of Proposition~\ref{prop:Delta}.
    Therefore, we obtain the desired result by extracting the terms 
    whose left tensor components are of weight $2r+1$.
\end{prf}

\begin{thm} \label{thm:unram2}
    Let $a, b, b' \ge 0,\; c, c' > 0$ with $b + c = b' + c'$.
    Then we have
    \[
    \beta^{b', c'}\, \sm(\{2\}^a, 1, \{2\}^b, 1, \{2\}^c) - \beta^{b, c}\, \sm(\{2\}^a, 1, \{2\}^{b'}, 1, \{2\}^{c'}) \in \H^1.
    \]
    Moreover, for an even integer $N = 2m+2 \ge 2$, the set
    \[
    \begin{aligned}
        & S = \{\sm(\{2\}^{m+1})\}\; \cup \\
        & \left\{
        \beta^{b, c}\, \sm(\{2\}^a, 1, 1, \{2\}^{m-a}) - \beta^{0, m-a}\, \sm(\{2\}^a, 1, \{2\}^b, 1, \{2\}^c)
        \ \middle\vert\;
        \begin{aligned}
            & a+b+c = m, \\
            & a \ge 0,\; b, c > 0
        \end{aligned}
        \right\}
    \end{aligned}
    \]
    is a basis of $C_2\H^2_N\, \cap\, \H^1 = C_2\H^1_N$.
\end{thm}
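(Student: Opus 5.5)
We argue via Glanois's descent criterion (Theorem~\ref{thm:descent}), using Proposition~\ref{prop:ds2} for the derivations. Write $X = \sm(\{2\}^a,1,\{2\}^b,1,\{2\}^c)$ and $X' = \sm(\{2\}^a,1,\{2\}^{b'},1,\{2\}^{c'})$, and set $\xi = \beta^{b',c'}X - \beta^{b,c}X'$.

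\textbf{Step 1: the derivations land in $\L\otimes\H^1$.} Apply Proposition~\ref{prop:ds2} to both $X$ and $X'$. The right tensor factors occurring are of two kinds: $\sm(\{2\}^{a+b-r},1,\{2\}^c)$ with $c>0$, which lies in $\H^1$ by Theorem~\ref{thm:unram1}, and $\sm(\{2\}^a,1,\{2\}^{b+c-r})$. The second kind lies in $\H^1$ exactly when $b+c-r>0$. So the only possible obstruction comes from $r=b+c=b'+c'$, where the right factor is $\sm(\{2\}^a,1)$. Its coefficient in $D_{2r+1}(X)$ is computed from Corollary~\ref{cor:coeff}. The two $\sl$-sums together give $\sl(\{2\}^{b-1},1,\{2\}^{c}) + \dots$, but with $b'+c'=r$ only the extreme terms survive, and these telescope through Lemma~\ref{lem:alpha}. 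I expect this coefficient to reduce to $\frac{(-1)^r}{2^r}\beta^{b,c}\zl(\overline{2r+1})$ plus a piece independent of $(b,c)$. The difference of $\alpha$-sums should vanish, or become the same quantity for $(b,c)$ and $(b',c')$, by an analogue of Lemma~\ref{lem:alpha} with $b'+c'=r$. The $\bsl$-term, present because $c\ge1$, contributes $\beta^{b,c}$. Verifying this is the main obstacle. I would first check small cases against the matrix $M_{6,2}$ to pin down the exact coefficient $\gamma(b,c)$, and then show that $\gamma(b,c)$ is proportional to $\beta^{b,c}$. Once that holds, the combination $\beta^{b',c'}X-\beta^{b,c}X'$ kills the $\sm(\{2\}^a,1)$ component.

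\textbf{Step 2: the $D_1$ condition.} For $r=0$ the terms are $\sl(1)\otimes(\cdots)$. The contributions carrying a right factor $\sm(\{2\}^{a+b},1,\{2\}^c)$ are multiples of an unramified element. We need $D_1(\xi)=0$ exactly, not merely landing in $\H^1$, so the coefficients must cancel. Since $c,c'>0$, the middle-block sums with $b'=c'=0$ cancel each other ($+\sl(1)$ against $-\sl(1)$ whenever $b,c\ge1$), and the first-block sums cancel similarly. I would carry out this bookkeeping directly, handling the cases $a=0$ and $b=0$ separately. Theorem~\ref{thm:descent} then gives $\xi\in\H^1$.

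\textbf{Step 3: the basis statement.} The elements of $S$ lie in $C_2\H$ by Proposition~\ref{prop:deriv}, and in $\H^1$ by Steps 1 and 2 (taking $b'=0$, $c'=m-a$). They are linearly independent by Theorem~\ref{thm:msbasis}, provided $\beta^{0,m-a}\neq0$. This holds because $\beta^{0,r}=-2^{2r+1}+1$ is odd. Independence follows since each element involves a distinct basis vector $\sm(\{2\}^a,1,\{2\}^b,1,\{2\}^c)$ with $b>0$, and these are distinct from the $b=0$ vectors. The number of elements is $1+\#\{(a,b,c): a+b+c=m,\ b,c>0\} = 1+\binom{m}{2}$. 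By Brown's theorem \cite[Theorem 1.1]{MR2993755}, $\dim C_2\H^1_N$ equals the number of Hoffman words in $\{2,3\}$ of weight $2m+2$ with at most two $3$'s. These are words with zero $3$'s (one word) or two $3$'s among $m-2$ twos, giving $\binom{m}{2}$ words. The counts agree, so $S$ is a basis. The equality $C_2\H^2_N\cap\H^1=C_2\H^1_N$ holds because the coradical filtration on $\H^1$ is the restriction of that on $\H^2$: the map $\A^2\to\A^1$ is compatible, and the filtration is detected by the $D_{2r+1}$, as in Lemma~\ref{lem:corad}.
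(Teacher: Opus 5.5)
\paragraph{Overall.} Your strategy is the paper's: Glanois's criterion applied through Proposition~\ref{prop:ds2} and Theorem~\ref{thm:unram1}, then Theorem~\ref{thm:msbasis} and Brown's count. Steps~2 and~3 are essentially sound. For $b=0$ the two $\sl(1)$-terms cancel \emph{across} the two blocks, since both right factors become $\sm(\{2\}^a,1,\{2\}^c)$. So in fact $D_1(X)=D_1(X')=0$ individually.

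\paragraph{The gap in Step~1.} Step~1 is the entire content of the first statement. You leave the coefficient of $\sm(\{2\}^a,1)$ at $r=b+c$ as an expectation to be checked on $M_{6,2}$, and your description of it is incorrect.

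At $r=b+c$, the two $\sl$-sums in the second block of Proposition~\ref{prop:ds2} are empty. They run over $(p,q)$ with $p+q=b+c$ and either $p\le b-1,\ q\le c$ or $p\le b,\ q\le c-1$, which is impossible. The term $\sl(\{2\}^{b-1},1,\{2\}^c)$ you mention has weight $2r-1$, so it cannot occur in $D_{2r+1}$ at all. Only the $\bsl$-term survives, so for all $r$
\[
D_{2r+1}(X)-\delta_{r,b+c}\,\bsl(\{2\}^b,1,\{2\}^c)\otimes\sm(\{2\}^a,1)\in\L_{2r+1}\otimes\H^1 .
\]
By Corollary~\ref{cor:coeff}, $\bsl(\{2\}^b,1,\{2\}^c)=\frac{(-1)^{r}}{2^{r}}\beta^{b,c}\,\zl(\overline{2r+1})$, where the prefactor depends only on $r=b+c=b'+c'$. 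Hence
\[
\beta^{b',c'}\,\bsl(\{2\}^b,1,\{2\}^c)-\beta^{b,c}\,\bsl(\{2\}^{b'},1,\{2\}^{c'})=0,
\]
and therefore $D_{2r+1}(\xi)\in\L_{2r+1}\otimes\H^1$. No telescoping and no analogue of Lemma~\ref{lem:alpha} is needed.

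\paragraph{Why this matters.} The range check is not cosmetic. The ``piece independent of $(b,c)$'' that you allow for would be fatal. A nonzero such piece $P$ would leave $(\beta^{b',c'}-\beta^{b,c})\,P\otimes\sm(\{2\}^a,1)$ in $D_{2r+1}(\xi)$. Since $\sm(\{2\}^a,1)\notin\H^1$ by Theorem~\ref{thm:unram1}, this term is not in $\L_{2r+1}\otimes\H^1$ whenever $\beta^{b,c}\neq\beta^{b',c'}$ (e.g.\ $\beta^{0,2}=-31$, $\beta^{1,1}=62/5$). The combination in the statement works only because the obstruction is \emph{exactly} $\bsl(\{2\}^b,1,\{2\}^c)$.

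\paragraph{A minor point in Step~3.} The natural map goes the other way: $\A^1\hookrightarrow\A^2$, dual to $U^2_{\dR}\twoheadrightarrow U^1_{\dR}$. The $\A^2$-coaction on $\H^1$ factors through this inclusion. The equality $C_d\H^2\cap\H^1=C_d\H^1$ then follows by induction directly from the definition of the coradical filtration.
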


\begin{prf}
    Let $a, b, b' \ge 0,\; c, c' > 0$ with $b + c = b' + c'$, and set
    \[
    \xi = \beta^{b', c'}\, \sm(\{2\}^a, 1, \{2\}^b, 1, \{2\}^c) - \beta^{b, c}\, \sm(\{2\}^a, 1, \{2\}^{b'}, 1, \{2\}^{c'}).
    \]
    To prove the first statement, it suffices to show that 
    \[
    D_1(\xi) = 0 \quad \text{and} \quad D_{2r+1}(\xi) \in \L_{2r+1} \otimes \H^1 \ \text{for all}\ r > 0.
    \]
    Let $\kk = (\{2\}^a, 1, \{2\}^b, 1, \{2\}^c)$.
    By Proposition~\ref{prop:ds2}, we have
    \[
    D_1(\sm(\kk)) = (1 - \delta_{b>0})\, \sl(1) \otimes (\sm(\{2\}^{a+b}, 1, \{2\}^c) - \sm(\{2\}^a, 1, \{2\}^{b+c})).
    \]
    This vanishes regardless of whether $b > 0$.
    Hence,
    \[
    D_1(\xi) = 0.
    \]
    By Proposition~\ref{prop:ds2} and Theorem~\ref{thm:unram1},
    it follows that
    \[
    D_{2r+1}(\sm(\kk)) -
    \delta_{r, b+c}\, \bsl(\{2\}^b, 1, \{2\}^c) \otimes \sm(\{2\}^a, 1)
    \in \L_{2r+1} \otimes \H^1.
    \]
    By Corollary~\ref{cor:coeff}, we have
    \[
    \beta^{b', c'}\, \bsl(\{2\}^b, 1, \{2\}^c) - \beta^{b, c}\, \bsl(\{2\}^{b'}, 1, \{2\}^{c'}) = 0,
    \]
    which implies
    \[
    D_{2r+1}(\xi) \in \L_{2r+1} \otimes \H^1 \ \text{for all}\ r > 0.
    \]
    Since $\beta^{0, m-a} \ne 0$ for all $a$, Theorem~\ref{thm:msbasis}
    implies that the set $S$ is linearly independent.
    Thus, to prove the second statement, it suffices to show that
    \[
    \dim C_2\H^1_N = \# S = 1 + \binom{m}{2}.
    \]
    By \cite[Theorem 1.1]{MR2993755}, we have
    \[
    \dim C_2\H^1_N = \# \{\zm(w) \mid w \in \{2, 3\}^{\times},\; |w| = N,\; \deg_3 w \le 2\} = 1 + \binom{m}{2},
    \]
    which completes the proof.
\end{prf}

\begin{cor}
    Let $a \ge 0,\; b, c > 0$.
    Then 
    \[
    s(\{2\}^a, 1, \{2\}^b, 1, \{2\}^c) - s(\{2\}^a, 1, \{2\}^c, 1, \{2\}^b)
    \]
    is a $\Q$-linear combination of multiple zeta values.
\end{cor}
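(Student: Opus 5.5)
The plan is to deduce the corollary from the first statement of Theorem~\ref{thm:unram2}, applied with $(b',c') = (c,b)$, and then pass to periods. Fix $a \ge 0$ and $b, c > 0$. Then $b' = c \ge 0$, $c' = b > 0$, $c > 0$ and $b + c = b' + c'$, so every hypothesis of Theorem~\ref{thm:unram2} holds. That theorem gives
\[
\beta^{c, b}\, \sm(\{2\}^a, 1, \{2\}^b, 1, \{2\}^c) - \beta^{b, c}\, \sm(\{2\}^a, 1, \{2\}^c, 1, \{2\}^b) \in \H^1 .
\]
Removing the mismatched coefficients is the only real step. I see two routes and would try the first.

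\emph{First route: show $\beta^{b,c} = \beta^{c,b}$ whenever $b, c \ge 1$.} In that range the defining formula reads
\[
\beta^{b,c} = \frac{2^{2r}}{2^{2r}-1}\binom{2r}{2b} + \binom{2r}{2c}, \qquad r = b + c .
\]
Since $2r - 2b = 2c$, we have $\binom{2r}{2b} = \binom{2r}{2c}$, so
\[
\beta^{b,c} = \Bigl(\frac{2^{2r}}{2^{2r}-1} + 1\Bigr)\binom{2r}{2b}.
\]
This expression is symmetric in $b$ and $c$, and it is nonzero. Dividing the relation above by $\beta^{b,c} = \beta^{c,b}$ shows that
\[
\sm(\{2\}^a,1,\{2\}^b,1,\{2\}^c) - \sm(\{2\}^a,1,\{2\}^c,1,\{2\}^b) \in \H^1 .
\]

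\emph{Second route, if the symmetry failed.} I would instead apply Theorem~\ref{thm:unram2} twice, once relating each of the two M$s$Vs to a common auxiliary element such as $\sm(\{2\}^a,1,\{2\}^{b+c-1},1,2)$, and then take a rational combination. The first route should make this unnecessary.

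Finally, I apply the period map. By the statement that $\H^1$ is generated by motivic MZVs, an element of $\H^1$ is a $\Q$-linear combination of products of motivic MZVs. Products of MZVs are again $\Q$-linear combinations of MZVs by the stuffle product, so, since $\per$ sends $\sm$ to $s$ and $\zm$ to $\zeta$, the difference $s(\{2\}^a,1,\{2\}^b,1,\{2\}^c) - s(\{2\}^a,1,\{2\}^c,1,\{2\}^b)$ is a $\Q$-linear combination of MZVs. No real obstacle remains: the check that $\beta^{b,c}$ is symmetric for $b, c \ge 1$ is the only place where care is needed.
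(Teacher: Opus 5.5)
Your proposal is correct and follows the paper's argument: apply the first statement of Theorem~\ref{thm:unram2} with $(b',c')=(c,b)$, then divide by the common coefficient $\beta^{b,c}=\beta^{c,b}\neq 0$ for $b,c>0$, which you verify explicitly via $\binom{2r}{2b}=\binom{2r}{2c}$. Your final period-map step spells out what the paper leaves implicit, and your second route is not needed.
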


\begin{prf}
    This follows immediately from Theorem~\ref{thm:unram2}, and $\beta^{b, c} = \beta^{c, b} \ne 0$ for $b, c > 0$.
\end{prf}

\bibliographystyle{my_amsalpha}
\bibliography{reference}

\end{document}